\documentclass[reqno, 10pt]{amsart}
\usepackage{amsmath, amssymb, amscd, amsthm, amsfonts, amstext,
  amsbsy,  mathrsfs, hyperref,  upgreek, mathtools, stmaryrd, enumitem, bbm, mathabx}
\usepackage[german,polish,english]{babel}
\usepackage[textsize=footnotesize,color=yellow, bordercolor=white]{todonotes}

\usepackage%[scale=0.682]
[a4paper, margin=1.35in]{geometry}

\usepackage[utf8]{inputenc}
\usepackage[T1]{fontenc}

\usepackage{pdflscape} 
\usepackage{afterpage}
\usepackage{etoolbox}

\usepackage{tikz}
\usetikzlibrary{tikzmark}

% other fonts

\newcommand{\RR}{\mathbb{R}}
\newcommand{\NN}{\mathbb{N}}

\newcommand{\EE}{\mathbb{E}}

\newcommand{\TT}{\mathbb{T}}

\newcommand{\Sacks}{\mathbb{S}}
\newcommand{\Silver}[1][{}]{\mathbb{V}_{ #1 }}
\newcommand{\Cohen}{\mathbb{C}}
\newcommand{\Laver}[1][{}]{\mathbb{L}_{ #1 }}
\newcommand{\Hechler}[1][{}]{\mathbb{H}_{ #1 }}
\newcommand{\Eventual}[1][{}]{\mathbb{E}_{ #1 }}
\newcommand{\Miller}{\mathbb{M}}
\newcommand{\Mathias}[1][{}]{\mathbb{R}_{ #1 }}
\newcommand{\Random}{\mathbb{B}}

\newcommand{\PP}{\mathbb{P}}
\newcommand{\QQ}{\mathbb{Q}}
\newcommand{\CC}{\mathbb{C}}
\newcommand{\uBc}{\Lambda_{\mathrm{uB}}}
\newcommand{\Add}{\mathrm{Add}}

%I don't know why P. uses the command \l all the time in the proof of that something is a projection in the last section, but it is not actually defined in math mode. I also do not know why latex only issues a warning and then replaces \l with l when typesetting. \l is really a text command to produce a "Polish" crossed l, as far as I can tell. I was annoyed by the warnings, so I added the following line (David) (and presumably, I also repalced \l by \len at the time, David in 2020)

\DeclareFontFamily{OT1}{pzc}{}
\DeclareFontShape{OT1}{pzc}{m}{it}{<-> s * [1.100] pzcmi7t}{}
\DeclareMathAlphabet{\mathscr}{OT1}{pzc}{m}{it}

\newcommand{\cN}{\mathrm{N}}
%{\mathscr{N}} 
\newcommand{\cI}{I}
\newcommand{\cIs}{I^\star} 
%{\mathcal{I}^\star} 

\newcommand{\Ord}{\mathrm{Ord}}

\newcommand{\I}{I} 
\newcommand{\bK}{\mathbf{K}}
\newcommand{\bL}{\mathbf{L}} 
%\newcommand{\bF}{\mathbf{F}} 

% operators

\newcommand{\pow}{\mathscr{P}}

\newcommand{\stem}{\operatorname{stem}}
\newcommand{\spl}{\operatorname{split}}

\newcommand{\res}{\upharpoonright}

% spaces

% axioms
\newcommand{\AD}{{\sf AD}}
\newcommand{\ZFC}{{\sf ZFC}}
\newcommand{\ZF}{{\sf ZF}}

\newcommand{\PD}{{\sf PD}}
\newcommand{\CH}{{\sf CH}}

\newcommand{\supp}{\mathrm{supp}}

% others

\DeclareMathOperator{\dom}{dom}
\DeclareMathOperator{\ran}{ran}

% Macro fÃŒr den | oder : in einer Menge
\newcommand{\st}{\mid}

\newcommand{\baseset}{\Omega} 
%{\star}
\newcommand{\comm}[1]{}

\newcommand{\Lev}{\mathrm{Lev}} 

\newcommand{\shift}{\sigma}

\newcommand{\np}{}
%{\newpage}

\newcommand{\XX}{X}

% theorems
\newcounter{cl}
\setcounter{cl}{0}
\AtBeginEnvironment{theorem}{\setcounter{cl}{0}}
\AtBeginEnvironment{lemma}{\setcounter{cl}{0}}
\newtheorem{theorem}{Theorem}[section]
\newtheorem*{theorem*}{Theorem} 
\newtheorem{lemma}[theorem]{Lemma}
\newtheorem{corollary}[theorem]{Corollary}
\newtheorem{proposition}[theorem]{Proposition}

\newtheorem{conditions}[theorem]{Conditions}
\newtheorem{convention}[theorem]{Convention}
\newtheorem{notation}[theorem]{Notation}
\newtheorem{question}[theorem]{Question}

\theoremstyle{definition}
\newtheorem{claim}[theorem]{Claim}
\newtheorem*{claim*}{Claim}
\newtheorem{subclaim}[theorem]{Subclaim}%[theorem]
\newtheorem*{subclaim*}{Subclaim}
\newtheorem{definition}[theorem]{Definition}

\newtheorem{example}[theorem]{Example}
\theoremstyle{remark}
\newtheorem{remark}[theorem]{Remark}

\newtheorem*{case*}{Case}

\newtheorem*{subcase*}{Subcase}

%David

\DeclareMathOperator{\Borel}{\textsc{\rmfamily Borel}}
\newcommand{\Borelcodes}{\mathcal{B}}
\newcommand{\Borelset}{B}
\DeclareMathOperator{\Proj}{\textsc{\rmfamily Proj}}

\DeclareMathOperator{\Meas}{\textsc{\rmfamily Meas}}

% enumeration

\newenvironment{enumerate-(a)}{\begin{enumerate}[label={\upshape (\alph*)}, leftmargin=2pc]}{\end{enumerate}}

\newenvironment{enumerate-(a)-r}{\begin{enumerate}[label={\upshape (\alph*)}, leftmargin=2pc,resume]}{\end{enumerate}}

\newenvironment{enumerate-(A)}{\begin{enumerate}[label={\upshape (\Alph*)}, leftmargin=2pc]}{\end{enumerate}}

\newenvironment{enumerate-(A)-r}{\begin{enumerate}[label={\upshape (\Alph*)}, leftmargin=2pc,resume]}{\end{enumerate}}

\newenvironment{enumerate-(i)}{\begin{enumerate}[label={\upshape (\roman*)}, leftmargin=2pc]}{\end{enumerate}}

\newenvironment{enumerate-(i)-r}{\begin{enumerate}[label={\upshape (\roman*)}, leftmargin=2pc,resume]}{\end{enumerate}}

\newenvironment{enumerate-(I)}{\begin{enumerate}[label={\upshape (\Roman*)}, leftmargin=2pc]}{\end{enumerate}}

\newenvironment{enumerate-(I)-r}{\begin{enumerate}[label={\upshape (\Roman*)}, leftmargin=2pc,resume]}{\end{enumerate}}

\newenvironment{enumerate-(1)}{\begin{enumerate}[label={\upshape (\arabic*)}, leftmargin=2pc]}{\end{enumerate}}

\newenvironment{enumerate-(1)-r}{\begin{enumerate}[label={\upshape (\arabic*)}, leftmargin=2pc,resume]}{\end{enumerate}}

\begin{document}

\title{
Lebesgue's density theorem and definable selectors for ideals} 
\date{\today} 

\author{Sandra M\"uller} 
\address{Sandra M\"uller, 
 Institute of Mathematics, University of Vienna, UZA 1---Geb\"aude 2, Augasse 2--6, 1090 Wien, Austria} 
\email{mueller.sandra@univie.ac.at}

%\thanks{}

\author{Philipp Schlicht} 
\address{Philipp Schlicht, School of Mathematics, University of Bristol, Fry Building, Woodland Road, Bristol, BS8 1UG, UK} 
\email{philipp.schlicht@bristol.ac.uk} \urladdr{}

\author{David Schrittesser} \address{David Schrittesser, 
 Institute of Mathematics, University of Vienna, UZA 1---Geb\"aude 2, Augasse 2--6, 1090 Wien, Austria} \email{david.schrittesser@univie.ac.at}

\author{Thilo Weinert}
\address{Thilo Weinert, Department of Mathematics
National University of Singapore
Blk S17 Level 4
10 Lower Kent Ridge Road
119076 Singapore} \email{thilo.weinert@univie.ac.at}

\subjclass[2010]{03E15, 28A05} 

\keywords{Density points, ideals, selectors, forcing. }

\begin{abstract} 
We introduce a notion of density point and prove results analogous to Lebesgue's density theorem for various well-known ideals
on Cantor space and Baire space.
In fact, we isolate a class of ideals for which our results hold. 
%This includes various ideals induced by ccc tree forcings. 

%As a contrasting result of independent interest, 
In contrast to these results, 
we show that there is no reasonably definable selector 
%function 
%reasonable 
% simple 
%(e.g., Baire measurable on codes) 
%way of 
%choosing 
that chooses representatives 
%(no selector)
%of Borel sets 
%with respect to 
for the equivalence relation on the Borel sets of having countable symmetric difference. 
% -- in fact there is no Baire measurable such selector in a sense made precise below. 
%In fact, there is no Baire measurable selector for this equivalence relation (this is made precise below). 
%More precisely, 
%the restriction of this 
%equivalence 
%relation to ${\bf \Sigma}^0_2$ sets. 
In other words, there is no notion of density which makes the ideal of countable sets satisfy an analogue to the density theorem. 

The proofs of the positive results use only elementary combinatorics of trees, while %those of 
the negative results rely on forcing arguments. 
%Lebesgue's density theorem. 
%way of assigning to each  Borel set a representative modulo the equivalence relation of having countable symmetric difference. More precisely, we show that there is no Baire measurable selector with Borel values for this equivalence relation. This implies that the ideal of countable sets does not satisfy an analogue to the density theorem for any reasonable notion of density. 
% points.
% the analogue to the density theorem fails for the ideal of countable sets. 

%Thus, any reasonably definable notion of density point for the ideal of countable sets contradicts the existence of certain large cardinals.

%%These results are tightly connected with the existence of selectors, since 
%We further observe that any density property for an ideal induces a selector for the class of Borel sets modulo the ideal in a way that is made precise. As a result of independent interest, we prove that there are no simply definable selectors for the ideal of countable sets. 
%%We prove the following result of independent interest: for the ideal of countable sets, there is no simply definable selector. 
%%We further observe that any simply definable density property for an ideal induces a selector for the class of Borel sets modulo the ideal and prove that for the ideal of countable sets, there is no simply definable selector. 
\end{abstract} 

\maketitle

%\setcounter{tocdepth}{3}
%\tableofcontents 

%\todo[inline]{This paper is an attempt to find the dividing line between ideals with and without density properties. } 

%\todo[inline]{Advantages of our definition: (a) its generality (b) lower complexity of density sets } 

%\todo[inline]{cite papers using tree ideals: \\ 
%Spinas-Shelah, Khomskii-Brendle-Wohofsky
%} 

%\todo[inline]{(Philipp) Maybe we should put all stuff using $\mathrm{PD}$, $\mathrm{AD}$ and about no selectors in Solovay's model into a follow-up paper about "Density properties and selectors"? (David:) I'm very much in favour of that suggestion!} 

%\todo[inline]{(Philipp) Mention that assuming $\CH$ and for $\PP$ of size $\omega_1$, $\omega_1$-preserving implies $\omega_1$-covering. Probably this can be stated a bit more general, without $\CH$. Moreover, it would be nice to construct a counterexample: an $\omega_1$-collapsing forcing $\PP$ such that not all Borel sets are $\PP$-measurable. } 

%\todo[inline]{Cite a simple proof of Lebesgue's density theorem from some textbook or paper? (which?)} 

%\todo[inline]{Zapletal's book has two different bib entries: take the longer one, but correct the mistake in it} 

%\todo[inline]{For eventually different forcing cite [Section 7.4.B in Bartoszynski's book] } 

%\todo[inline]{Numbering is bad. switch back to "Theorem 1.1" etc?} 

\np

\section{Introduction}\label{s.intro}

Given a $\sigma$-ideal $\I$ on a Polish space $X$, 
consider the equivalence relation $=_\I$ on $\pow(X)$ defined by 
$A =_\I B \iff A \triangle B \in \I$.
A map $D$ with $\dom(D),\ran(D) \subseteq \pow(X)$ 
is called a \emph{
%(partial) 
selector for $=_\I$} if 
$D(A)=_\I A$ (i.e., $D$ chooses a representative from the $=_\I$-equivalence class of $A$) and $A =_\I B \Rightarrow D(A)=D(B)$ (i.e., $D$ is invariant with respect to $=_\I$) for all $A, B \in \dom(D)$. 
While a selector for $=_\I$ on $\pow(X)$ can always be found using the axiom of choice, it is natural to ask: For which $\sigma$-ideals is there a selector $D$ with domain the Borel subsets of $\XX$ that is  \emph{definable} in some given sense?
% (e.g., in the sense of the projective hierarchy)? 
For instance, we are interested in Borel measurable and projective selectors. 

To make this more precise, let us consider the ideal of Lebesgue null sets on $\RR$ and the meager ideal on any Polish space as examples.
First we consider the equivalence relation $=_\mu$ on the collection $\Meas$ of Lebesgue measurable subsets of $\RR$, given by equality up to a Lebesgue null set. 
By Lebesgue's density theorem, we obtain a selector by assigning to each measurable set $A\subseteq \RR$ the set $D_\mu(A)$ of points of density $1$, as follows: Recall that 
\[
d^\mu_A(x)=\liminf_{\epsilon\rightarrow0} \frac{\mu\big(B_\epsilon(x)\cap A\big)}{\mu\big(B_\epsilon(x)\big)}
\]
and $D_\mu(A) = \{x \in \RR \mid d^\mu_A(x) = 1\}$\label{density1} for $A \in \Meas$.\footnote{See also
%\cite{mattila1995geometry}, 
  \cite{andrettacamerloconstantini} and
  \cite{andrettacamerlo} for recent results on the descriptive set theory of Lebesgue's density theorem.} Note that in measure theory $D_\mu(A)$ is usually denoted by $\Phi(A)$.
\begin{theorem}[Lebesgue's density theorem]
For any $A \in \Meas$, $A =_\mu D_\mu(A)$.
\end{theorem}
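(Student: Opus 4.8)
The plan is to prove the two requirements of a selector separately, and the core analytic content is entirely in the first one. First I would show that for any $A \in \Meas$, the set $D_\mu(A) \triangle A$ is null, which says precisely that $A =_\mu D_\mu(A)$. The standard route is to establish that $d^\mu_A(x)$ exists and equals $\mathbbm{1}_A(x)$ for $\mu$-almost every $x$; equivalently, that $\mu(\{x \in A : d^\mu_A(x) < 1\}) = 0$ and $\mu(\{x \notin A : d^\mu_A(x) > 0\}) = 0$. Since $D_\mu(A) \triangle A = (A \setminus D_\mu(A)) \cup (D_\mu(A) \setminus A)$, and $A \setminus D_\mu(A) \subseteq \{x \in A : d^\mu_A(x) < 1\}$ while $D_\mu(A) \setminus A \subseteq \{x \notin A : d^\mu_A(x) = 1\} \subseteq \{x \notin A : d^\mu_A(x) > 0\}$, the density statement gives exactly what we want.

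The key tool I would invoke is the Vitali covering lemma (in its form using balls $B_\epsilon(x)$ in $\RR$, though $\RR^n$ works identically). Fix $\alpha < 1$ and let $E_\alpha = \{x \in A : d^\mu_A(x) < \alpha\}$; it suffices to show $\mu(E_\alpha) = 0$ for every rational $\alpha < 1$, since $\{x \in A : d^\mu_A(x) < 1\} = \bigcup_{\alpha < 1} E_\alpha$. By outer regularity, pick an open $U \supseteq E_\alpha$ with $\mu(U) \le \mu(E_\alpha) + \delta$. For each $x \in E_\alpha$ there are arbitrarily small $\epsilon$ with $B_\epsilon(x) \subseteq U$ and $\mu(B_\epsilon(x) \cap A) < \alpha \, \mu(B_\epsilon(x))$; this is a Vitali cover of $E_\alpha$, so extract a countable disjoint subfamily $\{B_i\}_{i}$ with $\mu(E_\alpha \setminus \bigcup_i B_i) = 0$. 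Then
\[
\mu(E_\alpha) \le \sum_i \mu(E_\alpha \cap B_i) \le \sum_i \mu(A \cap B_i) < \alpha \sum_i \mu(B_i) \le \alpha\, \mu(U) \le \alpha\,(\mu(E_\alpha) + \delta).
\]
Letting $\delta \to 0$ gives $\mu(E_\alpha) \le \alpha\, \mu(E_\alpha)$, hence $\mu(E_\alpha) = 0$. Applying the same argument to the complement — i.e., to a measurable complement $A^c$ in a large ball, noting $d^\mu_{A^c}(x) = 1 - d^\mu_A(x)$ wherever the relevant limits behave — handles $\{x \notin A : d^\mu_A(x) > 0\}$, completing the proof that $A =_\mu D_\mu(A)$.

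Finally, invariance is immediate: if $A =_\mu B$, i.e., $\mu(A \triangle B) = 0$, then for every $x$ and every $\epsilon$ we have $\mu(B_\epsilon(x) \cap A) = \mu(B_\epsilon(x) \cap B)$, so $d^\mu_A = d^\mu_B$ pointwise, whence $D_\mu(A) = D_\mu(B)$. The main obstacle, such as it is, lies in the Vitali covering argument above — in particular in being careful about the reduction to rational thresholds $\alpha$ and about applying the covering lemma on sets of finite measure (restricting to a large ball if $A$ or its complement has infinite measure, which is harmless since density is a local notion). Everything else is bookkeeping.
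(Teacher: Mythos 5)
Your proof is correct and is the classical argument via the Vitali covering lemma. The paper itself does not reprove the $\RR$ version of Lebesgue's density theorem --- it cites it as standard --- but it does give a self-contained proof of the Cantor-space analogue in Section~\ref{section explicit construction of density points} (Theorem~\ref{construction of density points}), and the two routes are genuinely different. You decompose the exceptional set into slices $E_\alpha=\{x\in A : d^\mu_A(x)<\alpha\}$ and kill each one by choosing an open envelope $U$ of nearly-minimal measure, extracting a disjoint Vitali subcover of small balls on which $A$ is $\alpha$-thin, and running the contraction $\mu(E_\alpha)\le\alpha(\mu(E_\alpha)+\delta)$. The paper instead works on ${}^\omega 2$, replaces balls by basic clopen sets $N_t$, and proves quantitative lemmas about \emph{weighted trees} (Lemmas~\ref{measure as limit of relative level sizes}--\ref{large fraction of good nodes}): for a closed $[T]$ of positive measure it builds, level by level, a perfect subtree $S\subseteq T$ of relative measure $\geq q$ all of whose branches are density points, then reduces the general measurable case to closed sets by inner approximation. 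That argument is ``finitized'' and effective --- it yields an explicit algorithm and $\Delta^0_2(T)$-definability bounds on $S$, which the paper needs for the complexity results --- whereas the Vitali route is shorter but nonconstructive (the disjoint subcover extraction gives no effective bound). Two small cautions on your write-up: you should phrase the Vitali step with outer measure since $E_\alpha$ is not obviously measurable a priori (the argument goes through unchanged); and because $d^\mu_A$ is defined as a $\liminf$, the relation to the complement is in general only the inequality $d^\mu_{A^c}(x)\le 1-d^\mu_A(x)$ rather than an equality, but this inequality is exactly what your argument needs, so no harm done.
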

It follows that $A \mapsto D_\mu(A)$ is a selector with domain $\Meas$.
To gauge the definability of this selector, take recourse to a standard coding of Borel sets (see e.g. \cite[35.B]{Ke95}): %\footnote{We refer the reader to \cite{Ke95} for standard definitions in descriptive set theory throughout this paper.} % or \cite{MR2526093}):
Fix a $\Pi^1_1$ set $\Borelcodes \subseteq \pow(\omega)$ of Borel codes and $\Sigma^1_1$ sets $P, Q \subseteq \pow(\omega) \times \RR$ such that $\{y\in\RR \mid P(x,y)\} = \{ y \in \RR \mid \neg Q(x,y)\}$ for all $x \in \Borelcodes$ and  every Borel set is of this form.
Write $\Borelset_x$ for this set, the Borel set \emph{coded by $x$}. 
With such a coding, standard arguments  on the complexity of measure (see \cite[Theorem 2.2.3]{kechris:measure}) % or \cite[Theorem 17.25]{Ke95}) 
show that $D_\mu$ restricted to the Borel sets is induced by a definable map on the Borel codes as follows: 
There is a map $D\colon \Borelcodes \to \Borelcodes$  such that
\[
\Borelset_{D(x)} = D_\mu(\Borelset_x)
\]
for all Borel codes $x \in \Borelcodes$, 
and the graph of $D$ is a Boolean combination of $\Sigma^1_1$ sets. In particular, $D$ is universally Baire measurable.\footnote{We refer the reader to \cite{Ke95} for standard definitions in descriptive set theory throughout this paper. We also review the definition of universally Baire in Section~\ref{section complexity}.}

The situation is entirely analogous for the Lebesgue measure (i.e. coin-tossing measure) on Cantor space ${}^\omega 2$ (see e.g. 
% instead of the Lebesgue null ideal on $\RR$
%if instead of the Lebesgue null ideal on $\RR$, we look at the null ideal with respect to the Lebesgue measure on ${}^\omega 2$, i.e., the Haar measure, also known as the coin tossing measure, on Cantor space; 
\cite[Section 8]{andrettacamerlo2012} or \cite[Proposition 2.10]{Mi08}). 
It further follows from the isomorphism theorem for measures \cite[Theorem 17.41]{Ke95} that a definable selector exists for the null ideal with respect to any Borel probability measure on a standard Borel space.

Similarly, a definable selector (with domain the Borel sets) exists for the \emph{meager ideal} on any Polish space. This is the smallest $\sigma$-ideal containing all 
%\todo{(D) But at least let's say ``closed'' because that's a nice basis  \\ 
%(P) I defined "nwd" but removed "closed" since I think it's redundant, hope it's ok. } 
nowhere dense sets (i.e., sets whose closure has empty interior). 
% (closed sets with empty interior):
When $I$ is the meager ideal we can define a selector by letting
\[
D_\I(A)=\bigcup\{U\mid U\text{ open, $A\cap U$ comeager in $U\}$}
\] 
for any set $A$ with the Baire property (i.e., any set which has meager symmetric difference with some Borel set).\footnote{For a different definition of density point which generalizes the standard notion and is at the same time meaningful for the meager ideal, see \cite{Poreda85} (we discuss this in Section~\ref{section convergence density}).}

\medskip

In this article we isolate a class of ideals, the \emph{strongly linked tree ideals} for which likewise there exists a definable selector (cf.\ Definition~\ref{definition strongly linked}).
For this result, we utilize the connection between $\sigma$-ideals and \emph{forcing}.\footnote{This connection has been explored for instance in \cite{brendlekhomskiiwohofsky, goldstern1995tree, kanovei2013canonical, ikegami2010, shelahspinas, MR2391923}.} To make our paper as accessible as possible we give a short but self-contained review of this connection in Section~\ref{section tree ideals}.

We shall restrict our attention to spaces $X$ of the form ${}^\omega \baseset$ with the product topology, where $\baseset=\{0,1\}$ or $\baseset=\omega$, i.e., Cantor and Baire space.\footnote{Note that for any $\sigma$-ideal $\I$ that contains all singletons on a Polish space $\XX$, there is a set $A \in \I$ such that 
$\XX\setminus A$ is homeomorphic to ${}^\omega\omega$; %by \cite[]{}, 
so this is not a serious restriction.}
A \emph{tree forcing} is a preorder $\PP$, with respect to inclusion, where $\PP$ is a collection of perfect subtrees of ${}^{<\omega}\baseset$. 
A standard construction associates to each tree forcing $\PP$ an ideal
$\I_\PP$ on ${}^\omega \baseset$---we call ideals of this form
\emph{tree ideals}. Similarly, a family $\Meas({}^\omega
\baseset,\PP)$ of $\PP$-measurable subsets of ${}^\omega \baseset$ is
associated to each tree forcing. For details, see Sections~\ref{subsection what is a tree ideal}--\ref{section Positive Borel sets}.

Recall that $\sqsubseteq$ means \emph{initial segment} and the \emph{stem} of a tree $T$, denoted by $\stem(T)$, is the maximal node in $T$ that is $\sqsubseteq$-comparable with all other nodes in $T$. 
%If $\PP$ is a collection of subtrees of ${}^{<\omega}\baseset$, we say that $S, T\in \PP$ are \emph{compatible} if there is some $R\subseteq S\cap T$ in $\PP$. 

\iffalse
\todo[inline]{Postpone}

\begin{definition}\label{def.intro.strongly-linked}
A collection $\PP$ of subtrees of ${}^{<\omega}\baseset$ is called \emph{strongly linked} if for all $S,T \in \PP$ with 
$\stem(S) \in T$ and $\stem(T)\sqsubseteq \stem(S)$, there is some $R\in \PP$ with $R \subseteq S\cap T$. 
A \emph{strongly linked tree ideal} is an ideal of the form $\I_\PP$ for a strongly linked tree forcing $\PP$. 
\end{definition}

%For instance,  Cohen forcing, Hechler forcing, eventually different forcing, %\footnote{See \cite[Section 7.4.B]{MR1350295}.},  Laver forcing with a filter, and Mathias forcing with a shift invariant filter (cf.\ Section~\ref{section list of tree forcings}) have this property. 
\fi

If $\I$ is a tree-ideal associated to $\PP$
define the set of \emph{$\I$-shift density points} (or short, just \emph{$\I$-density points}) of $A\subseteq {}^\omega\baseset$ as follows:
\begin{equation}\label{intro definition density points}
D_\I(A)=\{x\in{}^\omega \baseset\mid \exists m\in\omega\ \forall n\geq m\ \forall T \in \PP \; [\stem(T)=x{\upharpoonright}n \Rightarrow [T]\cap A \notin \I ] \}.
\end{equation}
In Section~\ref{section tree ideals} we prove the following theorem (as Corollary~\ref{density property for concrete forcings}); 
this section should be readable without prior knowledge of forcing.
\begin{theorem}\label{intro theorem D_I}
Let $\I$ be a strongly linked tree ideal (cf.\ Definition~\ref{definition strongly linked}). %For $A\subseteq {}^{\omega}\baseset$, define the set of \emph{$\I$-shift density points} (or short, just \emph{$\I$-density points}) of $A$ as follows:
%\begin{equation}\label{intro definition density points}
%D_\I(A)=\{x\in{}^\omega \baseset\mid \exists m\in\omega\ \forall n\geq m\ \forall T \in \PP \; (\stem_T=x{\upharpoonright}n \Rightarrow [T]\cap A \notin \I) \}.
% \end{equation}
Then 
for any $A,B\in\Meas({}^\omega \baseset,\PP)$
\begin{enumerate-(A)}
\item\label{thm:D_I:well-def} $A =_\I B \Rightarrow D_\I(A)=D_\I(B)$, and 
\item\label{thm:D_I:positive} $D_\I(A) =_\I A$.
\end{enumerate-(A)}
\end{theorem}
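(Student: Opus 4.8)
The plan is to work directly from the definition~\eqref{intro definition density points}, exploiting the strong linkedness hypothesis on $\PP$ at exactly the point where one needs to take a common refinement of two trees. I will treat the two parts separately; part~\ref{thm:D_I:well-def} should be essentially formal, while part~\ref{thm:D_I:positive} is where the combinatorial work lies.

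For part~\ref{thm:D_I:well-def}, suppose $A \triangle B \in \I$. Since $\I$ is a $\sigma$-ideal and $[T] \cap A$ and $[T] \cap B$ differ by a subset of $A \triangle B$, we have $[T] \cap A \in \I \iff [T] \cap B \in \I$ for every $T \in \PP$. Plugging this equivalence into the matrix of~\eqref{intro definition density points} shows the defining condition for $x \in D_\I(A)$ and for $x \in D_\I(B)$ are literally the same, so $D_\I(A) = D_\I(B)$. This uses only that $\I$ is closed under subsets and finite unions, which any tree ideal is. Note this direction does not even need strong linkedness.

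For part~\ref{thm:D_I:positive} I must show $A \triangle D_\I(A) \in \I$, i.e.\ both $A \setminus D_\I(A) \in \I$ and $D_\I(A) \setminus A \in \I$. The first containment, $A \setminus D_\I(A) \in \I$, is the heart of the matter and is where I expect the main obstacle: given $x \in A$ with $x \notin D_\I(A)$, for every $m$ there is $n \ge m$ and a tree $T_n \in \PP$ with $\stem_{T_n} = x{\restriction}n$ and $[T_n] \cap A \in \I$. I want to conclude that the collection of such $x$ lies in $\I$. The strategy is to use $\PP$-measurability of $A$: since $A \in \Meas({}^\omega\baseset, \PP)$, either $A \in \I$ (done) or there is a tree $S \in \PP$ that ``reads $A$ positively'' below its stem in the sense supplied by the machinery of Sections~\ref{section Positive Borel sets}. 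I then run a fusion/exhaustion argument: starting from $S$, at each node above $\stem_S$ on which the density condition fails I can splice in a witnessing tree $T_n$ with $[T_n]\cap A \in \I$, and strong linkedness (applied with $\stem_{T_n} \in S$, $\stem_S \sqsubseteq \stem_{T_n}$) guarantees a common refinement $R \in \PP$ with $R \subseteq S \cap T_n$, so that $[R] \cap A \in \I$ while $R$ witnesses positivity — contradicting the positivity of $S$ unless the set of bad $x$ below $[S]$ is already covered by countably many such $[T_n]$, hence in $\I$. A maximal-antichain or rank argument then assembles these local covers into a single $\I$-set covering all of $A \setminus D_\I(A)$.

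The reverse containment $D_\I(A) \setminus A \in \I$ is symmetric but cleaner: if $x \in D_\I(A) \setminus A$, then $x \in {}^\omega\baseset \setminus A$, and since ${}^\omega\baseset \setminus A$ is also $\PP$-measurable, if it is not in $\I$ there is a positive tree $S'$ inside it; but for $x$ deep enough in $[S']$ the density condition $[T]\cap A \notin \I$ must fail for $T = S'{\restriction}(x{\restriction}n)$ (or a refinement of it given by strong linkedness), since $[S'] \cap A = \emptyset \in \I$ — contradicting $x \in D_\I(A)$. Hence $D_\I(A)\setminus A$ contains no positive tree, and by $\PP$-measurability it lies in $\I$. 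The main thing to get right in writing this up carefully is the precise form of the ``positive tree'' dichotomy for $\PP$-measurable sets from the earlier sections and checking that the stem hypotheses of Definition~\ref{def.intro.strongly-linked} are met at each splicing step; modulo that, both inclusions follow from strong linkedness plus $\sigma$-completeness of $\I$.
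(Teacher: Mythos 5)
Your overall route is a sound, more direct argument than the paper's, which factors through an abstract criterion: the paper first shows that if $D$ is lifted from $\Borel({}^\omega\baseset)/\I$, $\I$-compatible, and $\I$-positive, then the density property holds (Proposition~\ref{equivalence to density property}), and then verifies positivity via the key observation that $D_\I([T]) = [T]$ for every $T \in \PP$ (Lemma~\ref{every branch in T in a density point}) and compatibility via the stem property (Lemma~\ref{lem:disjointnessproperty}). Your approach instead runs the tree-level argument in place, which is a perfectly legitimate alternative. Part~\ref{thm:D_I:well-def} is correct as you have it. The one genuinely strong move you have exactly right is the use of strong linkedness: if $[S] \subseteq_\I A$ and $x \in [S]$, then for every $n \ge |\stem_S|$ and every $T \in \PP$ with $\stem_T = x{\restriction}n$, we have $\stem_T \in S$ and $\stem_S \sqsubseteq \stem_T$, so there is $R \subseteq S \cap T$ in $\PP$; since $[R] \subseteq [S]$ is $\I$-positive (Lemma~\ref{characterization of positive sets}(1)) and $[R] \setminus A \subseteq [S] \setminus A \in \I$, we get $[T] \cap A \supseteq [R] \cap A \notin \I$, hence $[S] \subseteq D_\I(A)$ with $n_x = |\stem_S|$.

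Where you go astray is in what to do with this. There is no need for a fusion, exhaustion, maximal-antichain, or rank argument, and the phrase ``unless the set of bad $x$ below $[S]$ is already covered by countably many such $[T_n]$'' misstates the logic---the contradiction is unconditional. The clean finish for $A \setminus D_\I(A) \in \I$ is: since $D_\I(A)$ is $\boldsymbol{\Sigma}^0_2$, the set $A \setminus D_\I(A)$ is $\PP$-measurable (here one must invoke that the $\PP$-measurable sets form a $\sigma$-algebra, which requires the $\omega_1$-covering property; strongly linked implies ccc implies this). If $A \setminus D_\I(A) \notin \I$, then since strongly linked implies ccc implies $\cI_\PP = \cIs_\PP$, Lemma~\ref{characterization of positive sets}(2) gives some $S \in \PP$ with $[S] \subseteq_\I A \setminus D_\I(A)$; but then $[S] \subseteq_\I A$ forces $[S] \subseteq D_\I(A)$ by the above, while $[S] \subseteq_\I A \setminus D_\I(A)$ forces $[S] \cap D_\I(A) \in \I$, whence $[S] \in \I$, contradicting Lemma~\ref{characterization of positive sets}(1). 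For the other inclusion $D_\I(A) \setminus A \in \I$, two imprecisions need fixing: the tree $S'$ you extract satisfies $[S'] \cap A \in \I$ (not $[S'] \cap A = \emptyset$), which is still enough; and to get $[T] \cap A \in \I$ for a $T$ with $\stem_T = x{\restriction}n$ you should take $T = S'_{x{\restriction}n}$ at an $n$ where $x{\restriction}n$ is a \emph{splitting} node of $S'$ (otherwise $\stem_T$ is strictly longer than $x{\restriction}n$); since $S'$ is perfect, such $n$ are cofinal along every branch, so no $x \in [S']$ is in $D_\I(A)$, and again $[S']$ would be in $\I$, a contradiction. With these repairs your direct argument is complete; it trades the paper's reusable abstract lemma for a shorter, self-contained verification.
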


%Intuitively, as we shall see, $x \in D_\I(A)$ means that for every sufficiently small neighborhood $N_s$ of $x$, $A\cap N_s$ is ``very large'' in some sense. 
We show in Section~\ref{section complexity} that $D_\I$ has reasonable complexity under the additional assumption that $\PP$ is Suslin. Then the map $D_\I{\upharpoonright}\Borel({}^\omega \baseset)$ 
is induced by an absolutely-$\Delta^1_2$ and therefore universally Baire measurable map $D \colon \Borelcodes \to \Borelcodes$ (where $\Borel({}^\omega \baseset)$ denotes the collection of Borel subsets of ${}^\omega \baseset$ and $\Borelcodes$ the set of their codes, as described above for $\RR$).

It follows immediately from Theorem~\ref{intro theorem D_I} that $D_\I$ is a useful notion of density point for the ideals associated to Cohen forcing, Hechler forcing, eventually different forcing, %\footnote{See \cite[Section 7.4.B]{MR1350295}.}, 
Laver forcing with a filter, and Mathias forcing with a shift invariant filter (cf.\ Section~\ref{section list of tree forcings}).
Although one can define $D_\I$ as in \eqref{intro definition density points} for arbitrary tree ideals, we verify in Section~\ref{section
  counterexamples} that the statement of Theorem~\ref{intro theorem D_I} fails for $\PP$ equal to Sacks,
Miller, Mathias, Laver, or Silver forcing.

While the null ideal is \emph{not} a strongly linked tree ideal (see Remark~\ref{lem:Random forcing no strongly linked dense subset}) our methods also yield a variant of density point for the null ideal.
Namely, when $\I=\I_\mu$ is the ideal of null sets with respect to the
Lebesgue measure $\mu$ on ${}^\omega2$ and $\PP$ is random forcing (see Definition \ref{def:listoftreeforcings}\ref{definition random})
%or Example~\ref{example random}) 
let, for $A\subseteq {}^\omega 2$,
%\todo{The (2) of the equation sticks out into the side margin now \\ (P) I just corrected it by removing spaces. }
\begin{multline}\label{intro definition variant null ideal}
D_{\I_\mu}(A)=\Bigg\{x\in{}^\omega \baseset\mid \exists m\in\omega \;\forall n\geq m\; \forall T \in \PP 
\left[ \frac{\mu([T]\cap N_{x{\upharpoonright}n})}{\mu(N_{x{\upharpoonright}n})}> \frac{1}{2} \Rightarrow [T]\cap A \notin \I_\mu \right] \Bigg\}.
\end{multline}
We show in Section~\ref{sec: null ideal} that $D_{\I_\mu}(A)=_\mu A$ for any $\mu$-measurable set $A\subseteq {}^\omega 2$.

\medskip

%Of course o
One expects that for many ideals, no definable selector exists; in fact 
%we show that 
it turns out that this is the case
for the smallest non-trivial $\sigma$-ideal, the ideal of countable sets. 
%\todo{(P) I changed this from tree ideals to arbitrary ideals} 
%given a tree ideal $\I=\I_\PP$ 
%\todo{(P) I changed this to arbitrary ideals instead of tree ideals} 
We prove the following result in Theorem \ref{no selector for ideal of countable sets}:

%Let $\I$ be the ideal of countable sets and consider the equivalence relation on $\Borel({}^\omega \baseset)$ given by $A=_\I B$ (i.e., $A\triangle B$ is countable). 
%Section~\ref{section: no selector}: 

\begin{theorem} \label{intro - no selector} 
Let $\I$ denote the ideal of countable subsets of ${}^\omega \baseset$.
There is no selector 
%$D\colon
$\Borel({}^\omega \baseset)\to\Borel({}^\omega \baseset)$ for $=_\I$ 
%which lifts a selector for the equivalence relation $=_\I$ on $\Borel({}^\omega \baseset)$ such that $D$ 
%whose restriction to $F_\sigma$ sets 
which 
is induced by a universally Baire measurable function on Borel codes. 
\end{theorem}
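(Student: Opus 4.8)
The plan is to derive a contradiction from the existence of such a selector by exploiting the homogeneity of the countable ideal under Borel automorphisms together with a forcing absoluteness argument. Suppose $D \colon \mathrm{B} \to \mathrm{B}$ is universally Baire measurable and induces a selector $D_\I$ for $=_\I$ on $\Borel({}^\omega\baseset)$. The key structural fact about the ideal $\I$ of countable sets is that it is invariant under all Borel bijections of ${}^\omega\baseset$ with itself: if $f$ is such a bijection and $A$ is Borel, then $f[A] =_\I A$ is false in general, but $f$ maps countable sets to countable sets, so $A =_\I B \iff f[A] =_\I f[B]$. In particular the equivalence relation $=_\I$ has a very rich automorphism group. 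My intention is to use this to show that any invariant selector on Borel codes would have to commit, in an absolute way, to a choice of representative for the class of (say) a perfect set with countable complement, or the class of a countable dense set, in a manner that cannot be maintained after forcing.

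First I would set up the forcing argument: force with a poset $\PP$ (the natural candidate is $\mathrm{Add}(\omega,1)$, i.e.\ Cohen forcing, or a product/iteration adding many Cohen reals) that adds a new real $c$ which is ``generic'' relative to the ground model in the sense that no ground-model countable set contains it and more. Since $D$ is universally Baire measurable, it has a canonical interpretation $D^{V[G]}$ in the generic extension, and the statement ``$D$ induces a selector for $=_\I$'' is $\Pi^1_\omega$-ish but, crucially, the universally Baire-ness gives us that the relevant absoluteness (the tree representations of $D$ and its complement project to complements in all set-generic extensions) holds. So $D^{V[G]}$ still induces a selector in $V[G]$. Now pick in the ground model a Borel code $x$ for some set $A$, together with a Borel bijection $f$ of ${}^\omega\baseset$ that is coded by a real in $V$ and satisfies $f[A] =_\I A$ (e.g.\ $f$ permutes a countable set off $A$ and fixes the rest, or more robustly, $f$ moves the new generic real $c$ while fixing all ground-model points). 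Then $D_\I(A) = D_\I(f[A])$ by invariance, but $D_\I(f[A]) = f[D_\I(A)]$ by the compatibility of a genuinely ``definable'' selector with Borel automorphisms — wait, that last equivariance is exactly what must be argued, not assumed.

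The heart of the argument, and the main obstacle, is precisely to pin down why a universally Baire measurable selector on Borel codes must be equivariant under (a sufficiently large family of) Borel automorphisms, or at least must respect enough symmetry to force a contradiction. The clean way to do this is: $D_\I(A)$ is a Borel set that is $=_\I$-equivalent to $A$ and depends only on the $=_\I$-class of $A$; consider the set $E = D_\I({}^\omega\baseset)$, a specific Borel set with countable complement (its complement is countable since $E =_\I {}^\omega\baseset$). Now in $V[G]$ with $c$ a Cohen real, $c \notin {}^\omega\baseset \setminus E$ is forced because ${}^\omega\baseset\setminus E$ is a ground-model-coded countable set and $c$ avoids all such; so $c \in E^{V[G]}$. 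Symmetrically, for the class of the empty set, $D_\I(\emptyset) =_\I \emptyset$ is a ground-model-coded countable set, so $c \notin D_\I(\emptyset)^{V[G]}$. Now the contradiction should come from choosing $A$ so that $A$ and ${}^\omega\baseset$ are $=_\I$-equivalent in $V[G]$ but not in a way visible from $V$ — concretely, let $A$ be the ground-model code for ${}^\omega\baseset \setminus \{d\}$ for a ground-model real $d$; then $A =_\I {}^\omega\baseset$ already in $V$, so $D_\I(A) = D_\I({}^\omega\baseset) = E$. But by a symmetric argument applied to the bijection $f$ swapping $c$ and $d$ (note $f \in V[G]$ but is \emph{not} in $V$, hence not a legitimate move yet) — so instead one should run two mutually generic Cohen reals $c_0, c_1$ and exploit a ground-model automorphism of the forcing swapping them, under which $D$'s interpretation is fixed (since $D$ is coded in $V$) while the generic data is swapped, forcing $c_0 \in D_\I(B)^{V[G]} \iff c_1 \in D_\I(B)^{V[G]}$ for every ground-model Borel code $B$; combined with a $B$ whose $V[G]$-class breaks this symmetry (e.g.\ $B$ coding a set that contains $c_0$ but whose $=_\I$ class is represented by a set, namely ${}^\omega\baseset$, symmetric in $c_0, c_1$), we get the desired contradiction. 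I expect the bookkeeping around which automorphisms are available (ground-model-coded vs.\ merely in $V[G]$) and verifying that the universally Baire tree representations genuinely transfer the selector property and its invariance to $V[G]$ to be the most delicate part; the combinatorial core — countable sets are invisible to mutually generic Cohen reals — is routine once the framework is in place.
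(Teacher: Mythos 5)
Your proposal takes a genuinely different route from the paper, but it has a gap at exactly the point you flag as "routine once the framework is in place," and I don't think that gap can be closed by the symmetry argument you sketch.

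The automorphism observation is correct as far as it goes: if $\sigma$ is a ground-model automorphism of $\Add(\omega,1)^2$ swapping the two coordinates, and $B$ is a Borel code in $V$, then $p \Vdash \dot c_0 \in D_\I(B) \iff \sigma(p) \Vdash \dot c_1 \in D_\I(B)$. But this is a vacuous constraint because there is \emph{no} ground-model code $B$ that distinguishes $c_0$ from $c_1$ to begin with: for any Borel code $B \in V$, membership of a Cohen-generic real in $B$ is decided symmetrically, so $c_0 \in B^{V[G]} \iff c_1 \in B^{V[G]}$ already holds (pointwise in each coordinate it's a $\Pi^1_1$ statement with ground-model parameters, decided the same way by the same dense sets). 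In your attempted concrete example you ask for a ground-model $B$ that "contains $c_0$" but whose $=_\I$-class is that of ${}^\omega\baseset$ — but any such $B$ has countable complement and therefore contains \emph{both} $c_0$ and $c_1$; there is no asymmetry to break. The genuine difficulty is to manufacture, \emph{in the extension}, a Borel set on which the selector is forced to err, and ground-model codes plus automorphism symmetry cannot do this. You also correctly note the equivariance-under-Borel-automorphisms problem early on and then set it aside; that too is not recoverable from the framework you build.

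The paper's proof attacks the problem from a completely different angle, and the difficult part is precisely what you do not do. Working over a countable elementary submodel $M$, one forces with a finite-support product $\TT^\omega$ of copies of a finite-tree forcing to add a \emph{generic $F_\sigma$ set} $C = \bigcup_n [\dot T_n]$ (not just a Cohen real). Continuity of $F$ on a comeager set lets one compute $F(C)$ inside $M[g]$. The first step (via the perfect set property for Borel sets and $\Pi^1_1$-absoluteness) yields a $\TT^\omega$-name $\sigma$ for a branch of $\dot T_0$ that is forced to lie in $B_{F(\dot C)}$. The heart of the argument is then a delicate rearrangement of generics: one constructs auxiliary forcings $\TT_\sigma$, $\TT_\star$, $\PP_\sigma$, $\PP_\star$ and explicit projections showing that any $\TT^\omega \times \Add(\omega,1)$-generic extension $M[g \times h]$ can be rewritten as $M[\hat g \times \hat h]$ where $\sigma^{\hat g}$ equals the Cohen real added by $h$ and $B_{\dot C^{\hat g}}$ differs from $B_{\dot C^g}$ by exactly one point. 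Since $F$ is $=_\I$-invariant, $F(\dot C^{\hat g}) = F(\dot C^g)$, so the Cohen real lands in $B_{F(C)} \setminus B_C$. Ranging over all Cohen reals over $M[g]$ in $V$ (an uncountable set), $B_{F(C)} \setminus B_C$ is uncountable, contradicting the selector property. This "insert an arbitrary Cohen real as $\sigma$" step is the whole content of the theorem and is what your symmetry heuristic does not supply.
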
 

We extend this result in Lemma \ref{internal projective Cohen absoluteness} and Theorem \ref{no selector from internal absoluteness} 
to selectors with projective values, assuming the Axiom of Projective Determinacy ($\mathsf{PD}$). 
To state the next result, let $\Proj({}^\omega\baseset)$ denote the collection of projective subsets of ${}^\omega\baseset$ (we shall discuss the notion of codes for projective sets in Section~\ref{section: no selector}).

\begin{theorem} \label{intro - no projective selector} 
Assuming $\mathsf{PD}$, 
there is no selector %$D\colon
$\Borel({}^\omega \baseset)\to\Proj({}^\omega \baseset)$ for $=_\I$ 
%which lifts a selector for the equivalence relation $=_\I$ on $\Borel({}^\omega \baseset)$ such that $D$ 
which is induced by a universally Baire measurable function on the codes. 
\end{theorem}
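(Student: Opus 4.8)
The plan is to reduce Theorem~\ref{intro - no projective selector} to the Borel case (Theorem~\ref{intro - no selector}) together with an absoluteness argument exploiting $\mathsf{PD}$. Suppose toward a contradiction that $F$ is a universally Baire measurable function on codes inducing a selector $\Borel({}^\omega\baseset) \to \Proj({}^\omega\baseset)$ for $=_\I$, where $\I$ is the ideal of countable sets. The key point is that under $\mathsf{PD}$ every projective set has the Baire property and, more importantly, the structure $L(\RR)$ (or at least the projective sets) enjoys strong absoluteness between $V$ and generic extensions: adding a Cohen real $c$ over $V$ does not change the truth value of projective statements about reals in $V$, and projective sets are ``captured'' correctly by their codes in $V[c]$. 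This is exactly the internal projective Cohen absoluteness alluded to in the statement (Lemma~\ref{internal projective Cohen absoluteness}).

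The main steps are as follows. First, I would fix a Borel code $x$ for the set $A = {}^\omega\baseset$ (or for some fixed uncountable Borel set), so that $D := F(x)$ codes a projective set $P = P_D$ with $P =_\I A$; in particular $P$ is co-countable, hence uncountable, and by invariance $F$ assigns the same projective set to \emph{every} Borel code for a co-countable set. Second, I would pass to a Cohen extension $V[c]$. In $V[c]$ there is a new real $c$, and crucially the set $\baseset \setminus \{$reals in $V\}$ — or more precisely the complement of any particular ground-model countable set — is again co-countable, so $F$ must assign it the ``same'' projective set $P$ as interpreted in $V[c]$. But now I can choose in $V[c]$ two Borel codes $x_0, x_1$ for co-countable sets $A_0, A_1$ that \emph{disagree} on whether the Cohen real $c$ belongs to them (e.g.\ $A_0 = {}^\omega\baseset \setminus C$ and $A_1 = {}^\omega\baseset \setminus (C \cup \{c\})$ for a suitable ground-model countable $C$, arranged so that $c \in A_0 \setminus A_1$ while $A_0 \triangle A_1 = \{c\}$ is countable), forcing $F(x_0) = F(x_1)$ yet demanding $F(x_i) =_\I A_i$; the symmetric difference of $A_0$ and $A_1$ being a single Cohen real is the mechanism that breaks invariance, mirroring the proof of Theorem~\ref{intro - no selector}. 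Third, absoluteness under $\mathsf{PD}$ guarantees that $F$ and the decoding of projective codes behave coherently across $V$ and $V[c]$, so the contradiction obtained in $V[c]$ reflects back to an outright contradiction.

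More carefully, the cleanest route is probably not to re-run the Borel argument but to \emph{derive} a Borel selector from the projective one and invoke Theorem~\ref{intro - no selector} directly. Under $\mathsf{PD}$, every projective set with the Baire property is equal, modulo a meager set, to a Borel set; but meager does not suffice here since we need equality modulo a \emph{countable} set. Instead, one uses that a co-countable projective set $P$ has a largest Borel co-countable subset only in a weak sense — so this naive reduction fails, and the absoluteness argument genuinely is needed. Thus I would formalize Lemma~\ref{internal projective Cohen absoluteness} as: for any universally Baire $F$ on codes, and any Cohen-generic $c$ over $V$, the interpretation of $F$ in $V[c]$ restricted to ground-model Borel codes agrees with $F$, and the projective set coded by $F(x)$ in $V[c]$ has the same intersection with ${}^\omega\baseset \cap V$ as it does in $V$. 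Given this, the Cohen-real argument of the previous paragraph yields that in $V[c]$, $F$ fails to be a selector — contradicting that ``$F$ is a selector'' is a statement about codes that, by universal Baire-ness and $\mathsf{PD}$-absoluteness, holds in $V[c]$ if it holds in $V$.

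The main obstacle will be pinning down exactly which absoluteness principle is both \emph{true under $\mathsf{PD}$} and \emph{strong enough} to transfer ``$F$ is a selector'' from $V$ to $V[c]$ while simultaneously letting us compute $P_{F(x)} \cap V$ correctly in $V[c]$. Universal Baire-ness of $F$ gives canonical reinterpretations $F^{V[c]}$ with absolutely correct behavior on old codes, but the subtlety is that the \emph{values} $F(x)$ are now codes for projective (not Borel) sets, whose decoding is only $\Sigma^1_{n}$ for some $n$; one needs $\Sigma^1_n$-absoluteness between $V$ and $V[c]$, which is precisely what $\mathsf{PD}$ (via, e.g., Woodin-cardinal-style tree representations or the Martin–Solovay / Moschovakis machinery) delivers. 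Getting the bookkeeping right — that the same real $c$ witnesses the failure of invariance, and that ``countable'' is correctly computed on both sides — is where the care lies; everything else is a routine adaptation of the forcing argument behind Theorem~\ref{intro - no selector}.
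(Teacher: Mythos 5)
Your high-level intuition is right: the paper proves this by adapting the proof of Theorem~\ref{no selector for ideal of countable sets} verbatim, replacing $\Pi^1_1$-absoluteness by the projective absoluteness principle $\mathsf{IA}^\CC_{\mathrm{proj}}$ (which follows from $\mathsf{PD}$ by Lemma~\ref{internal projective Cohen absoluteness}). But the specific forcing argument you sketch in the second paragraph has a genuine gap, and the transfer in the third paragraph inherits it.

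The gap: you propose to find in a Cohen extension two Borel codes $x_0,x_1$ for co-countable sets $A_0,A_1$ with $A_0\triangle A_1=\{c\}$ a single Cohen real, and then say that $F(x_0)=F(x_1)$ ``yet'' $F(x_i)=_\I A_i$ is a contradiction. It is not: since $\{c\}$ is countable, $A_0=_\I A_1$, so invariance only requires $F(x_0)=F(x_1)$, and $F(x_0)=_\I A_0$ and $F(x_1)=_\I A_1$ are perfectly compatible with this. A single Cohen real is invisible to $=_\I$ for the ideal of countable sets, so nothing breaks. The actual proof is considerably more involved: one forces over a \emph{countable} elementary submodel $M\prec H_\theta$ (not over $V$ — this is essential, so that the collection of Cohen reals over $M[g]$ that live in $V$ is uncountable), adds a generic $F_\sigma$ set $\dot C=\bigcup_n[\dot T_n]$, shows via a cardinality-plus-absoluteness claim that some branch $\sigma^G$ of $\dot T_0$ is forced into $B_{\dot F(\dot C)}$, and then — the crux — runs a rearrangement argument (the two technical claims with the projections $\pi$ in the proof of Theorem~\ref{no selector for ideal of countable sets}) to deduce by homogeneity that \emph{every} Cohen real over $M[g]$ in $V$ lands in $B_{F(C)}\setminus B_C$. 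That is uncountably many points, and \emph{this} is what contradicts $F(C)=_\I C$. Your sketch omits the countable submodel, the rearrangement claims, and the ``uncountably many Cohen reals'' count, all of which are what make a single Cohen-real observation into an actual violation of the selector property.

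What $\mathsf{PD}$ actually buys, and where it enters, is narrower than ``transfer the statement that $F$ is a selector'': $\mathsf{IA}^\CC_{\mathrm{proj}}$ is used in exactly two places, in lieu of $\Pi^1_1$-absoluteness. First, in the claim that $[\dot T_n]\setminus B_{\dot F(\dot C)}$ is countable: if it were uncountable, there would be a perfect subtree of $\dot T_n^g$ whose branches avoid $B_{\dot F^g(\dot C^g)}$ in $M[g]$; since $B_{\dot F^g(\dot C^g)}$ is now only projective, one needs projective absoluteness between $M[g]$ and $V$ to conclude that those branches still avoid the set in $V$. Second, at the very end, projective absoluteness between $M[g\times h]$ and $V$ is what gives $\sigma^G\in B_{F(\dot C^g)}$ in $V$. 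Everything else is the combinatorial forcing argument inherited unchanged from the Borel case — which is the part your proposal doesn't reconstruct.
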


We further prove in Theorem~\ref{theorem models without selector} that it 
is consistent with $\ZFC$ that there is no selector which is definable by a first order formula in the language of set theory (with parameter a sequence of ordinals); and that it is consistent with $\ZF$ that there is no selector for $\I$ at all. In fact there is no such selector in Solovay's model. 

We would like to point out two interesting recent results that came to our attention after this paper was submitted. 
Firstly, there \emph{is} a definable selector on ${\bf \Delta}^0_2$ sets for $=_I$, where $I$ denotes the ideal of countable sets \cite{kanovei}. 
%with domain the 
Secondly, the existence of a (not necessarily definable) \emph{lower density operator} (essentially, a selector preserving $\cap$) %\todo{be more precise} 
on Borel sets 
%with domain the Borel sets 
is equivalent to the Continuum Hypothesis \cite{balcerzakglab}.

\subsection*{What we aim for} \label{subsection - conclusion} 

%\medskip
Our aim is to find dividing lines between ideals
with and without a good notion of ``density point''.  The results show that 
%\emph{in all analyzed cases} of tree forcings $\PP$, 
for all tree forcings listed in Section \ref{section list of tree forcings}, 
the following three conditions (a)-(c) are
equivalent. Moreover for strongly linked tree forcings, random and Sacks forcing, all four conditions (a)-(d) are equivalent. 
%Recall that the \emph{countable chain condition (ccc)} for a partial order $\PP$ means that any collection of pairwise incompatible elmements of $\PP$ is countable. 
%We don't have a general argument. 

\begin{enumerate-(a)} 
\item%[$\sqbullet$] 
$\PP$ is $\sigma$-linked. 
\item%[$\sqbullet$] 
$\PP$ satisfies the countable chain condition (ccc). 
%\item%[$\sqbullet$] 
%$I_\PP^*$ satisfies the ccc 
%\item%[$\sqbullet$] 
%$I_\PP^*$ is $\sigma$-linked 
%\item 
%The sets $[T]$ for $T\in \PP$ form a base of a topology (WE SHOULD OMIT THIS CONDITION. it's not essential in the proofs) 
\item%[$\sqbullet$] 
The analogue of Theorem \ref{intro theorem D_I} holds for $\I=\I_\PP$ and all $A,B\in \Borel({}^\omega\Omega)$, with 
%$D_{\I_\PP}$ 
$D_\I$ as in Definition~\ref{def:density points for ideals}. 
%With $D_{\I_\PP}$ as in Definition~\ref{def:density points for ideals} below, $D_{\I_\PP}(A)=_{\I_\PP} A$ for each Borel set $A\subseteq {}^\omega \baseset$. In other words $\I_\PP$ has the density property with respect to $D_{\I_\PP}$ in the sense of Definition~\ref{definition density property} below. 
% (see Definition \ref{def:density points}) 
\item%[$\sqbullet$] 
There is a selector $\Borel({}^\omega\Omega)\rightarrow \Borel({}^\omega\Omega)$ for $=_{\I_{\PP}}$ that is induced by a universally Baire measurable function on the codes. 
%simply definable map which \todo{not yet clear is "lifts to a selector" is consistent with the paper} lifts to a selector for $=_{\I_\PP}$. 
% (see Definition \ref{definition of selector 2}) 
%\item%[$\sqbullet$] 
%The collection of Borel sets modulo $I_\PP^*$ carries a simply definable Polish metric 
%in some natural presentation of $\PP$ 
\end{enumerate-(a)} 
Whether (d) holds for other well-known forcings is an important question left open in this paper (see Section~\ref{section open problems}). 

\medskip
\subsection*{Structure of the paper} We introduce density points for ideals in Section \ref{sec:dpideals} and study them for the null ideal in Section \ref{sec: null ideal}. 
%The null ideal motivates our notion of density points for ideals in Section \todo{number}. 
In Sections \ref{subsection what is a tree ideal} to \ref{section Positive Borel sets}, we introduce tree ideals and study some of their properties. 
For instance, we show in Section \ref{section P-measurable sets} that for any tree forcing $\PP$ with the 
$\omega_1$-covering property, all Borel sets are 
$\PP$-measurable. This improves a result of Ikegami \cite[Lemma 3.5]{ikegami2010}. We then prove the main result on strongly linked tree ideals 
%the density property for strongly linked tree ideals in 
(Theorem \ref{intro theorem D_I}) 
in Sections
\ref{section equivalence to density property} and \ref{section
  strongly linked tree forcings}.  In Section \ref{section
  complexity}, we compute a bound on the complexity of density
operators for strongly linked tree ideals and show that they are universally Baire measurable. 
Section \ref{section counterexamples} contains counterexamples for the remaining tree
forcings listed in Section \ref{section list of tree forcings}.  In
Section \ref{section: no selector}, we prove that there are no selectors for the ideal of countable sets 
which are induced by universally Baire measurable functions (Theorems \ref{intro - no selector} and \ref{intro - no projective selector}). 
%that there is no simply definable selector for the ideal of countable sets. This is extended to a version for projective selectors from $\PD$. 
Moreover, we show
that it is consistent with $\ZF$ that there is no selector at all for
the ideal of countable sets. Section \ref{section list of tree
  forcings} contains a list of the tree forcings which we consider in this
article. 
As an additional result of independent interest, we show in Section \ref{section explicit construction of density points} that one can effectively construct density points of a closed set from a sequence of weights attached to basic open sets. 
We end with some open questions in Section \ref{section open
  problems}.

\subsection*{Acknowledgments}
The authors would like to thank Alessandro Andretta, Martin Goldstern, Daisuke Ikegami and Luca Motto Ros for discussions, 
Jindra Zapletal for the suggestion to study eventually different forcing and 
Vladimir Kanovei for reading a previous version of this article and sending us his version of the proof of Theorem~\ref{intro - no selector} (see \cite{kanovei-no-selector}). 
We further thank the anonymous referee for their work and many useful suggestions for improvement.

This project has received funding from the European Union's Horizon 2020 research and innovation programme under the Marie Sk\l odowska-Curie grant agreement No 794020 (IMIC) for the second-listed author and No 706219 (REGPROP) for the first, third, and last-listed authors. 
The first-listed author, formerly known as Sandra Uhlenbrock, was
partially supported by FWF grant number P28157, 
the second-listed author was partially supported by FWF grant number I4039, 
the third-listed author by the DNRF Niels Bohr Professorship
of Lars Hesselholt as well as FWF grant numbers F29999 and Y1012, and 
the last-listed author by FWF grants numbers I3081 and Y1012-N35. 
The first and second-listed authors further wish to thank the Erwin
Schr\"odinger International Institute for Mathematics and Physics
(ESI), Vienna for support during the workshop ``Current Trends in
Descriptive Set Theory'' in December 2016.

\section{Trees and density points}\label{sec:dpideals}

%%%%%%%%%%%%%%%%%%
%%%%%%%%%%%%%%%%%%
%\section{The null ideal} 
% properties}
In this section we review some notation and introduce some terminology, including our notion of
density point, in a way that allows us to treat
\emph{both} 
$D_\I$, as defined in \eqref{intro definition density
  points} for strongly linked tree ideals, \emph{and} $D_{\I_\mu}$, our
variant of the density operator for the null ideal from \eqref{intro
  definition variant null ideal}, simultaneously. 

%In this section, we define density points for ideals and outline the situation in the special case of the $\sigma$-ideal of null sets 
%% for Random forcing, 
%to illustrate some ideas used in this paper. 

%%%%%%%%%%%%%%%%%%
%%%%%%%%%%%%%%%%%%
%\subsection{
%A density property of the null ideal [maybe merge with Section 2.2]} 
%\label{sec:random}

%%%%%%%%%%%%%%%%%%
%%%%%%%%%%%%%%%%%%
%\subsection{Density points for ideals} 

%\todo{maybe call: "Density points for the null ideal"? } 

%We first introduce some notation that is used throughout the paper. 

%\todo{need this? DELETE?} Let $A^c={}^{\omega}\baseset \setminus A$ denote the complement of a subset $A$ of ${}^{\omega}\baseset$. 

Recall that we write $\baseset$ to mean either $2$ (i.e., $\{0,1\}$) or $\omega$ (i.e., $\NN$). 
We consider subtrees $T$ of ${}^{<\omega}\baseset$ and write 
\[
[T] = \{x\in{}^\omega \baseset\mid(\forall n\in\omega)\; x{\upharpoonright}n \in T\}
\]
for the set of branches through $T$. 
A tree $T$ is \emph{perfect} if it has no end nodes and some splitting node above each node. 
%at least two branches above each node. 
Let 
%$$T/s = \{ t \in {}^{<\omega} \baseset \mid s^\smallfrown t\in T \}$$ 
$$s^\smallfrown T = \{ s^\smallfrown t \mid t \in T\}$$ 
$$T_u=\{t\in T\mid u\subseteq t \vee t\subseteq u\}\label{T_u}$$ 
for $s \in {}^{<\omega}\baseset$ and $u\in T$. 
For $s\in {}^{<\omega}\baseset$ and $U \subset {}^{\leq\omega}\baseset$ write
$$U/s  = \{t \in {}^{<\omega} \baseset \mid s^\smallfrown t\in U\}.$$
This is frequently used as $T/s$ when $T\subseteq {}^{<\omega}\baseset$ is a perfect tree or as $A/s$ when $A\subseteq {}^\omega\baseset$. In the latter case, $A/s$ is also called the \emph{detail} or \emph{localization of $A$ at $s$} and is often denoted by $A_{\lfloor s \rfloor}$ (see, e.g., \cite{andrettacamerlo2012}).

We write
$$\shift_s\colon {}^{\leq \omega}\baseset\rightarrow {}^{\leq \omega}\baseset, \ \shift_s(x)=s^\smallfrown x$$ 
denote the shift by $s\in {}^{<\omega}\baseset$. 
Thus $\shift_s^{-1}[T] = [T/s] =\{t\mid s^\smallfrown t\in [T]\}$. 
% for $s\in {}^{<\omega}\baseset$, $x\in {}^{<\omega}\baseset$ or $x\in {}^{\omega}\baseset$. 
For $s,t \in \baseset^{\leq\omega}$, we write $s \sqsubseteq t$ if $s$
is an initial segment of $t$.
%and $s \parallel t$ (\emph{comparable}) if $s\sqsubseteq t$ or $t \sqsubseteq s$. 
The longest $s\in T$ such
that $s\sqsubseteq t$ or $t \sqsubseteq s$ for all nodes $t \in T$ is called the \emph{stem} of $T$, denoted (as mentioned previously) by $\stem(T)$. 
The set of \emph{splitting nodes} of $T$ (those with at least two direct successors in $T$) is denoted $\spl_T$. 
Let $|t|$ denote the \emph{length} of a finite sequence $t\in {}^{<\omega}2$ (note that $|t|=\dom(t)$). 
Moreover, let $s {\wedge} t$ denote the longest common initial segment of $s$ and $t$. 

%The product topology on ${}^{\omega}\baseset$ has the basic open sets 
%$N_t = \{ x \in {}^{\omega}\baseset \mid t \sqsubseteq x \}$ 
%for $t \in {}^{<\omega}\baseset$. 
%Let $I_\Random^*$ denote the $\sigma$-ideal generated by the $\mu$-null sets.  
% and say that a set $A$ is $\Random$-measurable if it is
% $\mu$-measurable. 
%Let $\mu$ denote the \emph{uniform measure} (or \emph{Lebesgue
%  measure}) on ${}^{\omega}2$. It is generated by the assignment
%$\mu(N_t) = 2^{-n}$ for any $t\in {}^n 2$. 
%This is sometimes called \emph{Lebesgue measure} on ${}^{\omega}2$. 
%We define Random forcing $\Random$ as the set of perfect subtrees $T$ of ${}^{<\omega}2$ such that $\mu([T])>0$ and for all $s \in T$, $\mu([T_s])>0$. 
%Moreover, let $I_\mu$ denote the $\sigma$-ideal of Borel $\mu$-null sets.  
Of course an ideal $\I$ on a set $X$ is a collection $\I \subseteq \pow(X)$ such that $A,B \in \I$ implies $A\cup B \in \I$ and for any $C \subseteq B \in \I$, $C \in \I$. A $\sigma$-ideal is an ideal which is closed not just under finite, but under countable unions.
If $\I$ is an ideal on ${}^{\omega}\baseset$ and $A$ and $B$
are subsets of ${}^{\omega}\baseset$, recall that we write $A =_\I B$ for
$A \triangle B \in I$.
We also write $A \subseteq_\I B$ for $A\ \setminus\ B \in \I$
and $A\perp_\I B$ for $A\cap B\in \I$.  
An ideal on ${}^\omega\baseset$ is called \emph{shift invariant}\label{definition shift invariant ideal} if it is closed under pointwise images and preimages under $\shift_t$ for all $t\in{}^{<\omega}\baseset$. 
For an ideal $\I$, a set is called \emph{$\I$-positive} if it is not in $\I$; recall that the set of $\I$-positive sets (the co-ideal of $\I$) is denoted $\I^+$. 

\medskip
The central notion for this
article is the \emph{density property}:

\begin{definition}\label{definition density property}
If $\I$ is an ideal and $D$ is a map into $\pow({}^\omega \baseset)$ with $\Borel({}^\omega \baseset)\subseteq\dom(D)\subseteq \pow({}^\omega \baseset)$ and $A=_\I B\Rightarrow D(A)= D(B)$, we say that the \emph{density property} holds (with respect to $D$ and $\I$) if $D(A)=_\I A$ for all $A\in\dom(D)$. 
\end{definition}

Ideally, we would like to define a notion of density points relative to an arbitrary shift invariant ideal $\I$ on ${}^{\omega}\baseset$. 
%\todo[inline]{MAYBE we don't need shift invariance. But then we definitely need to define the density points a bit different below: without shifting $A$ via $\sigma_t^{-1}$, but working in $N_t$} 
For our definition we find it necessary to fix a collection $\bL_\I$ of sets which we consider ``large'';
%Moreover, a collection of sets is called \emph{$I$-positive} if it contains only $I$-positive sets. 
%In the following, $\I$ always denotes an ideal on ${}^{\omega}\baseset$ and $\bL_\I$ a fixed collection of $\I$-positive Borel sets associated to $\I$. We say that elements of $\bL_\I$ are \emph{large} with respect to $\I$. 
but for strongly linked ideals and for the null ideal there is a natural choice of $\bL_\I$---namely when $\I=\I_\mu$, $\bL_{\I}$ is defined as the set of perfect sets of measure at least $\frac{1}{2}$ and
when $\I=\I_\PP$ and $\PP$ is strongly linked, $\bL_{\I}$ is defined to be the set
$\{[T]\in\PP\mid \stem(T)=\emptyset\}$.

Since we want to speak about arbitrary tree ideals in some of our
results below, we make the following convention.
\begin{convention}\label{convention}
Let $\I$ be a tree ideal, and fix $\PP$ such that $\I=\I_\PP$.
If $\I =\I_\mu$ we shall assume that $\PP$ is random forcing, i.e. the collection of trees $T\subseteq {}^{<\omega}2$ such that for all $s \in T$, $\mu([T]\cap N_s)>0$; further, we let
\[
\bL_\I = \{[T]  \mid T \in \PP, \mu([T])>\frac{1}{2}\}.
\]
If $\PP$ is any other tree forcing then we let
\[
\bL_\I = \left\{[T]  \mid T \in \PP,  \stem(T) = \emptyset\right\}.
\]
We say that elements of $\bL_\I$ are \emph{large} with respect to $\I$. 
\end{convention}

\begin{definition} \label{def:density points for ideals}
%Given an ideal $I$ of Borel subsets of ${}^{\omega}\baseset$ and a positive set $J$ of Borel subsets of ${}^{\omega}\baseset$, 
%We define the notion of a \emph{$(I,J)$-shift density point} as follows:
  Suppose that $A$ is a subset of ${}^{\omega}\baseset$. 
\begin{enumerate-(a)} 
\item \label{def:density points 1}
An element $x$ of ${}^{\omega}\baseset$ is an \emph{$\I$-shift density
   point of $A$} if there is some $n_x$ such that
  for all $B \in \bL_\I$ and $n\geq n_x$ 
  \[ \shift_{x\upharpoonright n}(B)\cap A \notin \I. \]
%  \[ \shift_{x\upharpoonright n}^{-1} (A\cap N_{x{\upharpoonright}n})\cap B\notin I. \]  
\item 
  $D_\I(A)$ 
  %=D_{I,L}(A)$ 
  denotes the set of $\I$-shift density points of $A$. 
%\item \label{def:density points 2}
%We say that $A$ has the \emph{(shift) $(I,J)$-density property} if $A\triangle D_{I,J}(A)\in I$. 
%Moreover, $I$ has the
%  \emph{(shift) $(I,J)$-density property} if this holds for all Borel
%  subsets $A$ of ${}^{\omega}\baseset$.
\end{enumerate-(a)} 
We further say that the \emph{$\I$-shift density property} holds if $D_\I(A)\triangle A\in \I$ for all Borel sets $A$. 
\end{definition}

For simplicity, we sometimes just write 
$\I$-density point and
$\I$-density property. 
It is clear that by Convention~\ref{convention}, Definition~\ref{def:density points for ideals} just repeats the defintion of $D_\I$ given in 
\eqref{intro definition density points} for strongly linked tree ideals as well as the one in \eqref{intro definition variant null ideal}
for the null ideal.
Note that %this notion depends on $\bL_\I$.  Moreover, 
$D_\I(A)$ is $\mathbf{\Sigma}^0_2$ for any
%Let $\I$ be any ideal and $\LL_\I$ any subset of $\pow({}^\omega \baseset)$.
subset $A$ of ${}^\omega\baseset$. % and any choice of $\bL_\I$. 
To see this, let 
\[
S = \{s \in {}^{<\omega} \baseset \mid \forall B\in  \bL_\I\ \sigma_s(B)\cap A\notin\I\}
\] 
and observe that $x \in D_\I(A) \iff \exists m\ \forall n\geq m\ x{\upharpoonright}n \in S$; thus $D_\I(A)$ is $\Sigma^0_2(S)$. 

Finally, note that in those cases where we verify the $\I$-shift density property, we obtain that $D_\I(A)\triangle A\in \I$ for all $\PP$-measurable\footnote{$\PP$-measurability is defined in Section \ref{section P-measurable sets}. } 
subsets $A$ of ${}^\omega \baseset$ rather than 
%$A \in\Meas(\PP,{}^\omega \baseset)$,
just for Borel sets, since in all these cases $\PP$ is ccc (cf.\ Remark~\ref{remark ccc}). 
%\todo[inline]{EXAMPLE would be good. otherwise this is very abstract \\ 
%Hechler forcing? It's mentioned below that its density points are almost the same as "topological density points" (whose definition doesn't make sense)}

\begin{remark} 
Definition \ref{def:density points for ideals} can be rephrased in the following fashion. 
We call a subset $A$ of ${}^\omega\baseset$ \emph{$\I$-full} if
for all $B \in \bL_\I$, the set $B \cap A$ is $\I$-positive (this is analogous to the definition of stationary sets from club sets). 
%Moreover, let $\bF_\I$ denote the set of $\I$-full sets. 
Then $x$ is a density point of $A$ if and only if $A/(x{\upharpoonright}n)$---which by definition is the same as $\sigma_{x{\upharpoonright}n}^{-1}(A)$---is eventually $\I$-full as $n$ increases.\footnote{As mentioned above, the localization $A/s$ of $A$ at $s$ is often denoted by $A_{\lfloor s \rfloor}$ (see e.g. \cite{andrettacamerlo2012}).}
\end{remark}

\begin{remark}\label{remark poreda} 
%\todo{Is there a direct implication with pointwise convergence almost everywhere, or convergence in measure?} 
We notice that Definition \ref{def:density points for ideals} can also be rephrased via the following notion of convergence. 
    We say that a sequence  $\vec{f}=\langle f_n\mid n\in\omega\rangle$ of functions $f_n\colon {}^\omega\baseset \rightarrow \RR$ \emph{converges in $\I$} to a function $f\colon {}^\omega\baseset \rightarrow \RR$ if the following condition holds:\footnote{Compare this with convergence in measure as in Lemma \ref{convergence in measure}.}  
    For all $\epsilon>0$, there is some $n_0$ such that for all $B\in \bL_\I$ and $n\geq n_0$, 
    $$B\ \setminus\ \{x\in {}^\omega\baseset \mid |f_n(x)-f(x)|\geq \epsilon \}\notin \I.$$ 
    By shift invariance, the condition $\shift_{x\upharpoonright n}(B)\cap A \notin \I$ in Definition \ref{def:density points for ideals} \ref{def:density points 1} is equivalent to $B\cap \shift_{x\upharpoonright n}^{-1}(A) \notin \I$. Moreover, $B\cap \shift_{x\upharpoonright n}^{-1}(A)=B\ \setminus\ \{x\in {}^\omega\baseset\mid |1_{\shift_{x{\upharpoonright}n}^{-1}(A)}(x)-1|\geq\epsilon\}$ for any $\epsilon$ with $0<\epsilon<1$. 
    Therefore $x$ is an $\I$-shift density point of $A$ if and only if the sequence 
    $\langle 1_{\shift_{x{\upharpoonright}n}^{-1}(A)} \mid n\in\omega\rangle$ 
    of characteristic functions of $\shift_{x{\upharpoonright}n}^{-1}(A)$ 
  %on $N_{x{\upharpoonright}n}$
    converges in $\I$ to the constant function with value $1$. 
\end{remark}

%%%%%%%%%%%%%%%%%%
%%%%%%%%%%%%%%%%%%
\section{The null ideal} 
\label{sec: null ideal}

%In this section, we 
We now outline the situation in the special case of the $\sigma$-ideal of Lebesgue null subsets of ${}^\omega2$ 
%% for Random forcing, 
to illustrate some ideas used in this paper.

%%%%%%%%%%%%%%%%%%
%\subsection{Density points} \label{subsection - Lebesgue versus shift density points} 

%Let $\mu$ denote Lebesgue measure on ${}^{\omega}2$, i.e., the measure generated by the assignment $\mu(N_t) = 2^{-n}$ for any $t\in {}^n 2$. 
%This is sometimes called \emph{Lebesgue measure} on ${}^{\omega}2$. 
%We define Random forcing $\Random$ as the set of perfect subtrees $T$ of ${}^{<\omega}2$ such that $\mu([T])>0$ and for all $s \in T$, $\mu([T_s])>0$. 
Recall that $\mu$ denotes Lebesgue measure on ${}^\omega2$ and $\I_\mu$ the $\sigma$-ideal of %Borel 
$\mu$-null
sets. 
%We now study the $\I_\mu$-density property. 
The next lemma %and Lebesgue's density theorem (Theorem \ref{thm:LDT}) 
%follows immediately from Lemma \ref{lem:separating Lebesgue density} and 
%imply 
implies that the $\I_\mu$-shift density property holds. 
%We prove it in slightly more generality to see that this holds for other variants of density points as well. 
%In Section \ref{section strongly linked tree forcings}, we will give an alternative proof which does not appeal to Lebesgue's density theorem. 

\begin{lemma} \label{lem:separating Lebesgue density} 
Let $x$ be an element and $A$ a Borel subset of ${}^{\omega}2$. 
%where as before $\Random$ denotes Random forcing. 
\begin{enumerate-(1)} 
\item 
If $d^\mu_A(x)=1$ and $\epsilon>0$, then there is some $n_{x,\epsilon}$ such that for all $n\geq n_{x,\epsilon}$ and all Borel sets $B$ with $\mu(B)\geq\epsilon$, $\mu(\sigma_{x{\upharpoonright}n}(B)\cap A)>0$. 
%for any $\I_\mu$-positive Borel set $B$ of size $\epsilon>0$, we eventually have $\mu(\sigma_{x{\upharpoonright}n}(B)\cap A)>0$ as $n$ increases. 
%$\frac{\mu(A\cap N_{x\upharpoonright n})}{\mu(N_{x\upharpoonright n})}> 1-\epsilon$. 
%In particular, $x$ is an $I_\mu$-shift density point of $A$. 
\item 
If $d^\mu_A(x)=0$ and $\epsilon>0$, then there is a 
%an $I_\mu$-positive 
Borel set $B$ with 
$$
%\forall t\in {}^{<\omega}2\ 
\exists^\infty n\ (\shift_{x\upharpoonright n}
%\circ \shift_t^{-1})
(B)\cap A=\emptyset)$$ 
%\in \I_\mu$$ 
and $\mu(B)\geq 1-\epsilon$. 
%In particular, $x$ is not an $\I_\mu$-shift density point of $A$. 
\end{enumerate-(1)}
\end{lemma}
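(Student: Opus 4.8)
The plan is to reduce everything to the classical Lebesgue density theorem on ${}^\omega 2$ together with the fact that the shift map $\shift_{x\upharpoonright n}$ rescales measure by a controlled factor. First recall that $\shift_s$ maps ${}^\omega 2$ homeomorphically onto the basic clopen set $N_s$, and for any Borel $C \subseteq {}^\omega 2$ one has $\mu(\shift_s(C)) = \mu(N_s)\cdot\mu(C) = 2^{-\lh(s)}\mu(C)$; equivalently $\mu(\shift_s^{-1}(D)) = \mu(D\cap N_s)/\mu(N_s)$ for $D \subseteq {}^\omega 2$. Thus $\shift_{x\upharpoonright n}(B)\cap A$ is (up to the homeomorphism) the set $B \cap \shift_{x\upharpoonright n}^{-1}(A)$, and $\mu(\shift_{x\upharpoonright n}(B)\cap A) > 0$ iff $\mu(B \cap \shift_{x\upharpoonright n}^{-1}(A)) > 0$, i.e. iff $B$ is not $\mu$-almost contained in the complement of $A/(x\upharpoonright n)$.

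For part (1): assume $d^\mu_A(x) = 1$. Since the balls $N_{x\upharpoonright n}$ form a neighbourhood basis at $x$ with $\mu(N_{x\upharpoonright n}) \to 0$, the hypothesis gives $\mu(A \cap N_{x\upharpoonright n})/\mu(N_{x\upharpoonright n}) \to 1$, that is, $\mu(\shift_{x\upharpoonright n}^{-1}(A)) \to 1$, so $\mu({}^\omega 2 \setminus \shift_{x\upharpoonright n}^{-1}(A)) \to 0$. Choose $n_{x,\epsilon}$ so that this complement has measure $< \epsilon$ for all $n \geq n_{x,\epsilon}$. Then for any Borel $B$ with $\mu(B) \geq \epsilon$ we cannot have $B \subseteq_{\mu} {}^\omega 2 \setminus \shift_{x\upharpoonright n}^{-1}(A)$, so $\mu(B \cap \shift_{x\upharpoonright n}^{-1}(A)) > 0$, which is what we want. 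For part (2): assume $d^\mu_A(x) = 0$, so $\mu(\shift_{x\upharpoonright n}^{-1}(A)) = \mu(A\cap N_{x\upharpoonright n})/\mu(N_{x\upharpoonright n}) \to 0$. The naive attempt is to take a single $B$ disjoint from all $\shift_{x\upharpoonright n}^{-1}(A)$, but these sets need not be nested, so instead I would thin out to a subsequence. Pick $n_0 < n_1 < \cdots$ with $\mu(\shift_{x\upharpoonright n_k}^{-1}(A)) < \epsilon \cdot 2^{-(k+1)}$, set $E = \bigcup_k \shift_{x\upharpoonright n_k}^{-1}(A)$, so $\mu(E) < \epsilon$, and let $B = {}^\omega 2 \setminus E$. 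Then $\mu(B) \geq 1 - \epsilon$ and $\shift_{x\upharpoonright n_k}(B)\cap A = \shift_{x\upharpoonright n_k}(B \cap \shift_{x\upharpoonright n_k}^{-1}(A)) = \emptyset$ for every $k$, since $B$ is disjoint from $\shift_{x\upharpoonright n_k}^{-1}(A)$; as there are infinitely many such $n_k$ this gives $\exists^\infty n\ (\shift_{x\upharpoonright n}(B)\cap A = \emptyset)$.

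The only genuinely delicate point is making sure the measure bookkeeping for the shift is stated correctly — in particular that $\mu(\shift_s(C)) = 2^{-\lh(s)}\mu(C)$ and hence that ``density $1$ at $x$'' translates exactly into ``$\mu(\shift_{x\upharpoonright n}^{-1}(A)) \to 1$''. This is routine from the product structure of $\mu$ and the fact that $\{N_{x\upharpoonright n}\}_n$ is cofinal in the balls around $x$ (using the ultrametric, balls around $x$ are exactly the $N_{x\upharpoonright n}$, so the $\liminf$ over $\epsilon \to 0$ in the definition of $d^\mu_A$ coincides with the limit over this sequence). No forcing or deep descriptive set theory is needed; this lemma is purely an exercise in transporting the classical density theorem through the shift homeomorphisms, and its role is to feed into the verification of the $\I_\mu$-shift density property asserted just before it.
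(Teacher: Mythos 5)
Your proof is correct and follows essentially the same route as the paper: part (1) is the same pigeonhole observation that a set of measure $\geq \epsilon$ cannot fit inside a set of measure $< \epsilon$, and part (2) is the same construction of a small exceptional set $\bigcup_k \shift_{x\upharpoonright n_k}^{-1}(A)$ (the paper uses a general sequence $\epsilon_i$ with $\sum_i\epsilon_i\leq\epsilon$ where you specialize to $\epsilon\cdot 2^{-(k+1)}$), with the desired $B$ its complement. The only cosmetic difference is notational (preimages vs.\ images of the shift); no gap.
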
 
\begin{proof} 
  For the first claim, note that there is some $n_{x,\epsilon}$ with $\frac{\mu(A\cap N_{x\upharpoonright n})}{\mu(N_{x{\upharpoonright}n})}> 1-\epsilon$ for all $n\geq n_{x,\epsilon}$, since 
  \[
  d^\mu_A(x)=\liminf_n \frac{\mu(A\cap N_{x{\upharpoonright}n})}{\mu(N_{x{\upharpoonright}n})}=1. 
  \]
  If $B$ is any $\I_\mu$-positive Borel set of size at least $\epsilon$, 
  %$T\in\Random$ and thus $\epsilon=\mu([T])>0$.  
  then $\mu(\sigma_{x{\upharpoonright}n}(B)\cap A)>0$ for all $n\geq n_{x,\epsilon}$.  

  For the second claim, let 
  $\vec{\epsilon}=\langle \epsilon_i\mid i<\omega\rangle $ be a
  sequence in $\RR^+$ with $\sum_i \epsilon_i\leq\epsilon$. 
  % for example $\epsilon_i=2^{-n-1}$. 
  %and let $\vec{s}=\langle s_i\mid i\in\omega\rangle$ be a sequence in ${}^{<\omega}2$ such that each $s\in {}^{<\omega}2$ appears infinitely often in $\vec{s}$.
  Since 
  \[
  d^\mu_A(x)=\liminf_n \frac{\mu(A\cap N_{x{\upharpoonright}n})}{\mu(N_{x{\upharpoonright}n})}=0, 
  \]
  there is a strictly
  increasing sequence $\vec{n}=\langle n_i\mid i\in\omega\rangle$ with
  $\frac{\mu(A\cap N_{x\upharpoonright n_i})}{\mu(N_{x\upharpoonright
      n_i})}<\epsilon_i$ for all $i\in\omega$. 
%the set $X_s=\{i\in\omega\mid s=s_i\}$ is infinite.
%there are infinitely many $i\in\omega$ with $s=s_i$. 
  Let $B_i=\shift^{-1}_{x\upharpoonright n_i}(A)$ 
  for $i\in\omega$ 
  and $B=\bigcup_{i\in\omega} B_i$.  Then
  $$\mu(B) \leq \sum_i \mu(B_i) \leq \sum_i \frac{\mu(A\cap N_{x\upharpoonright n_i})}{\mu(N_{x\upharpoonright n_i})} \leq\sum_i \epsilon_i\leq\epsilon .$$
  Let $C$ be an $\I_\mu$-positive set with $\mu(C)\geq 1-\epsilon$ that is disjoint from $B$. Then $C\cap B_i=\emptyset$ for all $i\in\omega$.  
  % and let $t\in{}^{<\omega}2$ be arbitrary. Take any $i\in\omega$ with $s_i=t$ (there are infinitely many).
  Since
  $B_i=\shift^{-1}_{x\upharpoonright n_i}(A)$, it follows that
  $\shift_{x\upharpoonright n_i} (C)\cap A=
  \shift_{x\upharpoonright n_i} (C) \cap
  \shift_{x\upharpoonright n_i}(B_i)=\shift_{x\upharpoonright n_i}(C\cap B_i)=\emptyset$.
  %Since $X_s$ is infinite, it follows that $x$ is not a
  %Hence $x$ is not a weak $\Random$-density point of $A$.
\end{proof} 

For $\epsilon=\frac{1}{2}$, we obtain that $d^\mu_A(x)=1$ implies that $x$ is an $\I_\mu$-shift density point and $d^\mu_A(x)=0$ implies that this fails. 
Using Lebesgue's density theorem for $\mu$ and ${}^\omega 2$ (see \cite[Section 8]{andrettacamerlo2012} or \cite[Proposition 2.10]{Mi08}) this yields the $\I_\mu$-shift density property  (cf.\ Definition~\ref{def:density points for ideals}). 

\begin{corollary}
  For every Lebesgue measurable subset $A$ of ${}^\omega2$,
  $D_{\I_\mu}(A)=_\mu D_\mu(A)$, recalling that by $D_\mu(A)$ we denote the points of $A$ of density 1 (see p.~\pageref{density1}).  In particular, the $\I_\mu$-shift density
  property holds.
\end{corollary}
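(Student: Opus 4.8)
The plan is to combine Lemma~\ref{lem:separating Lebesgue density} with the classical Lebesgue density theorem for $\mu$ on ${}^\omega 2$, applied with the specific value $\epsilon = \frac{1}{2}$ so that the threshold matches the definition of $\bL_{\I_\mu}$ in Convention~\ref{convention}. First I would fix a Lebesgue measurable $A \subseteq {}^\omega 2$ and recall that the classical density theorem gives $D_\mu(A) =_\mu A$; indeed $\mu$-almost every $x$ satisfies either $d^\mu_A(x) = 1$ (for a.e.\ $x \in A$) or $d^\mu_A(x) = 0$ (for a.e.\ $x \notin A$). So it suffices to prove $D_\I(A) =_\mu D_\mu(A)$, and since both sides agree modulo null sets with a set determined by the two-valued behaviour of $d^\mu_A$, it is enough to show: (i) if $d^\mu_A(x) = 1$ then $x \in D_\I(A)$, and (ii) if $d^\mu_A(x) = 0$ then $x \notin D_\I(A)$.

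For (i), suppose $d^\mu_A(x) = 1$. Apply Lemma~\ref{lem:separating Lebesgue density}(1) with $\epsilon = \frac{1}{2}$ to get $n_{x,1/2}$ such that for all $n \geq n_{x,1/2}$ and every Borel $B$ with $\mu(B) \geq \frac{1}{2}$, we have $\mu(\shift_{x\upharpoonright n}(B) \cap A) > 0$. By Convention~\ref{convention}, every $B \in \bL_{\I_\mu}$ is of the form $[T]$ for $T$ in random forcing with $\mu([T]) > \frac{1}{2} \geq \frac{1}{2}$, hence in particular $\mu(B) \geq \frac{1}{2}$; so $\shift_{x\upharpoonright n}(B) \cap A \notin \I_\mu$ for all $n \geq n_{x,1/2}$, which is precisely the statement that $x$ is an $\I_\mu$-shift density point of $A$, i.e.\ $x \in D_\I(A)$. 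For (ii), suppose $d^\mu_A(x) = 0$. Apply Lemma~\ref{lem:separating Lebesgue density}(2) with $\epsilon = \frac{1}{2}$ to obtain a Borel set $B$ with $\mu(B) \geq \frac{1}{2}$ and $\shift_{x\upharpoonright n}(B) \cap A = \emptyset$ for infinitely many $n$. One subtlety here is that $\bL_{\I_\mu}$ consists of sets $[T]$ with $\mu([T]) > \frac{1}{2}$ (strict), whereas the lemma only gives $\mu(B) \geq \frac{1}{2}$; this is harmless, since we may first run the lemma with $\epsilon = \frac{1}{3}$ to get $\mu(B) \geq \frac{2}{3} > \frac{1}{2}$ and then pass to a random-forcing tree $T$ with $[T] \subseteq B$ up to a null set and $\mu([T]) > \frac{1}{2}$ (using Remark~\ref{rmk:positive Borel set contains condition} or the analogous fact that a positive closed subset of positive measure can be thinned to a random condition). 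For such $[T] \in \bL_{\I_\mu}$ we then have $\shift_{x\upharpoonright n}([T]) \cap A \subseteq \shift_{x\upharpoonright n}(B) \cap A = \emptyset \in \I_\mu$ for infinitely many $n$, so there is no $n_x$ witnessing that $x \in D_\I(A)$; hence $x \notin D_\I(A)$.

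Putting these together: $D_\I(A)$ and $D_\mu(A)$ both contain $\{x : d^\mu_A(x) = 1\}$ and are both disjoint from $\{x : d^\mu_A(x) = 0\}$, and by Lebesgue's density theorem the union of these two sets is co-null; therefore $D_\I(A) \triangle D_\mu(A)$ is null, i.e.\ $D_\I(A) =_\mu D_\mu(A)$. Combined with $D_\mu(A) =_\mu A$, this gives $D_\I(A) =_\mu A$, establishing the $\I_\mu$-shift density property in the sense of Definition~\ref{def:density points for ideals} (for measurable, hence a fortiori Borel, sets). The only real obstacle is the bookkeeping around the strict-versus-nonstrict measure inequality in $\bL_{\I_\mu}$ and the passage from an arbitrary positive Borel set to a genuine random condition, but both are routine given the groundwork already laid; no new ideas beyond Lemma~\ref{lem:separating Lebesgue density} and the classical density theorem are needed.
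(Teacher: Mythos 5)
Your proof is correct and follows essentially the same route the paper takes: apply Lemma~\ref{lem:separating Lebesgue density} at $\epsilon=\tfrac12$ (or smaller) to show $d^\mu_A(x)=1$ forces $x\in D_\I(A)$ and $d^\mu_A(x)=0$ forces $x\notin D_\I(A)$, then invoke the classical Lebesgue density theorem to conclude $D_\I(A)=_\mu D_\mu(A)=_\mu A$. You actually handle one point more carefully than the paper: since $\bL_{\I_\mu}$ uses the strict inequality $\mu([T])>\tfrac12$, applying Lemma~\ref{lem:separating Lebesgue density}(2) with $\epsilon=\tfrac12$ as the paper's one-line remark suggests only yields $\mu(B)\geq\tfrac12$, which could in principle be tight; your choice of $\epsilon=\tfrac13$ followed by thinning to a random condition of measure $>\tfrac12$ (by inner regularity and pruning null-branching nodes) cleanly sidesteps this. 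The only blemish is the citation of a nonexistent ``Remark rmk:positive Borel set contains condition''; the fact you need --- that any Borel set of measure $>\tfrac12$ contains a random condition of measure $>\tfrac12$ --- is indeed standard and available from Lemma~\ref{characterization of positive sets} together with inner regularity, but the label does not resolve in this paper.
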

We shall give another proof of Lebesgue's density theorem in Section~\ref{section explicit construction of density points} below, thus making the previous argument for the $\I_\mu$-shift density property self-contained. 
The $\I_\mu$-density property also follows as a special case from the results in Section \ref{section strongly linked tree forcings}. 

\medskip
In the next lemma, we give two examples which show that if $d^\mu_A(x) \in (0,1)$, then $x$ can but does not have to be an $\I_\mu$-shift density point of $A$. 

\begin{lemma} \label{density versus shift density} 
  %Let $\Random$ denote Random forcing.
  Each of the following statements is satisfied by some Borel subset $A$ of ${}^\omega2$ and some $x\in {}^{\omega}2$  with $d^\mu_A(x) \in (0,1)$. 
\begin{enumerate-(a)} 
\item \label{density versus shift density 1} 
  $x$ is an $\I_\mu$-shift density point of $A$. 
\item \label{density versus shift density 2} 
  $x$ is not an $\I_\mu$-shift density point of $A$. 
\end{enumerate-(a)}
\end{lemma}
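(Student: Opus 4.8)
The plan is to do everything at the single point $x=0^\omega$, and for each of the two cases to construct a Borel set $A\subseteq{}^\omega 2$ that is ``self-similar above $0^\omega$'': it will have a fixed relative measure $c\in(0,1)$ on every basic neighbourhood $N_{0^n}$ of $0^\omega$, so that $d^\mu_A(0^\omega)=c$, and it will satisfy $\sigma_{0^n}^{-1}(A)=A$ for all $n$. Concretely, given a clopen $C\subseteq{}^\omega 2$ I would put $A_C:=\bigcup_{n\in\omega}\sigma_{0^n1}(C)$, the union of the copies of $C$ pushed below each node $0^n1$; this is an open (hence Borel) set. Since $A_C\cap N_{0^m1}=\sigma_{0^m1}(C)$ has measure $2^{-(m+1)}\mu(C)$, and $N_{0^n}=\{0^\omega\}\cup\bigsqcup_{m\geq n}N_{0^m1}$, a one-line computation gives $\mu(A_C\cap N_{0^n})=2^{-n}\mu(C)$ for every $n$; as the balls around $0^\omega$ in the standard ultrametric are exactly the $N_{0^n}$, this yields $d^\mu_{A_C}(0^\omega)=\liminf_n\frac{\mu(A_C\cap N_{0^n})}{\mu(N_{0^n})}=\mu(C)$. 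Inspecting the definition of $A_C$ also shows $\sigma_{0^n}^{-1}(A_C)=A_C$ for every $n$, and since $\sigma_{0^n}$ maps ${}^\omega 2$ bijectively onto $N_{0^n}$ scaling measure by $2^{-n}$, one gets $\mu(\sigma_{0^n}(B)\cap A_C)=2^{-n}\mu(B\cap A_C)$ for every $B\subseteq{}^\omega 2$.

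For \ref{density versus shift density 1} I would take $C$ clopen with $\mu(C)=\tfrac34$ (e.g.\ $C=N_1\cup N_{01}$) and set $A:=A_C$, $x:=0^\omega$, so that $d^\mu_A(x)=\tfrac34\in(0,1)$. Let $B\in\bL_\I$; by Convention~\ref{convention} this means $B=[T]$ for a random tree $T$ with $\mu(B)>\tfrac12$. Then $\mu(B\cap A)\geq\mu(B)+\mu(A)-1>\tfrac12+\tfrac34-1=\tfrac14$, so by the displayed identity
\[
\mu\bigl(\sigma_{x\upharpoonright n}(B)\cap A\bigr)=2^{-n}\mu(B\cap A)>2^{-n}\cdot\tfrac14>0
\]
for every $n\geq 0$, i.e.\ $\sigma_{x\upharpoonright n}(B)\cap A\notin\I_\mu$. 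As this holds for all $B\in\bL_\I$ and all $n$, the point $x$ is an $\I_\mu$-shift density point of $A$ (with $n_x=0$), which is exactly \ref{density versus shift density 1}.

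For \ref{density versus shift density 2} I would take $C$ clopen with $\mu(C)=\tfrac14$ (e.g.\ $C=N_{00}$) and again $A:=A_C$, $x:=0^\omega$, so that $d^\mu_A(x)=\tfrac14\in(0,1)$. Now $F:={}^\omega 2\setminus A$ is closed with $\mu(F)=\tfrac34>\tfrac12$, and by the standard fact that every $\I_\mu$-positive Borel set contains, modulo a null set, a random condition of the same measure (cf.\ Section~\ref{section Positive Borel sets}), there is $B^\ast\in\bL_\I$ with $B^\ast\subseteq_{\I_\mu}F$, hence $B^\ast\cap A\in\I_\mu$. Using $\sigma_{0^n}^{-1}(A)=A$ and injectivity of $\sigma_{0^n}$ one computes, for every $n$,
\[
\sigma_{x\upharpoonright n}(B^\ast)\cap A=\sigma_{x\upharpoonright n}\bigl(B^\ast\cap\sigma_{x\upharpoonright n}^{-1}(A)\bigr)=\sigma_{x\upharpoonright n}\bigl(B^\ast\cap A\bigr)\in\I_\mu.
\]
Thus for any putative threshold $n_x$ the pair $B^\ast\in\bL_\I$, $n:=n_x$ witnesses that $x$ is \emph{not} an $\I_\mu$-shift density point of $A$, giving \ref{density versus shift density 2}.

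The mechanism underneath both halves is that, by shift invariance and the fact that $\bL_\I$ consists precisely of the branch sets of random conditions of measure $>\tfrac12$, $0^\omega$ is an $\I_\mu$-shift density point of a Borel set $A$ if and only if the relative measures $\mu(A\cap N_{0^n})/\mu(N_{0^n})$ are eventually $\geq\tfrac12$; the threshold $\tfrac12$ is exactly what makes the two choices $c=\tfrac34$ and $c=\tfrac14$ land on opposite sides of it. I do not expect a genuine obstacle here; the only slightly delicate point is that $\bL_\I$ is a family of closed sets rather than of all Borel sets of large measure, which is why in \ref{density versus shift density 2} I replace $F={}^\omega 2\setminus A$ by a random condition contained in it of the same measure --- precisely the kind of statement about $\PP$-positive Borel sets established in Section~\ref{section Positive Borel sets}.
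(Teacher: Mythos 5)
Your proof is correct and follows essentially the same strategy as the paper: fix $x=0^\omega$, build a Borel set $A$ with $\sigma_{0^n}^{-1}(A)=A$ for all $n$ so that the relative densities $\mu(A\cap N_{0^n})/\mu(N_{0^n})$ are constant and lie strictly between $0$ and $1$, and then use the $\tfrac12$ threshold in Convention~\ref{convention} to decide the shift-density status. The paper's single example $A=\{0^n{}^\smallfrown 1^3{}^\smallfrown y\}$ is exactly your $A_{N_{11}}$ (measure $\tfrac14$), with part \ref{density versus shift density 1} verified for its complement and part \ref{density versus shift density 2} for $A$ itself; your abstraction to the parameterized family $A_C$ and your explicit replacement of $F={}^\omega2\setminus A$ by a random condition $B^\ast\subseteq_{\I_\mu}F$ in part \ref{density versus shift density 2} merely make the dependence on $\mu(C)$ and the constraint $B\in\bL_\I$ more visible, but the underlying argument is the same.
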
  
\begin{proof} 
%The first claim follows from Lemma \ref{lem:separating Lebesgue density examples}. For the second claim, we can use the same Borel set $B$ as in the proof of Lemma \ref{lem:separating Lebesgue density examples} and note that $\mu(B)\geq\frac{1}{2}$. 
  Let $A=\{0^n{}^\smallfrown 1^3{}^\smallfrown x\in {}^\omega2\mid n\in\omega,\ x\in {}^\omega 2\}$ and $B$ its complement. 
    
  For \ref{density versus shift density 1} note that $d^\mu_B(0^\omega)\in (0,1)$ since $\frac{\mu(B\cap N_{0^n})}{\mu(N_{0^n})}=\frac{3}{4}$ for all $n\in\omega$.  
  %takes a constant value $b\in (0,1)$ with $b>\frac{1}{2}$. 
  Thus $\mu(\sigma_{0^n}(C)\cap B)>0$ for any Borel set $C$ with $\mu(C)\geq\frac{1}{2}$.  
  So $0^\omega$ is an $\I_\mu$-shift density point of $B$. 
  
  For \ref{density versus shift density 2} we have $d^\mu_A(0^\omega)=1-d^\mu_B(0^\omega)\in (0,1)$. Since $\mu(B)\geq\frac{1}{2}$ and $\sigma_{0^n}(B)\cap A=\emptyset$ for all $n\in\omega$, $0^\omega$ is not an $\I_\mu$-shift density point of $A$. 
  \end{proof} 

%\todo[inline]{TRY TO SOLVE THE PROBLEMS AS ABOVE, but where the density actually exists (as limit) and is in $(0,1)$} 

\np

%%%%%%%%%%%%%%%%%%
%%%%%%%%%%%%%%%%%%

\section{Tree ideals}\label{section tree ideals} 

In this section, we study ideals induced by collections of trees. 
We introduce the class of strongly linked tree ideals %\todo{(P) I removed: "which includes various well-known ones", since this is discussed in the introduction } 
%which includes various well-known ones 
and show that the shift density property holds for this class. 
Recall again that we work in the Polish space ${}^\omega\baseset$ where $\baseset$ is either $2$ (i.e., $\{0,1\}$) or $\omega$ (i.e., $\NN$).

%%%%%%%%%%%%%%%%%%
%%%%%%%%%%%%%%%%%%

\subsection{What is a tree ideal?} \label{subsection what is a tree ideal}

A tree ideal on ${}^\omega \baseset$ is induced by
%These ideals were introduced in \cite{ikegami2010}. 
a collection $\PP$ of perfect subtrees of ${}^{<\omega}\baseset$ that contains ${}^{<\omega}\baseset$ and $T_s$ for all $T\in\PP$ and $s\in T$. 
%\todo{SHIFT INVARIANT means something else: it's defined on page 3} (we say that $\PP$ is \emph{shift invariant}, or following \cite{ikegami2010}, \emph{strongly arboreal}). 
%We call such a collection a \emph{forest}. 
We will always assume this condition for any collection of trees.

Any such collection of trees carries the partial order $S\leq T:\Longleftrightarrow S\subseteq T\Longleftrightarrow [S]\subseteq [T]$. 
Such partial orders are also called \emph{tree forcings};\footnote{These forcings, but without the condition that ${}^\omega\baseset$ is in $\PP$, are called \emph{strongly arboreal} in \cite[Definition 2.4]{ikegami2010}. } 
%Moreover, a \emph{tree forcing} is a forest together with the partial order $S\leq T:\Longleftrightarrow S\subseteq T\Longleftrightarrow [S]\subseteq [T]$. 
some well-known examples are listed in Section \ref{section list of tree forcings}. 
For instance, the null ideal is induced by the collection of random trees, given as follows: 

\begin{example}\label{example random} 
%\todo{changed} 
A subtree $T$ of ${}^{<\omega}2$ is \emph{random} if $\mu([T_s])>0$ for all $s\in T$ with $\stem(T) \sqsubseteq s$. 
% and $\frac{\mu([T_s])}{2^s}>\frac{1}{2}$ for $s=\stem_T$. 
\end{example} 

%\todo{CHANGE} Note that the last condition is only necessary to ensure that $\bL_{\I_\mu}$ as defined in Section \ref{sec:dpideals} is the set of trees with empty stem in this presentation. 

%Otherwise one can work with the collection of trees $T$ with $\mu([T_s])>0$ for all $s\in T$. 

We next associate an ideal to any collection of trees (we follow \cite[Definition 2.6]{ikegami2010}). 
%In many interesting cases, the sets $[T]$ for $T\in\PP$ form a base: for all $S,T\in \mathbb{P}$ and $x\in [S]\cap[T]\neq\emptyset$, there is some $U\in \mathbb{P}$ with $x\in [U]\subseteq [S]\cap [T]$. We then call $\PP$ \emph{topological} and let $\tau_\PP$ denote the induced topology. \footnote{A similar definition was used in \cite[Definition 3.6]{FKK16}, but it seems that the condition on $x$ was forgotten.} 
%The motivation is given by the special case that 
%A collection of trees is called \emph{topological} if 
The underlying idea is based on the special case that 
the sets $[T]$ for $T\in \PP$ form a base for a topology. In this case, $\PP$ is called \emph{topological} and its topology is denoted $\tau_\PP$. 
For instance, the collection of Hechler trees (see Definition
\ref{def:listoftreeforcings} \ref{definition Hechler}) is
topological. 
In fact, all strongly linked collections of trees
%\todo{What kind? Strongly linked? \\ 
%(P) ja. Vielleicht besser "in fact, all strongly linked tree forcings (as defined in Section \ref{section strongly linked tree forcings}) have ... ? Ok, changed it (D)} 
(as defined in Section \ref{section strongly linked tree forcings}) have this
property.\footnote{Topological does not imply the density
  property. For instance, assuming $\CH$ one can construct a dense
  topological (shift-invariant) subforcing of Sacks forcing, while we
  show in Propostion \ref{counterexamples for Mathias, Sacks, Silver}
  and Theorem \ref{no selector for ideal of countable sets} that the
  density property fails for the ideal associated to Sacks forcing. } 

Usually, one defines nowhere dense sets relative to a given topology, or equivalently, to a base of that topology. 
%the resulting classes are equal. 
Moreover, meager sets are defined as countable unions of these sets. 
In the next definition, these notions are generalized by replacing a base by an arbitrary collection of trees. 

\begin{definition} \cite[Definition 2.6]{ikegami2010} 
Let $\mathbb{P}$ be a collection of trees. 
%$A\subseteq {}^\omega\baseset$. 
  % and $A$ is a subset of ${}^{\omega}\baseset$.
\begin{enumerate-(a)} 
\item 
%is the collection of sets $A$ such that 
A set $A$ is \emph{$\PP$-nowhere dense} if for all $T\in \mathbb{P}$ there is some $S \leq T$ with $[S]\cap A=\emptyset$. Moreover, $\cN_{\mathbb{P}}$ is the ideal of $\PP$-nowhere dense sets. 
%A set $A$ in $N_{\mathbb{P}}$ is also called $\mathbb{P}$-null.
\item 
$\cI_\PP$ is the $\sigma$-ideal of \emph{$\PP$-meager sets} generated by $\cN_\PP$.
\end{enumerate-(a)} 
\end{definition} 

\emph{Tree ideals} are those of the form $\cI_\PP$ for a collection $\PP$ of trees.\footnote{In \cite[Section 2]{brendleloewe} and various other papers, tree ideals mean the ideals $\cN_\PP$ instead of $\cI_\PP$. }
%The definition in \cite{ikegami2010} is a refinement of this notion. } 
%The motivation for this set-up is that 
This presentation allows for uniform proofs of results for various ideals. 
Moreover, many well-known ideals are of this form; 
for instance, for Cohen forcing\footnote{See Section \ref{section list of tree forcings} for this and the following forcings. } $\tau_\PP$ is the standard topology, so $\cN_\PP$ is the collection of nowhere dense sets and $\cI_\PP$ that of meager sets. 
For random forcing, $\cN_\PP$ and $\cI_\PP$ equal the $\sigma$-ideal of null sets. 
Sacks forcing is the collection of all perfect trees; here both ideals equal the \emph{Marczewski ideal} (see \cite[3.1]{szpilrajn1935classe} or for a more modern presentation, \cite[p.~306]{bukovsky}). Its restriction to Borel sets equals the ideal of countable sets by the perfect set property for Borel sets. For Mathias forcing, $\tau_\PP$ is the Ellentuck topology, and $\cN_\PP$ and $\cI_\PP$ are equal to the ideal of nowhere Ramsey sets (see \cite{brendlekhomskiiwohofsky}). 
The ideal associated to Silver forcing consists of the completely doughnut null sets (see \cite{halbeisen2003making}). 

%\todo{changed} 
%Moreover, for many collections $\PP$ of trees, there is a natural choice
%of $\bL_{\I_\PP}$. 
%We will always let $\bL_{\I_\PP}$ denote the set of all $[T]$ with $T\in\PP$ and
%$\stem_T=\emptyset$, except when $\PP$ is Random forcing where $\bL_{\I_\PP}$ is defined as the set of all $[T]$ with $T\in \PP$ and $\mu([T])\geq\frac{1}{2}$. 

\np

%%%%%%%%%%%%%%%%%%
%%%%%%%%%%%%%%%%%%

\subsection{Measurability for tree ideals} \label{section P-measurable sets} \label{section Borel sets are P-measurable}  
%We will use this to characterize positive Borel sets and in the proofs below. 
%The next definition (due to Ikegami) introduces a form of measurability for any collection $\PP$ of trees. 

Let $\PP$ be a collection of subtrees of ${}^{<\omega}\baseset$. 
The next definition introduces a form of indivisibility\footnote{See e.g. \cite{laflamme2009partitions} for the concept of indivisibility in combinatorics. }  of $\PP$ with respect to $A$: If $T\in\PP$ and $[T]$ is split into the two pieces $[T]\cap A$ and $[T]\setminus A$, then at least one of these pieces contains a set of the form $[S]$ for some $S\in\PP$, up to some $\PP$-meager set. 

\begin{definition} \cite[Definition 2.8]{ikegami2010}\footnote{This is a variant of a definiton in \cite[Section 0]{MR2127234}. } \label{def:Pmeasurable}
A subset $A$ of ${}^\omega\baseset$ is called \emph{$\PP$-measurable} if for every $T\in \PP$, there is some $S\leq T$ with at least one of the properties (a) $[S]\subseteq_{\cI_\PP} A$ and (b) $[S]\perp_{\cI_\PP}A$. 
The collection of $\PP$-measurable sets is denoted $\Meas({}^\omega\baseset,\PP)$. 
\end{definition} 

Note that the properties (a) and (b) are mutually exclusive (see Lemma \ref{characterization of positive sets} below). 

%Moreover,
The main motivation for introducing this notion is the fact that it formalizes various well-known properties. For instance, we will see in Lemma \ref{measurability for the null ideal} that $\PP$-measurability for random forcing means Lebesgue measurability. 
For Sacks forcing it is equivalent to the Bernstein property for collections of sets closed under continuous preimages and intersections with closed sets \cite[Lemma 2.1]{brendleloewe}. 
Moreover, for Mathias forcing it is equivalent to being completely Ramsey. 

\np

%%%%%%%%%%%%%%%%%%
%%%%%%%%%%%%%%%%%%

%\subsection{Borel sets are measurable}

Our next goal is to show that for a very large class of forcings, all Borel sets are $\PP$-measurable. 
This will be important in the proofs of the following sections. 

%We will need that all Borel sets are $\PP$-measurable for the tree forcings studied below. This 
%It is shown in 
\cite[Lemma 3.5]{ikegami2010} shows that for proper tree forcings $\PP$, all Borel sets are $\PP$-measurable.\footnote{The proof of this and several other results in \cite{ikegami2010} can also be done from the weaker assumption that $\PP$ has the $\omega_1$-covering property. 
We give a more direct proof. }  
We will show a slightly more general version of this result. 
To state this, 
%We give a direct proof from the \emph{$\omega_1$-covering property}. 
recall that a forcing $\PP$ has the \emph{$\omega_1$-covering property} if for any $\PP$-generic filter $G$ over $V$, any countable set of ordinals in $V[G]$ is covered by (i.e., is a subset of) a set in $V$ that is countable in $V$. 
For instance, this statement holds for all proper and thus for all  Axiom A forcings.\footnote{See \cite[Definition 31.10]{MR1940513}. } In particular, it holds for all forcings considered in this paper. 

We will need the following characterization of the $\omega_1$-covering property. 
To state it, we introduce the following notation: For 
$D\subseteq \PP$ and $p\in \PP$ let us write
\[
D^{\parallel p}=\{q\in D\mid p\parallel q\},
\] 
where $p\parallel q$ denotes that
there is an $r \in \PP$ such that $r \leq p$ and $r \leq q$. 

\begin{lemma} \label{preservation of omega1} 
The following conditions are equivalent for any forcing $\PP$: 
\begin{enumerate-(a)} 
\item \label{preservation of omega1 a} 
$\PP$ has the $\omega_1$-covering property. 
\item \label{preservation of omega1 b} 
For any condition $p\in\PP$ and any sequence $\vec{D}=\langle D_n\mid n\in\omega\rangle$ of antichains in $\PP$, there is some $q\leq p$ such that for any $n\in\omega$, the set 
$D_n^{\parallel q}$ 
%$\{r\in A_n\mid q \parallel r\}$ 
is countable. 
%only countably many conditions in $A_n$ are compatible with $q$. 
\end{enumerate-(a)} 
\end{lemma}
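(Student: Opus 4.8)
The plan is to prove the equivalence of the two characterizations of the $\omega_1$-covering property by passing through the standard name-for-a-countable-set picture on both sides.

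For the direction \ref{preservation of omega1 a}$\Rightarrow$\ref{preservation of omega1 b}, suppose $\PP$ has the $\omega_1$-covering property, fix $p\in\PP$ and a sequence $\vec{D}=\langle D_n\mid n\in\omega\rangle$ of antichains. First I would fix, for each $n$, an enumeration of $D_n$ (in $V$) and introduce a $\PP$-name $\dot f$ such that $p\Vdash$ ``$\dot f\colon\omega\to V$ and for each $n$, $\dot f(n)$ is the (index of the) unique element of $D_n$ in the generic filter, if one exists, and $\dot f(n)=0$ otherwise''. By the $\omega_1$-covering property applied to $\ran(\dot f)$ there is a condition $q\leq p$ and a set $X\in V$ countable in $V$ with $q\Vdash\ran(\dot f)\subseteq \check X$. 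Now the key point is: if $r\leq q$ and $r\parallel d$ for some $d\in D_n$, then some common extension forces $d$ into the generic, hence forces $\dot f(n)$ to be the index of $d$, so that index lies in $X$; since $D_n$ is an antichain, the indices of distinct elements are distinct, so $D_n^{\parallel q}$ injects into (a subset of) $X$ and is therefore countable. This yields \ref{preservation of omega1 b}.

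For the converse \ref{preservation of omega1 b}$\Rightarrow$\ref{preservation of omega1 a}, let $G$ be $\PP$-generic over $V$ and let $C\in V[G]$ be a countable set of ordinals, say $C=\{\,\dot g(n)^G\mid n\in\omega\,\}$ for a name $\dot g$ and with some $p\in G$ forcing $\dot g\colon\omega\to\Ord$. For each $n$, using a maximal antichain below $p$ of conditions deciding the value $\dot g(n)$, I would fix an antichain $D_n$ (below $p$, extended to $\PP$ by adding $\PP$ itself or any condition incompatible with $p$ does not matter since we only care about compatibility with the eventual $q\leq p$) together with a function $v_n\colon D_n\to\Ord$ such that each $d\in D_n$ forces $\dot g(n)=\check{v_n(d)}$, and $D_n$ is predense below $p$. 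Apply \ref{preservation of omega1 b} to $p$ and $\langle D_n\mid n\in\omega\rangle$ to obtain $q\leq p$ with each $D_n^{\parallel q}$ countable. Set $X=\bigcup_{n\in\omega} v_n[D_n^{\parallel q}]$, a countable set in $V$. Now if $q\in G$, then for each $n$ the element of $G$ in $D_n$ (which exists by predensity) is compatible with $q$, hence lies in $D_n^{\parallel q}$, and it forces $\dot g(n)$ to be its $v_n$-value; therefore $\dot g(n)^G\in X$. So $C\subseteq X$, and since the conditions $q$ of this form are dense below $p$ and $p$ was an arbitrary condition forcing a name for a countable set of ordinals, the $\omega_1$-covering property holds.

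The main obstacle I expect is bookkeeping in the converse direction: one must be careful that the antichains $D_n$ one extracts are genuinely predense below $p$ (so that in the extension the generic really meets each $D_n$), and that the passage "the generic element of $D_n$ lies in $D_n^{\parallel q}$" is valid — this uses that $q$ itself is in $G$, so everything in $G$ is compatible with $q$. A minor subtlety is that Definition's antichains are required to live in $\PP$ while the natural objects live below $p$; this is harmless since compatibility with a fixed $q\leq p$ is unaffected by conditions incompatible with $p$, but it should be spelled out. The forward direction is comparatively routine once the name $\dot f$ tracking "which element of $D_n$ is generic" is written down.
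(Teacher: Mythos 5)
Your proof is correct and follows essentially the same route as the paper's: in both directions one passes through a name for a function $\omega\to V$ whose values track the antichains, then applies the covering property (forward) or the characterization (backward) to get a countable set, and concludes by genericity/density. The only differences are cosmetic bookkeeping — you track elements of $D_n$ by fixed ordinal indices in the forward direction and carry an explicit value-function $v_n$ in the backward direction, where the paper works directly with the conditions and a slightly more verbose description of the set $C_n$ — and your parenthetical about ``extending'' $D_n$ to $\PP$ is unnecessary (any antichain below $p$ is already an antichain in $\PP$), but harmless.
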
 
\begin{proof} 
  We first assume \ref{preservation of omega1 a}. Let $p\in\PP$ and
  let $\vec{D}=\langle D_n\mid n\in\omega\rangle$ be as in
  \ref{preservation of omega1 b}.  Take a $\PP$-generic filter $G$
  over $V$.  Moreover, let $f(n)$ be an element of $D_n\cap G$
  for each $n\in\omega$.  By the $\omega_1$-covering property, there
  is a countable subset $C\in V$ of $\PP$ such that $f(n)\in C$ for
  all $n\in\omega$.  Let $\dot{f}$ be a name for $f$ such that
  $q\leq p$ forces $\dot{f}(n)\in\check{C}$ for all $n\in\omega$.  It
  follows that $D_n^{\parallel q}\subseteq C$,
%$\{r\in A_n \mid r \parallel q\}\subseteq C$, 
since for any $\PP$-generic filter $H$ over $V$ that contains both $q$ and $r$ we have $r=\dot{f}^H(n)\in C$. 

%Conversely, assume that \ref{preservation of omega1 b} holds. 
For the converse implication, assume \ref{preservation of omega1 b} and suppose that $G$ is $\PP$-generic over $V$ and $C$ is a countable set of ordinals in $V[G]$. Moreover, let $f$ be an enumeration of $C$ and $\dot{f}$ a name with $\dot{f}^G=f$. 
Then there is a condition $p\in G$ which forces that $\dot{f}\colon \omega\rightarrow\Ord$ is a function. 
For each $n\in\omega$, let $D_n$ be a maximal antichain of conditions deciding $\dot{f}(n)$. 
By our assumption, there are densely many conditions $q\leq p$ as in \ref{preservation of omega1 b}. Hence there is some $q\in G$ as in \ref{preservation of omega1 b}. 
Since $D_n^{\parallel q}$ 
%$C_n=\{p\in A_n\mid p \parallel q\}$ 
is countable for all $n\in\omega$,
$C_n = \{ \alpha \mid r \Vdash \dot f(n) = \alpha \text{ for some } r
\leq q,q^\prime \text{ for a } q^\prime \in D_n^{\parallel q} \}$ is
countable and hence $C=\bigcup_{n\in\omega} C_n$ is a countable cover
of $\ran(f)$.
\end{proof} 

%The next result will be important in the next sections;
%We postpone the proof of the previous result to Section \ref{section a proof of measurability}. 

To show that all Borel sets are $\PP$-measurable if $\PP$ has the $\omega_1$-covering property, we need the next two easy lemmas. 
We will use the following notation. 
If $A$ is a subset of ${}^\omega\baseset$, let $\PP_A=\{T\in \PP\mid [T]\subseteq_{\cI_\PP}A\}$. 
%and let $(A^{(0)},A^{(1)})$ denote the pair consisting of $A^{(0)}=A$ and its complement $A^{(1)}$.  
%If $A$ is a subset of ${}^\omega\baseset$, 
%and let $D_A^{(i)}=\{T\in \PP\mid [T]\subseteq_{I_\PP}A^{(i)}\}$ for $i\leq1$. 
We further say that a subset of $\PP$ is \emph{$A$-good} if it is contained in $\PP_{(A)}=\PP_{A}\cup \PP_{{}^\omega\baseset\setminus A}$. 
%$\PP^{(A)}=\PP_{A^{(0)}}\cup \PP_{A^{(1)}}$. 
%If $B\subseteq \PP$ and $S\in \PP$, let further $B^\parallel_S=\{T\in B\mid S\parallel T\}$. 

\begin{lemma} \label{characterization of measurability} 
%\begin{enumerate-(1)} 
%\item \label{characterization of measurability 1} 
%If $C$ is a subset of ${}^\omega\baseset$, then 
A subset $A$ of ${}^\omega\baseset$ is $\PP$-measurable if and only if
there is an $A$-good maximal antichain in $\PP$. 
%\item \label{characterization of measurability 2} 
%\todo{This could be equivalently formulated for maximal antichains below $S$, is that nicer? } 
%If $C$ is a maximal antichain and $T\in \PP$, then $[T] \setminus \bigcup^\bullet C_T \in \cN_\PP$. 
%\end{enumerate-(1)} 
\end{lemma}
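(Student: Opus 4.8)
The plan is to prove both directions directly from the definitions. First, suppose $A$ is $\PP$-measurable. I would build an $A$-good antichain by a maximality/Zorn argument: consider the set $\PP_{(A)}=\PP_A\cup\PP_{{}^\omega\baseset\setminus A}$ and let $\mathcal{A}\subseteq\PP_{(A)}$ be a maximal antichain \emph{within} $\PP_{(A)}$ (which exists by Zorn's lemma). The claim is that $\mathcal{A}$ is in fact a maximal antichain in all of $\PP$. Indeed, given any $T\in\PP$, by $\PP$-measurability there is some $S\leq T$ with $[S]\subseteq_{\cI_\PP}A$ or $[S]\perp_{\cI_\PP}A$; the second case means $[S]\subseteq_{\cI_\PP}{}^\omega\baseset\setminus A$ (using that $[S]\setminus({}^\omega\baseset\setminus A)=[S]\cap A$, so these two phrasings coincide). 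In either case $S\in\PP_{(A)}$. By maximality of $\mathcal{A}$ in $\PP_{(A)}$, $S$ is compatible with some element of $\mathcal{A}$, hence so is $T$ (as $S\leq T$). This shows $\mathcal{A}$ is a maximal antichain in $\PP$, and it is $A$-good by construction.

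Conversely, suppose $\mathcal{A}\subseteq\PP_{(A)}$ is a maximal antichain in $\PP$. To see $A$ is $\PP$-measurable, fix $T\in\PP$. By maximality there is $S'\in\mathcal{A}$ with $S'\parallel T$, so there is $S\in\PP$ with $S\leq S'$ and $S\leq T$. Since $S'\in\PP_{(A)}$, either $[S']\subseteq_{\cI_\PP}A$ or $[S']\subseteq_{\cI_\PP}{}^\omega\baseset\setminus A$. As $[S]\subseteq[S']$ and $\cI_\PP$ is a $\sigma$-ideal closed under subsets, the same containment holds for $[S]$ in place of $[S']$: in the first case $[S]\subseteq_{\cI_\PP}A$, giving property (a) of Definition~\ref{def:Pmeasurable}; in the second case $[S]\setminus A=[S]\cap({}^\omega\baseset\setminus A)\subseteq[S']\setminus A\in\cI_\PP$ and also $[S]\cap A\subseteq [S]\setminus({}^\omega\baseset\setminus A)$, so $[S]\cap A\in\cI_\PP$ wait— more carefully, $[S]\subseteq_{\cI_\PP}{}^\omega\baseset\setminus A$ means $[S]\setminus({}^\omega\baseset\setminus A)=[S]\cap A\in\cI_\PP$, i.e. $[S]\perp_{\cI_\PP}A$, which is property (b). Since $S\leq T$, this witnesses $\PP$-measurability at $T$, and as $T$ was arbitrary, $A$ is $\PP$-measurable.

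The only slightly delicate point, and the one I would be most careful about, is the bookkeeping between the two dual formulations "$[S]\subseteq_{\cI_\PP}{}^\omega\baseset\setminus A$" and "$[S]\perp_{\cI_\PP}A$": these are literally the same statement since $[S]\setminus({}^\omega\baseset\setminus A)=[S]\cap A$, but it is worth spelling out so the passage through $\PP_{(A)}$ is transparent. Everything else is a routine maximality argument; no forcing and no combinatorics of trees beyond the definitions is needed, so the proof is short.
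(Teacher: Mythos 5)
Your proof is correct and follows essentially the same route as the paper: in the forward direction you show $\PP_{(A)}$ is dense (via the $\PP$-measurability of $A$) and take a maximal antichain inside it, arguing it remains maximal in $\PP$; in the converse direction you use maximality to find a common refinement below any given $T$ that decides membership in $A$ modulo $\cI_\PP$. The only difference is that you spell out the standard fact that a maximal antichain in a dense subset is maximal in the whole poset, and you flag explicitly that $[S]\subseteq_{\cI_\PP}{}^\omega\baseset\setminus A$ and $[S]\perp_{\cI_\PP}A$ are the same condition — both of which the paper leaves implicit.
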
 
\begin{proof} 
If $A$ is $\PP$-measurable, then $\PP_{(A)}$ is a dense subset of
$\PP$. Then there is a maximal antichain in $\PP$ contained in
$\PP_{(A)}$ and hence an $A$-good maximal antichain. 
Conversely, if $D$ is a maximal $A$-good antichain in $\PP$ and $S\in\PP$, let $T\in D^{\parallel S}$ and $R\leq S,T$. Then $[R]\subseteq_{\cI_\PP} A$ or $[R]\subseteq_{\cI_\PP} {}^\omega\baseset \setminus A$. 
\end{proof} 

If $D\subseteq \PP$, write 
\[
\sideset{}{^\square} \bigcup D=\bigcup_{T\in D}[T].
\] 

\begin{lemma} \label{complement of antichain is small} 
%\todo{This could be equivalently formulated for maximal antichains below $S$, is that nicer? } 
If $D$ is a maximal antichain in $\PP$ and $T\in\PP$, then $[T] \setminus \bigcup^\square D^{\parallel T} \in \cN_\PP$. 
\end{lemma}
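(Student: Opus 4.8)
The plan is to unfold the definition of $\cN_\PP$. Writing $A=[T]\setminus\bigcup^\square D^{\parallel T}$, I must produce, for an arbitrary $S\in\PP$, some $R\leq S$ with $[R]\cap A=\emptyset$. Since $A\subseteq[T]$, it suffices to arrange \emph{either} $[R]\cap[T]=\emptyset$ \emph{or} $[R]\subseteq\bigcup^\square D^{\parallel T}$.

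First I would dispose of the easy alternative: if some $R\leq S$ satisfies $[R]\cap[T]=\emptyset$, we are done at once. So assume no such $R$ exists. In particular, for every $s\in S$ the tree $S_s$ lies in $\PP$ (by the standing closure assumption on collections of trees) and $S_s\leq S$, so $[S]\cap N_s\cap[T]=[S_s]\cap[T]\neq\emptyset$. As the sets $[S]\cap N_s$ ($s\in S$) form a basis of the nonempty space $[S]$, this says $[S]\cap[T]$ is dense in $[S]$; being also closed in $[S]$ (because $[T]$ is closed), it must be all of $[S]$. Hence $[S]\subseteq[T]$, i.e.\ $S\leq T$.

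Now I invoke maximality of $D$: since $D$ is a maximal antichain, $S$ is compatible with some $q\in D$, so there is $r\in\PP$ with $r\leq S$ and $r\leq q$. Because $r\leq S\leq T$, the condition $r$ simultaneously refines $q$ and $T$, so $q\parallel T$, i.e.\ $q\in D^{\parallel T}$. Therefore $[r]\subseteq[q]\subseteq\bigcup^\square D^{\parallel T}$, which gives $[r]\cap A=[r]\cap\bigl([T]\setminus\bigcup^\square D^{\parallel T}\bigr)=\emptyset$, and $R:=r\leq S$ is as required. Since $S\in\PP$ was arbitrary, $A\in\cN_\PP$.

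The only genuinely non-formal point is the dichotomy in the second paragraph: recognizing that either $[S]$ can be shrunk inside $\PP$ so as to miss the closed set $[T]$, or else $S$ already lies below $T$. Everything else is routine bookkeeping with the standing closure properties of $\PP$ ($T\in\PP\Rightarrow T_s\in\PP$) and the maximality of $D$; note that the antichain property itself is not used, only that $D$ is predense.
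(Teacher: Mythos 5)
Your proof is correct, and it uses the same basic ingredients as the paper's (closure of $\PP$ under $T\mapsto T_s$, closedness of $[T]$ in $\baire$, and maximality of $D$), but the organization is genuinely different. The paper first uses maximality to obtain $Q\leq R,S$ for some $R\in D^{\parallel S}$, and only then case-splits on whether $Q\perp T$ or $Q\parallel T$; in the first case one shrinks $Q$ to avoid the closed set $[T]$ (this is where the closure property is invoked), and in the second one refines to $P\leq Q,T$, which lies inside $[R]\subseteq \bigcup^\square D^{\parallel T}$. You instead establish the dichotomy directly at the level of $S$ — either some $R\leq S$ already misses $[T]$, or else $[S]\subseteq [T]$, i.e.\ $S\leq T$ — and apply maximality of $D$ only in the second branch. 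Your derivation of $[S]\subseteq[T]$ by a density-and-closure argument is slightly less direct than the standard move (pick $x\in [S]\setminus [T]$, use that $[T]$ is closed to find $s=x{\upharpoonright}n$ with $N_s\cap[T]=\emptyset$, then $S_s$ works), but it is correct. Your closing remark that only predensity of $D$ is used is a nice, accurate observation that the paper does not make explicit.
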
 
\begin{proof} 
Let $S\in\PP$, $R\in D^{\parallel S}$ and $Q\leq R,S$. 
%Given any $R\in\PP$, find some $S\in B$ compatible with $R$. We can thus assume that $R\leq S$. 
If $Q\perp T$, then there is some $P\leq Q$ with $[P]\cap[T]=\emptyset$ by the closure property of $\PP$ 
%collections of trees 
defined in the beginning of Section \ref{subsection what is a tree ideal}. 
If $Q\parallel T$, let $P\leq Q,T$. 
Since $P\in D^{\parallel T}$, $[P]$ is disjoint from $[T] \setminus \bigcup^\square D^{\parallel T}$, as required. 
\end{proof}

The next result shows that Borel sets are $\PP$-measurable in all relevant cases. 
%the class of $\PP$-measurable sets forms a $\sigma$-algebra if $\PP$ has the $\omega_1$-covering property. 

\begin{lemma} \label{P-measurable sets form a sigma-algebra} 
If $\PP$ has the $\omega_1$-covering property, then the $\PP$-measurable sets form a $\sigma$-algebra. 
In particular, all Borel sets are $\PP$-measurable. 
\end{lemma}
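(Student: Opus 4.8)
The plan is to show both closure properties — complements and countable unions — and then conclude the statement about Borel sets by recalling that the Borel $\sigma$-algebra is the smallest $\sigma$-algebra containing the open sets, each of which is trivially $\PP$-measurable (every basic open set is $[T]$ for some $T \in \PP$ by the closure assumption on $\PP$). Closure under complements is immediate from Definition~\ref{def:Pmeasurable}, since the defining condition is symmetric in $A$ and ${}^\omega\baseset \setminus A$: a set $S \leq T$ witnessing $[S] \subseteq_{\cI_\PP} A$ or $[S] \perp_{\cI_\PP} A$ equally witnesses the corresponding statement for the complement. So the real content is countable unions.

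First I would reduce $\PP$-measurability to the existence of an $A$-good maximal antichain, using Lemma~\ref{characterization of measurability}. So suppose $\langle A_n \mid n \in \omega\rangle$ are $\PP$-measurable with union $A$, and fix for each $n$ an $A_n$-good maximal antichain $D_n$. Fix an arbitrary $T \in \PP$; I need to find $S \leq T$ which is in $\PP_{(A)}$. Here is where the $\omega_1$-covering property enters, via Lemma~\ref{preservation of omega1}: applied to the condition $T$ and the sequence $\langle D_n \mid n \in \omega\rangle$, it yields some $q \leq T$ such that $D_n^{\parallel q}$ is countable for every $n$. Now I work below $q$. For each $n$, enumerate $D_n^{\parallel q} = \{T^n_k \mid k \in \omega\}$ (possibly with repetitions, or a finite list); since $D_n$ is $A_n$-good, each $T^n_k$ satisfies either $[T^n_k] \subseteq_{\cI_\PP} A_n$ or $[T^n_k] \perp_{\cI_\PP} A_n$. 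Let $G_n$ be the union of those $[T^n_k]$ of the first kind; then $[T^n_k] \setminus A_n \in \cI_\PP$ for each such $k$, and there are only countably many, so $G_n \setminus A_n \in \cI_\PP$. On the other hand, by Lemma~\ref{complement of antichain is small}, $[q] \setminus \bigcup^\square D_n^{\parallel q} \in \cN_\PP \subseteq \cI_\PP$, and $\bigcup^\square D_n^{\parallel q} = G_n \cup (\text{the second-kind pieces})$, where each second-kind piece meets $A_n$ in a $\cI_\PP$-set. So modulo $\cI_\PP$, inside $[q]$ the set $A_n$ is contained in $G_n$; more precisely $[q] \cap A_n \subseteq_{\cI_\PP} G_n$ and $G_n \subseteq_{\cI_\PP} A_n$, i.e. $[q] \cap A_n =_{\cI_\PP} [q]\cap G_n$. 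Consequently $[q] \cap A = [q] \cap \bigcup_n A_n =_{\cI_\PP} [q] \cap \bigcup_n G_n$, using that $\cI_\PP$ is a $\sigma$-ideal.

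Now the endgame: either $[q] \cap \bigcup_n G_n$ is $\cI_\PP$-positive or not. If it is not, then $[q] \subseteq_{\cI_\PP} {}^\omega\baseset \setminus A$, so $q$ itself witnesses case (b) for $T$, and we are done. If it is positive, then some $[T^n_k] \cap A_n$ is positive for appropriate $n,k$ — but $[T^n_k] \subseteq_{\cI_\PP} A_n \subseteq A$, and since $T^n_k \parallel q$ we may pick $S \leq T^n_k, q$; then $[S] \subseteq [T^n_k] \subseteq_{\cI_\PP} A_n \subseteq_{\cI_\PP} A$ (using $A_n \subseteq A$), so $S \leq T$ witnesses case (a). In either case we have produced the required $S \leq T$, and since $T$ was arbitrary, $A$ is $\PP$-measurable. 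The main obstacle — really the only delicate point — is bookkeeping the $\cI_\PP$-error terms: one must be careful that the countability of each $D_n^{\parallel q}$ (guaranteed by Lemma~\ref{preservation of omega1}) is genuinely needed to keep each $G_n \setminus A_n$ inside the ideal, and that one only ever takes a \emph{countable} union of $\cI_\PP$-sets when passing from the $A_n$ to $A$, so that $\sigma$-additivity of $\cI_\PP$ applies. Once the complement and countable-union closure are in hand, the "in particular" clause is immediate: $\Meas({}^\omega\baseset,\PP)$ is a $\sigma$-algebra containing all basic open sets $[T]$, hence contains $\Borel({}^\omega\baseset)$.
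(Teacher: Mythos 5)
Your proof is correct and follows essentially the same route as the paper's: reduce to countable unions, pass to $A_n$-good maximal antichains $D_n$ via Lemma~\ref{characterization of measurability}, use the $\omega_1$-covering property (through Lemma~\ref{preservation of omega1}) to shrink $T$ to some $q$ making each $D_n^{\parallel q}$ countable, then split into the two cases according to whether any first-kind piece exists, invoking Lemma~\ref{complement of antichain is small} and $\sigma$-additivity of $\cI_\PP$ in the negative case. The only cosmetic difference is that you package the first-kind pieces into the auxiliary sets $G_n$ and phrase the dichotomy via $\cI_\PP$-positivity of $[q]\cap\bigcup_n G_n$, which is equivalent to (and slightly more elaborate than) the paper's direct case split on whether some $R\in E_n$ satisfies $[R]\subseteq_{\cI_\PP}A_n$.
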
 
\begin{proof} %[Proof of Proposition \ref{P-measurable sets form a sigma-algebra}] 
For the first claim, it suffices to show that the class of $\PP$-measurable sets is closed under forming countable unions. 
To this end, let $\vec{A}=\langle A_n\mid n\in\omega\rangle$ be a sequence of $\PP$-measurable subsets of ${}^\omega\baseset$. 
Furthermore, let $D_n$ be an $A_n$-good maximal antichain for each $n\in\omega$ by Lemma \ref{characterization of measurability}. 
We will show that $A=\bigcup_{n\in\omega} A_n$ is $\PP$-measurable. 

Fix any $T\in \PP$. 
Since $\PP$ has the $\omega_1$-covering property, there is some $S\leq T$ such that the sets $E_n=D_n^{\parallel S}$ 
%=\{R\in B_n\mid R \parallel S \}$ 
are countable for all $n\in\omega$ by Lemma \ref{preservation of omega1}. 
%We will show that there is some $R\leq S$ with $[R]\subseteq_{\cI_\PP} A$, or $[S]\perp_{\cI_\PP} A$. 
First assume that for some $n\in\omega$, there is a tree $R\in E_n$ with $[R]\subseteq_{\cI_\PP} A_n$. 
%and $R \parallel S$. 
Then there is some $Q\leq S$ with $[Q]\subseteq_{\cI_\PP} A_n\subseteq A$ as required. 
%we are done by taking an extension of both $R$ and $S$. 
%So we can assume otherwise. 
%We now assume that there is no such $U$ for any $n\in\omega$. 
So we can assume that the previous assumption fails. 
We claim that then 
%We claim that otherwise 
%\begin{claim*} 
$[S]\perp_{\cI_\PP} A$. 
%\end{claim*} 
%\begin{proof} 
It suffices to show that $[S]\cap A_n \in \cI_\PP$ for each $n\in\omega$, since $\cI_\PP$ is a $\sigma$-ideal. 
To see this, note that $[R]\cap A_n\in \cI_\PP$ for all $R\in E_n$ by our case assumption. Hence $\bigcup^\square E_n\cap A_n\in \cI_\PP$. 
Moreover, $[S]\setminus \bigcup^\square E_n\in\cN_\PP$ 
%\bigcup_{U\in A_n,\ U\parallel S} [U] \in \cN_\PP$ 
by Lemma \ref{complement of antichain is small} and therefore $[S]
\cap A_n \in \cI_\PP$. 
%\end{proof} 
%Hence $B$ is $\PP$-measurable. 

Since it easy to see that closed sets are $\PP$-measurable, it follows that all Borel sets are $\PP$-measurable. 
\end{proof} 

%It is open whether the previous proposition holds without assuming the $\omega_1$-covering property. 

Next is the observation that $\PP$-measurability for random forcing means Lebesgue measurability. 
%The next result gives an easier proof of $\PP$-measurability of Borel sets for ccc tree forcings $\PP$. 
%It implies that for Random forcing, $\PP$-measurable means Lebesgue measurable. 
We recall the argument from \cite{ikegami2010} for the benefit of the reader. 

\begin{lemma} \label{measurability for the null ideal} \cite[Proposition 2.9]{ikegami2010}
If $\PP$ is a ccc tree forcing and $A$ is any subset of ${}^\omega\baseset$, then the following conditions are equivalent. 
\begin{enumerate-(a)} 
\item 
$A$ is $\PP$-measurable. 
\item 
There is a Borel set $B$ with $A\triangle B\in \cI_\PP$. 
\end{enumerate-(a)} 
\end{lemma}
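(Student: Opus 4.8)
The implication (b)$\Rightarrow$(a) is the easy direction: if $A\triangle B\in\cI_\PP$ for a Borel set $B$, then since Borel sets are $\PP$-measurable by Lemma~\ref{P-measurable sets form a sigma-algebra} (as any ccc forcing has the $\omega_1$-covering property, cf.\ the remark following it), given $T\in\PP$ we find $S\leq T$ with $[S]\subseteq_{\cI_\PP} B$ or $[S]\perp_{\cI_\PP} B$; since $[S]\cap(A\triangle B)$ is contained in $A\triangle B\in\cI_\PP$, the same $S$ witnesses $[S]\subseteq_{\cI_\PP} A$ or $[S]\perp_{\cI_\PP} A$, so $A$ is $\PP$-measurable. (This direction does not even use ccc.)

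For (a)$\Rightarrow$(b), the plan is to use a maximal antichain to build the Borel approximation. Assume $A$ is $\PP$-measurable. By Lemma~\ref{characterization of measurability} fix an $A$-good maximal antichain $D\subseteq\PP$, i.e.\ $D\subseteq\PP_A\cup\PP_{{}^\omega\baseset\setminus A}$. Since $\PP$ is ccc, $D=\{T_n\mid n\in\omega\}$ is countable. Split $D$ into $D_0=\{T\in D\mid [T]\subseteq_{\cI_\PP} A\}$ and $D_1=\{T\in D\mid [T]\perp_{\cI_\PP} A\}$ (these are disjoint by the exclusivity noted after Definition~\ref{def:Pmeasurable}), and set $B=\bigcup^\square D_0=\bigcup_{T\in D_0}[T]$, which is a countable union of closed sets, hence Borel. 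I claim $A\triangle B\in\cI_\PP$. For the inclusion $A\setminus B\subseteq_{\cI_\PP}\cdots$: by Lemma~\ref{complement of antichain is small} applied with $T={}^{<\omega}\baseset$ (which is in $\PP$), the set ${}^\omega\baseset\setminus\bigcup^\square D\in\cN_\PP\subseteq\cI_\PP$; and $A\cap\bigcup^\square D_1=\bigcup_{T\in D_1}([T]\cap A)$ is a countable union of sets in $\cI_\PP$, hence in $\cI_\PP$. Since $A\setminus B\subseteq (A\cap\bigcup^\square D_1)\cup({}^\omega\baseset\setminus\bigcup^\square D)$, we get $A\setminus B\in\cI_\PP$. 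For $B\setminus A=\bigcup_{T\in D_0}([T]\setminus A)$: each $[T]\setminus A\in\cI_\PP$ since $[T]\subseteq_{\cI_\PP}A$, so again $B\setminus A\in\cI_\PP$ as a countable union. Hence $A\triangle B=(A\setminus B)\cup(B\setminus A)\in\cI_\PP$.

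The key point where ccc is genuinely used is the countability of the maximal antichain $D$, which is what allows the union $\bigcup^\square D_0$ to remain Borel and the relevant unions to stay inside the $\sigma$-ideal $\cI_\PP$. I expect the main (minor) obstacle to be bookkeeping around Lemma~\ref{complement of antichain is small}: one must make sure it is being invoked correctly so that the complement of $\bigcup^\square D$ is $\PP$-nowhere dense, and that ${}^{<\omega}\baseset\in\PP$ (guaranteed by the standing assumption on collections of trees) so that it applies to all of ${}^\omega\baseset$. Everything else is routine manipulation of symmetric differences and the closure properties of $\cI_\PP$.
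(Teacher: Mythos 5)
Your proof is correct and follows essentially the same route as the paper: both directions use the same lemmas (Lemma~\ref{P-measurable sets form a sigma-algebra} for (b)$\Rightarrow$(a), and for (a)$\Rightarrow$(b) a countable $A$-good maximal antichain together with Lemma~\ref{complement of antichain is small} to show ${}^\omega\baseset\setminus\bigcup^\square D\in\cN_\PP$), and you set $B=\bigcup^\square D_0$, which is exactly the paper's $B_0$. Your version simply spells out the symmetric-difference bookkeeping that the paper leaves implicit.
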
 
\begin{proof} 
If $A$ is $\PP$-measurable, then $\PP_{(A)}$ is dense in $\PP$. 
Let $D\subseteq \PP_{(A)}$ be a maximal antichain in $\PP$. 
Since $D$ is countable, the sets $B_0=\bigcup^\square (D\cap \PP_A)\subseteq_{\cI_\PP}A$ and $B_1=\bigcup^\square (D\setminus \PP_A)\subseteq_{\cI_\PP}{}^\omega\baseset\setminus A$ are Borel. 
Since $D$ is maximal, ${}^\omega\baseset\setminus \bigcup^\square D\in\cN_\PP$ by Lemma \ref{complement of antichain is small}. 
Thus $A\triangle B_0\in\cI_\PP$. 

The reverse implication follows from the fact that all Borel sets are $\PP$-measurable by Lemma \ref{P-measurable sets form a sigma-algebra}. 
\end{proof}

\np

%%%%%%%%%%%%%%%%%%
%%%%%%%%%%%%%%%%%%

\subsection{Positive Borel sets} \label{section Positive Borel sets} 

The following characterization of positive Borel sets via trees will be important below. It uses an auxiliary ideal from \cite{ikegami2010}. 

\begin{definition} \cite[Definitions 2.11]{ikegami2010} 
$A\in \cIs_\PP$ 
%is the collection of sets $A$ such that 
if for all $T\in \mathbb{P}$, there is some $S \leq T$ with $[S]\cap A\in \cI_{\mathbb{P}}$.
\end{definition} 

%For example, the null sets are the sets in $N_\Random$ for random
%forcing $\Random$ and the nowhere dense sets are the sets in
%$N_\Cohen$ for Cohen forcing $\Cohen$ while $I_\Cohen$ is the meager
%ideal. Moreover, for Sacks forcing $\Sacks$ we have $N_\Sacks =
%I_\Sacks$ by a fusion argument and $I_\Sacks$ is the Marczewski ideal.
%It is clear that $\cI_\PP \subseteq \cIs_\PP$ for every tree forcing $\mathbb{P}$.
%If moreover $\cN_\PP=\cI_\PP$, then $\cI_\PP=\cIs_\PP$. 

It is clear that $\cI_\PP\subseteq \cIs_\PP$, but it is open whether equality holds for all proper tree forcings.\footnote{To our knowledge, every known proper tree forcing satisfies either the ccc or fusion and thus equality holds.} 
Note that equality holds for ccc forcings. 
To see this, assume that $A\in \cIs_\PP$. Then $\PP_{{}^\omega\baseset\setminus A}$ (as defined before Lemma \ref{characterization of measurability}) is dense in $\PP$ and therefore contains a (countable) maximal antichain $D$. 
We have $A\cap \bigcup^\square D \in \cI_\PP$.\footnote{This notation is defined before Lemma \ref{complement of antichain is small}. } 
Since ${}^\omega\baseset\setminus \bigcup^\square D\in \cN_\PP$ by Lemma \ref{complement of antichain is small}, we have $A\in \cI_\PP$. 
Moreover, equality holds for fusion forcings (then in fact $\cN_\PP=\cI_\PP$)\footnote{For a tree forcing $\PP$, we define \emph{fusion} as the existence of a sequence $\vec{\leq}=\langle \leq_n\mid n\in\omega \rangle$ of partial orders on $\PP$ with $\leq_0=\leq$ which satisfy the following conditions: 
  \begin{enumerate-(a)} 
  \item (\emph{decreasing})
  If $S\leq_n T$ and $m\leq n$, then $S\leq_m T$. 
  \item (\emph{limit})
  If $\vec{T}=\langle T_n\mid n\in\omega\rangle$ is a sequence in $\PP$ with $T_{n+1}\leq_n T_n$ for all $n\in\omega$, then there is some $S\in \PP$ with $S\leq_n T_n$ for all $n\in\omega$. 
  \item (\emph{covering})
  If $T\in \PP$, $n\in\omega$ and $D$ is dense below $T$, then there is some $S\leq_n T$ with $[S]\subseteq\bigcup^\square D$. 
  %\{[T]\mid T\in D\}$. 
  \end{enumerate-(a)} 
} and for topological tree forcings as in the proof of \cite[Lemma 3.8]{FKK16}. 

The next lemma characterizes $\cIs_\PP$-positive sets. 
%\todo[inline]{We might also move the proof of the next lemma to Section 3.5? } 

\begin{lemma} \label{characterization of positive sets} 
Suppose that $\PP$ is a tree forcing.
% and $A$ is a $\PP$-measurable subset of of ${}^\omega\baseset$. 
\begin{enumerate-(1)} 
\item 
For any $T\in \PP$, $[T]\notin \cIs_\PP$. 
\item 
A $\PP$-measurable subset $A$ of ${}^\omega\baseset$ is $\cIs_\PP$-positive if and only if there is some $T\in\PP$ with $[T]\subseteq_{\cIs_\PP} A$. 
\end{enumerate-(1)} 
\end{lemma}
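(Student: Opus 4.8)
The plan is to prove both items by exploiting the definitions of $\cI_\PP$, $\cIs_\PP$, and $\PP$-measurability, together with the closure properties of tree forcings and Lemma~\ref{complement of antichain is small}.

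\medskip

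For item (1), suppose toward a contradiction that $[T]\in\cIs_\PP$ for some $T\in\PP$. By definition of $\cIs_\PP$ applied to the condition $T$ itself, there is some $S\leq T$ with $[S]\cap[T]\in\cI_\PP$. But $S\leq T$ means $[S]\subseteq[T]$, so $[S]\cap[T]=[S]$, giving $[S]\in\cI_\PP$. Now I claim no set of the form $[S]$ with $S\in\PP$ can be $\PP$-meager: this follows immediately from the definition of $\cN_\PP$ together with the fact that $\cI_\PP$ is generated by $\cN_\PP$ as a $\sigma$-ideal. Concretely, if $[S]\in\cI_\PP$ then $[S]=\bigcup_{n\in\omega}C_n$ with each $C_n\in\cN_\PP$; but then $S\in\PP$ witnesses, via $\PP$-nowhere-density of $C_0$, that there is $S_0\leq S$ with $[S_0]\cap C_0=\emptyset$, and iterating (using that $\PP_{T_s}\in\PP$, so one can work below $S_0$) one would need a fusion-type argument to get a branch avoiding every $C_n$. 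This is precisely the point where the argument is not completely trivial, so instead I would invoke the cleaner route: $\cI_\PP\subseteq\cIs_\PP$ is immediate, and it suffices to show $[S]\notin\cIs_\PP$ — but that is exactly what we are trying to prove, so this is circular. The correct self-contained argument is: if $[S]\in\cI_\PP$, write $[S]=\bigcup_n C_n$ with $C_n\in\cN_\PP$; by Baire-category-style reasoning inside the tree $S$ (build a decreasing sequence $S=S_{-1}\geq S_0\geq S_1\geq\cdots$ in $\PP$ with $[S_n]\cap C_n=\emptyset$ using $\PP$-nowhere-density, which is possible since $S_{n-1}\in\PP$) one needs the intersection $\bigcap_n[S_n]$ to be nonempty. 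For tree forcings this requires a fusion or at least that stems grow, which holds since each $C_n$ being $\PP$-nowhere dense lets us pass to $S_n$ with a strictly longer stem; then $\bigcap_n[S_n]$ contains the unique branch through the union of the stems, contradicting $[S]=\bigcup_n C_n$. The main obstacle is thus making this "branch through increasing stems" step rigorous without assuming fusion; I expect the paper resolves it by noting that $\PP$-nowhere density of $C$ below any $T\in\PP$ can always be witnessed by some $S\leq T$ with $\stem_S\supsetneq\stem_T$ (shrink further if necessary using $T_s$), so the stems are strictly increasing and their union determines a branch avoiding all $C_n$.

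\medskip

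For item (2), first suppose $A$ is $\PP$-measurable and $\cIs_\PP$-positive. Since $A$ is $\PP$-measurable, $\PP_{(A)}=\PP_A\cup\PP_{{}^\omega\baseset\setminus A}$ is dense in $\PP$ (as noted before Lemma~\ref{characterization of measurability}); pick a maximal antichain $D\subseteq\PP_{(A)}$. If every $T\in D$ lies in $\PP_{{}^\omega\baseset\setminus A}$, i.e. $[T]\cap A\in\cI_\PP$ for all $T\in D$, then by Lemma~\ref{complement of antichain is small} (applied with the root $T={}^{<\omega}\baseset$) we have ${}^\omega\baseset\setminus\bigcup^\square D\in\cN_\PP\subseteq\cI_\PP$, and since $A\subseteq(\bigcup^\square D\cap A)\cup({}^\omega\baseset\setminus\bigcup^\square D)$ with the first part a countable union of sets in $\cI_\PP$, we get $A\in\cI_\PP\subseteq\cIs_\PP$, contradicting positivity. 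Hence some $T\in D$ lies in $\PP_A$, i.e. $[T]\subseteq_{\cI_\PP}A$, and a fortiori $[T]\subseteq_{\cIs_\PP}A$. Conversely, if $[T]\subseteq_{\cIs_\PP}A$ for some $T\in\PP$, then $[T]\setminus A\in\cIs_\PP$; if $A\in\cIs_\PP$ as well, then $[T]=([T]\cap A)\cup([T]\setminus A)\subseteq(A\cap[T])\cup([T]\setminus A)$, and since $\cIs_\PP$ is a $\sigma$-ideal (it is straightforwardly closed under countable unions, being defined via a density condition) this would give $[T]\in\cIs_\PP$, contradicting item (1). So $A$ is $\cIs_\PP$-positive, completing the proof.

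\medskip

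I expect the only genuine difficulty to be in item (1): establishing $[S]\notin\cI_\PP$ for $S\in\PP$ without a blanket fusion hypothesis. Everything else is bookkeeping with the ideal definitions and a single application of Lemma~\ref{complement of antichain is small}. I would present item (1) first, since item (2) uses it, and for item (1) I would emphasize the stem-growth observation as the key lemma-within-the-proof.
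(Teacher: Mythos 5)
Your item (1) ultimately arrives at the paper's argument: build a decreasing sequence $\langle S_n\rangle$ in $\PP$ with $[S_n]\cap A_n=\emptyset$ and stems strictly increasing (the stem-growth step is indeed justified exactly as you suspect, by passing to $T_s$ for some $s\in T$ properly extending the stem), and observe that $\bigcup_n\stem_{S_n}$ yields a branch in $[S]$ avoiding every $A_n$. The exposition circles before landing, but the mathematics matches the paper. The backward direction of item (2) is also fine, though note you only need closure of $\cIs_\PP$ under \emph{binary} unions (which holds trivially from the definition); the paper does not claim $\cIs_\PP$ is a $\sigma$-ideal without extra hypotheses, and your parenthetical assertion that it ``straightforwardly'' is one should be deleted.

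The forward direction of item (2), however, has a genuine gap. You take a maximal antichain $D\subseteq\PP_{(A)}$ and, assuming every $T\in D$ lies in $\PP_{{}^\omega\baseset\setminus A}$, you assert that $\bigcup^\square D\cap A$ is ``a countable union of sets in $\cI_\PP$.'' The lemma makes no ccc assumption, so $D$ may well be uncountable (for Sacks or Miller forcing, maximal antichains typically are), and then $\bigcup_{T\in D}([T]\cap A)$ is an uncountable union: you cannot conclude $A\in\cI_\PP$, and in fact this conclusion can fail. The paper's argument sidesteps antichains entirely: if no $S\in\PP$ has $[S]\subseteq_{\cIs_\PP}A$, then $\PP$-measurability forces that below every $T\in\PP$ there is some $S\leq T$ with $[S]\perp_{\cI_\PP}A$, and that is literally the definition of $A\in\cIs_\PP$, contradicting positivity. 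Your antichain setup can be repaired along the same lines — given any $Q\in\PP$, use maximality of $D$ to find $T\in D$ compatible with $Q$ and $R\leq Q,T$; then $[R]\cap A\subseteq[T]\cap A\in\cI_\PP$, so $A\in\cIs_\PP$ directly — but the point is that the target conclusion should be $A\in\cIs_\PP$, not $A\in\cI_\PP$, since only the former is reachable without ccc.
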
 
\begin{proof} 
We show the first claim. 
If $[T]\in\cIs_\PP$, then $[S]\in \cI_\PP$ for some $S\leq T$. Let
$\vec{A}=\langle A_n\mid n\in\omega\rangle$ be a sequence of sets in $\cN_\PP$ with $[S]\subseteq \bigcup_{n\in\omega} A_n$. 
We can then recursively construct a sequence $\vec{S}=\langle S_n\mid n\in\omega\rangle$ in $\PP$ such that $S_0=S$, $S_{n+1}\subseteq S_n$, $[S_n]\cap A_n=\emptyset$ for all $n\in \omega$ and the sequence $\vec{s}=\langle \stem(S_n)\mid n\in\omega\rangle$ of stems is strictly increasing. Then $x=\bigcup_{n\in\omega} \stem(S_n)\in \bigcap_{n\in\omega} [S_n]$. Since  $[S_n]\cap A_n=\emptyset$ for all $n\in\omega$ and $[S]\subseteq \bigcup_{n\in\omega} A_n$, we have $x \notin [S]$. But this contradicts the fact that $x\in [S]$. 

We now show the second claim. 
By the first part, it is sufficient to take any $\PP$-measurable $\cIs_\PP$-positive set $A$ and find some $S\in \PP$ with $[S]\subseteq_{\cIs_\PP} A$. 
Assume that there is no such tree. 
Since $A$ is $\PP$-measurable, we have that for any $T\in \PP$ there
is some $S\leq T$ with $[S]\perp_{\cI_\PP}A$. Hence $A\in \cIs_\PP$ by
the definition of $\cIs_\PP$. 
%We first assume that $A$ is $I$-positive. Let $T\in\PP$ such that $[S]\cap A\notin I_\PP$ for all $S\subseteq T$ in $\PP$. By Lemma \ref{properties of ccc tree forcings}, there is some $S\subseteq T$ in $\PP$ with $[S]\cap A\notin I_\PP$ or $[S]\subseteq_I A$. Since the former is impossible by the choice of $T$, we have $[S]\subseteq_I A$. 
\end{proof} 

%\todo{check $\cI_\PP$ versus $\cIs_\PP$} 
For ideals $\I$ of the form $\cIs_\PP$ such that $\PP$ has the $\omega_1$-covering property, the definition of $\I$-shift density points (Definition \ref{def:density points for ideals}) of a Borel set $A$ can now be formulated in the following way: 
An element $x$ of ${}^{\omega}\baseset$ is an \emph{$\I$-shift density point of $A$} if there is some $n_x$ such that for all $B \in \bL_\I$ and $n\geq n_x$, there is some $T\in \PP$ with 
  \[[T]\subseteq_I \shift_{x\upharpoonright n}(B)\cap A. \] 
%in the following simpler way: $\I$-positive Borel sets can be replaced with sets $[T]$ for $T\in\PP$. 

 Note that it is easy to see that $\PP$-measurability remains
equivalent if $\cI_\PP$ is replaced with $\cIs_\PP$. 
Moreover, the ideals $\cN_\PP$, $\cI_\PP$ and $\cIs_\PP$ remain the same if $\PP$ is replaced by a dense subset by the next remark.

\begin{remark} 
%Suppose that $\PP$ and $\QQ$ are tree forcings. 
$\PP$ is dense in $\QQ$ if for every $T\in\PP$, there is some $S\leq T$ in $\QQ$. 
We define $\PP$ and $\QQ$ to be \emph{mutually dense} if $\PP$ is dense in $\QQ$ and conversely. 

We claim that $\cN_\PP=\cN_\QQ$, $\cI_\PP=\cI_\QQ$ and $\cIs_\PP=\cIs_\QQ$ if $\PP$ and $\QQ$ are mutually dense. 
%Conversely, suppose that $\PP$, $\QQ$ are mutually dense. 
To see that $\cN_\PP\subseteq \cN_\QQ$, take any $A\in \cN_\PP$ and 
%\begin{lemma} \label{rmk:NPequalNQ} 
%\todo{not used} 
%If $\PP$ and $\QQ$ are mutually dense, then $N_\PP=N_\QQ$, and hence $I_\PP=I_\QQ$ and $I_\PP^*=I_\QQ^*$. 
%\end{lemma} 
%\begin{proof} 
%  We show that $N_\PP\subseteq N_\QQ$; the proof that $N_\QQ\subseteq N_\PP$ is similar.  
%To show $A \in N_\QQ$, let 
$T\in \QQ$. As $\PP$ is dense in $\QQ$, there is some $T^\prime \leq T$ in $\PP$. Since $A\in \cN_\PP$, there is some $S\leq T^\prime$ in $\PP$ with $[S]\cap A =\emptyset$. As $\QQ$ is dense in $\PP$, there is some $S^\prime \leq S$ with $S^\prime \in \QQ$ such that $[S^\prime] \cap A \subseteq [S]\cap A =\emptyset$. 
Thus $\cN_\PP= \cN_\QQ$ and $\cI_\PP=\cI_\QQ$. 
A similar argument shows $\cIs_\PP=\cIs_\QQ$. 

Conversely, any two collections of trees $\PP$ and $\QQ$ with the $\omega_1$-covering property and $\cIs_\PP=\cIs_\QQ$ are mutually dense by Lemmas \ref{P-measurable sets form a sigma-algebra}  and \ref{characterization of positive sets}. 
\end{remark}

%\begin{remark} 
%We claim that $S\parallel T \Longleftrightarrow [S]\cap [T]\notin \cIs_\PP$. 
%The forward implication is clear. For the converse, assume that $[S\cap T]=[S]\cap[T]\notin \cIs_\PP$. Since $[S\cap T]\notin \cN_\PP$, there is some $Q\in\PP$ with $[P]\cap[S\cap T]\neq\emptyset$ for all $P\leq Q$, so $Q\subseteq S\cap T$ and $S\parallel T$.  
%\end{remark} 

\np

%%%%%%%%%%%%%%%%%%
%%%%%%%%%%%%%%%%%%

\subsection{An equivalence to the density property} \label{section equivalence to density property} 

Let $\I$ be an ideal on ${}^\omega\baseset$. 
%$D(A)$ is a Borel set for every Borel subset $A$ of ${}^\omega\baseset$. 
We say that a function $D\colon \Borel({}^\omega \baseset)\rightarrow \Borel({}^\omega \baseset)$ is \emph{lifted from $\Borel({}^\omega \baseset)/\I$} if for any $A, B \in \Borel({}^\omega \baseset)$,  $A=_\cI B \Longrightarrow D(A)=D(B)$. 
%and $D$ a function which maps the collection of Borel subsets of ${}^\omega\baseset$ to itself. 

For instance, $D_\I$ as introduced in Definition \ref{def:density points for ideals} is lifted from $\Borel({}^\omega \baseset)/\I$ if $\I$ is shift invariant (see p.~\pageref{definition shift invariant ideal}). 
Then in fact $A\subseteq_\I B\Longrightarrow D_\I(A) \subseteq D_\I(B)$ by the definition of $D_\I$. 

Recall that the density property holds for $D$ and $\I$ if $D(A)=_\cI A$ for all Borel sets $A$ (see Definition \ref{definition density property}). 
%The \emph{$\I$-density property} for $D$ means that 
We now give a useful condition for proving this for specific ideals. 
To state this condition, we say that $D$ is \emph{$\cI$-compatible} if $A \subseteq_\cI B \Longrightarrow D(A) \subseteq_\cI D(B)$ and $A \perp_\cI B \Longrightarrow D(A) \perp_\cI D(B)$ for all Borel sets $A$ and $B$. 
We further say that $D$ is \emph{$\cI$-positive} if $D(A)\cap A$ is $\cI$-positive for all $\cI$-positive Borel sets $A$. 

%\begin{definition} 
%We say that $D$ is \emph{$\cI$-compatible } if the following conditions hold for all Borel sets $A$ and $B$. 
%\begin{enumerate-(a)} 
%\item 
%$A \subseteq_\cI B \Longrightarrow D(A) \subseteq_\cI D(B)$. 
%\item 
%$A \perp_\cI B \Longrightarrow D(A) \perp_\cI D(B)$ 
%\end{enumerate-(a)} 
%\end{definition} 

\begin{proposition} \label{equivalence to density property} 
If $D \colon \Borel({}^\omega \baseset) \to \Borel({}^\omega \baseset)$ is a function that is lifted from $\Borel({}^\omega \baseset)/\I$, then the following statements are equivalent. 
\begin{enumerate-(a)} 
\item 
$D$ is $\cI$-compatible and $\cI$-positive. 
\item 
The density property holds for $D$ and $\I$. 
%$D$ has the $\cI$-density property. 
%\todo{Was terminology fixed diffrerently..? \\ 
%(P) Right. I corrected the terminology and "recalled" it a few lines above. }
\end{enumerate-(a)} 
\end{proposition}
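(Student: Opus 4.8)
The equivalence is between two conditions on a function $D$ lifted from $\Borel({}^\omega\baseset)/\I$, so the proof splits naturally into two implications. For the direction (b) $\Rightarrow$ (a), assuming the density property ($D(A) =_\I A$ for all Borel $A$), the two consequences in (a) are almost immediate: since $D(A) =_\I A$ and $D(B) =_\I B$, the hypothesis $A \subseteq_\I B$ gives $D(A) =_\I A \subseteq_\I B =_\I D(B)$, hence $D(A) \subseteq_\I D(B)$; similarly $A \perp_\I B$ gives $D(A) \cap D(B) =_\I A \cap B \in \I$, so $D$ is $\I$-compatible. And $\I$-positivity is immediate too: if $A$ is $\I$-positive then $D(A) \cap A =_\I A \cap A = A$, which is $\I$-positive, since $A \triangle D(A) \in \I$ implies $A \setminus (A \cap D(A)) = A \setminus D(A) \in \I$, so $A \cap D(A)$ is $\I$-positive. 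I would write this direction out in a sentence or two.

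**The substantive direction is (a) $\Rightarrow$ (b).** So assume $D$ is $\I$-compatible and $\I$-positive; I must show $D(A) \triangle A \in \I$ for every Borel set $A$. The plan is to show separately that $D(A) \setminus A \in \I$ and $A \setminus D(A) \in \I$. For the first: apply $\I$-compatibility with the complement. Note $A \perp_\I ({}^\omega\baseset \setminus A)$ trivially (their intersection is empty), so $D(A) \perp_\I D({}^\omega\baseset\setminus A)$, i.e. $D(A) \cap D({}^\omega\baseset\setminus A) \in \I$. The task then reduces to relating $D(A)\setminus A$ to $D({}^\omega\baseset\setminus A)$. The key sub-claim should be: $D(A) \setminus A \subseteq_\I D({}^\omega\baseset\setminus A)$. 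To see this, suppose toward a contradiction that $C := D(A) \setminus (A \cup D({}^\omega\baseset\setminus A))$ is $\I$-positive; then $C \cap ({}^\omega\baseset\setminus A) = C$ is $\I$-positive (as $C \cap A = \emptyset$), so $C \subseteq_\I {}^\omega\baseset\setminus A$, and by $\I$-compatibility — using $C$ as a Borel set, assuming Borel sets are closed enough, which they are — one should derive that $D(C)$ relates suitably to $D({}^\omega\baseset\setminus A)$ while $\I$-positivity forces $D(C) \cap C$ positive, eventually contradicting $C \cap D({}^\omega\baseset\setminus A) = \emptyset$. Then combining $D(A)\setminus A \subseteq_\I D({}^\omega\baseset\setminus A)$ with $D(A) \cap D({}^\omega\baseset\setminus A) \in \I$ yields $D(A) \setminus A \in \I$.

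**The symmetric half**, $A \setminus D(A) \in \I$: here I would argue that if $E := A \setminus D(A)$ were $\I$-positive, then $E \subseteq_\I A$, so by $\I$-compatibility $D(E) \subseteq_\I D(A)$, while $\I$-positivity gives $D(E) \cap E$ is $\I$-positive; but $E \cap D(A) = \emptyset$ and $D(E) \subseteq_\I D(A)$ together force $D(E) \cap E \subseteq_\I D(A) \cap E = \emptyset \in \I$, a contradiction. This half is cleaner and I expect it to go through directly once the right bookkeeping with $\subseteq_\I$ is set up.

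**Main obstacle.** The delicate point is the first half — showing $D(A) \setminus A \in \I$ — because $\I$-compatibility only directly controls $D$ applied to Borel sets related by $\subseteq_\I$ or $\perp_\I$, and I need to convert the geometric statement "$D(A)$ is not concentrated outside $A$" into such a relation. The cleanest route is probably: set $A' = {}^\omega\baseset \setminus A$ and show $D(A) \cap A' \subseteq_\I D(A')$, then use $D(A) \perp_\I D(A')$. To prove $D(A)\cap A' \subseteq_\I D(A')$, I would let $C = (D(A) \cap A') \setminus D(A')$, assume $C$ is $\I$-positive, note $C \subseteq A'$ hence $C \subseteq_\I A'$, apply $\I$-compatibility to get $D(C) \subseteq_\I D(A')$, and use $\I$-positivity of $D$ to get $D(C) \cap C$ is $\I$-positive; since $C \cap D(A') = \emptyset$ and $D(C) \subseteq_\I D(A')$, we get $D(C) \cap C \subseteq_\I D(A') \cap C = \emptyset$, contradiction. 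So in fact both halves run on the same pattern, and the whole of (a) $\Rightarrow$ (b) reduces to this one lemma-shaped argument applied twice. I would present it once as a short claim and invoke it twice.
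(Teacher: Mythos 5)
Your proof is correct and follows essentially the same strategy as the paper: both directions split the symmetric difference into $A\setminus D(A)$ and $D(A)\setminus A$, and argue by contradiction using $\I$-positivity to produce an $\I$-positive set in an impossible location, with $\I$-compatibility supplying the contradiction. The one organizational difference is in showing $D(A)\setminus A\in\I$: you detour through the complement $A'$, reprove the $\subseteq_\I$-lemma for $D(A)\cap A'$ versus $D(A')$, and then combine it with $D(A)\perp_\I D(A')$; the paper is a bit more direct, setting $B_1=D(A)\setminus A$ and applying the $\perp_\I$-clause of $\I$-compatibility to $B_1$ and $A$ to see that $D(B_1)\cap B_1\subseteq D(B_1)\cap D(A)\in\I$, contradicting $\I$-positivity in one step -- but your version is sound and makes the two halves visibly symmetric, which is a reasonable trade.
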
 
\begin{proof} 
  It is clear that the $\cI$-density property implies that $D$ is
  $\cI$-positive and $\cI$-compatible.  To see that these conditions
  imply the density property, take any Borel set $A$.  We aim to show
  that $D(A)=_\cI A$.

We first show that $B_0= A\setminus D(A)$ is in $\cI$. Towards a contradiction, assume that $B_0$ is $\cI$-positive. 
Then $D(B_0)\setminus D(A)$ is also $\cI$-positive, since it contains $D(B_0)\cap B_0$ as a subset, and the latter is $\cI$-positive since $D$ is $\cI$-positive. 
On the other hand, we have $D(B_0)\setminus D(A)\in \cI$ since $B_0\subseteq A$ and $D$ is $\cI$-compatible, contradiction. 

It remains to show that $B_1= D(A)\setminus A$ is in $\cI$. Assume
that $B_1$ is $\cI$-positive, so that in particular $D(A)$ is
$\cI$-positive.  The set $C=D(B_1)\cap B_1$ is $\cI$-positive, since
$D$ is $\cI$-positive.  We thus obtain $C\perp_\cI D(A)$, as
$C\subseteq D(B_1)$ and we have $D(B_1)\perp_\cI D(A)$ since 
$B_1 \perp_\cI A$ (in fact $B_1$ and $A$ are disjoint) and $D$ is
$\cI$-compatible.  However, this contradicts the fact that $C$ is $\I$-positive and 
$C \subseteq B_1 \subseteq D(A)$.
\end{proof}

\np 

%%%%%%%%%%%%%%%%%%
%%%%%%%%%%%%%%%%%%

\subsection{The density property for strongly linked tree ideals}
%{Strongly linked tree ideals have the density property}
\label{section strongly linked tree forcings} 

To obtain the density property for $D_{\I_\PP}$, 
we will make two modest assumptions on $\PP$. 
Let $\bK_\I$ denote a fixed subset of $\PP$ coding $\bL_\I$ (from Convention~\ref{convention}) in the sense that $\bL_\I=\{[T]\mid T\in \bK_\I\}$. 

\begin{definition} \label{definition stem property} 
We say $(\PP,\bK_\I)$ has the \emph{stem property} 
  if for all $T\in\PP$ and
  $\cIs_\PP$-almost all $x\in [T]$, there are infinitely many $n\in\omega$ such that
  there is some $S \leq T$ with $x \in [S]$ and
  $S/(x{\upharpoonright}n)\in \bK_\I$. 
Since $\bK_\I$ is fixed for each $\PP$, we may also just say $\PP$ has the stem property.
\end{definition} 

The condition is trivially true for all $x\in [T]$ when $\bK_\I=\{T\in\PP\mid \stem(T)=\emptyset\}$ and we only introduce it to deal with the case of random forcing. 
%Suppose that $\PP$ is a collection of perfect trees with the property that for all $T \in \PP$ and all $s\in T$ also $T_s \in \PP$ (as is the case for all forcings we consider) and let $\tilde L_\I=\{T\in\PP\mid \stem_T=\emptyset\}$.Then it is easy to see that $(\PP,\tilde L_\I)$ has the stem property. In fact the following much stronger statement holds: For all $T\in \PP$ and all $x \in [T]$ the set of $n\in\omega$ such that $T/(x{\upharpoonright}n)\in \tilde L_\I$ is infinite.
For random forcing, recall $\bK_\I=\{T\in\PP\mid \mu([T])>\frac{1}{2}\}$. 
Then the stem property follows from Theorem~\ref{construction of density points} or Lebesgue's density theorem. 
%$(\PP,\tilde L_\I)$ has the stem property as well: This follows straightforwardly from Theorem~\ref{construction of density points} or if you prefer, from Lebesgue's density theorem.

\medskip
The next lemma shows that $D_\I$ is $\I$-compatible for $\I=\cIs_\PP$ provided that $D_\I$ is $\I$-positive and $\PP$ has the stem property.

\begin{lemma} \label{lem:disjointnessproperty} 
Assume that $\PP$ is a collection of trees with the stem property, all Borel sets are $\PP$-measurable, $\I=\cIs_\PP$ and
$D_\I([T])\cap[T] \notin \I$ for all $T\in\PP$. 
Then $D_\I(A)\cap D_\I(B) =_\I D_\I(A \cap B)$ for all Borel sets $A$, $B$. 
In particular, $D_\I$ is $\I$-compatible.
\end{lemma}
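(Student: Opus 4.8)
The plan is to prove the two inclusions in $D_\I(A)\cap D_\I(B) =_\I D_\I(A\cap B)$ separately. The inclusion $D_\I(A\cap B)\subseteq D_\I(A)\cap D_\I(B)$ holds at once: $D_\I$ is lifted from $\Borel({}^\omega\baseset)/\I$ and monotone (see the remarks at the start of Section~\ref{section equivalence to density property}), so from $A\cap B\subseteq A$ and $A\cap B\subseteq B$ we get $D_\I(A\cap B)\subseteq D_\I(A)$ and $D_\I(A\cap B)\subseteq D_\I(B)$. Hence it suffices to show that $N:=\bigl(D_\I(A)\cap D_\I(B)\bigr)\setminus D_\I(A\cap B)$ lies in $\I$; note $N$ is Borel, since $D_\I(X)$ is $\mathbf{\Sigma}^0_2$ for every $X\subseteq{}^\omega\baseset$.

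The main tool will be the following consequence of the stem property: if $S\in\PP$, $X\subseteq{}^\omega\baseset$ is Borel and $[S]\cap X\in\I$, then $[S]\cap D_\I(X)\in\I$. To see this, apply the stem property to $S$: for $\cIs_\PP$-almost every $x\in[S]$ there are infinitely many $n$ for which there is some $R\leq S$ with $x\in[R]$ and $R/(x{\upharpoonright}n)\in\bK_\I$. For such $n$ put $B':=[R/(x{\upharpoonright}n)]$, so $B'\in\bL_\I$; since $x\in[R]$ one has $\shift_{x{\upharpoonright}n}(B')=[R]\cap N_{x{\upharpoonright}n}\subseteq[R]\subseteq[S]$, whence $\shift_{x{\upharpoonright}n}(B')\cap X\subseteq[S]\cap X\in\I$. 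As this happens for infinitely many $n$, $x$ violates the defining condition of an $\I$-shift density point of $X$, so $x\notin D_\I(X)$. Thus $[S]\cap D_\I(X)$ is contained in the $\cIs_\PP$-null set of those $x\in[S]$ at which the stem property fails, and therefore lies in $\I$.

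Now suppose toward a contradiction that $N\notin\I$. Since $N$ is Borel it is $\PP$-measurable, and $\I=\cIs_\PP$, so by Lemma~\ref{characterization of positive sets} there is $T\in\PP$ with $[T]\subseteq_\I N$; in particular $[T]\subseteq_\I D_\I(A)$, $[T]\subseteq_\I D_\I(B)$ and, since $N\cap D_\I(A\cap B)=\emptyset$, $[T]\cap D_\I(A\cap B)\in\I$. Using that $A$ is $\PP$-measurable, pick $S_1\leq T$ with $[S_1]\subseteq_\I A$ or $[S_1]\cap A\in\I$; in the latter case the tool above gives $[S_1]\cap D_\I(A)\in\I$, which together with $[S_1]\subseteq[T]$ and $[T]\subseteq_\I D_\I(A)$ forces $[S_1]\in\I$, contradicting Lemma~\ref{characterization of positive sets}, by which $[S_1]\notin\cIs_\PP$. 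So $[S_1]\subseteq_\I A$. Applying $\PP$-measurability of $B$ to $S_1$ in the same way yields $S_2\leq S_1$ with $[S_2]\subseteq_\I B$ (the alternative $[S_2]\cap B\in\I$ again contradicts $[S_2]\subseteq[T]$ and $[T]\subseteq_\I D_\I(B)$). Then $[S_2]\subseteq_\I A\cap B$, so by monotonicity $D_\I([S_2])\subseteq D_\I(A\cap B)$, and by hypothesis $D_\I([S_2])\cap[S_2]\notin\I$; hence $D_\I(A\cap B)\cap[S_2]\notin\I$. But $[S_2]\subseteq[T]$ and $[T]\cap D_\I(A\cap B)\in\I$, a contradiction. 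Therefore $N\in\I$, which proves $D_\I(A)\cap D_\I(B)=_\I D_\I(A\cap B)$.

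For the ``in particular'' clause, $\I$-compatibility of $D_\I$ splits into two parts: $A\subseteq_\I B\Rightarrow D_\I(A)\subseteq D_\I(B)$ holds by monotonicity, while if $A\cap B\in\I$ then $D_\I(A)\cap D_\I(B)=_\I D_\I(A\cap B)=D_\I(\emptyset)=\emptyset$ (using $\emptyset\in\I$ and liftedness), i.e.\ $D_\I(A)\perp_\I D_\I(B)$. The step I expect to be most delicate is the ``main tool'' paragraph: it is the only place where the stem property enters, and one has to check carefully that shifting an element of $\bL_\I$ that sits on the stem $x{\upharpoonright}n$ of a subtree $R\leq S$ stays inside $[S]$, so that $\I$-smallness of $[S]\cap X$ transfers to $\shift_{x{\upharpoonright}n}(B')\cap X$; everything else is a bookkeeping combination of $\PP$-measurability of Borel sets, Lemma~\ref{characterization of positive sets}, and the $\I$-positivity hypothesis.
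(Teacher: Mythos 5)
Your proof is correct, and the overall shape is the same as the paper's---reduce to a single difference set, find a tree inside it whose branch set is $\subseteq_\I A\cap B$, and then derive a contradiction from $D_\I([T])\cap[T]\notin\I$. Where you genuinely diverge is in how you locate that tree. The paper picks a single point $x\in[S_0]\cap D_\I(A)$ that simultaneously witnesses the stem property, reads off a subtree $S_1\leq S_0$ with stem on $x$, and uses $x\in D_\I(A)$ directly to get $[S_1]\cap A\notin\I$ before invoking Lemma~\ref{characterization of positive sets}. You instead use $\PP$-measurability of $A$ up front to produce $S_1\leq T$ with either $[S_1]\subseteq_\I A$ or $[S_1]\perp_\I A$, and rule out the second alternative via a cleanly isolated auxiliary lemma, namely that $[S]\cap X\in\I$ implies $[S]\cap D_\I(X)\in\I$, deduced from the stem property applied to $\cIs_\PP$-almost all of $[S]$. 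This ``tool'' is a nice abstraction that the paper never states: it packages the stem property's negative consequence (a set $\I$-small on $[S]$ has $\I$-few density points in $[S]$) as a reusable fact, whereas the paper works only with a single witness. Your version costs an extra case analysis but arguably reads more modularly; the paper's is a bit shorter because it never needs the full almost-everywhere strength of the stem property. Both handle the ``in particular'' clause in the standard way.

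One small presentational point: for the first half of $\I$-compatibility you write that $A\subseteq_\I B\Rightarrow D_\I(A)\subseteq D_\I(B)$ ``holds by monotonicity,'' but $A\subseteq_\I B$ is not literal inclusion; what you are invoking is the combination of liftedness (so $D_\I(A)=D_\I(A\cap B)$) with monotonicity $D_\I(A\cap B)\subseteq D_\I(B)$, which the paper records explicitly just before Proposition~\ref{equivalence to density property}. Worth making explicit so a reader doesn't stumble.
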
 
\begin{proof} 
It is easy to see that $D_\I(A \cap B) \subseteq D_\I(A)\cap D_\I(B)$,
so suppose towards a contradiction 
that there are Borel sets $A$, $B$ with $C=(D_\I(A)\cap D_\I(B)) \setminus D_\I(A \cap B) \notin \I$.
Since $C$ is 
%\todo{state Sigma02 somewhere} 
Borel (it is a Boolean combination of ${\bf \Sigma}^0_2$ sets) and hence $\PP$-measurable, by Lemma \ref{characterization of positive sets} there is some $S_0\in \PP$ with $[S_0]\subseteq_\I C$. 

Since $[S_0] \subseteq_\I D_\I(A)$ we can pick 
$x\in[S_0]\cap D_\I(A)$ witnessing the stem property for $S_0$. Let $n_x \in \omega$ witness that
$x \in D_\I(A)$. 
By the choice of $x$, there is some $m\geq n_x$
%such that for $t=x{\upharpoonright}m$, there is some 
and $S_1\leq S_0$ with $S_1/(x{\upharpoonright}m) \in \bK_\I$. 
Since $m\geq n_x$, $[S_1]\cap A\notin \I$. 
By Lemma \ref{characterization of positive sets}, there is some $S_2\in \PP$ with $[S_2]\subseteq_\I [S_1]\cap A$. 
%Note that $[S_1]\subseteq_\I [S_0]\cap A$

Thus $[S_2] \subseteq_\I C \subseteq D_\I(B)$. Repeating the previous argument with $A$ replaced by $B$ yields some $S_3\in\PP$ with $[S_3] \subseteq_\I [S_2]\cap B$. 

Since $[S_3]\subseteq_\I A \cap B$, we have 
$D_\I([S_3])\subseteq D_\I(A \cap B)$. 
Since $[S_3]\subseteq_\I C$ it follows that $D_\I([S_3])\cap [S_3]\subseteq_\I C \cap D_\I(A \cap B) = \emptyset$. 
This contradicts the assumption that $D_\I([S_3])\cap [S_3] \notin \I$.
\end{proof} 

%We will assume a condition that is stronger than $\sigma$-linked to see that $\I_\PP$-compatiblity holds. 

To obtain $\I$-positivity we assume the following property (already mentioned in Section~\ref{s.intro}). 

\begin{definition} \label{definition strongly linked} 
A collection of trees $\PP$ is called \emph{strongly linked} if 
%for all $T \in \PP$, $\stem_T$ is an initial segment of $s(T)$ and 
any $S,T\in \PP$ with $\stem(S) \sqsubseteq \stem(T)$ and $\stem(T)\in S$ are compatible in $\PP$. 
%\todo{(Philipp) I deleted the definition of strong linking function here} 
%In this case, we call $s$ a \emph{strong linking function} for $\PP$.
\end{definition} 

This condition holds for all ccc tree forcings that we study in this paper except random forcing. 
For instance for Hechler forcing, eventually different forcing, Laver forcing $\Laver[F]$ with a filter and Mathias forcing $\Mathias[F]$ with a shift invariant filter. 
Clearly the condition implies that $\PP$ is 
%if any $S,T \in \PP$ with $s(S) = s(T)$ are compatible in $\PP$, then $\PP$ is 
$\sigma$-linked and ccc. 
Thus $\I_\PP=\cIs_\PP$ by Section \ref{section Positive Borel sets}.

\begin{remark}\label{lem:Random forcing no strongly linked dense subset} 
%\todo{maybe rephrase?} 
The null ideal $\I_\mu$ is not a strongly linked tree ideal.
For suppose that $\PP$ is a strongly linked tree forcing with $\I_\PP=\I_\mu$.
Fix a nowhere dense closed set $C$ of positive measure and find $S\in\PP$ with 
$[S]\subseteq_\mu C$ by Lemma~\ref{characterization of positive sets}. 
We write $A\subseteq_\mu B$ if $\mu(A\setminus B)=0$. 
Find some $t\in S$ with $\stem(S)\sqsubseteq t$ and $\frac{\mu([S]\cap N_t)}{\mu(N_t)}<1$; 
%<\frac{1}{2}$.
such a $t\in S$ exists since $C$ is nowhere dense and $[S]\subseteq_\mu C$. 
Let further $A=N_t\setminus [S]$. Since $\frac{\mu(A\cap
  N_t)}{\mu(N_t)}>0$, 
  %\frac{1}{2}$, 
  there is some $T\in\PP$ with $[T]\subseteq_\mu A$ (again by Lemma~\ref{characterization of positive sets}). 
%Since  $t\sqsubseteq \stem_T$ it holds that 
Then $\stem(S)\sqsubseteq t\sqsubseteq \stem(T)$, but $S$ and $T$ are incompatible in $\PP$. 
\end{remark} 

\iffalse
\todo{@Philipp: I don't understand this comment.}
Moreover, note that $\bL_{\cI_\PP}$ can be defined from $\cI_\PP$ if $\PP$ is strongly linked. 
Recall that we defined $\bL_{\I_\PP}$ as the set of all $[T]$ with $T\in\PP$ and $\stem(T)=\emptyset$ for any collection $\PP$ of trees. 
In this situation, a set $[T]$ for $T\in\PP$ is in $\bL_{\I_\PP}$ if
and only if $[T]\cap [U]\neq\emptyset$ for all $U\in\PP$ with $[U]\in
\bL_{\I_\PP}$.
\fi

\begin{lemma} \label{every branch in T in a density point} 
Assume that $\PP$ is a strongly linked tree forcing---whence by Convention~\ref{convention} $\bL_{\I_\PP}=\{[T]\in\PP\mid \stem(T)=\emptyset\}$. Let $\I=\cI_\PP$. Then $D_\I$ is $\I$-positive. In fact for any $T\in \PP$, $D_\I([T])=[T]$. 
\end{lemma}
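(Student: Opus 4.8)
The plan is to show $D_\I([T])=[T]$ for every $T\in\PP$ directly from the definitions, using strong linkedness. Fix $T\in\PP$. The inclusion $D_\I([T])\subseteq[T]$ follows immediately once we know $[T]$ is closed: if $x\notin[T]$, then $x{\upharpoonright}n\notin T$ for some $n$, and shift invariance together with the closure property of $\PP$ (each tree in $\PP$ contains $\mathbb{P}$-trees below it with any prescribed stem, and in particular $[S]\cap N_{x\upharpoonright n}=\emptyset$ for suitable $S$) shows $x$ fails to be a density point, since for $B=[U]\in\bL_\I$ with $\stem_U=\emptyset$ we get $\shift_{x\upharpoonright m}(B)\cap[T]=\emptyset$ for all large $m$. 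So the content is the reverse inclusion $[T]\subseteq D_\I([T])$.

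First I would fix $x\in[T]$ and set $n_x=\lh(\stem_T)$. I claim this $n_x$ witnesses that $x$ is an $\I$-shift density point of $[T]$. So let $n\geq n_x$ and let $B=[U]\in\bL_\I$, so $U\in\PP$ with $\stem_U=\emptyset$. We must show $\shift_{x\upharpoonright n}(B)\cap[T]\notin\I=\cI_\PP$. Since $n\geq\lh(\stem_T)$, we have $\stem_T\sqsubseteq x{\upharpoonright}n$, and $x{\upharpoonright}n\in T$ because $x\in[T]$. Now consider the tree $T/(x{\upharpoonright}n)$, which lies in $\PP$ since $\PP$ is closed under $T\mapsto T_s$ and under the inverse shift $\shift_s^{-1}$ (shift invariance). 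It has empty stem (as $\stem_T\sqsubseteq x{\upharpoonright}n$). So $U$ and $T/(x{\upharpoonright}n)$ are two elements of $\PP$ both with empty stem; in particular $\stem_U=\emptyset\sqsubseteq\stem_{T/(x\upharpoonright n)}=\emptyset$ and $\stem_{T/(x\upharpoonright n)}=\emptyset\in U$, so by strong linkedness they are compatible: there is $R\in\PP$ with $R\subseteq U\cap(T/(x{\upharpoonright}n))$. Applying $\shift_{x\upharpoonright n}$, the tree $x{\upharpoonright}n\,{}^\smallfrown R$ belongs to $\PP$ (shift invariance again) and satisfies $[x{\upharpoonright}n\,{}^\smallfrown R]\subseteq\shift_{x\upharpoonright n}([U])\cap[T]=\shift_{x\upharpoonright n}(B)\cap[T]$. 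By Lemma~\ref{characterization of positive sets}(1), $[x{\upharpoonright}n\,{}^\smallfrown R]\notin\cIs_\PP\supseteq\cI_\PP$, so $\shift_{x\upharpoonright n}(B)\cap[T]\notin\I$, as required. Hence $x\in D_\I([T])$.

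The main point to be careful about is the bookkeeping around stems and shifts: one has to check that $\stem_U\sqsubseteq\stem_{T/(x\upharpoonright n)}$ and $\stem_{T/(x\upharpoonright n)}\in U$ in the precise form demanded by Definition~\ref{definition strongly linked} (here both stems are empty, so this is automatic, but one should say it), and that $\shift_{x\upharpoonright n}(B)\cap[T]$ really contains $[x{\upharpoonright}n\,{}^\smallfrown R]$ rather than merely intersecting it — this uses $R\subseteq T/(x{\upharpoonright}n)$ to land inside $[T]$ and $R\subseteq U$ to land inside $\shift_{x\upharpoonright n}([U])$. Once the inclusion $[x{\upharpoonright}n\,{}^\smallfrown R]\subseteq\shift_{x\upharpoonright n}(B)\cap[T]$ is in hand, positivity is immediate from Lemma~\ref{characterization of positive sets}. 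Since this works for every $n\geq n_x$ and every $B\in\bL_\I$, we conclude $D_\I([T])=[T]$, and in particular $D_\I([T])\cap[T]=[T]\notin\I$, so $D_\I$ is $\I$-positive.
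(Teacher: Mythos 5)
Your proposal is correct and follows essentially the same approach as the paper's own proof: fix $x\in[T]$, take $n_x=\lh(\stem_T)$, and apply strong linkedness to the given tree and (a shifted copy of) $T$ to produce an $\I$-positive set inside the intersection. The only stylistic difference is that the paper works directly with the formulation of $D_\I$ from \eqref{intro definition density points}, applying strong linkedness to $S$ and $T$ themselves (so that $\stem_T=x{\upharpoonright}n_x\sqsubseteq x{\upharpoonright}n=\stem_S\in T$ does real work), whereas you first pass to $T/(x{\upharpoonright}n)$ and $U$ where both stems are empty and the linkedness hypothesis is trivially met, and then shift back; both routes rely on the same shift-invariance of $\PP$ that underlies the identification of \eqref{intro definition density points} with Definition~\ref{def:density points for ideals}.
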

\begin{proof} 
It follows from the definition that $D_\I([T])\subseteq [T]$. To see the other inclusion,
let $x \in [T]$ be given; we show that $x$ is an $\I$-density point of $[T]$. 
Let $m\in\omega$ be such that 
$\stem(T) = x{\upharpoonright}m$. 
Suppose we are given $S\in\PP$ such that $\stem(S) = x{\upharpoonright}n$ for some $n\geq m$. 
Since the stems of $S$ and $T$ are compatible, $\stem(S) \in T$, and $\PP$ is strongly linked, $S$ and $T$ are compatible. 
Therefore $[S]\cap [T]$ is an $\I$-positive set as required. 
\end{proof} 

Since $\I_\PP =\cIs_\PP$ when $\PP$ is ccc, Proposition \ref{equivalence to density property} together with Lemmas \ref{lem:disjointnessproperty} and \ref{every branch in T in a density point} imply the following version of Lebesgue's density theorem, viz.\ Theorem~\ref{intro theorem D_I}:
% for the ccc tree forcings we consider. 

\begin{corollary} \label{density property for concrete forcings}
For any strongly linked tree forcing $\PP$, the $I_\PP$-shift density property (cf.\ Definition \ref{def:density points for ideals}) holds. % with the stem property. 
In particular, the $\I_\PP$-shift density property holds when $\PP$ is Cohen forcing $\Cohen$, Hechler forcing $\Hechler$, eventually different forcing $\mathbb{E}$, Laver forcing $\Laver[F]$ with a filter and Mathias forcing $\Mathias[F]$ with a shift invariant filter (cf.\ Section \ref{section list of tree forcings}). 
\end{corollary}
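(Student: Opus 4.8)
The plan is to obtain this as a direct corollary of Proposition~\ref{equivalence to density property} applied to $D_{\I}$ with $\I = \I_\PP$, once the hypotheses of that proposition and of Lemmas~\ref{lem:disjointnessproperty} and~\ref{every branch in T in a density point} have been checked. So I would first fix a strongly linked tree forcing $\PP$, set $\I = \I_\PP$, and collect the standing facts. Strong linkedness implies $\sigma$-linkedness, hence the ccc (as noted after Definition~\ref{definition strongly linked}); by the discussion in Section~\ref{section Positive Borel sets} this gives $\I_\PP = \cIs_\PP$, and since ccc forcings are proper they have the $\omega_1$-covering property, so Lemma~\ref{P-measurable sets form a sigma-algebra} shows every Borel set is $\PP$-measurable. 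Moreover $\I$ is shift invariant, so by the remark at the start of Section~\ref{section equivalence to density property} the map $D_\I$ restricted to $\Borel({}^\omega\baseset)$ is lifted from $\Borel({}^\omega\baseset)/\I$.

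Next I would verify the two conditions in Proposition~\ref{equivalence to density property}. For $\I$-positivity, Lemma~\ref{every branch in T in a density point} applies verbatim: since $\PP$ is strongly linked, $D_\I([T]) = [T]$ for every $T \in \PP$, so $D_\I$ is $\I$-positive and in particular $D_\I([T]) \cap [T] = [T] \notin \I$ (using that $[T]\notin\cIs_\PP$ by Lemma~\ref{characterization of positive sets}). For $\I$-compatibility I would invoke Lemma~\ref{lem:disjointnessproperty}, whose hypotheses are exactly: all Borel sets are $\PP$-measurable (established above), $\I = \cIs_\PP$ (established above), $D_\I([T]) \cap [T] \notin \I$ (just noted), and the stem property --- which here is automatic, since Convention~\ref{convention} sets $\bK_\I = \{T \in \PP \mid \stem_T = \emptyset\}$, so the condition in Definition~\ref{definition stem property} holds for every $x \in [T]$, as observed after that definition. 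Hence $D_\I$ is $\I$-compatible, and Proposition~\ref{equivalence to density property} yields that the density property holds for $D_\I$ and $\I$, i.e.\ the $\I_\PP$-shift density property.

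For the ``in particular'' clause it then suffices to observe that each of Cohen forcing, Hechler forcing, eventually different forcing, Laver forcing $\Laver[F]$ with a filter, and Mathias forcing $\Mathias[F]$ with a shift invariant filter is strongly linked; this is asserted after Definition~\ref{definition strongly linked}, with the routine verifications located in Section~\ref{section list of tree forcings}, so the first part of the corollary applies to each of them.

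I do not anticipate a genuine obstacle, since everything reduces to citing results already proved; the only point needing care is threading the chain ``strongly linked $\Rightarrow$ $\sigma$-linked $\Rightarrow$ ccc $\Rightarrow$ ($\omega_1$-covering property, $\I_\PP = \cIs_\PP$, all Borel sets $\PP$-measurable)'' so that the structural lemmas become applicable, and recording that the stem property is vacuous under Convention~\ref{convention}. If anything is genuinely non-trivial here it is confirming that the five named forcings literally satisfy Definition~\ref{definition strongly linked}, but that bookkeeping is deferred to Section~\ref{section list of tree forcings}.
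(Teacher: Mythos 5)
Your proposal is correct and matches the paper's own argument: the paper derives the corollary in exactly the same way, by observing that strong linkedness gives ccc (hence $\I_\PP = \cIs_\PP$ and, via the $\omega_1$-covering property, $\PP$-measurability of Borel sets) and then combining Proposition~\ref{equivalence to density property} with Lemmas~\ref{lem:disjointnessproperty} and~\ref{every branch in T in a density point}. Your write-up merely makes explicit the chain of standing hypotheses that the paper leaves implicit, including the vacuity of the stem property for non-random $\bK_\I$.
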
 

For random forcing, the $\I_\mu$-shift density property follows from Lemma~\ref{lem:separating Lebesgue density} and Lebesgue's density theorem. A self-contained proof is obtained from the fact that $D_\I$ is $\I$-positive by Theorem~\ref{construction of density points} together with Proposition \ref{equivalence to density property} and Lemma \ref{lem:disjointnessproperty}.

%\begin{remark}\todo{New remark (David)}
%For Random forcing, Theorem~\ref{construction of density points} together with Lemma~\ref{lem:separating Lebesgue density} implies that $D_\I$ is $\I$-positive. 
%We thus obtain a proof of the $\I_\mu$-shift density property without assuming or appealing to Lebesgue's density theorem.
%In fact via Lemma~\ref{lem:separating Lebesgue density} this also gives a new, very roundabout proof of the Lebesgue density theorem.
%\end{remark}

%\begin{remark} 
Note that if $\PP$ is topological and $\cI_\PP$ has the density property, then one can describe $D_{\I_\PP}(A)$ as follows using $\tau_\PP$. 
First note that $\cI_\PP=\cIs_\PP$ by the proof of \cite[Lemma 3.8]{FKK16}. 
Moreover, for any Borel set $A$ there is some $\tau_\PP$-open set $U$ with $A\triangle U$ $\tau_\PP$-meager by the $\tau_\PP$-Baire property. 
Since $\cI_\PP$ equals the set of $\tau_\PP$-meager sets, 
%We therefore have $D(A)\triangle U\in \cI_\PP$. 
$D_{\I_\PP}(A)$ is almost equal to a $\tau_\PP$-open set. However, note that even for ccc collections it is not clear how to find such a set in a simply definable way. 
%\end{remark} 

We conclude this section with some observations about other variants of the Definition \ref{def:density points for ideals} of density points and the shift density property. 

\begin{remark}\label{remark ccc} 
If $\PP$ is ccc and the density property holds for all Borel sets, then it already holds for all $\PP$-measurable sets. 
This is the case because any $\PP$-measurable set equals $A\triangle B$ for some Borel set $A$ and some $B\in \cI_\PP$ by the ccc and Lemma \ref{measurability for the null ideal}. 
%Note that in the cases discussed above (when $\PP$ is strongly linked forcings or $\PP$ isRandom forcing), $D_{I}(A)\triangle A \in I$ holds not just for all $A\in\Borel({}^\omega \baseset)$ but for all $\PP$-measurable subsets $A$ of ${}^{\omega}\baseset$. This is because since $\PP$ is ccc, by Lemma \ref{measurability for the null ideal} any $\PP$-measurable set equals $B\triangle Z$ for some Borel set $B$ and some $Z\in \cI_\PP$ . 
%by \cite[Proposition 2.9]{ikegami2010}. 
\end{remark}

%\newpage 
%\todo{remove newpage command for arxiv and journal submission} 
\begin{remark} \ 
\label{rmk: failure for the Hechler ideal} 
  \begin{enumerate-(1)}
  \item If we let $\bL_\I=\I^+$ be the $\I$-positive sets for Hechler forcing $\Hechler$ and $\I=\I_{\Hechler}$, then the density property fails. 
To see this, let $T$ be the Hechler tree with empty stem given by the constant function with value $1$. 
With this variant $[T]$ does not satisfy the density property, since
no $x\in[T]$ is an $\I$-shift density point of $[T]$, as witnessed by
$N_{\langle0\rangle}\in\bL_\I$. 
\item \label{failure for the Hechler ideal} 
If $n_0$ depends on $B\in\bL_\I$ in Definition \ref{def:density points
  for ideals}, then the density property fails for $\I_{\Hechler}$ as well. 
To see this, let $x$ be the function with $x(n)=n+1$ for all $n\in\omega$ and let $T_{\emptyset, x}$ be the tree with empty stem given by $x$. 
Let further $A=\bigcup_{t\in {}^{<\omega}\omega}[T_{t^\smallfrown\langle|t|\rangle,t^\smallfrown\langle|t|\rangle^\smallfrown0^\infty}]$; this contains all $y\in{}^\omega\omega$ with $y(n)=n$ for some $n\in\omega$.  
It is sufficient to show $[T_{\emptyset,x}]\subseteq D(A)\setminus A$ by Lemma \ref{characterization of positive sets}. 

It is easy to see that $[T_{\emptyset,x}]\cap A=\emptyset$, since any $y\in[T_{\emptyset,x}]$ satisfies $y(n)\geq n+1$ for all $n\in\omega$. 

We now claim that $y\in D(A)$ for all $y\in{}^\omega\omega$. So take any $\I_{\Hechler}$-positive Borel set $B$. 
It is sufficient to assume that $B=[T_{s,u}]$ for some $s\in {}^{<\omega}\omega$ and $u\in{}^{\omega}\omega$ by Lemma \ref{characterization of positive sets}. 
Then for any $n\geq u(|s|$), $\sigma_{y{\upharpoonright}n}^{-1}([T_{s,u}/s)])\cap A=[(y{\upharpoonright}n)^\smallfrown (T_{s,u}/s)]\cap A$ contains $[T_{t,t^\smallfrown v}]$ for $t=y{\upharpoonright}n^\smallfrown\langle n\rangle$ and $v\in{}^\omega\omega$ with $v(i)=u(|s|+i+1)$ for all $i\in\omega$. 
Hence it is $\I_{\Hechler}$-positive and thus $y$ is a density point of $A$. 
  \end{enumerate-(1)}

\end{remark}

\np 

%%%%%%%%%%%%%%%%%%
%%%%%%%%%%%%%%%%%%

\subsection{Complexity of the density operator} \label{section complexity} 

%We prove the following lemma under very general assumptions (even though it will not be particularly useful unless somewhat more is assumed). 

%An obvious question is: how complex is $D_\I$? 
In this section, we give an upper bound for the complexity of the operator $D_\I$ for relevant cases of tree ideals $\I$ of the form $\I_\PP$. 
$\bK_\I$ is fixed as in the previous section. 
% with the density property. 
%Let $\bK_\I$ denote a fixed subset of $\PP$ coding $\bL_\I$ for any ideal $\I$, i.e. such that $\bL_\I=\{[T]\mid T\in \bK_\I\}$. 

Recall that a subset $A$ of ${}^\omega\omega$ is called 
\emph{universally Baire} if for any topological space $Y$ and any continuous function $f\colon Y\rightarrow{}^\omega\omega$, $f^{-1}(A)$ has the property of Baire. 
A function
$f\colon A\rightarrow B$ between subsets of ${}^\omega\omega$ is called \emph{universally Baire
  measurable} if $f^{-1}(U)$ is universally Baire for every
relatively open subset $U$ of $B$.  
%We remind the reader that 
Note that all universally Baire sets have the Baire property, are Lebesgue measurable and Ramsey \cite[Theorem 2.2]{MR1233821}.
A definable forcing $\PP$ is called \emph{absolutely ccc} if the ccc holds in all generic extensions. A $\Delta^1_2$ predicate is \emph{absolutely $\Delta^1_2$} if its $\Sigma^1_2$ and $\Pi^1_2$ definitions are equivalent in all generic extensions.

%For this recall that $\PP$ is called \emph{absolutely ccc} if there is a formula $\phi_{\PP}$ such that $\langle\PP,\subseteq \rangle= \langle\{x\mid\phi(x)\},\subseteq\rangle$ and for any forcing $\QQ$ it holds that $\Vdash_{\QQ}$``$\langle\{x\mid\phi(x)\},\subseteq\rangle$ is ccc,'' and a predicate $P$ is called \emph{absolutely $\Delta^1_2$} if there are a $\Sigma^1_2$ formula $\phi_0$ and a $\Pi^1_2$ formula $\phi_1$ such that for any forcing $\QQ$ it holds that $\Vdash_{\QQ}$``$\phi_0(x) \iff \phi_1(x) \iff P(x)$''.
%
%
%
%

\begin{lemma} \label{complexity of density operator} 
Suppose that $\I = \I_\PP$, where $\PP$, $\leq_\PP$ and $\perp_\PP$ are $\Sigma^1_1$, $\bK_\I$ is a $\Sigma^1_1$ subset of $\PP$ and $\PP$ is absolutely ccc. 
Then %viewed as a map from Borel codes to Borel codes,
\[
D_\I \colon \Borel({}^\omega \baseset) \to \Borel({}^\omega \baseset)
\]
is induced by a $\Delta^1_2$ function from Borel codes to Borel codes. 
Moreover, this function is universally Baire measurable.
% and hence universally Baire measurable. For instance, this holds when $\PP$, $\leq_\PP$ and $\perp_\PP$ are $\Sigma^1_1$, $\bK_\I=\{T\in\PP\mid \stem_T=\emptyset\}$ and $\PP$ is strongly linked. 
%is the set of trees with empty stem 
\end{lemma}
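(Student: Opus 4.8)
The plan is to reduce the statement to a single complexity computation about $\I_\PP$—that membership of a Borel set (given by a code) in $\I_\PP$ is absolutely $\Delta^1_2$—and then to transfer this through the $\mathbf{\Sigma}^0_2$ shape of $D_\I$. First I would unwind the definitions: by Convention~\ref{convention} and Definition~\ref{def:density points for ideals}, for a Borel code $a$ one has $x\in D_\I(B_a)\iff\exists m\,\forall n\geq m\,(x{\upharpoonright}n\in S_a)$ where $S_a=\{s\in{}^{<\omega}\baseset\mid\forall T\in\bK_\I\ \sigma_s([T])\cap B_a\notin\I\}$. Since a canonical $\mathbf{\Sigma}^0_2$ Borel code for $\{x\mid\exists m\,\forall n\geq m\,x{\upharpoonright}n\in z\}$ can be computed recursively from $z\subseteq{}^{<\omega}\baseset$, and since every remaining quantifier in ``$x\in D_\I(B_a)$'' and in ``$c$ is that canonical code'' is either a number quantifier (over $m$, $n$, or $s\in{}^{<\omega}\baseset$) or a single real quantifier of the right kind, the whole reduction comes down to showing that the relation ``$s\in S_a$'' (in $s,a$) is absolutely $\Delta^1_2$: closure of $\Sigma^1_2$ and $\Pi^1_2$ under number quantifiers, under $\exists^1$ resp.\ $\forall^1$, and under countable unions and intersections then yields that the graph of the induced map $D\colon\mathrm{B}\to\mathrm{B}$ is absolutely $\Delta^1_2$.

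The core is the sublemma that $\{c\in\mathrm{B}\mid B_c\in\I_\PP\}$ (equivalently $\{(T,c)\mid T\in\PP,\ [T]\subseteq_{\I_\PP}B_c\}$) is absolutely $\Delta^1_2$. For the $\Sigma^1_2$ side I would use the ccc: $B_c\in\I_\PP$ iff there is a real coding a sequence $\langle c_n\rangle$ of Borel codes with $B_c=\bigcup_n B_{c_n}$ together with, for each $n$, a real coding a \emph{countable} pre-dense $D_n\subseteq\PP$ such that $[T]\cap B_{c_n}=\emptyset$ for all $T\in D_n$ (such countable pre-dense sets exist because $\PP$ is ccc, ``$[T]\cap B_{c_n}=\emptyset$'' is $\Pi^1_1$ with the standard coding of Borel sets, and ``$d(i)\in\PP$'' can be witnessed by a bundled real since $\PP$ is $\Sigma^1_1$); pulling the existential real witnesses to the front and using that $\PP,\leq_\PP,\perp_\PP,\bK_\I$ are $\Sigma^1_1$ gives a $\Sigma^1_2$ formula. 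For the $\Pi^1_2$ side I would use that $\PP$ ccc implies $\I_\PP=\cIs_\PP$ and (Lemma~\ref{P-measurable sets form a sigma-algebra}) all Borel sets are $\PP$-measurable, so by Lemma~\ref{characterization of positive sets} $B_c\in\I_\PP\iff\forall T\in\PP\ [T]\cap({}^\omega\baseset\setminus B_c)\notin\I_\PP$; as ``$\cdot\notin\I_\PP$'' is the complement of the $\Sigma^1_2$ predicate just obtained, this is $\Pi^1_2$. The ``absolutely'' is where the hypothesis \emph{absolutely ccc} is used: the first characterisation needs the ccc (and $\omega_1$-covering, hence Lemmas~\ref{characterization of positive sets} and~\ref{P-measurable sets form a sigma-algebra}) to hold in the relevant generic extension, after which Shoenfield absoluteness makes the $\Sigma^1_2$ and $\Pi^1_2$ definitions absolute and equivalent everywhere.

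Granting this, ``$\sigma_s([T])\cap B_a\notin\I$'' is absolutely $\Delta^1_2$ in $(s,T,a)$ (apply the sublemma to a code for $[s^\smallfrown T]\cap B_a$, recursive in $(s,T,a)$), so ``$s\in S_a$'' $\iff\forall T\,(T\in\bK_\I\to\sigma_s([T])\cap B_a\notin\I)$ is immediately $\Pi^1_2$. The main obstacle is the matching $\Sigma^1_2$ bound, i.e.\ collapsing this universal real quantifier over $T\in\bK_\I$. Here I would pass to the forcing relation of $\PP$: as $\sigma_s([T])=[U]$ with $U$ ranging over the conditions of $\PP$ of stem $s$, one has $[U]\cap B_a\notin\I\iff U\not\Vdash_\PP\dot x_{\mathrm{gen}}\notin B_{\check a}$ (because, by genericity and the definition of $\cN_\PP$, the $\PP$-generic real avoids every ground-model $\PP$-meager Borel set), so $s\in S_a$ asserts that no condition of stem $s$ forces $\dot x_{\mathrm{gen}}\in B_{\check a'}$ ($a'$ a code for the complement); invoking the analysis of the forcing relation for Suslin ccc forcings (as in \cite{ikegami2010}), which gives that ``$U\Vdash_\PP\dot x_{\mathrm{gen}}\in B_{\check c}$'' is absolutely $\Delta^1_2$, together with the ccc-structure of the set of conditions of fixed stem (in the strongly linked cases a linked family; in general bundled as in the previous paragraph), this statement is seen to be $\Sigma^1_2$ as well. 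For $\PP$ random one argues identically with the $\Pi^1_1$ predicate ``$\mu([T]\cap N_{x{\upharpoonright}n})/\mu(N_{x{\upharpoonright}n})>\tfrac12$'' in place of ``$\stem_T=x{\upharpoonright}n$'', now using Lemma~\ref{characterization of positive sets} and Theorem~\ref{construction of density points} for the measure-theoretic input.

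Finally, universal Baire measurability of the induced $D\colon\mathrm{B}\to\mathrm{B}$: for an open $U$ in the Polish space of codes, $D^{-1}(U)=\{a\mid D(a)\in U\}$ is, $D$ being a total function with absolutely-$\Delta^1_2$ graph, itself absolutely $\Delta^1_2$; and an absolutely-$\Delta^1_2$ set is universally Baire via the Shoenfield tree and its dual, whose projections stay complements of each other in all set-generic extensions (the standard tree characterisation of universally Baire sets; cf.\ \cite{MR1233821}). I expect everything outside the $\Sigma^1_2$-side of ``$s\in S_a$''—namely the collapse of the quantifier $\forall T\in\bK_\I$—to be routine bookkeeping with the pointclass closure properties and Shoenfield absoluteness; that collapse, resting on the ccc and on the complexity of the forcing relation, is the delicate step.
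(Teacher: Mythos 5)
Your overall architecture—reduce to the complexity of ``$\cdot\in\I_\PP$'' and of ``$s\in S_a$'', track this through the $\mathbf{\Sigma}^0_2$ form of $D_\I$, and get universal Baireness from absolute $\Delta^1_2$-ness via Shoenfield trees—matches the paper's, and your sublemma that $\{c : B_c\in\I_\PP\}$ is absolutely $\Delta^1_2$ (ccc-bundled countable predense sets for $\Sigma^1_2$, Lemma~\ref{characterization of positive sets} for $\Pi^1_2$) is exactly the paper's computation for ``$B_x\in\cN_\PP$'' and ``$[T]\subseteq_\I B_x$''. The genuine gap is at the step you yourself flag as delicate: the $\Sigma^1_2$ upper bound for ``$s\in S_a$''. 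You propose to reach it by rewriting $s\in S_a$ as ``no condition $U$ of stem $s$ forces $\dot x_{\mathrm{gen}}\in B_{\check a'}$'' and invoking the $\Delta^1_2$-ness of the $\PP$-forcing relation from \cite{ikegami2010} plus ``the ccc-structure of the set of conditions of fixed stem'', but this rewriting is still a universal quantifier over conditions with a $\Delta^1_2$ matrix, i.e.\ again only $\Pi^1_2$; the asserted ``is seen to be $\Sigma^1_2$ as well'' is where the proof actually lives, and you do not write it. The forcing-relation detour also imports an external analysis of $\Vdash_\PP$ that the paper's self-contained proof avoids.

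The missing step is exactly one more application of the ccc-bundling you already use for the sublemma, but directly to the $\bK_\I$-quantifier. The paper proves an equivalence $\varphi\iff\psi$, where $\varphi(x)$ is $\forall T\in\bK_\I\ B_x\cap[T]\notin\I$ and $\psi(x)$ asserts the existence of a countable sequence $\langle T_i\rangle$ from $\PP$ with $[T_i]\subseteq_\I B_x$ for all $i$ and $\forall T\in\bL_\I\,\exists i\ T\parallel_\PP T_i$; the forward direction is a ccc-length antichain of conditions below $B_x$ predense among the large trees, the backward direction uses Lemma~\ref{characterization of positive sets}. Crucially this keeps the complexity down only because $\perp_\PP$ is $\Sigma^1_1$, so $T\parallel_\PP T_i$ is $\Pi^1_1$ and the clause $\forall T\in\bL_\I\,\exists i\ T\parallel T_i$ is $\Pi^1_1$ in a real coding $\vec T$; you list $\perp_\PP\in\Sigma^1_1$ among the hypotheses but never invoke it, and without it the universal over $T$ would push the complexity past $\Sigma^1_2$. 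Applying $\varphi\iff\psi$ to the shifted code $a/s$ gives the $\Sigma^1_2$ bound on ``$s\in S_a$'', after which the rest of your bookkeeping, including the universal Baireness, goes through as described.
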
 
\begin{proof} 
%Observe first that since $\perp_\PP$ is $\Sigma^1_1$, the statement that a sequence $\vec{T}=\langle T_i \mid i\in\omega\rangle$ in $\PP$ is predense in $\PP$, i.e. $\forall T \in \PP\ \exists i\ T \parallel T_i$, is a $\Pi^1_1$ predicate in $\vec{T}$ (modulo some convenient coding). 
Recall that we write $\Borelcodes$ for the $\Pi^1_1$ set of Borel codes and $\Borelset_x$ the set coded by a Borel code $x$. 
Let $\varphi(x)$ denote the formula $x\in \Borelcodes\ \&\ \forall T \in \bK_\I\ \Borelset_x \cap [T]\notin \I$. 
Let $\psi(x)$ denote the following statement: 
$x\in \Borelcodes$ and there is some $m\leq\omega$ and a sequence $\vec{T}=\langle  T_i\mid i<m\rangle$ from $\PP$ with $\forall i<m\ [T_i]\subseteq_\I \Borelset_x$ and $\forall T \in L_\I\ \exists i<m \ T \parallel_{\PP} T_i$.

\begin{claim} 
$\forall x\ \varphi(x)\Longleftrightarrow \psi(x)$. 
\end{claim} 
\begin{proof} 

If $\varphi(x)$ holds, inductively define an antichain $\vec{T}=\langle T_\xi \mid \xi < \theta \rangle$ from $\PP$ with $[T_\xi]\subseteq_{I_\PP} \Borelset_x$ for all $\xi < \theta$. 
Suppose that $\vec T^{(\alpha)} = \langle T_\xi \mid \xi < \alpha \rangle$ is already defined. 
If there is $T\in \bL_\I$ which is incompatible with each element of $\vec T^{(\alpha)}$, we may find $T_\alpha \in \PP$ with $T_\alpha \subseteq T$ and $[T_\alpha] \subseteq \Borelset_x$ since $\phi(x)$ holds. 
We can thus extend the antichain by adding $T_\alpha$. 
By the ccc we must reach some $\theta < \omega_1$ such that each $T \in {\bL}_\I$ is compatible to an element of $\vec{T}=\langle T_\xi \mid \xi < \theta \rangle$. 
Enumerate $\vec{T}$ in order type $m\leq \omega$ to obtain a witness to $\psi(x)$. 

If conversely $\psi(x)$ holds, then for any $T \in \bK_\I$ we may find some $i\in\omega$ with $T_i \parallel T$ and $[T_i] \subseteq_{\I} \Borelset_x$, so one can infer $\Borelset_x\cap [T] \notin \I$ from Lemma \ref{characterization of positive sets}. Thus $\varphi(x)$ holds. 
\end{proof}  

We now check that $\varphi(x)$ is a $\Pi^1_2$ and $\psi(x)$ a $\Sigma^1_2$ formula. 
First note that the statement $\Borelset_x\in\cN_\PP$ is $\Sigma^1_2$, since this holds if and only if there is a (countable) maximal antichain $\vec{S}=\langle S_i\mid i<m \rangle$ in $\PP$ with $\Borelset_x\cap \bigcup_{i<m} [S_i]=\emptyset$. 
Since $\I = \cI_\PP$ is the $\sigma$-ideal generated by $\cN_\PP$, the statements $\Borelset_x\in\I$ and $[T]\subseteq_\I \Borelset_x$ are $\Sigma^1_2$ as well. 
Thus $\varphi(x)$ and $\psi(x)$ are indeed of said complexity.

%We begin with the following claim which is easily seen to imply that $D_\I{\upharpoonright} \Borel({}^\omega \baseset)$ is $\Sigma^1_2$:

Since $\PP$ is absolutely ccc, the argument above shows that $\forall x\ \Phi(x)\Longleftrightarrow \Psi(x)$ is absolute to generic extensions. 
The above easily shows that $S=\{(x,s) \in \Borelcodes \times {}^{<\omega} \baseset \mid \forall T\in \bK_\I \ \sigma_s([T])\cap \Borelset_x\notin\I\}$ is absolutely $\Delta^1_2$, where $\Borelcodes$ denotes the set of Borel codes. 
Thus the function $\hat D$ which sends each Borel code $x$ to a Borel code $\hat D(x)$ for $D_\I(\Borelset_x)$ has a $\Sigma^1_2$ graph. 
Finally for each $s \in {}^{<\omega}$, $\{x \mid (x,s) \in S\}$ is absolutely $\Delta^1_2$ and hence universally Baire by \cite[Theorem 2.1]{MR1233821}.
It follows easily that $\hat D$ is universally Baire measurable.
\end{proof}

We want to point out that  the previous lemma remains true with virtually the same proof if we replace \emph{absolutely ccc} and  \emph{absolutely $\Delta^1_2$} by \emph{provably ccc} and \emph{provably $\Delta^1_2$}. 

\medskip
We now show that for all strongly linked tree forcings listed in Section \ref{section list of tree forcings}, the density operator $D_{\I_\PP}$ is induced by a universally Baire measurable function. 
Recall that a forcing is called \emph{Suslin} if $\PP$, $\leq_\PP$ and $\perp_\PP$ are $\Sigma^1_1$. 

\begin{proposition} 
For any strongly linked Suslin tree forcing $\PP$, $D_{\I_\PP}$ is induced by a universally Baire measurable function. 
\end{proposition}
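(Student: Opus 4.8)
The plan is to reduce the statement to Lemma~\ref{complexity of density operator}, which already delivers exactly this conclusion provided that $\PP$, $\leq_\PP$ and $\perp_\PP$ are $\Sigma^1_1$, that $\bK_\I$ is a $\Sigma^1_1$ subset of $\PP$, and that $\PP$ is absolutely ccc. The first of these is precisely the hypothesis that $\PP$ is Suslin, so there is nothing to do there. For the second, I would first observe that, since the null ideal is not a strongly linked tree ideal (Remark~\ref{lem:Random forcing no strongly linked dense subset}), Convention~\ref{convention} puts us in the ``other tree forcing'' case, so that $\bK_\I=\{T\in\PP\mid\stem_T=\emptyset\}$. The condition $\stem_T=\emptyset$ just says that the empty sequence is already a splitting node of $T$, i.e.\ that two distinct immediate successors of the root lie in $T$; this is a Borel --- indeed arithmetic --- condition on a code for $T$, so $\bK_\I$ is the intersection of the $\Sigma^1_1$ set $\PP$ with a Borel set and hence is itself $\Sigma^1_1$.

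The one spot where a genuine argument is needed is the verification that $\PP$ is absolutely ccc, and here I would exploit the $\Pi^1_2$ complexity of strong linkedness. Written out, ``$\PP$ is strongly linked'' is the statement $\forall S\,\forall T\,[(S,T\in\PP\ \wedge\ \stem_S\sqsubseteq\stem_T\ \wedge\ \stem_T\in S)\rightarrow\exists R\,(R\in\PP\ \wedge\ R\leq_\PP S\ \wedge\ R\leq_\PP T)]$, whose antecedent is $\Sigma^1_1$ (the stem comparisons being arithmetic) and whose consequent is $\Sigma^1_1$; thus each instance is a Boolean combination of $\Sigma^1_1$ predicates, and after the two outer universal quantifiers the whole assertion is $\Pi^1_2$ in the real parameter defining $\PP$. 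By Shoenfield absoluteness it therefore continues to hold in every generic extension, so $\PP$ is strongly linked in any $V[G]$; and a strongly linked tree forcing is $\sigma$-linked --- decompose it according to the countably many possible stems $\stem_T$, using that any two conditions $S,T$ sharing a stem $s$ satisfy $\stem_S\sqsubseteq\stem_T$ and $\stem_T=s\in S$ and so are compatible --- hence ccc. Thus $\PP$ is ccc in every generic extension, i.e.\ absolutely ccc.

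With all three hypotheses of Lemma~\ref{complexity of density operator} verified, that lemma applies and yields that $D_{\I_\PP}\colon\Borel({}^\omega\baseset)\to\Borel({}^\omega\baseset)$ is induced by a $\Delta^1_2$ function on Borel codes which is universally Baire measurable, as required. I expect the only mildly fiddly point to be checking that the auxiliary syntactic predicates used above (``$u=\stem_T$'', ``$u\in T$'', ``$\stem_S\sqsubseteq\stem_T$'') are genuinely arithmetic in the relevant codes and that the parameter defining $\PP$ is carried correctly through the Shoenfield step; neither is a real obstacle.
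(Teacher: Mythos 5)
Your argument is correct and follows essentially the same route as the paper's: invoke Lemma~\ref{complexity of density operator} after observing that Suslinness supplies the $\Sigma^1_1$ hypotheses, that $\bK_\I=\{T\in\PP\mid\stem_T=\emptyset\}$ is $\Sigma^1_1$, and that strong linkedness is $\Pi^1_2$, hence Shoenfield-absolute, hence (via $\sigma$-linkedness) gives absolute ccc. The paper additionally notes that strong linkedness makes $\perp_\PP$ arithmetical, but since Suslinness already demands $\perp_\PP$ be $\Sigma^1_1$ this remark is not essential and your proof does not need it.
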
 
\begin{proof} 
If $\PP$ is strongly linked, then $S \parallel T$ if and only if $\stem(S) \in T$, $\stem(T) \in S$, and $\stem(S)$ and $\stem(T)$ are compatible, so $\perp_\PP$ is arithmetical and hence $\Sigma^1_1$. 
Moreover, the fact that a Suslin tree forcing is strongly linked is $\Pi^1_2$ and hence absolute. Thus $D_{\I_\PP}$ is induced by a $\Sigma^1_2$, universally Baire measurable function on the Borel codes by Lemma \ref{complexity of density operator}. 
\end{proof}

\np 

%%%%%%%%%%%%%%%%%%
%%%%%%%%%%%%%%%%%%
\section{Ideals without density} 
% properties}

In this section, we study various counterexamples to density properties. 
%provide counterexamples to the Lebesgue densitytheorem for the ideals associated to several well-known tree forcings.

\subsection{Counterexamples} \label{section counterexamples} 
%for tree ideals} 
%Non-ccc tree ideals 
%from Section \ref{sec:dpideals}}

We first give counterexamples to the density property in Definition \ref{def:density points for ideals} for several non-ccc tree forcings. 
%We always consider the density property with respect to the function $s$ given by $s(T)=\stem_T$.

\begin{proposition} \label{counterexamples for Mathias, Sacks, Silver} 
  Let $\Mathias$, $\Silver$, $\Sacks$ denote Mathias, Silver and Sacks
  forcing. Then $\cI_{\Mathias}$, $\cI_{\Silver}$ and $\cI_{\Sacks}$
  do not have the shift density property. 
\end{proposition}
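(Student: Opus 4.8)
The plan is to exhibit, for each of the three forcings $\PP\in\{\Mathias,\Silver,\Sacks\}$, a single Borel set $A$ witnessing the failure of the $\cI_\PP$-shift density property, namely a closed, nowhere dense set of the form $A=[T_0]$ with $T_0\in\PP$ and $\stem_{T_0}=\emptyset$. I will check that $D_{\I_\PP}(A)=\emptyset$ while $A\notin\cI_\PP$; since then $D_{\I_\PP}(A)\triangle A=A\notin\cI_\PP$, this contradicts the density property of Definition~\ref{def:density points for ideals}.

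Two parts of the argument are uniform across the three forcings. First, $A=[T_0]\notin\cI_\PP$: each of $\Mathias$, $\Silver$, $\Sacks$ has fusion, so in fact $\cN_\PP=\cI_\PP=\cIs_\PP$ by the discussion in Section~\ref{section Positive Borel sets}, and $[T_0]\notin\cIs_\PP$ by Lemma~\ref{characterization of positive sets}(1). Second, $D_{\I_\PP}(A)\subseteq A$: if $x\notin[T_0]$, then since $[T_0]$ is closed there is $m$ with $N_{x{\upharpoonright}m}\cap[T_0]=\emptyset$, whence $\sigma_{x{\upharpoonright}n}(B)\cap[T_0]=\emptyset\in\cI_\PP$ for every $n\geq m$ and every $B$, so $x\notin D_{\I_\PP}(A)$. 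It therefore remains to show that no $x\in[T_0]$ is an $\I_\PP$-shift density point of $A$. By the definition of density point, this follows once we prove the following: for every $x\in[T_0]$ and every $n\in\omega$ there is $B\in\bL_{\I_\PP}$ --- that is, $B=[T]$ with $T\in\PP$ and $\stem_T=\emptyset$ --- such that $\sigma_{x{\upharpoonright}n}(B)\cap A\in\cI_\PP$. By shift invariance $\sigma_{x{\upharpoonright}n}(B)\cap[T_0]=\sigma_{x{\upharpoonright}n}(B\cap[T_0/(x{\upharpoonright}n)])$, and $[T_0/(x{\upharpoonright}n)]$ is a nowhere dense closed set, being homeomorphic to $[T_0]\cap N_{x{\upharpoonright}n}$. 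So the task is to find an empty-stem condition whose body misses this nowhere dense set, or at least meets it in a set in $\cI_\PP$.

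This last step is carried out separately for each forcing and is the technical core of the proof. For $\Sacks$, put $C=[T_0/(x{\upharpoonright}n)]$; since $C$ is nowhere dense we may choose basic open sets $N_{s_0}\subseteq N_{\langle 0\rangle}\setminus C$ and $N_{s_1}\subseteq N_{\langle 1\rangle}\setminus C$, and let $B$ be the perfect tree with $[B]=N_{s_0}\cup N_{s_1}$; then $\stem_B=s_0\wedge s_1=\emptyset$ and $[B]\cap C=\emptyset$. For $\Silver$, write $T_0=T_p$ where $p$ is a partial function with coinfinite domain, $0\notin\dom(p)$, $\dom(p)$ infinite, and $x\supseteq p$; choose $j\geq 1$ with $n+j\in\dom(p)$ and let $q$ be a Silver condition with $j\in\dom(q)$, $0\notin\dom(q)$ and $q(j)=1-p(n+j)$. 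Then $\sigma_{x{\upharpoonright}n}([T_q])$ is the body of a Silver condition $r$ with $r{\upharpoonright}n=x{\upharpoonright}n$ and $r(n+j)=q(j)$; since $r$ disagrees with $p$ at $n+j\in\dom(p)\cap\dom(r)$ we get $\sigma_{x{\upharpoonright}n}([T_q])\cap[T_p]=\emptyset$. For $\Mathias$, write $T_0=T_{(\emptyset,E)}$ with $E$ infinite, coinfinite and $0\in E$, so $[T_0]=\{\chi_Z\mid Z\subseteq E\}$ is nowhere dense, and let $x=\chi_X$ with $X\subseteq E$; set $q=\{0\}\cup\{i\geq 1\mid n+i\notin E\}$, which is infinite (as $E$ is coinfinite) and contains $0$, so $\stem_{T_{(\emptyset,q)}}=\emptyset$. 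Then $\sigma_{x{\upharpoonright}n}([T_{(\emptyset,q)}])$ is the body of the Mathias condition $(X\cap n,\,n+q)$, i.e.\ the set $\{\chi_Z\mid Z\cap n=X\cap n,\ Z\setminus n\subseteq n+q\}$, and its intersection with $[T_0]$ is $\{\chi_Z\mid Z\cap n=X\cap n,\ Z\setminus n\subseteq (n+q)\cap E\}$; since $(n+q)\cap E\subseteq\{n\}$ is finite, this intersection is finite, hence in $\cI_{\Mathias}$. In each case $x\notin D_{\I_\PP}(A)$, so $D_{\I_\PP}(A)=\emptyset$, which finishes the argument.

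I expect the main difficulty to be bookkeeping rather than any single deep point: in each case one must verify that the escaping tree $B$ really is a member of $\PP$ with empty stem (so that it lies in $\bL_{\I_\PP}$ and the definition of shift density point applies), that $\sigma_{x{\upharpoonright}n}(B)$ is again the body of a condition in $\PP$, and --- for $\Mathias$, where the intersection cannot be made empty --- that the leftover intersection really is $\cI_{\Mathias}$-null (here it is finite, hence a countable union of $\Mathias$-nowhere dense singletons). The only point at which the particular combinatorics of these forcings is used is the pair of facts $\cI_\PP=\cIs_\PP$ and $[T_0]\notin\cIs_\PP$ that deliver the $\cI_\PP$-positivity of $A$.
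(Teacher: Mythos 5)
Your overall strategy — pick an $\cI_\PP$-positive $A=[T_0]$ with $T_0\in\PP$ and empty stem, and show $A\cap D_{\I_\PP}(A)=\emptyset$, so that $A\triangle D_{\I_\PP}(A)\supseteq A\notin\cI_\PP$ — coincides with the paper's, but the execution is genuinely different. The paper handles all three cases at once: it fixes the single set $A=\{f\in{}^\omega2 : f(2n+1)=1\text{ for all }n\}$ and a single escaping tree $T$ (with $\spl(T)=2\NN$ and second coordinate forced to $0$, so $[T]=\{y:y(2k+1)=0\text{ for all }k\}$), shows $\sigma_{x\upharpoonright n}[T]\cap A=\emptyset$ whenever $n$ is even, and propagates to Silver and Sacks via $\Mathias\subseteq\Silver\subseteq\Sacks$. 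You instead give a tailored construction per forcing, allowing the escaping condition $B\in\bL_{\I_\PP}$ to depend on $x$ and $n$, and showing the residual intersection lands in $\cI_\PP$ even when it cannot be made empty. Your extra care buys robustness in the Mathias case: under Definition~\ref{def:listoftreeforcings}\ref{definition Mathias} the condition $T_{s,N}$ imposes $t(n)=1\Rightarrow n\in N$, so $N$ is an \emph{upper} bound on the support; the paper's $A$, which forces $1$s at all odd coordinates, is therefore not of the form $[S]$ for any $S\in\Mathias$, and is in fact $\Mathias$-nowhere dense (one escapes $A$ from any $T_{s,N}$ by appending two $0$s to the stem), so the asserted $\cI_\Mathias$-positivity of $A$ is questionable as written. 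Your $A=[T_{\emptyset,E}]$ is a Mathias body by construction, and your observation that $(n+q)\cap E\subseteq\{n\}$ makes the shifted intersection finite is exactly the right way to land in $\cI_\Mathias$. One caveat on your side: your Silver step tacitly uses the standard convention $[T_p]=\{y:y\supseteq p\}$, while the paper's Definition~\ref{def:listoftreeforcings} writes the Silver constraint as $f(n)\leq t(n)$; under that $\leq$-reading conditions can only force coordinates to $1$, so $q(j)=1-p(n+j)$ would not make $\sigma_{x\upharpoonright n}([T_q])$ disjoint from $[T_p]$ — you are almost certainly correcting a typo, but the discrepancy is worth flagging. Finally, a small economy: to get $[T_0]\notin\cI_\PP$ you only need the inclusion $\cI_\PP\subseteq\cIs_\PP$ (which holds for every tree forcing) together with Lemma~\ref{characterization of positive sets}(1), rather than the full identity $\cI_\PP=\cIs_\PP$ for fusion forcings.
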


\begin{proof}
  To see that $\cI_{\Mathias}$ does not have the $\cI_{\Mathias}$-shift density property, let
  $A = \{ f \in {}^\omega2 \st f(2n+1) = 1 \text{ for all } n \in \omega
  \}$. Note that $A = [S]$ for some $S \in \Mathias$ and hence
  $A \notin \cI_{\Mathias}$. We aim to show that no $x \in A$ is an
  $\cI_{\Mathias}$-density point of $A$, i.e.
  $A \cap D_{\Mathias}(A) = \emptyset$. Then in particular
  $A \triangle D_{\Mathias}(A) = A \cup D_{\Mathias}(A) \notin
  \cI_{\Mathias}$.

  Let $x \in A$ be arbitrary and let $T \in \Mathias$ be a perfect
  tree such that $\spl(T) = 2\mathbb{N}$ and
  $t^\smallfrown \langle i\rangle ^\smallfrown \langle j\rangle \in T$ iff $j = 0$ for all
  $t \in \spl(T)$ and $i,j \in \{0,1\}$. In particular
  $\stem(T) = \emptyset$. Let $n_0 \in \omega$ be arbitrary and let
  $n \geq n_0$ be even. Then
  $f_{x \upharpoonright n}[T] \cap A = \emptyset$ and thus $x$ is not
  an $\cI_{\Mathias}$-density point of $A$. 
  
  As $\Mathias \subseteq \Silver \subseteq \Sacks$, the claim also
  holds for $\cI_{\Silver}$ and $\cI_{\Sacks}$.  
\end{proof}

%\begin{lemma}
%  Let $\Silver$ denote Silver forcing. Then $I_{\Silver}^*$ does not
%  have the density property.
%\end{lemma}

%\begin{lemma} \label{failure for Sacks}
%  Let $\Sacks$ denote Sacks forcing. Then $I_{\Sacks}^*$ does not
%  have the density property.
%\end{lemma}

The following is a similar counterexample for Laver and Miller forcing. 

\begin{proposition} \label{counterexamples for Laver, Miller} 
  Let $\Laver$, $\Miller$ denote Laver and Miller forcing. Then $\cI_{\Laver}$ and $\cI_{\Miller}$ do not
  have the shift density property.
\end{proposition}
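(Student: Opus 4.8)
The plan is to mimic the argument for Proposition~\ref{counterexamples for Mathias, Sacks, Silver}, exhibiting for each of Laver and Miller forcing a tree $A = [S]$ with $S$ in the forcing (so that $A \notin \cI_\PP$ since $[S] \notin \cIs_\PP \supseteq \cI_\PP$ by Lemma~\ref{characterization of positive sets}) such that no branch of $A$ is an $\I$-shift density point of $A$; since $\bL_{\I_\PP} = \{[T] \mid T \in \PP, \stem_T = \emptyset\}$ here, it suffices to produce, for each $x \in A$, a single tree $T \in \PP$ with empty stem witnessing the failure of the density condition along the sequence $\langle x{\upharpoonright}n \mid n \in \omega\rangle$. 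As in the Mathias case, this gives $A \cap D_\I(A) = \emptyset$, hence $A \triangle D_\I(A) = A \cup D_\I(A) \notin \cI_\PP$, so the density property fails.

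First I would handle Laver forcing. A Laver tree has a stem, above which every node splits infinitely (into cofinally many successors in $\omega$). Take $A = [S]$ where $S$ is the Laver tree with empty stem in which every node splits, but a successor $t^\smallfrown\langle k\rangle$ is kept only when $k \geq 1$; thus every $f \in A$ satisfies $f(n) \geq 1$ for all $n$. Now given any $x \in A$ and any $n$, the shifted tree $\sigma_{x{\upharpoonright}n}[T]$ for a fixed Laver tree $T$ with empty stem whose root splits into successors that are all equal to $0$ at the appropriate coordinate will be disjoint from $A$: more carefully, pick $T$ to be the Laver tree with empty stem all of whose branches $g$ satisfy $g(0) = 0$ (e.g. insist the root's only successors in $T$ are of the form $\langle 0 \rangle$, and above that $T$ is a full Laver tree). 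Then $\sigma_{x{\upharpoonright}n}[T] = \{(x{\upharpoonright}n)^\smallfrown g \mid g \in [T]\}$ consists of sequences whose $n$-th value is $0$, so it is disjoint from $A$ since members of $A$ are everywhere $\geq 1$; as $\stem_T = \emptyset$, this $T \in \bL_\I$ witnesses that $x$ is not an $\I$-density point of $A$, for every $n_0$ (take $n = n_0$).

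For Miller (superperfect) forcing the same idea works with a cosmetic change: a Miller tree is a subtree of ${}^{<\omega}\omega$ with stem such that above every node there is an infinite-splitting node, and at an infinite-splitting node all of $\omega$ appears. Let $A = [S]$ with $S$ the superperfect tree with empty stem whose branches $f$ all satisfy $f(0) \geq 1$, say (or, to be safe about the structure of Miller trees, arrange $A$ so that its first splitting happens at level $0$ into $\{1,2,3,\dots\}$ and above each split one has a full superperfect tree). Then for $x \in A$ and $n \geq 0$, picking $T$ to be a superperfect tree with empty stem all of whose branches satisfy $g(0) = 0$ makes $\sigma_{x{\upharpoonright}n}[T]$ disjoint from $A$ by the same coordinate argument, and $\stem_T = \emptyset$ so $[T] \in \bL_\I$; the relevant trees are genuinely in Miller forcing since we only prune to one successor at one non-splitting level and leave full superperfect trees elsewhere. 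Hence $x \notin D_{\Miller}(A)$.

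The main obstacle is purely bookkeeping: one must check that the trees $S$ and $T$ constructed by pruning are legitimate conditions in Laver resp.\ Miller forcing (in particular that $S$ still has infinite/$\omega$-splitting everywhere above its stem, and that $T$ genuinely lies in $\bL_\I$, i.e.\ has empty stem and belongs to $\PP$), and that the forcings in question are shift invariant so that $\sigma_{x{\upharpoonright}n}[T]$ lies in the co-ideal in the relevant sense — but shift invariance is a standing assumption and the pruning clearly preserves the defining splitting properties, so no real difficulty arises. One should also note $\I_\PP \subseteq \cIs_\PP$ is all that is needed to conclude $A = [S] \notin \cI_\PP$ via Lemma~\ref{characterization of positive sets}, which applies since $S \in \PP$. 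As in the previous proposition, since $\Laver$-like and $\Miller$-like pruned trees are still in their respective forcings, the proof is a direct transcription of the Mathias/Silver/Sacks argument.
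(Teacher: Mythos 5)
Your choice of $A$ already fails before the issue with $T$ arises: for the Laver example the density property in fact holds and $D_\I(A)=A$. Indeed, if $x\in A$ then $x{\upharpoonright}n\in S$ for all $n$, so $\sigma_{x{\upharpoonright}n}^{-1}(A)=A$; and for any Laver tree $T$ with $\stem_T=\emptyset$, the tree $T'=\{t\in T\mid \forall k<|t|\ t(k)\geq 1\}$ is again a Laver tree with empty stem (pruning at most the single successor $t^\smallfrown\langle 0\rangle$ at each node leaves infinitely many), and $[T']=[T]\cap A$, so $\sigma_{x{\upharpoonright}n}([T])\cap A=\sigma_{x{\upharpoonright}n}([T'])\notin\I_\Laver$. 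Hence every $x\in A$ is a density point of $A$; and if $x\notin A$ then $x{\upharpoonright}n\notin S$ for all large $n$, so $\sigma_{x{\upharpoonright}n}(B)\cap A=\emptyset\in\I$ for every $B$, giving $x\notin D_\I(A)$. Your Miller set, constrained only at coordinate $0$, has the same defect even more visibly.

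The underlying obstruction, which also explains why your $T$ does not exist, is that over $\omega$ no Laver or Miller tree with empty stem can pin down a single fixed value at a coordinate: $\stem_T=\emptyset$ forces $\emptyset$ to have at least two (for Laver, infinitely many) incomparable direct successors, so $[T]$ realizes more than one first digit. The tree you describe, whose root has $\langle 0\rangle$ as its only successor, has $\stem_T=\langle 0\rangle\neq\emptyset$ and so $[T]\notin\bL_\I$. This is precisely where the analogy with the Mathias/Sacks/Silver argument over $\{0,1\}$ breaks down: a pruned binary tree can still have empty stem, but on $\baire$ one must make the set of \emph{forbidden} values at each coordinate infinite. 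The paper's proof does exactly this, taking $A=\{f\in{}^\omega\omega \mid f(n)\ \text{even for all}\ n\}$ and $[T]=\{g \mid g(n)\ \text{odd for all}\ n\}$; both are Laver (hence Miller) trees with empty stem, and $\sigma_{x{\upharpoonright}n}([T])\cap A=\emptyset$ for all $x\in A$ and $n$. Your overall scheme --- exhibit $A=[S]$ with $A\cap D_\I(A)=\emptyset$ and conclude as in Proposition~\ref{counterexamples for Mathias, Sacks, Silver} --- is the right one; only the arithmetic of the forbidden values needs fixing.
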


\begin{proof}
  Let
  $A = \{ f \in {}^\omega\omega \st f(n) \text{ is even for all } n \in
  \omega \}$. Then $A = [S]$ for some $S \in \Laver$ so in particular
  $A \notin \cI_{\Laver}$. We aim to show that no $x \in A$ is an
  $\cI_{\Laver}$-density point of $A$, i.e.
  $A \cap D_{\Laver}(A) = \emptyset$. As above this implies
  $A \triangle D_{\Laver}(A) \notin \cI_{\Laver}$.

  Let $x \in A$ be arbitrary and let $T \in \Laver$ be a perfect tree
  such that $\stem(T) = \emptyset$ and
  $[T] = \{ g \in {}^\omega\omega \st g(n) \text{ is odd for all } n \in
  \omega \}$. Let $n_0 \in \omega$ be arbitrary and let $n \geq
  n_0$. Then $f_{x \upharpoonright n}[T] \cap A = \emptyset$ and thus
  $x$ is not an $\cI_{\Laver}$-density point of $A$.
  
  Since $\Laver \subseteq \Miller$, the claim for  $\cI_{\Miller}$ follows. 
\end{proof}

%Since $\Laver \subset \Miller$, this proof also shows
%the following lemma.

%\begin{lemma}
%  Let $\Miller$ denote Miller forcing. Then $I_{\Miller}^*$ does not
%  have the density property.
%\end{lemma}

%\begin{remark} The counterexamples in Lemmas \ref{counterexamples for Mathias, Sacks, Silver} and \ref{counterexamples for Laver, Miller} also show that the density notion analogous to weak density points (see Definition \ref{naive definition of weak density points}) does not hold for the ideals   $\cI_{\Mathias}, \cI_{\Silver}, \cI_{\Sacks}, \cI_{\Laver}, \cI_{\Miller}$. 
%\end{remark}

\np 

%%%%%%%%%%%%%%%%%%
%%%%%%%%%%%%%%%%%%

\subsection{Selectors for the ideal of countable sets} \label{section: no selector}
%\todo{possibly a similar proof works for the $K_\sigma$-ideal } 
We now study the ideal $\cI$ of countable sets. In contrast to the previous results, we will show that 
%there is no reasonable selector for $E_I$. This implies that 
there is no Baire measurable selector with Borel values for the equivalence relation of having countable symmetric difference on the set of Borel subsets. 
This implies that the density property fails for $\cI$ for any reasonable notion of density point. 

To state the result formally, we need the following notions. 
%We now define the notions used in the next result. 
A \emph{selector} for an equivalence relation $E$ is a function that chooses an element from each equivalence class. 
We generalize this notion by replacing equality with a subequivalence relation $E'$ of $E$. 
%We consider equivalence relations $E\subseteq F$ and 
%work with \emph{selectors for $F/E$} as in the next definition, where $E\subseteq F$ are equivalence relations on a set $Y$. Such a function defines 

\begin{definition} \label{definition of selector 1} 
Suppose that $E'\subseteq E$ are equivalence relations on a set $B$ and $A\subseteq B$. 
%\begin{enumerate-(a)} 
%\item 
%A function $g\colon X\rightarrow Y$ is called \emph{$F$-selective} if $(x,g(x))\in F$ for all $x\in X$. 
%\item 
A \emph{selector for $E/E'$ on $A$} is an 
%$F$-selective 
$(E,E')$-homomorphism $A\rightarrow B$ that uniformizes $E$. 
%\end{enumerate-(a)} 
\end{definition} 

Equivalently, the induced map on $B/E'$ 
%induced by $g$ 
is a selector for the equivalence relation on $B/E'$ induced by $E$. 

In the following, $E'$ will be equality of decoded sets, $E$ the equivalence relation of having countable symmetric difference, $A$ will be the set of $F_\sigma$-codes, denoted also by $\Borelcodes_{F_\sigma}$,  and $B=\Borelcodes$, the set of Borel codes. 
%for a fixed projective class $\Lambda$ 
%(which we confuse with the set of codes). 
%It is easy to see that the density property for any notion of density points with values in $Y$ induces such a selector. 
%We next define a selector on the set of $F_\sigma$-codes with respect to an ideal. 
%A \emph{Borel code} for a subset of ${}^{\omega}2$ is a well-founded subtree of ${}^{<\omega}\omega$ whose end nodes have labels in ${}^{<\omega}\omega$. We will identify each Borel code with an element of ${}^{\omega}2$. Let $\Bc$ denote the set of such codes. 
More precisely, 
%\todo{decide: $F_\sigma$ or ${\bf \Sigma}^0_2$ everywhere?} 
an $F_\sigma$-code is a sequence 
%$\Lambda_{F_\sigma}$ denotes the set of $F_\sigma$-codes (sequences 
$\vec{T}=\langle T_n\mid n\in\omega\rangle$, where $T_n$ is a subtree of ${}^{<\omega}2$ for each $n\in\omega$. 
Note that $\Borelcodes_{F_\sigma}$ is therefore a Polish space. 
% and let $\Borelset_x$ denote the $F_\sigma$ set coded by $x\in \Lambda_{F_\sigma}$. 
%$\Lambda$ the set of Borel codes 
%${\bf \Sigma}^1_n$-codes or ${\bf \Pi}^1_n$-codes via a fixed universal set, 
We can assume that $\Borelcodes_{F_\sigma}\subseteq\Borelcodes$. 

Equality of Borel sets and equality modulo $\I$ induces the following equivalence relations on $\Borelcodes$. 
As before, let $\Borelset_x$ denote the Borel set coded by $x\in\Borelcodes$. 
%Borel codes (coded by well-founded labeled trees or in any other standard way) and let $\Borelset_x$ denote the Borel set coded by $x\in\Lambda$. 
%Let $\Lambda$ denote a projective pointclass. 
%We will also write $\Lambda$ for the subset of ${}^{\omega}2$ of codes for sets in $\Gamma$ given by a universal set and $\Borelset_x$ the set with code $x$. 
Let $E_=$ denote the equivalence relation on $\Borelcodes$ of equality of decoded sets, i.e. $(x,y)\in E_= \Longleftrightarrow \Borelset_x=\Borelset_y$. 
%We define a \emph{selector for an ideal $I$} as a selector modulo $E_=$ since sets but not codes themselves are relevant. 
Let further $(x,y)\in E_\I\Longleftrightarrow \Borelset_x\triangle \Borelset_y\in \cI$ for $x,y\in \Borelcodes$. 

\begin{definition} \label{definition of selector 2} 
%Suppose that $I$ is an ideal on $\Lambda$. 
% of subsets of ${}^\omega\basetset$. 
%\begin{enumerate-(a)} 
%\item 
%The equivalence relation $E_I$ on $\Lambda$ is defined by letting 
%\item 
A \emph{selector for $\I$ with Borel values} is a selector  $\Borelcodes_{F_\sigma}\to\Borelcodes$ for $E_\I/E_=$. 
% on $\Bc$. 
%\end{enumerate-(a)} 
\end{definition} 
%We have thus made the existence of an $\omega$-universally Baire selector precise. Let $I$ denote the ideal of countable sets. In the rest of this section, we will show that there is no such selector for $I$. 
%In the remainder of this section, we will prove the next result for the ideal $I$ of countable subsets of the Cantor space. \todo{move this into the proof below} 

%Since the range of the selector is $\Lambda$, we will say that it has Borel values. 
The restriction to $F_\sigma$-codes is purely for a technical reason: the proof of Theorem \ref{no selector for ideal of countable sets} will show that there is no reasonably definable selector for $\I$ on $\Borelcodes_{F_\sigma}$. 
It follows that there is no such selector on the set of all Borel codes. 
%it is already sufficient to show that no such selectors exist. 

The motivation for this definition is as follows. Consider any notion
of density points for $\I$ with Borel values, i.e. such that for any
Borel set $A$, the set of density points of $A$ is Borel. 
%a definition in $\Lambda$ and assume that PD holds. 
%with the property that any Borel set has a $\Lambda$ set of density points. 
If the density property holds for this notion, then the density operator induces a selector for $\I$ on the set of Borel codes. 
%But the properties of the selector contradict the previous theorem. 
%Since its restriction to $F_\sigma$-codes is Baire measurable and internal Cohen absoluteness holds. 

%The next result implies that there is no selector mapping Borel sets to Borel sets that is Baire measurable on every Borel set of codes. 

\begin{theorem} \label{no selector for ideal of countable sets}
There is no Baire measurable selector for $\I$, the ideal of countable sets, with Borel values. 
\end{theorem}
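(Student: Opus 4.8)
The plan is to derive a contradiction from the existence of a Baire measurable selector $D$ for $\I$ with Borel values, using a Cohen-forcing/genericity argument. Recall that such a $D$ is, by definition, an $(E_\I, E_=)$-homomorphism $\mathrm{B}_{F_\sigma} \to \mathrm{B}$ that uniformizes $E_\I$; in particular $B_{D(x)} =_\I B_x$ and $B_x =_\I B_y \Rightarrow B_{D(x)} = B_{D(y)}$ for all $F_\sigma$-codes $x, y$. Since $D$ is Baire measurable and $\mathrm{B}_{F_\sigma}$ is Polish, $D$ is continuous on a comeager set; this is the feature we will exploit.

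The key idea is to build two $F_\sigma$-codes that are ``the same modulo countable'' but which a continuous-on-a-comeager-set function cannot see as different. Concretely, I would fix a countable set $C = \{c_n \mid n \in \omega\} \subseteq {}^\omega 2$ and consider, for a generic $z \in {}^\omega 2$ (Cohen-generic over a suitable countable elementary submodel $M$ containing a code for $D$), the two $F_\sigma$-sets $A_0 = $ (some fixed Borel/$F_\sigma$ set built from $z$) and $A_1 = A_0 \triangle C$, or more flexibly $A_1 = A_0 \cup C$ versus $A_0 \setminus C$. Since $C$ is countable, $A_0 =_\I A_1$, so $D$ must send their codes to the same Borel code, hence $B_{D(\text{code of }A_0)} = B_{D(\text{code of }A_1)}$ as actual sets. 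The contradiction will come from choosing $A_0$ and the perturbation so that $A_0$ and $A_1$ are ``indistinguishable to $D$'' in the sense needed to force $D$ to assign them codes for sets that differ, e.g. by a mutual-genericity/symmetry argument: arrange a homeomorphism (or Cohen-generic shift) $h$ of ${}^\omega 2$ with $h[A_0] = A_1$ realizing a permutation of coordinates or of the countable set $C$, apply invariance of $D$ under this automorphism together with the $E_=$ requirement, and conclude $B_{D(A_0)}$ is invariant under $h$; then show no Borel set that is $=_\I$-equivalent to a sufficiently generic $A_0$ can be invariant under all the relevant $h$'s.

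More precisely, here is the route I would take. Let $M$ be a countable elementary submodel of some $H_\theta$ containing a Baire-measurable code for $D$ and a witness to the comeagerness of its continuity set. Take $x \in {}^\omega 2$ Cohen-generic over $M$ and let $A = A_x$ be the $F_\sigma$-set $\{ y \in {}^\omega 2 \mid \exists n\, y{\upharpoonright}n = x{\upharpoonright}n \ast 0 \ \text{(some canonical block pattern)}\}$ — any reasonable generic $F_\sigma$-set whose $=_\I$-class is not Borel-invariant under a rich group of automorphisms will do. For each $k$, let $\pi_k$ be the automorphism of ${}^\omega 2$ flipping the $k$-th coordinate; then $A \triangle \pi_k[A]$ might not be countable, so instead I would use automorphisms $\pi$ fixing a tail (finite-support coordinate permutations): for such $\pi$, $A \triangle \pi[A]$ is still $F_\sigma$ and generically of size continuum, so that doesn't immediately help either. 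The cleaner approach: directly perturb by a countable set. Fix a countable dense $C$. Let $x, x'$ be mutually Cohen-generic over $M$ and consider $A_x$ versus $A_x \triangle (C \cap (\text{piece depending on } x'))$. By genericity of $x'$ over $M[x]$ and the fact that $D$'s continuity set is comeager and coded in $M$, the value $D(\text{code}(A_x))$ depends only on $A_x$'s $=_\I$-class, hence is already in $M[x]$; but one can choose the countable perturbation using $x'$ so that any single candidate Borel code for the common value is ruled out by a density argument — for each potential Borel code $w$, the set of generics for which $B_w \ne B_{D(A_x)}$ modulo the perturbation is dense, and there are only countably many $w \in M[x]$. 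Running this as a fusion/diagonalization over the countably many relevant codes yields the contradiction.

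The main obstacle I expect is making the diagonalization genuinely work: one must arrange a single countable perturbation $C$ (or a Cohen-generic $A_0$ together with $C$) such that \emph{simultaneously}, for the common Borel code $w := D(\text{code}(A_0 \cup C)) = D(\text{code}(A_0))$, the set $B_w$ cannot equal $A_0$ modulo $\I$ — the trouble being that $B_w$ is allowed to differ from $A_0$ by \emph{any} countable set, so one cannot pin it down pointwise; instead one must pin down its ``$F_\sigma$-skeleton'' or its closure-type invariants. The trick is that while $C$ is countable, the perturbed set $A_0 \cup C$ can be made to have a \emph{different} canonical Borel representative in a way $D$ must respect because of $E_=$-invariance applied along a generic path: the uniformization condition forces $B_{D(x)}$ itself to lie in a definable way over the $=_\I$-class, and a reflection/absoluteness argument (the value is computed the same way in $M[x]$ and in $V$) combined with the impossibility of a Cohen-generic real being in the ground model gives the contradiction. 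Handling this coding-vs-invariance tension carefully — ensuring the selector's output is genuinely trapped inside a small model while the input's $=_\I$-class is not — is the crux, and is presumably where the authors invoke a precise absoluteness lemma (the kind foreshadowed by Lemma~\ref{internal projective Cohen absoluteness}).
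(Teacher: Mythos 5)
Your proposal captures the correct general framework (Baire measurable $\Rightarrow$ continuous on a comeager set, work with Cohen genericity over a countable $M \prec H_{\omega_1}$ containing a code for the selector, perturb inputs by countable sets), and you correctly identify the central difficulty: the selector's output $B_{D(A)}$ is allowed to disagree with $A$ on an arbitrary countable set, so one cannot trap it pointwise by mere continuity. However, the resolution you propose — diagonalizing against the countably many Borel codes $w$ appearing in $M[x]$, running a fusion so that $B_w \ne B_{D(A_x)}$ ``modulo the perturbation'' for all such $w$ — is not a working argument as stated: since the perturbation is countable, it does not move the $=_\I$-class at all, so it gives you no leverage against any individual candidate $w$; and the proposal never explains how the diagonalization would conclude.

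The paper's actual mechanism is substantially different and more specific. It does not build the $F_\sigma$ set from a single Cohen real $x$; rather, it forces with the finite-support product $\TT^\omega$ that adds a generic \emph{sequence of perfect trees} $\langle \dot T_n\mid n\in\omega\rangle$ (each $\TT$ condition is a finite tree, so branches of $\dot T_0$ are mutually Cohen-generic). The first key step shows, using the perfect set property for Borel sets and $\Pi^1_1$-absoluteness between $M[g]$ and $V$, that any selector $F$ must satisfy $[\dot T_n]\setminus B_{F(\dot C)}$ countable for every $n$; this singles out a name $\sigma_x$ for a branch of $\dot T_0$ forced to lie in $B_{F(\dot C)}$. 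The crux — which your proposal has no analogue of — is a technical projection/rearrangement argument between the auxiliary forcings $\PP_\sigma$, $\PP_\sta$, and $\TT^\omega\times\Add(\omega,1)$: it shows that \emph{any} Cohen real $y$ over $M[g]$ in $V$ can be realized as $\sigma^{\hat g}$ in an isomorphic rearrangement $M[\hat g\times\hat h]=M[g\times h]$, while the generic $F_\sigma$ set changes by exactly one point (hence stays in the same $=_\I$-class). Combining this with $E_\I$-invariance of $F$ and Borel absoluteness yields that $B_{F(C)}\setminus B_C$ contains \emph{every} Cohen real over $M[g]$ in $V$, which is uncountable — the contradiction comes from counting, not from diagonalizing against codes. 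You would need to supply this rearrangement argument (or an equivalent way to show that a whole perfect set of reals must lie in the selector's output but outside the input) for your approach to become a proof.
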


It follows that there is no selector $\Borel({}^\omega \baseset)\to\Borel({}^\omega \baseset)$ for $=_\I$ 
that is induced by a universally Baire measurable function (on the codes). 

Note that this result is analogous to the fact that $E_0$ does not have
a Baire measurable selector. 
However, the rather short proof of the latter (see e.g. \cite[Example 1.6.2]{Hjorth10}) is of no use here. 

%We thank Vladimir Kanovei for sending us a preprint \cite{kanovei-no-selector} with a simplified construction based on our proof of Theorem~\ref{no selector for ideal of countable sets}.
The current proof simplifies that of a previous version, and resembles Kanovei's version \cite{kanovei-no-selector} in some aspects, but was arrived at independently. 

%Thus the non-existence of such a selector for $I$ implies the failure of any simply defined density property. 
%Note that it follows that in any model of set theory without the axiom of choice in which every set of reals has the property of Baire [cite Solovay: A model of set theory..., Shelah: can you take Solovay's inaccessible...], there is no selector for $I$ and therefore no notion of density points for which the density property holds. 

\medskip

Before we begin the long proof of Theorem~\ref{no selector for ideal of countable sets} let us sketch the main ideas.
Suppose that $F\colon\Borelcodes_{F_\sigma}\to\Borelcodes$ is a selector for $E_I / E_=$. 
Thus, given a code $C$ for an $F_\sigma$ set, $\Borelset_{F(C)}$ is a representative for the equivalence class of $\Borelset_C$ modulo $I$.
%Since we will evaluate $F$ at Cohen generic codes, we will be able to assume that $F$ is a continuous function.

We first construct a sequence of names for perfect trees $\langle \dot T_n \mid n\in\omega\rangle$ added by %(a forcing %$\TT^\omega$  which is equivalent to) 
Cohen forcing, $\Cohen$.
Thus it is forced that
%\todo{write $x$ instead of $C$?} 
$\bigcup_{n\in\omega}[\dot T_n]$ is an $F_\sigma$ set; let $\dot C$ be a name for its code (i.e., for the sequence of trees).
We will fix a large enough countable elementary submodel $M$ of $H_{\omega_1}$ and 
carefully construct a name $\sigma \in M$ for an element of the representative $\Borelset_{F(\dot C)}$ of $\bigcup_{n\in\omega}[\dot T_n]$.

%We will choose a particular $\Cohen$-generic $g$ over $M$.
Now let $g\times h$ be any $\Cohen^2$-generic over $M$.  
We will find a different $g' \in M[g][h]$ which is also Cohen-generic over $M$ such that $\bigcup h = \sigma^{g'}$ and
%\todo{write $x$ instead of $C$?} 
\[
\bigcup_{n\in\omega}[\dot T^{g}_n] = \Big(\bigcup_{n\in\omega}[\dot T^{g'}_n] \Big)\setminus \{\sigma^{g'}\}.
\] 
Thus letting $C=\langle \dot T^{g}_n \mid n\in\omega\rangle$, a code for $\bigcup_{n\in\omega}[\dot T^{g}_n]$, 
and $C' =\langle \dot T^{g'}_n \mid n\in\omega\rangle$, a code for $\bigcup_{n\in\omega}[\dot T^{g'}_n]$, 
it follows that $\Borelset_{F(C)} = \Borelset_{F(C')}$, i.e., the two sets have the same representative.
A careful choice of the name $\sigma$ and of the model $M$ is crucial for the construction of $g'$.
Since $\bigcup h = \sigma^{g'} \in \Borelset_{F(C')} = \Borelset_{F(C)} $,  
we will be able to conclude that  any Cohen real over $M[g]$ is an element of 
$\Borelset_{F(C)} \setminus \bigcup_{n\in\omega}[\dot T^g_n]$.
Since there are uncountably many Cohen reals over $M[g]$ this contradicts the assumption that $F$ is a selector.

% for some cardinal \todo{$\theta\geq\omega_1$?} $\theta$. 
%One then shows that the properties of a selector are not satisfied on the $E_\I$-equivalence class of the $F_\sigma$-code $C=\langle T_n \mid n\in \omega\rangle$. 

%
%
%
\iffalse 
is as follows. We assume that there is such a selector for $E_\I$. 
Its restriction $F$ to $\Lambda_{F_\sigma}$ is Baire measurable by the assumption. 
% and has a continuous approximation on a comeager Borel set 
Let $M$ be a countable transitive model that contains a real code for a continuous approximation of $F$. 
We will obtain a Cohen extension $M[g]$ by adding a generic sequence $\vec{T}=\langle T_n\mid n\in\omega\rangle$ of subtrees of ${}^{<\omega}2$. Thus the set $C=\bigcup_{n\in\omega}[T_n]$ is a generic $F_\sigma$ set. 
The proof will show that any Cohen real in $V$ over $M[g]$ is an element of \todo{NOTATION doesn't make sense} $F(C)\setminus C$, contradicting the assumption that $F$ is a selector for $E_I$ on the set of $F_\sigma$-codes. 
%One can show that in $M[g]$ there is no selector for $E_I$ on the equivalence class $[C]_{E_I}$ that is definable from parameters in $M$. 
%We will use the following version of the fact that Cohen forcing adds a perfect set of Cohen reals. 
\fi 
\iffalse 
We will use the following equivalence from [Theorem 2.1, Feng-Magidor-Woodin]. 

\begin{lemma} 
The following conditions are equivalent for a subset $A$ of ${}^\omega\basetset$. 
\begin{enumerate-(a)} 
\item 
$A$ is $\omega$-universally Baire. 
\item 
There is an $\omega$-complemented pair $(S,T)$ for $A$. 
%i.e. $A=p[S]$ and $(p[S],p[T])$ form a partition of ${}^\omega\basetset$ in every generic extension of $V$ by Cohen forcing. 
\end{enumerate-(a)} 
\end{lemma} 
\fi 
%
%
%

\medskip

We now introduce some notation to aid us in the proof of Theorem~\ref{no selector for ideal of countable sets}.
To obtain our sequence $\langle \dot T_n\mid n<\omega\rangle$ we will use a particular presentation of Cohen forcing.
%The following forcing $\TT$ adds a generic subtree of ${}^{<\omega}2$. 
%Let $t^{(n)}$ denote the full binary tree of height $n$ and $\spli(t)$ the set of splitting nodes of a tree $t$. 
\begin{notation}\label{notation}
Let $\TT$ be the forcing whose conditions are finite subtrees $t$ of ${}^{<\omega}2$, 
%with $\spli(t)\cong t^{(n)}$ for some $n\in\omega$. 
%Moreover, the conditions are 
ordered by end extension. 
By this we mean that one can only extend a tree at its maximal nodes. 
For any $\TT$-generic filter $G$ we refer to the tree $\bigcup G$ as the \emph{tree added by $\TT$}. 

Let $\TT^\omega=\prod_{n\in\omega} \TT_n$ denote the finite support product of $\omega$ copies of $\TT$ and $\dot{T}_n$ a $\TT_n$-name for the tree added by $\TT_n$. 
We can identify each $\dot{T}_n$ canonically with a $\TT^\omega$-name. 
\end{notation}

Since $\TT^\omega$ is countable and atomless, indeed it is a presentation of Cohen forcing.

We are now ready to prove Theorem \ref{no selector for ideal of countable sets}.

\begin{proof}[Proof of Theorem \ref{no selector for ideal of countable sets}] 
%Let $\Lambda_{F_\sigma}$ the set of $F_\sigma$-codes $\vec{S}=\langle S_n\mid n\in\omega\rangle$, where $S_n$ is a subtree of ${}^{<\omega}2$ for each $n\in\omega$. 
Suppose that there is a selector $F$ on $\Borelcodes_{F_\sigma}$ as in the statement of the theorem. 
%By an $F_\sigma$-code, we mean a sequence $\vec{S}=\langle S_n\mid n\in\omega\rangle$ of subtrees of ${}^{<\omega}2$. 
%Let $A$ denote the closed subset of ${}^{\omega}2$ of all $F_\sigma$-codes. 
%This is a closed subset of ${}^{\omega}2$. 
Since $F$ is Baire measurable, there is a comeager $G_\delta$ subset
$A$ of the Polish space $\Borelcodes_{F_\sigma}$ such that $F{\upharpoonright}A$ is continuous. 
Let $x_A$ be a real in which both a Borel code for $A$ and a code for $F{\upharpoonright}A$ are computable. 
Moreover, let $M\prec H_{\omega_1}$ be countable with $x_A\in M$.

Work in $M$. 
Note that $\TT^\omega \in M$; the same holds for all other objects defined in \ref{notation}.
Let $\dot{C}\in M$ be a $\TT^\omega$-name for the canonical $F_\sigma$-code for $\bigcup_{n\in\omega} [\dot{T}_n]$ and $\dot{F}\in M$ a $\TT^\omega$-name for $F{\upharpoonright}A$. 
%$F{\upharpoonright}B$ that is given by its definition as a continuous function. 

Working in $V$ again, $\dot{F}^g(x)=F(x)$ for all $x\in A\cap M[g]$ and all $\TT^\omega$-generic filters $g\in V$ over $M$. 
%Then \todo{symbol for $1$?} $1_{\TT^\omega}$ forces that $\dot{F}$ is a partial selector for $E_I$. 
%Let further $\dot{C}$ be a $\TT^\omega$-name for an $F_\sigma$-code of $\bigcup_{n\in\omega}[\dot{T}_n]$. 

%\begin{claim*} 
%$1 \Vdash^M_{\TT^\omega} [\dot{T}_n]\cap B_{\dot{F}(\dot{C})}\neq\emptyset$ for all $n\in\omega$. 
%\end{claim*} 
%\begin{proof} 
%Assume towards a contradiction that $p\Vdash^M_{\TT^\omega} [\dot{T}_n]\cap B_{\dot{F}(\dot{C})}=\emptyset$ for some $p\in\TT^\omega$ and some $n\in\omega$. 
%Since $A$ is comeager, there is a $\TT^\omega$-generic filter $g$ over $M$ with $p\in g$ and $\dot{C}^g\in A$. 
%Then 
%$\dot{F}^g(\dot{C}^g)=F(\dot{C}^g)$ and $B_{\dot{F}^g(\dot{C}^g)}=B_{F(\dot{C}^g)}$. 
%We have $M[g]\models [\dot{T}_n^g]\cap B_{\dot{F}^g(\dot{C}^g)}=\emptyset$ and hence $[\dot{T}_n^g]\cap B_{F(\dot{C}^g)}=[\dot{T}_n^g]\cap B_{\dot{F}^g(\dot{C}^g)}=\emptyset$ by $\Pi^1_1$-absoluteness between $M[g]$ and $V$. 
%But  this contradicts the assumption that $F$ is a selector. 
%\end{proof} 

\begin{claim} 
For all $n\in\omega$, $1 \Vdash^M_{\TT^\omega} [\dot{T}_n]\setminus \Borelset_{\dot{F}(\dot{C})}$ is countable. 
\end{claim} 
\begin{proof} 
Assume towards a contradiction that $p\Vdash^M_{\TT^\omega} [\dot{T}_n]\setminus \Borelset_{\dot{F}(\dot{C})}$ is uncountable for some $p\in\TT^\omega$ and some $n\in\omega$. 
%Since $M\prec H_{\theta}$, this also holds in $M$. 
Since $A$ is comeager, any $\TT^\omega$-generic filter $g$ over $M$ with $p\in g$ will satisfy $\dot{C}^g\in A$. 
Let $C=\dot{C}^g$. 
Then 
%Since $\dot{C}^g\in A$, 
$\dot{F}^g(\dot{C}^g)=F(C)$. 
%and $B_{\dot{F}^g(\dot{C}^g)}=B_{F(\dot{C}^g)}$. 
Since $p\in g$, $M[g]\models [\dot{T}_n^g]\setminus \Borelset_{F(C)}$ is uncountable. 

Work in $M[g]$. 
By the perfect set property for Borel sets, there is a perfect tree $S$ whose branches are all elements of $[\dot{T}_n^g]\setminus \Borelset_{F(C)}$. 

%Now work in $V$. 
By $\Pi^1_1$-absoluteness between $M[g]$ and $V$, all branches of $S$ in $V$ are elements of  $[\dot{T}_n^g]\setminus \Borelset_{F(C)}$. 
%Hence $[\dot{T}_n^g]\setminus B_{F(\dot{C}^g)}=[\dot{T}_n^g]\setminus B_{\dot{F}^g(\dot{C}^g)}$ is uncountable by $\Pi^1_1$-absoluteness between $M[g]$ and $V$. 
But this contradicts the assumption that $F$ is a selector. 
%We claim that this also holds for $B_{F(\dot{C}^g)}$ and thus obtain a contradiction. 
%Therefore $[\dot{T}_n^g]\cap B_{F(\dot{C}^g)}=\emptyset$; but 
%Since the subset of $A$ of $F_\sigma$-codes in $V$ that are $\TT^\omega$-generic over $M$ is comeager in $A$, it follows that 
%We can assume without loss of generality that $\sigma$ is a name for an element of $\dot{T}_0$. 
\end{proof}

We now resume working in $M$. 
For the construction of $ \sigma$ we need some more notation:
\begin{notation}\label{notation-pi}~
\begin{enumerate-(1)}
\item If $S$ is a subtree of ${}^{<\omega}2$, 
%\todo{check if the notation split is used somewhere else in the paper already} 
let $\operatorname{split}(S)$ denote the set of splitting nodes of $S$, i.e. those with at least two direct successors and let $\operatorname{term}(S)$ denote the set of terminal (i.e. maximal) nodes in $S$. 
% (term stands for \emph{terminal}). 
%\item If $S$ is perfect, let $\pi_S$ denote the unique order isomorphism from ${}^{<\omega}2$ to $\operatorname{split}(S)$. 
\item\label{pi_t} 
If $S$ is an arbitrary subtree of ${}^{<\omega}2$, let $\pi_S$ denote the unique order isomorphism from a subtree of ${}^{<\omega}2$ onto $\operatorname{split}(S)\cup\operatorname{term}(S)$ 
%Note that 
such that every node in $\dom(\pi_S)$ is either splitting or maximal in $\dom(\pi_S)$. 
(If $S$ is perfect, then $\pi_S$ is the unique order isomorphism from ${}^{<\omega}2$ to $\operatorname{split}(S)$.) 
\item For each $x\in {}^\omega 2$, let $ \sigma_x$ be a $\TT^\omega$-name for $\bigcup_{n\in\omega} \pi_{\dot{T}_0}(x{\upharpoonright}n)$. 
\end{enumerate-(1)}
\end{notation}

Using the previous claim, elementarity, the fact that $\TT^\omega$ preserves cardinality and $({}^\omega 2)^M$ is uncountable in $M$, 
there is $x\in ({}^{\omega}2)^M$ and $p\in \TT^\omega$ with $p \Vdash^M_{\TT^\omega} \sigma_x\in [\dot{T}_0]\cap \Borelset_{\dot{F}(\dot{C})}$. 

\begin{notation}
Until the end of the proof of Theorem~\ref{no selector for ideal of countable sets} and Claim~\ref{cl.isomorphism}, let us fix $x\in ({}^{\omega}2)^M$ and $p_0\in \TT^\omega$ with $p_0 \Vdash^M_{\TT^\omega} \sigma_x\in [\dot{T}_0]\cap \Borelset_{\dot{F}(\dot{C})}$. 
Moreover, we assume $p_0=\emptyset$ for notational convenience. 
We shall also write $\sigma$ for $\sigma_x$ and $\stem(T)$ for $\stem(T)$. 
\end{notation}

%We will now show that it is forced that $\bigcup_{n\in\omega} [\dot T_n] \setminus \{\dot \sigma\} = \bigcup_{n\in\omega} [\dot{T}'_n]$ for a $\TT^\omega$-generic sequence $\langle \dot T'_n \mid n\in\omega\rangle$  and moreover that this sequence can be chosen so that $\dot \sigma$ together with $\langle \dot T'_n \mid n\in\omega\rangle$ is  forced to be (or to give rise to a) $\Cohen\times\TT^\omega$-generic. 
%This will then lead us to the contradiction that $\Borelset_{F(\dot C)}\setminus \Borelset_{\dot C}$ must be uncountable, as it must contain many Cohen reals. 

Our next goal is to demonstrate that for any $\TT^\omega$-generic $g$ over $M$, $\Borelset_{F(\dot C^g)}\setminus \Borelset_{\dot C^g}$ must contain every real which is Cohen over $M[g]$ (and hence must be uncountable, leading to a contradiction). 
The next claim will be crucial.

\begin{claim}\label{cl.isomorphism}
In every $\TT^\omega\times\Cohen$-generic extension $M[g \times h]$ we can find  $g'$ which is $\TT^\omega$-generic over $M$ such that
\begin{align}
\label{e.iso.cohen}  \sigma^{g'} &= \bigcup h, \\
\label{e.iso.trees} \Big(\bigcup_{n\in\omega} [\dot T^{g'}_n] \Big)\setminus \{\sigma^{g'}\} &= \bigcup_{n\in\omega} [\dot T^{g}_n].
\end{align}
%where $\ \dot T^*_n$ is a name for the $n$-the tree added by the second component of $\Cohen\times\TT^\omega$.
In fact, there exist in $M$ a dense subset $D$ of $\TT^\omega\times\Cohen$ and a projection
\[
\pi\colon D \to  \TT^\omega
\]
so that 
the above holds with $g'= \pi\big[(g \times h )\cap D\big]$. (See \cite[Definition 5.2]{MR2768691} for the definition of projections.) 
\end{claim}

Assuming Claim~\ref{cl.isomorphism} for the moment, we can easily finish the proof of Theorem~\ref{no selector for ideal of countable sets} as follows. 
Fix any $\TT^\omega$-generic filter $g$ over $M$ in $V$.
Since $A$ is comeager and is coded in $M$,  we have $C=\dot{C}^g\in A$.  

\begin{claim}
$\Borelset_{F(C)}\setminus \Borelset_C$ contains every Cohen real over $M[g]$ in $V$. 
\end{claim} 
\begin{proof} 
Let $h$ be any Cohen generic filter over $M[g]$ in $V$ and let $c=\bigcup h$. 
%Since $x$ is a Cohen real over $M[g]$, we obtain 
One easily obtains $c\notin \Borelset_C$ by a density argument. 
%Let $h$ be Cohen generic over $M[g]$ and $\theta\in M$ a name for the Cohen real. Since there are uncountably many Cohen reals over $M[g]$ in $V$ below any condition, \todo{check} it it sufficient to show that $\theta^h \in F(C)\setminus C$. 

It remains to show that $c\in \Borelset_{F(C)}$. 
By the previous claim we can find $g'\in M[g\times h]$ which is $\TT^\omega$-generic over $M$ and so that $c=\sigma^{g'}$ and $\Borelset_{\dot{C}^{g'}}=\Borelset_{C}\cup\{\sigma^{g'}\}$. 
Moreover $\sigma^{g'}\in \Borelset_{F(\dot{C}^{g'})}$ by the choice of $\sigma$ and Borel absoluteness between $M[g']$ and $V$. 
Since $F$ is a selector, $\Borelset_{F(\dot{C}^{g'})}=\Borelset_{F(C)}$. 
We thus have $c=\sigma^{g'}\in \Borelset_{F(\dot{C}^{g'})} = \Borelset_{F(C)}$. 
\end{proof} 

%Thus $\Borelset_{F(C)}\setminus \Borelset_C$ contains every Cohen real over $M[g]$ in $V$ and is hence uncountable. 
By the previous claim, $\Borelset_{F(C)}\setminus \Borelset_C$ is uncountable. 
But this contradicts our assumption that $F$ is a selector. 
%\renewcommand{\qedsymbol}{{\tiny  Theorem~\ref{no selector for ideal of countable sets}, assuming Claim~\ref{cl.isomorphism}~}$\Box$}
%\end{proof}

\medskip

To finish the proof of Theorem~\ref{no selector for ideal of countable sets}, it remains to prove Claim~\ref{cl.isomorphism}.
We need some more notation:
\begin{notation}%~
%\begin{itemize}
%\item 
For $s,t\in{}^{<\omega}2$ write $s \wedge t$ for the longest common initial segment of $s$ and $t$. 
%\end{itemize}
\end{notation}

\begin{proof}[Proof of Claim~\ref{cl.isomorphism}.] 
Before we define %$D$ and 
$\pi$, we define a map 
$\hat{\pi}$ on a larger domain. The projection $\pi$ will be the restriction of $\hat{\pi}$ to a dense subset $D$ of $\TT^\omega\times\Cohen$. 
This approach is convenient since $\hat{\pi}$ captures both the action of $\pi$ on conditions in $\TT^\omega\times\Cohen$, 
as well the ``continuous extension'' of this action to generic objects for $\TT^\omega\times\Cohen$.

The domain of $\hat{\pi}$ is the set of pairs $(\langle t_n\mid n < l \rangle, p)$ such that %either
%\begin{equation*}%\label{e.inD}
%\big(\langle t_n \mid n < l \rangle, p\big) \in \TT^\omega\times\Cohen
%\end{equation*}
%or such that $p \in {}^\omega 2$, $l=\omega$, and each $t_n$ is a perfect subtree of ${}^{<\omega} 2$.
$l\in\omega+1$, $p \in {}^{\leq \omega} 2$,  and each $t_n$ is a subtree of ${}^{<\omega} 2$.
Suppose we are given $q=(\langle t_n \mid n < l \rangle, p)\in \dom(\hat{\pi})$. 
We shall let
\[
\hat{\pi}(q)=   \langle t'_j \mid j< l'\rangle, 
\] 
where $\langle t'_j \mid j< l'\rangle$ is a sequence of subtrees of ${}^{<\omega}2$ with $l' \in \omega+1$ which is constructed as follows. 
%In the case where \eqref{e.inD} fails and $l=l'=\omega$, we let us write 
%\[
%I^\infty(\langle t_n \mid n < \omega \rangle, p) =   \langle t'_j \mid j< \omega'\rangle
%\] 
%for later reference.

%\medskip

We first construct a sequence 
%$\vec n^q=
$\langle n(i) \mid i < m\rangle$ of natural numbers $n(i)<l$, where $m\in\omega+1$.
Let $k \in \omega$ and suppose we have already defined $\langle n(i) \mid i<k\rangle$ (if $k=0$, this is the empty sequence). 
Let $n(k)$ be least number, if it exists, that satisfies the following conditions: 
%$n(k) > n(k')$ for each $k'<k$ such that and

%\bigskip 
%\noindent 
%{\bf Condition.} 
\begin{conditions} \ 
\begin{enumerate-(a)}
\item
%\label{req.n(k)} 
If $k>0$, then 
\begin{enumerate-(i)} 
\item 
$n(k) > n(k-1)$,
%\item $p \res n(k) \perp \stem(t_{n(k-1)})$, %or equivalently, $wedge$
%\item $p \res n(k) \subseteq \stem(t_{n(k)})$,
%\item $p \perp \stem(t_{n(k)})$, %
\item 
$p {\wedge} \stem(t_{n(k-1)}) \subsetneq p {\wedge} \stem(t_{n(k)})$ and 
\item  
$\stem(t_{n(k)}) \perp p$. 
\end{enumerate-(i)} 
\item 
Letting 
%$t  =p \cup \bigcup_{i\leq k} t_{n(i)}$, 
%$t  =\{p\}\cup\big\{p\wedge\stem(t_{n(i)}) : i \leq k\big\} \cup \bigcup_{i\leq k} t_{n(i)}$, 
$t  =\{p{\upharpoonright}j\mid j\in\omega \} \cup \bigcup_{i\leq k} t_{n(i)}$, 
we have 
\begin{enumerate-(i)} 
\item 
$x{\res} (k+1)\in\dom(\pi_t)$ and\footnote{See Notation~\ref{notation-pi}\ref{pi_t} for the definition of the isomorphism $\pi_t$.}
\item 
%\[
$\pi_t\big(x{\res} (k+1)\big) \subseteq p$. 
\end{enumerate-(i)} 
%\]
\end{enumerate-(a)}
\end{conditions} 
%if such $n(k)$ exists. 
%
%letting 
%\[
%%$
%t = \{p \res k \mid k \leq l \}\cup \bigcup_{i\leq k} \{ \stem(t_{n(i)})\res k \mid k\leq l\} 
%%t = \{p \res n(i) \mid i \leq k\}\cup \{\stem(t_{n(i)}) \mid i\leq k\}
%%$
%\] 
%it holds that $\pi_t\big(x\res k+1\big)$ is defined and
%\begin{equation}\label{e.pi_t}
%\pi_t\big(x\res k+1\big) \subseteq p,
%\end{equation}

%if such $n(k)$ exists. 

%
If such $n(k)$ does not exist, let $m=k$ and declare the construction of $\langle n(i) \mid i < m\rangle$ complete.
If $n(k)$ is defined for each $k<\omega$ we let $m=\omega$.
This completes the construction of 
%$\vec n^q=
$\langle n(i) \mid i < m\rangle$.

We now define $\langle t'_j \mid j< l'\rangle$ and thus $\hat{\pi}$. 
% as follows. 
First, let 
%$l$ by $|p\wedge \stem(t_{n(m-1)})|+1$, since $l$ didn't make sense. Also, the case $m=\omega$ was missing. } 
\begin{equation}\label{e.t'_0}
%t'_0 = \big\{p {\res} j \mid j < \min(\lvert p \rvert +1,\omega)\big\} \cup \bigcup_{i<m} t_{n(i)}. 
t'_0 = \{p {\res} j \mid j \in\omega \} \cup \bigcup_{i<m} t_{n(i)}. 
\end{equation}
%where $t=\{p {\res} i \mid i \leq |p\wedge \stem(t_{n(m-1)})|+1 \}$ if $m$ is finite and $t=p$ otherwise. 
Note that $m<\omega$ and $\pi_{t'_0}(x{\res} m) = p$ if $p\in \CC$ and in general, $\bigcup_{k<m} \pi_{t'_0}(x{\res}(k+1)) = p$. 
Further, let $\langle t'_j\mid 0<j<l'\rangle$ be the increasing re-enumeration of 
what remains of the sequence $\langle t_n \mid n<l\rangle$ after removing the subsequence $\langle t_{n(i)} \mid i < m\rangle$.
In other words, letting $\langle \bar n(i)\mid i< \bar{l}\rangle$ be the increasing enumeration of $l  \setminus \{n(i)\mid i<m\}$, we define $l' = 1 + \bar{l}$ and  $t'_{j+1} = t_{\bar n(j)}$ for $j<\bar{l}$. 
(Note that $l'=l-m+1$ if $l$ is finite.) 
If $(\langle t_n \mid n < l \rangle, p) \in \TT^\omega\times\Cohen$, then clearly $\langle t'_j\mid j<l'\rangle\in\TT^\omega$.

To define $D$, we introduce following notation. 
% for later reference (namely, when we define the dense set $D$ which will be the domain of our projection $\pi$).  
Suppose that in the context of the above construction of $\langle n(i) \mid i < m\rangle$, we have 
$(\langle t_n \mid n < l \rangle, p) \in \TT^\omega\times\Cohen$ (whence $m<\omega$)
and furthermore, $m>0$. 
Then we denote the last initial segment $p \wedge \stem(t_{n})$ of $p$ picked up in this construction as 
\begin{equation}\label{e.def.l}
%s(q)=
s\big(\langle t_n \mid n < l \rangle, p\big) := p \wedge \stem(t_{n(m-1)}).
\end{equation}
When this notation is used in the definition of $D$ below, it is implied that $0<m<\omega$, so that $t_{n(m-1)}$ is defined.
%(It will simplify things to assume this initial segment of $p$ is as long as possible.) 

Note that 
$\pi_t\big(x{\res} (k+1)\big) \perp \stem(t_{n(k)})$ and $\pi_t\big(x{\res} k\big) \subseteq \stem(t_{n(k)})$ for each $k<m$ and
%It also follows that that 
$p \wedge \stem(t_{n(k')}) \subsetneq p \wedge \stem(t_{n(k)})\subsetneq p$ for each $k'<k<m$. 
Let 
\begin{multline}\label{e.def.D}
D = \big\{ (\langle t_n \mid n < l \rangle, p) \in \TT^\omega\times\Cohen \mid 
\lvert s\big(\langle t_n \mid n < l \rangle, p\big)\rvert=|p|-1\text{ and }\\
\forall n < l\; \operatorname{split}(t_n) \neq \emptyset \land \lvert p\wedge\stem(t_n)\rvert<|p| 
\big\},
\end{multline}
where $s\big(\langle t_n \mid n < l \rangle, p\big)$ is as defined in \eqref{e.def.l} above.
Clearly $D$ is dense in $\TT^\omega\times\Cohen$.
The reason for restricting the domain of $\hat{\pi}$ in this way is that for $q \in D$, it is guaranteed that the sequence 
%$\vec n^q$ 
$\langle n(i) \mid i < m\rangle$ grows in the intended manner when extending the $\TT^\omega$-component of $q$ (see below). 
\begin{subclaim}\label{scl.dense}
The map $\pi = \hat{\pi}{\upharpoonright} D$ is a projection to $\TT^\omega$.
\end{subclaim}
\begin{proof}%[Proof of Subclaim] %~\ref{scl.dense}]
To see that $\pi$ is order-preserving, 
suppose we are given 
$q = \big(\langle t_j \mid j < l\rangle,p\big)$ and
$\tilde q = \big(\langle \tilde t_j \mid j < \tilde l\rangle,\tilde p\big)$ from $D$ with $\tilde q \leq q$. 
Write $\pi(q)=\langle t'_j \mid j < l'\rangle$ and $\pi(\tilde q)=\langle \tilde t'_j \mid j < \tilde l'\rangle$.
We must show that $l' \leq \tilde l'$ and $\tilde t'_j$ end-extends $t'_j$ for each $j< l'$.

Write $\langle n(i) \mid i<m\rangle$ for the sequence constructed from $q$ as above (in the definition of $\hat{\pi}$) and write 
$\langle \tilde n(i) \mid i<\tilde m\rangle$ for the analogous sequence 
%$\vec n^{\tilde q}$ 
constructed from $\tilde q$.
By the requirement in \eqref{e.def.D} that all trees have at least one splitting node, and since trees can only be extended at their maximal nodes, we have $\stem(t_j) = \stem(\tilde t_j)$ for all $j<l$. 
By the requirement in \eqref{e.def.D} that $p\wedge\stem(t_j)$ is strictly shorter than $p$ for all $j<l$, 
%and since none of the trees $t_j$ with $n(m-1) < j <l$ satisfied the condition for their index to appear in $\langle n(i) \mid i<m\rangle$, the same holds for $\langle t'_{n(i)}\mid i<m\rangle$.
%It follows that 
$\langle n(i) \mid i<m\rangle$ is an initial segment of $\langle \tilde n(i) \mid i<\tilde m\rangle$.
Hence the increasing enumeration of $l \setminus \{n(i) \mid i<m\}$ is an initial segment of 
the increasing enumeration of $\tilde l \setminus \{\tilde n(i) \mid  i<\tilde m\}$.

It follows that $l' \leq \tilde l'$ and  $\tilde t'_i$ end-extends $t'_i$ for $0<i<l$.
Since $q\in D$ and thus $|p\wedge \stem(t_{n(m-1)})| = |p|-1$ by \eqref{e.def.D}, it follows that
$p \subseteq \stem(t_{\tilde n(j)})$ whenever $m<j<\tilde m$. 
Thus $\tilde t'_0$ end-extends $t'_0$ by the definition in \eqref{e.t'_0}. 

\medskip 

To see that $\pi$ is a projection, let $q = \big(\langle t_j \mid j < l\rangle,p\big) \in D$ be given; write $q' =\langle t'_n \mid n < l'\rangle$ for $\pi(q)$.
Further suppose we are given $q''=\langle t''_n \mid n < l''\rangle$ in $\TT^\omega$ with $q'' \leq q'=\pi(q)$. 
We aim to find $\tilde q = \big(\langle \tilde t_j \mid j < \tilde l\rangle,\tilde p\big)$ in $D$ with $\tilde q \leq q$ and $\pi(\tilde q) \leq q''$.

Let $\tilde m$ be maximal so that $\pi_{t''_0}(x{\res} \tilde m)$ is defined and let 
\[
\tilde{p} = \pi_{t''_0}(x{\res} \tilde m).
\]
Write $\langle s_i\mid i < \tilde m\rangle$ for the enumeration in order of increasing length of the set of $s \in t''_0$ such that $s \perp \tilde{p}$ but each proper initial segment of $s$ is an initial segment of $\tilde{p}$. 

\medskip

We now take some precautions to ensure that $\tilde q$ as defined below will be an element of $D$ and that $\pi(\tilde q)=q''$. 

Without loss of generality, by replacing each tree $t''_i$ for $i<l''$ by an end-extension if necessary,
we can assume that each tree $(t''_0)_{s_i}$ for $i<\tilde m$ as well as each tree $t''_j$ for $0<j<l''$ has at least one splitting node; this is automatically true only for $i<m$ and $j<l'$.
(Recall here that $(t''_0)_{s_i}=\{s \in t''_0 \mid s \subseteq s_i \lor s_i \subseteq s\}$  as in  Section~\ref{sec:dpideals}, p.~\ref{T_u}.) 

Likewise, 
replacing $t''_0$ by an end-extension if necessary,
we can assume that $\tilde{p}$ is has greater length than $\tilde{p}\wedge\stem(t''_j)$ for each $j<l''$.
%Note to self (David):
%We could also require ``greater length than $\stem(t''_j)$'' here. 
%The requirement ``greater length than $\stem(t''_j)$'' is appropriate in the definition of $D$ because we also need $s(\tilde q)$ to be as long as possible, which we ensure in the next step.

Finally, 
%again without loss of generality, and 
again replacing $t''_0$ by an end-extension 
and increasing $\tilde{m}$ if necessary, we can assume that $|s_{\tilde m-1}|=|\tilde{p}|$, or equivalently,
\begin{equation}\label{e.last-tree}
|\tilde{p} \wedge s_{\tilde m-1}|=|\tilde{p}|-1. 
\end{equation}
Making these assumptions will suffice to see that $\tilde q$ as defined below is in $D$.

\medskip

Note that $p$ is an initial segment of $\tilde{p}$, since $q'' \leq q' = \pi(q)$ (whence $t''_0$ end-extends $t'_0$) and by the definition of $\hat{\pi}$. 
We further have $p\wedge \stem(t_{n(i)})\subseteq s_i \subseteq \stem(t_{n(i)})$ for each $i<m$ by the definition of $\langle n(i)\mid i<m\rangle$ 
% = \vec n^q$ 
from $q$ as above. 
% in the definition of $\hat{\pi}$. 
Moreover $p\subseteq s_j$ for any $j$ with $m\leq j<\tilde m$,  since 
$|s_{m-1}|=|p|$ or equivalently, $|p \wedge \stem(t_{n(m-1)})|=|p|-1$. 
The latter holds since $q\in D$.

%Now for each $i$ with $m < i < \tilde m$ let $\tilde s_i$ be any end-extension of $s_i$ with at least one splitting node and
We now define $\tilde{q}$. 
With $\langle\bar n(i)\mid i<l-m\rangle$ defined from $q$ as above, let for $j<l$ 
\[
\tilde t_j = \begin{cases} (t''_0)_{s_i} &\text{if $j = n(i)$ for some $i<m$,}\\
                                      t''_{i+1}  &\text{if $j=\bar n(i)$ for some $i<l-m$.}\\
%                                       \tilde s_i &\text{if $j = l+i$, some $i<m$}\\
%                                        t''_{l + (\tilde m-m) +i}  &\text{if $j \in \bar l \setminus \{n(i) \mid i<m\}$,}\\
\end{cases}
\]
%Moreover,  for each $i$ such that $m\leq i < \tilde m$ define $\tilde s_i$ to be any end-extension of $s_{i}$ with at least one splitting node. 
Let $\tilde l = l'' + \tilde m-1$ and define 
\begin{tikzpicture}[remember picture,overlay]
\draw[<-]
([shift={(7pt,-5pt)}]pic cs:s) |- ([shift={(-10pt,-15pt)}]pic cs:s) 
  node[anchor=east] {$\scriptstyle l\text{th position}$};  
\draw[<-] 
([shift={(2pt,-5pt)}]pic cs:t) |- ([shift={(-5pt,-23pt)}]pic cs:t) 
  node[anchor=east] {$\scriptstyle (l+\tilde m-m)\text{th position}$};  
\draw[<-] 
([shift={(2pt,-5pt)}]pic cs:last) |- ([shift={(5pt,-15pt)}]pic cs:last) 
  node[anchor=west] {$\scriptstyle (\tilde l-1)\text{th position}$};  
%\draw[<-] 
 % ([shift={(2pt,-2pt)}]pic cs:t) |- ([shift={(14pt,-10pt)}]pic cs:b) 
 % node[anchor=west] {$\scriptstyle b\text{th position}$}; 
\end{tikzpicture}
\begin{gather*}
\tilde q = \big(\langle \tilde t_0, \hdots, \tilde t_{l-1}, \overbrace{\tikzmark{s} (t''_0)_{s_m}, \hdots, (t''_0)_{s_{\tilde m-1}}}^{\text{if $\tilde m>m$}}, \overbrace{\tikzmark{t}t''_{l'}, \hdots, \tikzmark{last}t''_{l''-1}}^{\text{if $l''>l'$}}\rangle, \tilde{p}\big).\\[8pt]
\end{gather*}
The precautions taken above ensure that $\tilde q \in D$.  
To see that $\pi(\tilde q)\leq q''$, observe that by the definition of $\hat{\pi}$ and \eqref{e.last-tree}, the sequence of $\tilde{n}(i)$ associated with $\tilde{q}$ as above has length $\tilde{m}$; 
we denote it by $\langle \tilde{n}(i)\mid i<\tilde{m}\rangle$. 
Moreover  
%we have $\tilde m=\lh(\vec n^{\tilde q})$ and 
\[
\tilde{n}(i) = \begin{cases}
n(i) &\text{if $i<m$,}\\
l-m+i &\text{if $m\leq i < \tilde m$.}
\end{cases}
\]  
It is then a matter of straightforward computation to check that $\pi(\tilde q) = q''$. 
\end{proof}
It remains to verify \eqref{e.iso.cohen} and \eqref{e.iso.trees}. 
Suppose $g \times h$ is $\TT^\omega\times\Cohen$-generic. Let $T_n = \dot T^g_n$ and let $c = \bigcup h$.
By the definition of $\hat{\pi}$, letting 
 $g'=\hat{\pi}(g\times h)$ and writing $\langle T'_n \mid n \ <\omega\rangle$ for the sequence of trees given as
\[
\langle T'_n \mid n \ <\omega\rangle = \hat{\pi}\big(\langle T_n \mid n<\omega\rangle,c\big),
\]
we have $T'_n = \dot T^{g'}_n$ for each $n<\omega$. 
Also by the definition of $\hat{\pi}$, in particular by \eqref{e.t'_0},
\[
c = %\pi_{T'_0}(x) =
\bigcup_{k\in\omega} \pi_{T'_0}(x{\upharpoonright}k)=  \sigma^{g'}_x= \sigma^{g'} %\qedhere
\]
proving \eqref{e.iso.cohen}. 
Moreover
\[
T'_0 = \bigcup_{i<\omega} T_{n(i)}
\]
with $n(i)$ as above (in the definition of $\hat{\pi}$) and therefore by construction
\[
[T'_0] \setminus \{c\}= \bigcup_{i<\omega} [T_{n(i)}], 
\]
proving \eqref{e.iso.trees}.
%This proves the claim, completing the proof of Theorem~\ref{no selector for ideal of countable sets}.
%\end{proof}
%\noindent 
%\renewcommand{\qedsymbol}{{\tiny  Theorem~\ref{no selector for ideal of countable sets}~}$\Box$}
%\renewcommand{\qedsymbol}{{\tiny  Claim~\ref{cl.isomorphism}~}$\Box$}
\end{proof}
This completes the proof of the theorem, as we argued after 
%the statement of 
Claim ~\ref{cl.isomorphism}. 
\end{proof} 

%\noindent
%Having proved  Claim~\ref{cl.isomorphism}, the proof of Theorem~\ref{no selector for ideal of countable sets} is complete.

The next results use the following principle. 
Let 
\emph{internal projective Cohen absoluteness} ($\mathsf{IA}^\CC_{\mathrm{proj}}$) denote the statement that $H_{\omega_1}^{M[g]}\prec H_{\omega_1}$ holds for all sufficiently large regular cardinals $\theta$, countable elementary submodels $M\prec H_{\theta}$ and Cohen generic filters $g$ over $M$ in $V$. 

We will only need the first part of the next lemma; the second part is
an easy observation. Recall that $\PD$ denotes the axiom of projective
determinacy.\footnote{See \cite[Chapter 33]{MR1940513}. } 

%only included to see that the principle
%$\mathsf{IA}^\CC_{\mathrm{proj}}$ is sufficient for this proof.  

\begin{lemma}~\label{internal projective Cohen absoluteness} 
\begin{enumerate-(1)} 
\item 
$\PD$ implies $\mathsf{IA}^\CC_{\mathrm{proj}}$. 
\item 
If $\mathsf{IA}^\CC_{\mathrm{proj}}$ holds, then all projective set have the property of Baire. 
\end{enumerate-(1)} 
\end{lemma}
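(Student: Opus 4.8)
The plan is to treat the two parts separately, since they have rather different flavours. For part~(1) I would first reduce the assertion $H_{\omega_1}^{M[g]}\prec H_{\omega_1}$ to projective absoluteness between $M[g]$ and $V$. Since $M$ is countable in $V$, so is $M[g]$; hence $H_{\omega_1}^{M[g]}\subseteq H_{\omega_1}$, and by the standard reduction of truth in $H_{\omega_1}$ to projective truth about reals, for any formula $\psi$ of set theory and any parameters $\bar a$ coded by a real $z$, the statement ``$H_{\omega_1}\models\psi(\bar a)$'' is equivalent to a projective statement about $z$. So it suffices to prove: for every $\Sigma^1_n$ formula $\varphi$ and every real $z\in M[g]$, $M[g]\models\varphi(z)\iff\varphi(z)$ holds in $V$. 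This I would extract from $\PD$ via tree representations. By the periodicity theorems, under $\PD$ every projective pointclass has the scale property, so one obtains, for each $\Sigma^1_n$ formula $\varphi(v)$, trees $T,S$ (definable over $H_\theta$ for $\theta$ large) with $p[T]=\{z\mid\varphi(z)\}$, with $p[S]$ its complement, with $T\otimes S$ well-founded, and with $1\Vdash_{\Add(\omega,1)}$ ``$p[\check T]$ and $p[\check S]$ still partition the reals''. Taking $\theta$ large and $M\prec H_\theta$ with such trees in $M$, elementarity transfers the displayed forcing statement to $M$, hence to $M[g]$. Now if $M[g]\models\varphi(z)$, then $M[g]$ — hence $V$ — has a branch through the $z$-section of $T$, so $\varphi(z)$ holds in $V$; and if $M[g]\models\neg\varphi(z)$, then $M[g]$, hence $V$, has a branch through the $z$-section of $S$, so by well-foundedness of $T\otimes S$ we get $z\notin p[T]$ in $V$, i.e.\ $\neg\varphi(z)$ in $V$. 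This yields the equivalence for all $n$, and hence part~(1).

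For part~(2) I would argue directly. Let $A\subseteq{}^\omega\baseset$ be projective, say $A=\{y\mid\varphi(y,p)\}$ for a projective formula $\varphi$ and a real $p$. Fix a regular $\theta$ as in $\mathsf{IA}^\CC_{\mathrm{proj}}$ and a countable $M\prec H_\theta$ with $p\in M$, and put
\[
B=\bigcup\{N_s\mid s\in{}^{<\omega}\baseset,\ M\models(s\Vdash_{\Add(\omega,1)}\varphi(\dot x,\check p))\},
\]
where $\dot x$ is the canonical name for the Cohen real; note $B$ is open. For any Cohen real $x$ over $M$, the forcing theorem in $M$ gives $M[x]\models\varphi(x,p)\iff x\in B$, while $\mathsf{IA}^\CC_{\mathrm{proj}}$, applied to the $H_{\omega_1}$-formula expressing $\varphi(\,\cdot\,,p)$ with the parameters $x,p\in H_{\omega_1}^{M[x]}$, gives $M[x]\models\varphi(x,p)\iff\varphi(x,p)$ holds in $V$; so $x\in B\iff x\in A$. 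Since $M$ is countable, the set of Cohen reals over $M$ is comeager, whence $A\,\triangle\,B$ is meager; as $B$ is open, $A$ has the Baire property.

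The main obstacle I anticipate is the bookkeeping in part~(1): identifying, for a given $\Sigma^1_n$ formula, precisely which consequence of $\PD$ (which scale and which pair of absolutely complementing trees) is invoked, checking that these trees are genuine parameters available to both $M$ and $M[g]$, and confirming that the countable model $M[g]$, being a model of only a fragment of $\ZFC$, nonetheless identifies $p[T]$ and $p[S]$ with the true projective sets. These are standard points, but they must be set up with some care; part~(2), as the excerpt already notes, is then routine.
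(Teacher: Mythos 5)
Your overall strategy for part~(1) is the same as the paper's: reduce $H_{\omega_1}^{M[g]}\prec H_{\omega_1}$ to projective absoluteness between $M[g]$ and $V$, and then use trees of scales. Part~(2) is correct and matches the paper's argument. However, there is a genuine gap in part~(1), and it is precisely at the point you flag as ``bookkeeping.'' You assert that the periodicity theorems yield trees $T,S$ together with the statement $1\Vdash_{\Add(\omega,1)}$ ``$p[\check T]$ and $p[\check S]$ partition the reals.'' That is not a direct consequence of $\PD$ plus the scale property. The scale property gives you trees whose projections are correct \emph{in $V$}, but the persistence of complementation (and, what you also implicitly need, the persistence of the identification $p[T]=\{z\mid\varphi(z)\}$ inside $M[g]$, so that $M[g]\models\varphi(z)$ can be translated into $M[g]\models z\in p[T]$) is exactly the nontrivial content of the lemma. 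Absoluteness of well-foundedness alone gives the transfer $M[g]\models z\in p[T]\Rightarrow V\models z\in p[T]$, but it does not tell you which of the two trees catches $z$ in $M[g]$ in a way that reflects $M[g]$'s own opinion of $\varphi(z)$; and ``partition persists'' does not follow from the well-foundedness of $T\otimes S$.

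The paper handles this by proving two claims about the trees $T_{\sigma_n}$ computed internally: first, that $M[g]\models A_n=p[T_{\sigma_n}]$, using Woodin's one-step projective Cohen absoluteness \emph{inside $M$}; and second, that $T_{\sigma_n}^M=T_{\sigma_n}^{M[g]}$, i.e.\ Cohen forcing over $M$ does not change the tree of the scale. This second claim is proved via the result that Cohen forcing adds no new equivalence classes to thin projective pre-wellorders, which is where the bulk of the technical input lies. Your sketch attributes all of this to ``the periodicity theorems,'' which is not accurate; one needs the additional Cohen-specific preservation machinery, and without it the step from ``such trees exist in $M$'' to ``$M[g]$ sees $p[T]$ and $p[S]$ as a complementing pair still coding $\varphi$'' does not go through.
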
 
\begin{proof} 
(1) 
%\todo{CITE KECHRIS?} 
Take a ${\bf \Sigma}^1_{2n}$-universal $\Sigma^1_{2n}$ subset $A_n$ 
%$A_n$ 
of $({}^\omega2)^2$ for each $n\geq1$. 
%Assume that this set is defined by the $\Sigma^1_{2n}$-formula $\varphi_n(x,y)$. 
%be the standard $\Sigma^1_{2n}$-formula that defines a ${\bf \Sigma}^1_{2n}$-universal subset of $({}^\omega2)^2$. 
% that is universal for ${\bf \Sigma}^1_{2n}$ subsets of ${}^\omega2$ 
%for each $n\geq 1$. 
If the $\Sigma^1_{2n}$-formula $\varphi_n(x,y)$ defining $A_n$ is chosen in a reasonable way, then for any $\Sigma^1_{2n}$-formula $\psi(x)$ with (hidden) real parameters, there is some $y_{\psi}\in{}^\omega2$ with $\forall x\ (\psi(x)\Longleftrightarrow \varphi_n(x,y_{\psi}))$. 
%We can assume that $\varphi_n(x,y)$ is the standard formula defining such a set. 
Moreover, this holds in all transitive models of $\ZFC^-$ and the map $\psi\mapsto y_\psi$ is absolute between such models. 
We assume that in any transitive model of $\ZFC^-$, $A_n$ denotes the set defined by $\varphi_n(x,y)$. 

%Then every ${\bf \Sigma}^1_{2n}$ subset of ${}^\omega2$ is of the form $A_n^y=\{x\in {}^\omega2\mid (x,y)\in A_y\}$ for some $y\in{}^\omega2$. 
%the collection of sets $A_y=\{x\in {}^\omega2\mid (x,y)\in A\}$ for $y\in{}^\omega2$ equals that of all ${\bf \Sigma}^1_{2n}$ subsets of ${}^\omega2$. 
%Suppose that $A$ is given by the $\Sigma^1_{2n}$-formula $\varphi(x,y)$ and the sets $A_y=\{x\in {}^\omega2\mid \varphi(x,y)\}$ list all ${\bf \Sigma}^1_{2n}$ subsets of ${}^\omega2$. 

Let $M \prec H_\theta$ be a countable elementary submodel for some
large enough regular cardinal $\theta$.
By \cite[Corollary 6C.4]{MR2526093} and $\PD$, there is a $\Sigma^1_{2n}$-scale $\sigma_n=\langle \leq^m_n\mid m\in\omega\rangle$ on $A_n$ for each $n\geq1$. 
%a sequence $\vec{y}=\langle y_n\mid n\in\omega\rangle$ of reals such that $A_n$ carries a $\Sigma^1_{2n}(y_n)$-scale $\sigma_n=\langle \leq^m_n\mid m\in\omega\rangle$. 
% for each $n\geq 1$. 
%We can take $y_n$ to be projective by projective uniformization. 
%Since $M\prec_{\Sigma^1_n} H_\theta$ for all $n$, this holds in $M$ as
%well. 
%We can assume that $\vec{y}\in M$, since $M\prec H_\theta$. 
Let $r^m_n(x)$ denote the rank of $x\in A_n$ with respect to $\leq^m_n$. 
Moreover, recall that the \emph{tree of the scale} $\sigma_n$\footnote{The tree of a scale is defined in the discussion before \cite[Theorem 8G.10]{MR2526093}.} is defined as 
$$T_{\sigma_n}=\{(x{\upharpoonright}m,(r^0_n(x),...,r^{m-1}_n(x)))\mid x\in A_n,\ m\in\omega\}$$ 
and $p[T_{\sigma_n}]=A_n$. 
%In the following, $p[T_{\sigma_n}]$ means the ones given by a projective definition via that of $T_{\sigma_n}$.  
%Note that in any transitive set $M$, $T_{\sigma_n}$ is defined via $A_n^M$. 
Since  $\ZFC^-+\PD$ is sufficient to prove the existence of such
scales, let $T_{\sigma_n}$ denote the tree defined via
$\varphi_n(x,y)$ in any transitive model of this theory containing
$\pow(\omega)$ as an element. Let $g \in V$ be Cohen generic over $M$.

\begin{claim} 
$M[g]\models A_n= p[T_{\sigma_n}]$. 
\end{claim} 
\begin{proof} 
%Note that 
$p[T_{\sigma_n}]$ has a projective definition via the above
definitions of $T_{\sigma_n}$ and $A_n$.
%The parameter $y_n$ can be omitted, since it is itself projective. 
Thus the claim holds by $1$-step projective Cohen absoluteness in $M$
\cite[Lemma 2]{Woodin82}. 
\end{proof} 

\begin{claim} 
%We claim that 
$T_{\sigma_n}^M=T_{\sigma_n}^{M[g]}$. 
\end{claim} 
\begin{proof} 
%To see this, we use the fact that by 
By $\PD$ any projective pre-wellorder $E$ is \emph{thin}, i.e. there is no perfect set of reals that are pairwise inequivalent with respect to $E$. 
By $1$-step projective Cohen absoluteness \cite[Lemma 2]{Woodin82}, \cite[Lemma 3.18]{MR3226056} and $\PD$, Cohen forcing does not add new equivalence classes to $E$. 
Equality now follows from \cite[Theorem 5.15]{MR3226056}. 
The inclusion $T_\sigma^M\subseteq T_\sigma^{M[g]}$ holds since the rank function $r_n$ is upwards absolute from $M$ to $M[g]$ by the previous statement for $E=\leq_n$. 
The converse inclusion is proved in \cite[Claim 5.16]{MR3226056} from $\PD$. 
\end{proof} 

%We will work with the following coded version of $T_\sigma$. Let  
%$$\tilde{T}_\sigma=\{(x{\upharpoonright}n,(x,...,x)):x\in A,\ n\in\omega\},$$ 
%where $(s,x,\dots, x)\leq_{\tilde{T}_\sigma} (t,y,\dots, y)$ if $s\subseteq t$ and $r_i(x)=r_i(y)$ for all $i<n$. 
%Since $r_i(x)\leq r_i(y) \Longleftrightarrow x\leq_i y$, $\tilde{T}_\sigma$ is projective. 
%By $1$-step projective absoluteness for Cohen forcing \cite[Lemma 2]{Woodin82}, $\tilde{T}_\sigma$ is absolute between $M$ and $M[g]$ in the sense that $\tilde{T}_\sigma^M=\tilde{T}_\sigma^{M[g]}\cap M$. 
%The (projectively coded) tree $T$ from a scale on a universal set for a projective class is preserved from $M$ to $M[g]$ by $1$-step generic absoluteness for Cohen forcing \cite[Lemma 2]{Woodin82} 

We now show that $\mathsf{IA}^\CC_{\mathrm{proj}}$ holds. 
Assume that $\psi(x)$ is a $\Sigma^1_{2n}$-formula and $x\in M[g]$. 
Note that the equivalences $\psi(x)\Longleftrightarrow \varphi_n(x,y_\psi) \Longleftrightarrow (x,y_\psi)\in A_n \Longleftrightarrow (x,y_\psi)\in p[T_{\sigma_n}]$ hold in $V$ and in $M[g]$ by the first claim. 
It remains to show that $M[g]\models (x,y_\psi)\in p[T_{\sigma_n}]$ if
and only if $(x,y_\psi)\in p[T_{\sigma_n}]$ holds in $V$. 

We claim that $T_{\sigma_n}^{M[g]}=T_{\sigma_n}\cap M[g]$. 
To see this, note that $T_{\sigma_n}^M=T_{\sigma_n}\cap M$ since $M\prec H_\theta$. 
Using the second claim and the fact that $\Ord^M=\Ord^{M[g]}$, we obtain $T_{\sigma_n}^{M[g]}=T_{\sigma_n}^M=T_{\sigma_n}\cap M=T_{\sigma_n}\cap M[g]$. 
By absoluteness of wellfoundedness, we have $M[g]\models (x,y_\psi)\in p[T_{\sigma_n}]\Longleftrightarrow (x,y_\psi)\in p[T_{\sigma_n}^{M[g]}] \Longleftrightarrow (x,y_\psi)\in p[T_{\sigma_n}]$.

%First assume that $\psi(x)$ is a $\Sigma^1_n$-formula with $M[g]\models \psi(x)$ for some $x\in M[g]$. 
%Then $M[g]\models \varphi_n(x,y_\psi)$ and $M[g]\models (x,y_\psi)\in A_n$. 
%By the previous claims, $M[g]\models (x,y_\psi)\in p[T_{\sigma_n}]$ and $(x,y_\psi)\in p[T_{\sigma_n}^{M[g]}]=p[T_{\sigma_n}^M]$. 
%Since $M\prec H_\theta$, $T_{\sigma_n}^M\subseteq T_{\sigma_n}$. So $(x,y_\psi)\in p[T_{\sigma_n}^M]\subseteq p[T_{\sigma_n}]=A_n$ and thus $\psi(x)$ holds in $V$, as required. 

(2) Since $M$ is countable, the set of Cohen reals over $M$ in $V$ is comeager. Therefore, this claim holds by the argument for the Baire property for definable sets in Solovay's model. 
\end{proof} 

The above argument for the non-existence of a selector with Borel values can now be used for the following results. 
We fix codes for ${\bf \Sigma}^1_n$ sets via ${\bf \Sigma}^1_n$-universal $\Sigma^1_n$ sets. 

%To state these results, we first fix some notation. 
%
%We further call a set $A$ \emph{absolutely ${\bf \Delta}^1_n(x)$} if it has $\Sigma^1_n(x)$ and $\Pi^1_n(x)$ definitions $\varphi(x,y)$ and $\psi(x,y)$ in some real parameter $x$ that coincide in any transitive model of the form $M[g]$, where $x\in M[g]$ and $M\prec H_\theta$ for some uncountable regular cardinal $\theta$ and some generic filter $g$ over $M$. The triple $(x,\varphi,\psi)$ is a code for $A$. 
%For instance, any set that is $\Delta^1_2$ provably in $\ZFC^-$ is also absolutely $\Delta^1_2$. 

\begin{theorem} \label{no selector from internal absoluteness} 
%\begin{enumerate-(1)} 
%\item 
%There is no Baire measurable selector for $\I$ with absolutely ${\bf \Delta}^1_2$ values. 
%\item 
Assuming $\mathsf{IA}^\CC_{\mathrm{proj}}$, there is no Baire measurable selector for $\I$ with projective values. 
%\end{enumerate-(1)} 
\end{theorem}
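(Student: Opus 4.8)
The plan is to replay the proof of Theorem~\ref{no selector for ideal of countable sets} almost verbatim, substituting $\mathsf{IA}^\CC_{\mathrm{proj}}$ for the appeals to $\Pi^1_1$-absoluteness and Borel absoluteness that appear there. Suppose towards a contradiction that $F$ is a Baire measurable selector for $\I$ on $\mathrm{B}_{F_\sigma}$ with projective values. As in the Borel case, $F$ is continuous on a comeager $G_\delta$ set $A\subseteq \mathrm{B}_{F_\sigma}$; fix a real $x_A$ coding $F{\upharpoonright}A$ (and hence coding $A$). Then let $M\prec H_\theta$ be countable with $x_A\in M$ for a sufficiently large regular $\theta$, so that $\mathsf{IA}^\CC_{\mathrm{proj}}$ applies to all Cohen generic extensions of $M$ lying in $V$. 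Finally, set up the forcings $\TT$, $\TT^\omega$, $\PP_\sigma$, $\PP_\sta$, the names $\dot C$, $\dot F$, $\sigma$, $\tau$ and the two rearrangement claims together with their projections exactly as in the proof of Theorem~\ref{no selector for ideal of countable sets}.

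The key observation is that $\TT$, $\TT^\omega$, $\PP_\sigma$, $\PP_\sta$ and every quotient forcing appearing in the argument are countable and non-atomic, hence forcing-equivalent to Cohen forcing; therefore every generic extension $M[\bar g]$ produced along the way---in particular $M[g]$, $M[\hat g]$ and $M[\hat g\times\hat h]$---is a Cohen generic extension of $M$ in $V$, so $\mathsf{IA}^\CC_{\mathrm{proj}}$ gives $H_{\omega_1}^{M[\bar g]}\prec H_{\omega_1}$. This supplies full first-order absoluteness over $H_{\omega_1}$ between $M[\bar g]$ and $V$ for parameters in $M[\bar g]$---in particular for projective statements---which is exactly what is needed once $B_{\dot F(\dot C)}$ is only projective rather than Borel. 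Concretely, whenever $\hat g$ is a rearranged generic filter with $\dot C^{\hat g}\in A$, one has $\dot F^{\hat g}(\dot C^{\hat g})=F(\dot C^{\hat g})\in M[\hat g]$ since $F{\upharpoonright}A$ is continuous with code $x_A\in M$, so $B_{\dot F(\dot C^{\hat g})}$ is a projective set with a code in $M[\hat g]$, and the step ``$\sigma^{\hat g}\in B_{\dot F(\dot C^{\hat g})}$ holds in $V$'' follows from the choice of $\sigma$ and the rearrangement claims (which give it in $M[\hat g]$) together with $H_{\omega_1}^{M[\hat g]}\prec H_{\omega_1}$, just as the original proof used Borel absoluteness there.

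The one step requiring a genuinely different argument is the Claim that $1\Vdash^M_{\TT^\omega}[\dot T_n]\setminus B_{\dot F(\dot C)}$ is countable, which the original proof obtained from the perfect set property for Borel sets and $\Pi^1_1$-absoluteness. Instead I would argue as follows: if some $p$ forced $[\dot T_n]\setminus B_{\dot F(\dot C)}$ to be uncountable, pick a $\TT^\omega$-generic filter $g$ over $M$ in $V$ with $p\in g$ and $C:=\dot C^g\in A$ (possible since $A$ is comeager), so that $F(C)=\dot F^g(\dot C^g)\in M[g]$ and $X:=[\dot T_n^g]\setminus B_{F(C)}$ is a projective set with a code in $M[g]$. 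Since $[\dot T_n^g]\subseteq B_C$ and $F$ is a selector, $X\subseteq B_C\triangle B_{F(C)}\in\I$, so $X$ is countable in $V$; as ``$X$ is countable'' is a $\Sigma_1$ statement over $H_{\omega_1}$ with a parameter in $M[g]$, and $M[g]$ is a Cohen extension of $M$, $H_{\omega_1}^{M[g]}\prec H_{\omega_1}$ gives $M[g]\models X$ is countable, contradicting $p\in g$ and the choice of $p$. With the Claim re-established, the remainder of the proof---extracting $\sigma_x$ and $p$ with $p\Vdash^M_{\TT^\omega}\sigma_x\in[\dot T_0]\cap B_{\dot F(\dot C)}$, the two rearrangement claims, and the final claim that for a $\TT^\omega$-generic $g$ with $\dot C^g\in A$ the set $B_{F(\dot C^g)}\setminus B_{\dot C^g}$ contains every Cohen real over $M[g]$ in $V$ and is therefore uncountable---goes through word for word, contradicting that $F$ is a selector. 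By Lemma~\ref{internal projective Cohen absoluteness}~(1) this in particular yields Theorem~\ref{intro - no projective selector}.

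I expect the main obstacle to be one of bookkeeping rather than a new idea: one must verify carefully that every auxiliary model arising in the rearrangement constructions really is a Cohen extension of $M$ (so $\mathsf{IA}^\CC_{\mathrm{proj}}$ applies), and that each statement that must pass between such a model and $V$---namely ``$\sigma^{\hat g}\in B_{\dot F(\dot C^{\hat g})}$'' and ``$X$ is countable''---is expressible over $H_{\omega_1}$ with all of its parameters in the relevant model. Both are routine given the countability and non-atomicity of the forcings involved, but they are the points at which the transfer from the Borel case could break down.
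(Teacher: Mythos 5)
Your proposal is correct and follows essentially the same strategy as the paper: rerun the proof of Theorem~\ref{no selector for ideal of countable sets} and, wherever the Borel argument invoked Shoenfield-level absoluteness between $M[\bar g]$ and $V$, substitute the full $H_{\omega_1}$-elementarity supplied by $\mathsf{IA}^\CC_{\mathrm{proj}}$, observing that every intermediate model arising from $\TT^\omega$, $\PP_\sigma$, $\PP_\star$ and their products with $\Add(\omega,1)$ is a Cohen extension of $M$.

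There is one small but pleasant deviation worth noting. The paper's original proof of the first claim (in the Borel case) runs \emph{upward}: it uses the perfect set property for Borel sets inside $M[g]$ to extract a perfect tree $S\subseteq[\dot T_n^g]\setminus B_{F(C)}$ and then pushes the ($\Sigma^1_1$) existence of such an $S$ up to $V$. You instead run the argument \emph{downward}: in $V$, the selector property directly forces $X=[\dot T_n^g]\setminus B_{F(C)}\subseteq B_C\triangle B_{F(C)}$ to be countable, and $H_{\omega_1}^{M[g]}\prec H_{\omega_1}$ reflects this into $M[g]$, contradicting the choice of $p$. This is arguably cleaner and avoids needing any regularity property for projective sets in $M[g]$; it is available precisely because $\mathsf{IA}^\CC_{\mathrm{proj}}$ gives you full elementarity in both directions rather than just $\Sigma^1_2$-absoluteness. (A very minor imprecision: ``$X$ is countable'' is not literally a $\Sigma_1$ statement over $H_{\omega_1}$ once the defining formula of $X$ has higher projective complexity, but since $\mathsf{IA}^\CC_{\mathrm{proj}}$ yields unrestricted first-order elementarity this costs nothing.) Everything else---the extraction of $\sigma_x$ and $p$, the two rearrangement claims, and the final use of Lemma~\ref{perfect set of Cohen reals}---transfers verbatim, exactly as you say.
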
 
\begin{proof} 
This is proved similarly to Theorem \ref{no selector for ideal of countable sets}. We only indicate the two necessary changes.  
%(1) \todo{ONLY FOR provably $\Delta^1_2$} 
%We work with ${\bf \Sigma^1_2}$ codes instead of Borel codes and 
%We write $\Borelset_x$ for the ${\Delta}^1_2(x)$ set coded by a triple $(\varphi,\psi,x)$. 
%Only two steps are slightly different from above. 
In the proof of the first claim, $\mathsf{IA}^\CC_{\mathrm{proj}}$ implies that the (projective) statement $[\dot{T}_n^g]\cap B_{F(\dot{C}^g)}=[\dot{T}_n^g]\cap B_{\dot{F}^g(\dot{C}^g)}=\emptyset$ is absolute between $M[g]$ and $V$. 
The second change is at the very end of the proof. Here projective absoluteness between $M[g\times h]$ and $V$ by $\mathsf{IA}^\CC_{\mathrm{proj}}$ guarantees that $x=\sigma^G\in B_{F(\dot{C}^g)}$ and thus $B_{F(\dot{C}^g)}\setminus B_{\dot{C}^g}$ contains every Cohen real over $M[g]$ in $V$. As before, this contradicts 
the fact that $F$ is a selector for $\I$. 
\end{proof}

Note that by Lemma \ref{internal projective Cohen absoluteness}, $\PD$
is sufficient for the previous result.  Thus $\PD$ implies that there
is no projective selector for $\I$. 
In particular, there is no selector $\Borel({}^\omega \baseset)\to\Proj({}^\omega \baseset)$ for $=_\I$ 
that is induced by a universally Baire measurable function (assuming $\mathsf{PD}$).

Note that some assumption beyond
$\ZFC$ is necessary to prove this statement. For instance, in $L$
there are projective selectors for all projectively coded ideals $\I$,
since there is a projective, in fact a $\Sigma^1_2$, wellorder of the
reals.

The previous arguments can also be used to show that it is consistent with $\ZF$ that there is no selector at all for $\I$. 

Suppose that $\kappa$ is an uncountable cardinal and $G$ is a $\PP$-generic filter over $V$ for 
%\todo{(P) typo. this should be $\Add(\omega,\kappa)$ } 
$\PP=\Add(\omega,\kappa)$ or $\PP=\operatorname{Col}(\omega,{<}\kappa)$. 
We call $V^\star= \bigcup_{\alpha<\kappa} V[G{\upharpoonright}\alpha]$ a \emph{symmetric extension} for $\PP$. 
Note that $V^\star=HOD^{V[G]}_{V^\star}$ (the class of sets which are hereditarily ordinal definable in $V[G]$ with parameters from $V^\star$) by homogeneity and for cardinals $\kappa$ of uncountable cofinality, $\RR^{V^\star}=\RR^{V[G]}$. 

%Suppose that $\kappa$ is an uncountable cardinal, $G$ is a $\mathrm{Col}(\omega,{<}\kappa)$-generic  filter over $V$ and $\RR^\star=\bigcup_{\alpha<\kappa} \RR^{V[G{\upharpoonright}\alpha]}$. 
%We call $M=HOD_{V\cup \RR^\star}$ a \emph{symmetric extension} for $\mathrm{Col}(\omega,{<}\kappa)$. 
%Note that $\RR^M=\RR^\star$ by homogeneity of $\mathrm{Col}(\omega,{<}\kappa)$. 

\begin{theorem}\label{theorem models without selector} 
There are no selectors for $\I$ in the following models of $\ZF$. 
\begin{enumerate-(a)} 
\item 
Symmetric extensions $V^\star$ for $\Add(\omega,\kappa)$ and $\operatorname{Col}(\omega,{<}\kappa)$, where $\kappa$ is any uncountable cardinal. 
\item 
Solovay's model. 
% $HOD_\RR$ in an $\operatorname{Col}(\omega,{<}\kappa)$-generic extension, where $\kappa$ is inaccessible. 
\item 
$L(\RR)$ assuming $\AD^{L(\RR)}$. 
\end{enumerate-(a)} 
\end{theorem}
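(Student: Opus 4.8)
The plan is to reduce all three cases to the argument behind Theorem~\ref{no selector for ideal of countable sets}. The first observation is that a selector $F$ for $=_\I$ with domain $\Borel({}^\omega\baseset)$ is automatically \emph{Borel-valued}: since $F(A)\triangle A\in\I$ is countable and countable sets are $F_\sigma$, the set $F(A)$ has Borel complexity only slightly above that of $A$ whenever $A$ is Borel. Hence, restricting to $F_\sigma$-codes, the relation $R_F=\{(x,y)\mid x\in\mathrm{B}_{F_\sigma},\ y\in F(B_x)\}$ is a subset of the Polish space $\mathrm{B}_{F_\sigma}\times{}^\omega\baseset$, and (by the selector property) its sections depend only on the $E_\I$-class of $x$. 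So it suffices to show: in each of the listed models there is no selector $F$ whose associated relation $R_F$ has the property of Baire.

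The second ingredient is that in each of these models every subset of a Polish space has the property of Baire and $\mathrm{DC}$ holds. For $L(\RR)$ under $\AD^{L(\RR)}$ both are classical ($\AD$ yields the Baire property for all sets of reals, and $L(\RR)\models\mathrm{DC}$). For Solovay's model this is Solovay's theorem. For the symmetric extensions $V^\star=\bigcup_{\alpha<\kappa}V[G{\upharpoonright}\alpha]$ one runs the usual unfolding argument: a set of reals $X\in V^\star$ is $\mathrm{OD}$ over $V[G]$ from a real $a\in V[G{\upharpoonright}\alpha]$, the quotient of $\Add(\omega,\kappa)$ (resp.\ $\mathrm{Col}(\omega,{<}\kappa)$) past $\alpha$ is again a Cohen (resp.\ L\'evy-collapse) forcing and is weakly homogeneous, so Solovay's Baire-property argument applies; that $V^\star\models\mathrm{DC}$ is standard. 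Consequently, in each of these models every function between Polish spaces is Baire measurable, so the relation $R_F$ above always has the property of Baire.

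With these two ingredients the proof of Theorem~\ref{no selector for ideal of countable sets} can be carried out inside the model essentially verbatim. Given a purported selector $F$, use the property of Baire of $R_F$ --- which provides a comeager $G_\delta$ set of $F_\sigma$-codes on which $F$ is determined by a fixed open subset of the product space --- to fix a real coding this data together with a countable elementary submodel $M\prec H_{\omega_1}$ containing it (such an $M$ exists and may be taken countable and transitive using $\mathrm{DC}$; note that $H_{\omega_1}$ satisfies $\mathrm{AC}$ since all its elements are hereditarily countable, so full choice is available over $M$). Then reproduce the forcing construction: force over $M$ with $\TT^\omega$ to add a generic $F_\sigma$-set $B_C=\bigcup_n[\dot T_n]$, and run the two rearrangement claims --- via the explicit projections between $\PP_\sigma\times\Add(\omega,1)$, $\PP_{\star}\times\Add(\omega,1)$ and $\TT^\omega\times\Add(\omega,1)$ --- exactly as before, concluding that $B_{F(C)}\setminus B_C$ contains every Cohen real over $M[g]$ and hence is uncountable, contradicting that $F$ is a selector. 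The only facts about the ambient universe used are Baire measurability of the selector, the perfect set property for Borel sets, absoluteness of membership in open sets and their complements, and $\Pi^1_1$-absoluteness between countable transitive models and $V$ --- all available in $\ZF+\mathrm{DC}$. (For part (c) one may alternatively observe that $\AD^{L(\RR)}$ implies $\mathsf{PD}$ in $L(\RR)$, hence $\mathsf{IA}^\CC_{\mathrm{proj}}$ by Lemma~\ref{internal projective Cohen absoluteness}, and then quote Theorem~\ref{no selector from internal absoluteness} directly, whose proof is valid over $\ZF+\mathrm{DC}$ and whose hypothesis on the values --- Borel, a fortiori projective --- is met.)

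I expect the main difficulty to be organisational rather than conceptual: one has to verify that the delicate rearrangement arguments in the proof of Theorem~\ref{no selector for ideal of countable sets} --- the explicit projections between the three product forcings and their subclaims --- nowhere invoke the axiom of choice in the \emph{ambient} model, so that they remain valid when performed over the countable transitive $M$ inside a merely $\ZF+\mathrm{DC}$ universe; since everything of substance takes place over $M$, where $\mathrm{AC}$ holds, this should cause no genuine trouble. A secondary point requiring care is the verification of the property of Baire for \emph{all} sets of reals in the symmetric $\Add(\omega,\kappa)$-extension (as opposed to the L\'evy collapse), i.e.\ that the tail forcing past a countable fragment is homogeneous enough for Solovay's argument --- but this too is by now standard.
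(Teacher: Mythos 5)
Your proposal pursues a genuinely different route from the paper's for parts (a) and (b), and flags an alternative route for (c). The paper never internalizes the argument inside $V^\star$: instead of invoking the Baire property for all sets of reals there and then building a countable $M\prec H_{\omega_1}^{V^\star}$, the paper observes that a selector $F\in V^\star$ is definable over $V[G]$ from a parameter $x_0\in V[G{\upharpoonright}\alpha]$, absorbs the first $\alpha$ stages into the ground model so that $x_0\in V$, concludes via homogeneity that $F$ is continuous on the Cohen reals over $V$ and given by a $V$-name $\dot F$, and then runs the $\TT^\omega$-rearrangement argument \emph{over $V$} (not over a countable model), reading off the contradiction inside $V^\star$. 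This external approach sidesteps entirely the question of whether all sets of reals in $V^\star$ have the Baire property and whether $\mathrm{DC}$ holds there. That matters: the theorem allows \emph{any} uncountable $\kappa$, including successors and singulars, and your step ``every set of reals in $V^\star$ is $\mathrm{OD}$ over $V[G]$ from a real in some $V[G{\upharpoonright}\alpha]$, hence has the Baire property'' is not automatic in that generality. For $\Add(\omega,\kappa)$ with $\kappa>\omega_1$ the restriction $G{\upharpoonright}\alpha$ is not coded by a real, and a set of reals in $V^\star$ need not lie in any single $V[G{\upharpoonright}\alpha]$ nor be ordinal definable from a real; for $\mathrm{Col}(\omega,{<}\kappa)$ one usually assumes $\kappa$ inaccessible to get Solovay's conclusion. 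So part (a) of your argument has a gap that the paper's ``external'' argument avoids. Parts (b) and (c) are essentially fine as you outline them, and your alternative for (c) --- that $\AD^{L(\RR)}$ gives $\mathsf{PD}$ in $L(\RR)$, hence $\mathsf{IA}^\CC_{\mathrm{proj}}$ by Lemma~\ref{internal projective Cohen absoluteness}, and then quote Theorem~\ref{no selector from internal absoluteness} --- is a clean simplification; the paper instead derives (c) from (a) via the Schindler--Steel theorem that $L(\RR)$ is the $L(\RR)$ of a symmetric collapse extension of an inner model. Your observations that $H_{\omega_1}$ satisfies $\mathrm{AC}$ under $\ZF+\mathrm{DC}$, and that the rearrangement combinatorics take place over a countable $\ZFC$-model so no ambient choice is used, are correct but, for (a), the Baire-property premise feeding into them is the problem.
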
 
\begin{proof} 
%It is sufficient to show the claim for $\kappa=\omega_1$, since any $\Add(\omega,\kappa)$-generic extension is an $\Add(\omega,\omega_1)$-generic extension of an intermediate model. 
The proof of the first claim is similar to that of to Theorem \ref{no selector for ideal of countable sets}. 
%Suppose that $W$ is an $\Add(\omega,\kappa)$-generic extension of $V$ and $F$ is a selector for $\I$ in $HOD_\RR$. 

Suppose that $F$ is a selector for $\I$ in $V^\star$. 
It is definable from an element $x_0$ of $V[G{\upharpoonright}\alpha]$ for some $\alpha<\kappa$. 
We can further assume that $x_0\in V$.  
Hence $F$ is continuous on the set of Cohen reals over $V$ in $V^\star$; let $\dot{F}$ be a name for this function. 

We follow the proof of Theorem \ref{no selector for ideal of countable sets} but work with $V$ instead of $M$. 
The first claim in the proof of Theorem \ref{no selector for ideal of
  countable sets} is replaced by the following claim. 

We will write $V^\star$ for a $\PP$-name for $V^\star$ to keep the notation simple. 

\begin{claim} 
$1 \Vdash^V_{\TT^\omega\times \PP} V^\star\models [\dot{T}_n]\cap B_{\dot{F}(\dot{C})}\neq\emptyset$ for all $n\in\omega$. 
\end{claim} 
\begin{proof} 
Assume towards a contradiction that $(p,q) \Vdash^V_{\TT^\omega\times \PP} V^\star\models [\dot{T}_n]\cap B_{\dot{F}(\dot{C})}=\emptyset$ for some $p\in\TT^\omega$, $q\in \PP$ and $n\in\omega$. We can assume $q=1$ by homogeneity. 
Let $g\times h$ be a $\TT^\omega\times \PP$-generic filter over $V$ with $p\in g$ whose symmetric model is $V^\star$. 
%and $W=V[g\times h]$. 
Then  $[\dot{T}_n^g]\cap B_{F(\dot{C}^g)}=[\dot{T}_n^g]\cap B_{\dot{F}^g(\dot{C}^g)}=\emptyset$. But  this contradicts the assumption that $F$ is a selector with respect to $\I$. 
\end{proof} 

There is a $\TT^\omega$-name $\sigma$ in $V$ that is forced to be an element of $[\dot{T}_n]\cap B_{\dot{F}(\dot{C})}\cap V^\star$ by the previous claim. The next steps of the proof are as before. 

In the end of the proof, we rearrange $V[g\times h]$ as a $\TT^\omega\times\Add(\omega,1)$-generic extension $V[G\times H]$ with $\sigma^G=x$ as before. 
Then $\sigma^G\in B_{F(\dot{C}^g)}$ by the choice of $\sigma$. 
Thus $B_{F(\dot{C}^g)}\setminus B_{\dot{C}^g}$ contains every Cohen real over $V[g]$ in $V^\star$. 
Since there are uncountably many Cohen reals over $V[g]$ in $V^\star$, this contradicts the assumption that $F$ is a selector. 

The second claim holds since Solovay's model is obtained via a symmetric extension for  $\operatorname{Col}(\omega,{<}\kappa)$. Note that this model is also a $\Add(\omega,\omega_1)$-generic extension of an intermediate model.  

The last claim follows from \cite[Theorem 0.1]{schindlersteel} and the
first claim. By this theorem and $\AD^{L(\RR)}$,
$L(\RR)^V=L(\RR)^{V^\star}$ for some symmetric extension $V^\star$ for
$\operatorname{Col}(\omega,{<}\kappa)$ for some $\kappa$ over some ground model $N$ which is an inner model of some generic extension of $V$. 
\end{proof}

\np

\subsection{Density points via convergence} \label{section convergence density} 

In this section we discuss the notion of density point introduced in \cite{Poreda85}.
We show that this notion does not satisfy the analogue of Lebesgue's density theorem for any of the tree forcings listed in the next section, except Cohen and random forcing. 
%In fact, the null and the meager ideal are the only ideals for which \todo{better: solve it for all forcings that we consider, and state that} it is known to hold. 

Lebesgue's density theorem was generalized to the $\sigma$-ideals of
meager sets on Polish metric spaces in \cite[Theorem 2]{Poreda85}.  To
this end, a notion of density points for ideals was introduced.  
This notion is
based on the following well-known measure theoretic lemma. 

\begin{lemma} \label{convergence in measure} 
Suppose that
$(\XX,d,\mu)$ is a metric measure space, $f$ is a Borel-measurable
function and $\vec{f}=\langle f_n \mid n\in\omega\rangle$ is a
sequence of Borel-measurable functions from $(\XX,d)$ to $\RR$.
The following statements are equivalent: 
\begin{enumerate-(a)} 
\item 
$f_n\rightarrow f$ converges in measure, i.e. for all $\epsilon>0$ we have 
$$\lim_{n\rightarrow\infty} \mu(\{x\in \XX\mid |f_n(x)-f(x)|\geq\epsilon\})=0.$$ 
\item \label{condition b}
Every subsequence of $\vec{f}$ has a further 
subsequence that converges pointwise $\mu$-almost everywhere. 
\end{enumerate-(a)} 
\end{lemma}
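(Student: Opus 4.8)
The plan is to reproduce the classical Riesz-type argument, in two implications. Throughout I use that $\mu$ is a finite measure; this holds in all cases of interest here, and it is genuinely needed: without it implication \ref{condition b}$\Rightarrow$(a) fails (for example $f_n=1_{[n,n+1]}$ on $\RR$ converges to $0$ at every point, so \ref{condition b} holds trivially, yet $\mu(\{x\mid |f_n(x)|\geq\tfrac12\})=1$ for all $n$). The Borel-measurability hypotheses on $f$ and the $f_n$ are used only to guarantee that all sets and functions appearing below are measurable.

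\emph{Proof of (a)$\Rightarrow$\ref{condition b}.} Let $\langle f_{n_k}\mid k\in\omega\rangle$ be an arbitrary subsequence of $\vec f$; since $f_n\to f$ in measure, so does $\langle f_{n_k}\mid k\in\omega\rangle$. Recursively choose $k_0<k_1<\cdots$ and write $m_j=n_{k_j}$, arranging that the sets $A_j=\{x\in\XX\mid |f_{m_j}(x)-f(x)|\geq 2^{-j}\}$ satisfy $\mu(A_j)\leq 2^{-j}$. Put $A=\bigcap_{l\in\omega}\bigcup_{j\geq l}A_j$. By countable subadditivity, $\mu(A)\leq\mu(\bigcup_{j\geq l}A_j)\leq 2^{-l+1}$ for every $l$, hence $\mu(A)=0$. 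If $x\notin A$, fix $l$ with $x\notin A_j$ for all $j\geq l$; then $|f_{m_j}(x)-f(x)|<2^{-j}$ for all $j\geq l$, so $f_{m_j}(x)\to f(x)$. Thus $\langle f_{m_j}\mid j\in\omega\rangle$ converges to $f$ $\mu$-almost everywhere, which is what \ref{condition b} requires.

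\emph{Proof of \ref{condition b}$\Rightarrow$(a).} Suppose $f_n\not\to f$ in measure. Then there are $\epsilon>0$, $\delta>0$ and a subsequence $\langle f_{n_k}\mid k\in\omega\rangle$ with $\mu(\{x\mid|f_{n_k}(x)-f(x)|\geq\epsilon\})\geq\delta$ for every $k$. Applying \ref{condition b} to this subsequence, there is a further subsequence $\langle f_{n_{k_j}}\mid j\in\omega\rangle$ converging to $f$ $\mu$-almost everywhere, so $g_j:=1_{\{x\mid|f_{n_{k_j}}(x)-f(x)|\geq\epsilon\}}\to 0$ $\mu$-almost everywhere. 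Since $0\leq g_j\leq 1$ and the constant function $1$ is $\mu$-integrable ($\mu$ being finite), dominated convergence gives $\mu(\{x\mid|f_{n_{k_j}}(x)-f(x)|\geq\epsilon\})=\int g_j\,d\mu\to 0$, contradicting that this quantity is $\geq\delta$ for all $j$. (One may equally invoke Egorov's theorem in place of dominated convergence.) Hence $f_n\to f$ in measure.

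The only real obstacle is the use of finiteness of $\mu$ in the second implication; the rest is a routine combination of a Borel--Cantelli-style estimate and dominated convergence, and the argument is uniform in the measurable setting provided by the hypotheses.
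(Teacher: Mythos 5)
The paper states this lemma as a well-known fact and gives no proof of its own, so there is nothing to compare against; your argument stands on its own. It is the standard Riesz subsequence argument, and it is correct: a Borel--Cantelli estimate for (a)$\Rightarrow$\ref{condition b}, and the contrapositive via dominated convergence (or Egorov) for \ref{condition b}$\Rightarrow$(a). You are also right to flag that \ref{condition b}$\Rightarrow$(a) genuinely requires $\mu$ to be finite (your $f_n=1_{[n,n+1]}$ example is the standard counterexample otherwise); the paper leaves this hypothesis implicit, but it is satisfied throughout since all measures appearing in the paper are Borel probability measures on Cantor or Baire space, so this is a reasonable reading rather than a flaw in the statement.
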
 

Condition \ref{condition b} is suitable for generalizations to other
ideals, since it only mentions the ideal of null sets, but not the
measure itself.  

\begin{definition} Let $\I$ be a $\sigma$-ideal on ${}^\omega \baseset$ and $A\subseteq {}^\omega \baseset$. 
\begin{enumerate-(1)}
\item 
Given $x\in {}^\omega\baseset$, let $f_n$ denote the characteristic function of ${\sigma_{x{\upharpoonright}n}^{-1}(A\cap N_{x{\upharpoonright}n})}$ for each $n\in\omega$. 
Define $x$ to be an \emph{$\I$-convergence density point} of $A$
if every subsequence of $\vec{f}=\langle f_n\mid n\in\omega\rangle$ 
has a further subsequence that converges $\I$-almost everywhere (i.e. on a set $A$ with ${}^\omega \baseset\setminus A \in \I$) to the constant function on ${}^\omega \baseset$ with value $1$. 
%${}^\omega \baseset \to \{1\}$ (i.e., converges everywhere on a set $A$ such that ${}^\omega \baseset\setminus A \in \I$).
\item 
By the \emph{$\I$-convergence density property} we mean the statement that for any $B\in\Borel({}^\omega \baseset)$ and the set $C$ of  $\I$-convergence density points of $B$, we have $B\triangle C \in \I$. 
\end{enumerate-(1)}
\end{definition}
%It is easy to see that for any subset $A$ of ${}^\omega 2$, the sets of $\I-$convergence density points of $A$ and that of its complement are disjoint.

We first consider tree forcings on ${}^\omega\omega$. 
For $s\in {}^{<\omega}\omega$ and $f_0,\dots,f_m\in {}^\omega\omega$, let
$$T_{s,f_0,\dots,f_m}=\{t\in C_s\mid \forall i\leq m\ \forall n\geq |s|\ t(n)\neq f_i(n)\}$$ 
(cf.\ Definition \ref{def:listoftreeforcings} \ref{definition eventually different forcing} below) and 
 let $f\in{}^\omega\omega$ denote the constant function with value $0$. 

\begin{lemma} 
If $N_{\langle 0\rangle}$ and $[T_{\emptyset,f}]$ are $\I$-positive, then the $\I$-convergence density property fails. 
In particular, this holds for $\I_{\Hechler}$, $\I_\EE$, $\I_{\Miller}$  
and $\I_{\Laver[A]}$ if $A$ contains all cofinite sets. % and $[T]$ is $\I_{\Laver[A]}$-positive for all $T\in \Laver[A]$. 
\end{lemma}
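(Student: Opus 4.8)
The plan is to exhibit a single closed counterexample, namely
$B=[T_{\emptyset,f}]=\{g\in{}^\omega\omega\mid g(n)\neq 0\text{ for all }n\in\omega\}$, and to show that its set of $\I$-convergence density points is empty. By hypothesis $B$ is $\I$-positive, so $B\triangle\emptyset=B\notin\I$, and the $\I$-convergence density property fails. The key feature of $B$ is that for every $x\in B$ and every $n\in\omega$ we have $\sigma_{x{\upharpoonright}n}^{-1}(B)=B$: indeed $(x{\upharpoonright}n)^\smallfrown y$ avoids the value $0$ if and only if $y$ does, using that $x(i)\neq 0$ for $i<n$. For $x\notin B$, letting $i_0$ be least with $x(i_0)=0$, every extension $(x{\upharpoonright}n)^\smallfrown y$ with $n>i_0$ contains the value $0$, so $\sigma_{x{\upharpoonright}n}^{-1}(B)=\emptyset$ for $n>i_0$.

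Since $\sigma_{x{\upharpoonright}n}^{-1}(B\cap N_{x{\upharpoonright}n})=\sigma_{x{\upharpoonright}n}^{-1}(B)$, the functions $f_n$ from the definition of an $\I$-convergence density point satisfy: if $x\in B$ then $f_n=1_B$ for all $n$, while if $x\notin B$ then $f_n$ is identically $0$ for all $n>i_0$. In both cases the sequence $\langle f_n\mid n\in\omega\rangle$ is eventually constant, with eventual value $c$ equal to $1_B$ (when $x\in B$) or to the constant function $0$ (when $x\notin B$). For any subsequence of an eventually constant sequence to converge $\I$-almost everywhere to the constant function with value $1$ would require $\{y\mid c(y)\neq 1\}\in\I$; but $\{y\mid c(y)\neq 1\}$ is $ {}^\omega\omega\setminus B\supseteq N_{\langle 0\rangle}$ when $c=1_B$, which is $\I$-positive by hypothesis, and is all of ${}^\omega\omega\notin\I$ when $c$ is the constant function $0$. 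Taking $\langle f_n\rangle$ as a subsequence of itself, it therefore has no further subsequence converging $\I$-almost everywhere to $1$, so no $x$ is an $\I$-convergence density point of $B$. This proves the first assertion.

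For the ``in particular'' clause it remains to verify, for each listed forcing $\PP$, that $N_{\langle 0\rangle}$ and $[T_{\emptyset,f}]$ are $\I_\PP$-positive. In every case $N_{\langle 0\rangle}$ contains $[T]$ for a condition $T$ with $\stem_T=\langle 0\rangle$ (the tree of extensions of $\langle 0\rangle$, appropriately pruned to satisfy the definition of the forcing), and $[T]\notin\cI_\PP$ by Lemma~\ref{characterization of positive sets}, so $N_{\langle 0\rangle}$ is $\I_\PP$-positive. Likewise $T_{\emptyset,f}$ with $f\equiv 0$ is (or contains) a condition in each case: it is a Hechler tree with empty stem and dominating function $\equiv 0$; it is an eventually different condition avoiding the single function $f$; it is a superperfect tree, since every node splits into the infinite set $\{1,2,3,\dots\}$; and it is a Laver tree whose successor set at every node is the cofinite set $\{1,2,3,\dots\}$, which lies in $A$ by assumption. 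Hence $[T_{\emptyset,f}]$ is $\I_\PP$-positive, and the lemma applies.

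I do not expect a genuine obstacle here: the argument is elementary once the counterexample $B=[T_{\emptyset,f}]$ is identified, the point being that $B$ is self-similar above each of its branches yet is neither $\I$-null nor $\I$-conull, so the relevant sequence of characteristic functions gets stuck at $1_B$. The only mild care needed is (i) that negating the definition of a convergence density point requires only one ``bad'' subsequence of $\langle f_n\rangle$ — and here the whole sequence works, being eventually constant — and (ii) matching the exact convention for conditions in Hechler, eventually different, Miller, and Laver forcing, though in each case $[T_{\emptyset,f}]$ visibly contains the branches through a condition.
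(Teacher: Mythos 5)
Your argument is correct and follows the same route as the paper's proof: compute $\sigma_{x\upharpoonright n}^{-1}([T_{\emptyset,f}])$, observe it is always disjoint from the $\I$-positive set $N_{\langle 0\rangle}$, and conclude no $x$ is a convergence density point of $[T_{\emptyset,f}]$. The only (harmless) slip is calling $T_{\emptyset,f}$ ``a Hechler tree with dominating function $\equiv 0$''; you want the Hechler condition with dominating function $\equiv 1$, which has the same branch set.
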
 
\begin{proof} 
% a function which takes every value in $\omega$ infinitely often. 
We claim that no $x\in{}^\omega\omega$ is an $\I$-convergence density point of $[T_{\emptyset,f}]$. 
We have $\sigma_t^{-1}([T_{\emptyset,f}])=[T_{\emptyset,f}]$ if $t\in T_{\emptyset,f}$ and $\sigma_t^{-1}([T_{\emptyset,f}])=\emptyset$ otherwise. 
Hence $\sigma_t^{-1}([T_{\emptyset,f}])\cap N_{\langle 0\rangle}=\emptyset$ for all $t\in {}^{<\omega}\omega$. 
Thus $x$ is not an $\I$-convergence density point of $[T_{\emptyset,f}]$. 
\end{proof} 

We now turn to tree forcings on ${}^\omega 2$. 
For $s \in {}^{<\omega}2$ and $N\in[\omega]^\omega$
let
$$T_{s,N}=\{ t \in {}^{<\omega}2 \mid t\in C_s\ \&\ \forall n\ t(n)=1\Rightarrow n\in N\} $$ 
(cf.\  Definition \ref{def:listoftreeforcings} \ref{definition Mathias} below). 

\begin{lemma} 
Suppose that $N_{\langle 1\rangle}$ and $[T_{\emptyset,N}]$ are $\I$-positive for some infinite set $N$. 
Then the $\I$-convergence density property fails. 
In particular, this holds for $\Mathias[A]$ and $\Silver[A]$ for any subset $A$ of $\pow(\omega)$ that contains at least one infinite set. 
\end{lemma}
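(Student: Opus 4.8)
The plan is to show that the closed (hence Borel) set $B:=[T_{\emptyset,N}]$ already witnesses the failure of the $\I$-convergence density property, i.e.\ that $B\triangle C\notin\I$, where $C$ denotes the set of $\I$-convergence density points of $B$. Since $B$ is $\I$-positive by hypothesis, it will be enough to prove $C=\emptyset$: that no $x\in{}^\omega2$ is an $\I$-convergence density point of $B$. Throughout I take $N$ to be infinite \emph{and} coinfinite, which costs nothing in the applications --- for $\Silver[A]$ every admissible parameter is coinfinite, and for $\Mathias[A]$ one may pick a coinfinite infinite $N\in A$, while in both cases $N_{\langle 1\rangle}$ and $[T_{\emptyset,N}]$ are $\I$-positive because each of them contains the set of branches through a condition.

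First I would compute, for a fixed $x$, the functions $f_n:=1_{\sigma_{x{\upharpoonright}n}^{-1}(B\cap N_{x{\upharpoonright}n})}$. If $\supp(x{\upharpoonright}n)\not\subseteq N$, then $B\cap N_{x{\upharpoonright}n}=\emptyset$ and $f_n\equiv 0$; otherwise a routine calculation gives $\sigma_{x{\upharpoonright}n}^{-1}(B\cap N_{x{\upharpoonright}n})=[T_{\emptyset,M_n}]$ with $M_n:=\{k\mid n+k\in N\}$, so $f_n=1_{[T_{\emptyset,M_n}]}$. The key point is that whenever $n\notin N$ we have $f_n(y)=0$ for every $y\in N_{\langle 1\rangle}$: in the first case this is immediate, and in the second case $y\in[T_{\emptyset,M_n}]$ would force $0\in M_n$, i.e.\ $n\in N$, contrary to assumption.

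Next I would use coinfiniteness of $N$: enumerating $\omega\setminus N=\{n_0<n_1<\dots\}$, the subsequence $\langle f_{n_k}\mid k\in\omega\rangle$ satisfies $f_{n_k}{\upharpoonright}N_{\langle 1\rangle}\equiv 0$ for all $k$, and so does every further subsequence of it. Hence every further subsequence converges pointwise to the constant $0$ on $N_{\langle 1\rangle}$, so the set on which its limit differs from the constant function $1$ always contains $N_{\langle 1\rangle}$ and is therefore not in $\I$; thus no further subsequence of $\langle f_{n_k}\mid k\in\omega\rangle$ converges $\I$-almost everywhere to $1$. This shows $x$ is not an $\I$-convergence density point of $B$, and as $x$ was arbitrary, $C=\emptyset$ and $B\triangle C=B\notin\I$. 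The statement for $\Mathias[A]$ and $\Silver[A]$ then follows from the choice of $N$ above, with only cosmetic changes if one prefers to work with a Silver condition that is not constantly $0$.

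I do not expect any real difficulty here; the argument reduces to the two elementary computations above. The one point that needs care is the passage to a coinfinite $N$: for a cofinite $N$ the set $[T_{\emptyset,N}]$ is clopen and all of its shifts $\sigma_{x{\upharpoonright}n}^{-1}(B)$ eventually equal ${}^\omega2$, so for such $N$ the set $B$ would in fact satisfy the density property --- which is why the statement is applied only to forcings, such as Mathias and Silver forcing, for which a coinfinite infinite parameter is available.
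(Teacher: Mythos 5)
Your argument is essentially the same as the paper's: both show that no $x$ is an $\I$-convergence density point of $[T_{\emptyset,N}]$ by observing $\sigma_{x{\upharpoonright}n}^{-1}([T_{\emptyset,N}])\cap N_{\langle 1\rangle}=\emptyset$ whenever $n\notin N$, then passing to the subsequence indexed by $\omega\setminus N$. (The paper restricts attention to $x\in[T_{\emptyset,N}]$ rather than showing $D([T_{\emptyset,N}])=\emptyset$ outright, but this is cosmetic.)

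What you do differently, and it is a genuine improvement, is to notice that the argument requires $\omega\setminus N$ to be infinite, i.e.\ $N$ coinfinite. The paper's proof as written says ``any infinite strictly increasing sequence in $N$'', which must be a typo for ``in $\omega\setminus N$'' in light of the preceding computation, and the hypothesis ``for some infinite set $N$'' should correspondingly read ``for some infinite coinfinite set $N$''. Your observation that for cofinite $N$ the set $[T_{\emptyset,N}]$ in fact satisfies the convergence density property, because the shifted sets eventually become ${}^\omega 2$ for $x\in[T_{\emptyset,N}]$ and eventually become $\emptyset$ for $x\notin[T_{\emptyset,N}]$, is correct and pinpoints exactly why the coinfiniteness cannot be dropped. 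This also has a consequence for the ``in particular'' clause that you are right to flag: the statement for $\Mathias[A]$ requires $A$ to contain an infinite \emph{coinfinite} set, not merely an infinite set (e.g.\ $A=\{\omega\}$ gives Cohen forcing, for which the convergence density property does hold by \cite{Poreda85}). In all the intended applications a coinfinite $N$ is available, so the fix costs nothing, but the hypothesis as printed is slightly too weak.
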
 
\begin{proof} 
It is sufficient to show that no $x\in [T_{\emptyset,N}]$ is an $\I$-convergence density point of $[T_{\emptyset,N}]$. 
We have $\sigma_{x{\upharpoonright}n}^{-1}([T_{\emptyset,N}])\cap N_{\langle 1\rangle}=\emptyset$ for all $n\notin N$. 
Since $N_{\langle 1\rangle}$ is $\I$-positive, any infinite strictly increasing sequence in $N$ witnesses that $x$ is not an $\I$-convergence density point. 
\end{proof} 

The next lemma takes care of the remaining tree forcings. 

\begin{lemma} 
Suppose that $\I$ is an ideal on ${}^\omega 2$ with $\I_\Sacks\subseteq \I\subseteq \I_{\Mathias}$. 
%If $I$ is an ideal on ${}^{\omega}2$ that contains $I_\Sacks$ and $[T]$ is $I$-positive for every $T\in \Mathias$ 
Then the $\I$-convergence density property fails. 
\end{lemma}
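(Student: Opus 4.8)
The plan is to re-use the strategy of the two preceding lemmas: produce a single Borel set $A\subseteq{}^\omega2$ which is $\I$-positive but contains \emph{no} $\I$-convergence density point of itself. Once this is done, writing $C$ for the set of $\I$-convergence density points of $A$ we get $A\cap C=\emptyset$, whence $A\subseteq A\triangle C$ and so $A\triangle C\notin\I$; since $A$ is Borel, the $\I$-convergence density property fails. As in the Mathias/Silver case above, the natural candidate is a Mathias-type closed set: fix an infinite \emph{and} co-infinite set $N\subseteq\omega$, say $N=\{2k\mid k\in\omega\}$, and set $A=[T_{\emptyset,N}]$.

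The first point is that $A$ and the clopen set $N_{\langle1\rangle}$ are both $\I$-positive, and this is exactly where the hypothesis $\I\subseteq\I_{\Mathias}$ enters. Since $T_{\emptyset,N}\in\Mathias$ and $N_{\langle1\rangle}$ is of the form $[S]$ for a Mathias condition $S$, Lemma~\ref{characterization of positive sets}(1) gives $A\notin\cIs_{\Mathias}$ and $N_{\langle1\rangle}\notin\cIs_{\Mathias}$; as $\I\subseteq\I_{\Mathias}=\cI_{\Mathias}\subseteq\cIs_{\Mathias}$, it follows that neither $A$ nor $N_{\langle1\rangle}$ lies in $\I$. (The inclusion $\I_\Sacks\subseteq\I$ plays no role in the argument; it only serves to delimit the ideals under consideration, namely those of Sacks, Silver, Mathias, and anything in between.)

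The remaining ingredient is a single computation with the shift maps. For $x\in A$ and any $n\in\omega$ one has $x\upharpoonright n\in T_{\emptyset,N}$, and unwinding the definition of $T_{\emptyset,N}$ shows that every $y\in\sigma_{x\upharpoonright n}^{-1}(A\cap N_{x\upharpoonright n})$ must take the value $0$ at coordinate $0$ as soon as $n\notin N$; in other words $\sigma_{x\upharpoonright n}^{-1}(A\cap N_{x\upharpoonright n})\cap N_{\langle1\rangle}=\emptyset$ whenever $n\notin N$. Writing $f_n$ for the characteristic function of $\sigma_{x\upharpoonright n}^{-1}(A\cap N_{x\upharpoonright n})$, this says that $f_n$ vanishes identically on $N_{\langle1\rangle}$ for every $n\notin N$. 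Since $N$ is co-infinite, I would then consider the subsequence $\langle f_{n_k}\mid k\in\omega\rangle$ indexed by an increasing enumeration of $\omega\setminus N$: along every further subsequence, and at every point of $N_{\langle1\rangle}$, the value is constantly $0$, so that subsequence fails to converge to $1$ on all of $N_{\langle1\rangle}$; and $N_{\langle1\rangle}\notin\I$, so it does not converge $\I$-almost everywhere to the constant function $1$. Hence no subsequence of $\langle f_n\mid n\in\omega\rangle$ witnesses the defining property of an $\I$-convergence density point for $x$, i.e.\ $x$ is not an $\I$-convergence density point of $A$, which finishes the proof.

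I do not anticipate a genuine obstacle here: the argument is essentially a direct re-run of the Mathias/Silver lemma. The only point that needs a little care is confirming that the two witness sets $[T_{\emptyset,N}]$ and $N_{\langle1\rangle}$ really are of the form $[S]$ for a legitimate Mathias condition $S$, so that Lemma~\ref{characterization of positive sets} applies — which just amounts to checking the precise convention for Mathias trees from Definition~\ref{def:listoftreeforcings}.
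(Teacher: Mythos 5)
Your argument is correct, and it takes a genuinely simpler route than the paper's. You observe that under $\I\subseteq\I_{\Mathias}$ the hypotheses of the preceding lemma (the Mathias/Silver case) hold automatically with $N=\{2k\mid k\in\omega\}$, and then simply re-run that lemma's shift computation: for $x\in A=[T_{\emptyset,N}]$ the characteristic function $f_n$ vanishes on $N_{\langle 1\rangle}$ whenever $n\notin N$, and since $N$ is co-infinite and $N_{\langle 1\rangle}\notin\I$, the subsequence along odd indices has no further subsequence converging $\I$-a.e.\ to $1$, so $x$ is not an $\I$-convergence density point of $A$. The positivity of $A$ and of $N_{\langle1\rangle}=[T_{\langle1\rangle,\omega}]$ indeed follows from Lemma~\ref{characterization of positive sets} together with $\I\subseteq\cI_{\Mathias}\subseteq\cIs_{\Mathias}$, exactly as you say. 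The paper proceeds differently: it fixes $T=T_{\emptyset,N'}$ for $N'=\{2^j+1\mid j\in\omega\}$ and proves the \emph{stronger} claim that no $x\in{}^\omega 2$ at all is an $\I$-convergence density point of $[T]$. Its argument is by contradiction: a density point $x$ produces a subsequence $\vec n$ along which the $\liminf$ set is $\I$-co-small and hence uncountable (using $\I\subseteq\I_{\Mathias}$); then some $A_{i,j}=\sigma_{x\upharpoonright n_i}^{-1}([T])\cap\sigma_{x\upharpoonright n_j}^{-1}([T])$ is uncountable, and comparing splitting nodes of three lexicographically ordered branches of $A_{i,j}$ forces $n_j-n_i$ to admit two distinct representations of the form $2^k-2^l$, which is impossible. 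Your approach dispenses with this combinatorial step entirely, at the cost of only showing that the set of convergence density points is disjoint from $[T_{\emptyset,N}]$ rather than empty; both conclusions suffice for the lemma since $[T_{\emptyset,N}]\notin\I$. Your remark that $\I_\Sacks\subseteq\I$ plays no role in the argument is also accurate; it is not visibly used in the paper's proof either.
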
 
\begin{proof} 
Let 
\begin{align*}
T = \{t \in {}^{<\omega}2 \mid \forall i \in \omega\ (t(i)=1\Rightarrow \exists j\in\omega\  (i=2^j+1)) \}. 
\end{align*} 
It is sufficient to show that no $x\in {}^\omega 2$ is an $\I$-convergence density point of $[T]$. 
Otherwise there is a strictly increasing sequence $\vec{n}=\langle n_i \mid i <\omega\rangle$ such that 
$$A = \{y\in {}^\omega 2\mid \exists i\ \forall j\geq i\ y\in \sigma_{x{\upharpoonright} n_j}^{-1} ([T])\}$$ 
is co-countable. 
Let 
$$A_{i,j} = \sigma_{x{\upharpoonright} n_i}^{-1}([T]) \cap \sigma_{x{\upharpoonright} n_j}^{-1}([T])$$ 
for $i<j$ in $\omega$. 
Since $A \subseteq \bigcup_{i,j\in\omega,\ i<j } A_{i, j}$ is uncountable, $A_{i,j}$ is uncountable for some $i<j$. 
Let $a<_{\mathrm{lex}}b<_{\mathrm{lex}}c$ be elements of $A_{i,j}$. 
We denote the longest common initial segment of $d,e\in {}^{\leq\omega}2$ by $d{\wedge} e$. 
We can assume without loss of generality that $a{\wedge} b=s$, $a{\wedge} c=b{\wedge} c=t$ and $s\subsetneq t$. 
Then $s$ and $t$ are splitting nodes in $\sigma_{x{\upharpoonright}n_i}^{-1}(T)$ and $\sigma_{x{\upharpoonright} n_j}^{-1}(T)$. 
Hence $s+n_i$, $s+n_j$, $t+n_i$, $t+n_j$ are of the form $2^k+1$. 
% for some $k\in\omega$. 
Since $s\neq t$, $n_j-n_i$ can be written in the form $2^k-2^l$ in two different ways. 
But this contradicts the easy fact that $k,l$ are uniquely determined by $2^k-2^l$. 
\iffalse 
Let $k_0 := \lh(s)$ and $k_1 :=
\lh(t)$. Then there are natural numbers $k_i$ for $i \in 6 \setminus
2$ such that
\begin{align*}
& n_i + k_0 = 2^{k_2}, \\
& n_j + k_0 = 2^{k_3}, \\
& n_i + k_1 = 2^{k_4}, \\
& n_j + k_1 = 2^{k_5}.
\end{align*}
Therefore $n_j - n_i = 2^{k_3} - 2^{k_2} = 2^{k_5} - 2^{k_4}$, i.e.
$2^{k_3} + 2^{k_4} = 2^{k_2} + 2^{k_5}$. As every natural number has a
unique represenation in the binary system this implies $(k_2 = k_3
\wedge k_4 = k_5) \vee (k_2 = k_4 \wedge k_3 = k_5)$. As $i < j$ we have
$n_i < n_j$ and therefore $k_2 \ne k_3$. Therefore $(k_2 = k_4 \wedge
k_3 = k_5)$ and hence $k_0 = k_1$, a contradiction. 
\fi 
\end{proof} 

Let $\PP_{E_0}$ denote \emph{$E_0$-forcing} \cite[Section 4.7.1]{MR2391923} and $\mathbb{W}$ an appropriate representation of \emph{Willowtree forcing} \cite[Section 1.1]{MR1354935}. 
Since $\Mathias \subseteq \Silver\subseteq \mathbb{W}\subseteq
\PP_{E_0} \subseteq \Sacks$, the previous result holds for the ideals
$\I_\PP$ associated to these forcings as well.

%%%%%%%%%%%%%%%%%%
%%%%%%%%%%%%%%%%%%
\section{%Appendix on 
A list of tree forcings} \label{section list of tree forcings} 

%We shall study the following well-known forcings, whose 
We review definitions of some tree forcings for the reader's convenience. 
If $N\subseteq \omega$ and $m\in\omega$, we write $m+N=\{m+n \mid n\in N\}$. 
Moreover, a subset $A$ of $\pow(\omega)$ is called \emph{shift invariant} if $N\in A \Longleftrightarrow m+N\in A$ for all $m\in \omega$. 
  
\begin{definition} \label{def:listoftreeforcings} 
Assume that $A$ is a subset of $\pow(\omega)$. 
\begin{enumerate-(1)} 
\item\label{definition random} 
%\todo{changed}
\emph{Random forcing} is the collection of perfect subtrees $T$ of ${}^{<\omega}2$ with $\mu([T_s])>0$ for all $s\in T$ with $\stem(T)\sqsubseteq s$. 
%and $\frac{\mu([T_s])}{2^s}>\frac{1}{2}$ for $s=\stem_T$. 

\item 
\emph{Cohen forcing $\Cohen$} is collection of \emph{cones} $C_s=\{ t \in {}^{<\omega}\omega \st s \sqsubseteq t \text{ or }t\sqsubseteq s\}$ for $s \in {}^{<\omega}\omega$. 

\item 
\emph{Sacks forcing $\Sacks$} is the collection of all perfect subtrees of ${}^{<\omega}2$. 

\item 
\emph{Miller forcing $\Miller$} is the collection of \emph{superperfect} subtrees $T$ of ${}^{<\omega}\omega$. This means that above every node in $T$ there is some infinitely splitting node $t$ in $T$, i.e. $t$ has infinitely many direct successors. 

\item \label{definition Hechler} 
\emph{Hechler forcing $\Hechler$} is the collection of trees 
$$T_{s,f}=\{t\in C_s\mid \forall n\geq |s|\ t(n)\geq f(n)\}$$ 
for $s\in {}^{<\omega}\omega$ and $f\in {}^\omega\omega$. 
%of ${}^{<\omega}\omega$. 
%tree forcing consisting of trees $p \subseteq {}^{<\omega}\omega$ such that for some
%  $s \in {}^{<\omega}\omega$ and some $f \in {}^\omega\omega$
%  \[ [p] = \{g \in {}^\omega\omega \st s \subseteq g \text{ and }
%    (\forall n > \leng(s) ) \; f(n) \leq g(n) \}. \] We also write
%  $(s, f)$ for $p$ as above.  It is sometimes useful to allow for
%  finitely many functions in the second coordinate: this is done by
%  interpreting $(s, \{f_0, \hdots, f_k\})$ as
%  $(s, \max \{f_0, \hdots, f_k\})$ (where $\max \{f_0, \hdots, f_k\}$
%  denotes the pointwise maximum of the functions
%  $\{f_0, \hdots, f_k\}$). 
  
\item \label{definition eventually different forcing} 
\label{def:listoftreeforcings e} 
\emph{Eventually different forcing $\Eventual$} is the collection of trees 
$$T_{s,f_0,\dots,f_m}=\{t\in C_s\mid \forall i\leq m\ \forall n\geq |s|\ t(n)\neq f_i(n)\}$$ 
for $s\in {}^{<\omega}\omega$ and $f_0,\dots,f_m\in {}^\omega\omega$. 

\item 
\emph{$A$-Laver forcing $\Laver[A]$} is the collection of subtrees $T$ of ${}^{<\omega}\omega$ such that for every $t\in T$ with $\stem(T)\sqsubseteq t$, the set of immediate successors of $t$ in $T$ is an element of $A$. \emph{Laver forcing} is $\Laver[F]$ for the Fr\'echet filter $F$ of cofinite sets. 
  
\item \label{definition Mathias} 
\emph{$A$-Mathias forcing} $\Mathias[A]$ is the collection of trees 
$$T_{s,N}=\{ t \in {}^{<\omega}2 \mid t\in C_s\ \&\ \forall n\ t(n)=1\Rightarrow n\in N\} $$
for $s \in {}^{<\omega}2$ and $N\in A$. 
%We also write $(s,N)$ for $p$ as above. 
\emph{Mathias forcing} $\Mathias$ is $\Mathias[A]$ for $A = [\omega]^\omega$. 
%One usually only considers $\Mathias$ and $\Mathias[F]$ where $F$ is a filter on $\omega$. 
  
\item 
\emph{$A$-Silver forcing} $\Silver[A]$ is the collection of trees 
$$T_f=\{t\in {}^{<\omega}2\mid \forall n\in \dom(t)\cap \dom(f) \ f(n)\leq t(n)\},$$ 
where $\dom(f)=\omega\setminus N$ for some $N\in A$. 
%all trees $p \subseteq {}^{<\omega}2$ such that for some infinite
%  $A \in X$ and some $f \in 2^{\omega \setminus A}$ we have
%  \[ [p] = \{ g \in {}^{\omega}2 \st f \subseteq g \}, \] i.e.\ trees $p$
%  such that $s$ is a splitting node of $p$ if and only if
%  $\leng(s) \in A$ and such that any $s(n)$ agrees with $f(n)$ for
%  $n \notin A$.  
Silver forcing $\Silver$ is $\Silver[A]$ for $A = [\omega]^\omega$. 
\end{enumerate-(1)}
\end{definition}

All of these satisfy the above condition for collections of trees $\PP$ that $T_s\in\PP$ for all $T\in\PP$ and $s\in T$. 
Moreover, random forcing, Cohen forcing, Sacks forcing, Hechler
forcing, eventually different forcing, and $A$-Laver forcing are \emph{shift invariant} in the sense that for all $T$ and $s\in {}^{<\omega}2$, $T\in\PP\Longleftrightarrow \sigma_s(T)\in\PP$. 
If $A$ is shift invariant, then $\Mathias[A]$ and $\Silver[A]$ are also shift invariant. 

%\begin{definition} \label{def:translinv} \todo{check it this is the same as defined in the beginning of the paper. delete this here. } Let $\PP$ be a tree forcing
%  on ${}^\omega\baseset$ and $s$ a function that associates to every
%  $T \in \PP$ some $s(T) \in {}^{<\omega}\baseset$. We say that $\PP$
%  is \emph{shift invariant with respect to $s$} if for all $T \in \PP$
%  and $s \leq s(T)$, $T/s \in \PP$ and for all
%  $t \in {}^{<\omega}\baseset$,
%  $t^\smallfrown T = \{t^\smallfrown s \mid s \in T\} \in
%  \PP$. Moreover, we say that $\PP$ is \emph{shift invariant} if it is
%  shift invariant with respect to the function $s(T) = \stem_T$.
% % A tree forcing $\mathbb{P}$ is \emph{shift invariant} if for all $T$
% % and all $s\in T$ with $s \leq \stem_T$, $T/s\in \mathbb{P}$ iff
% % $T\in \mathbb{P}$.
%\end{definition} 

Note that Cohen forcing, Hechler forcing, eventually different forcing, Laver forcing, and Silver forcing are topological, while random forcing, Sacks forcing, and Miller forcing are not. 

\section{An explicit construction of density points} \label{section explicit construction of density points} 

%Lebesgue's density theorem for the uniform measure on Cantor space leaves open whether one can explicitly construct a density point of a closed set $C$ of positive measure. 
%The next result provides an algorithm for carrying out such a construction: 
In this section, we show how to explicitly construct density points of a closed set $C$ of positive measure. 

In fact, by the next result there is an algorithm that takes as input a list of data from $C$ and outputs a perfect tree (level by level) all of whose branches are density points. (By choosing e.g. the leftmost branch, we can approximate a single density point with arbitrary precision.) 
The input is a tree $T$ together 
with weights $w_t=\frac{\mu([T]\cap N_t)}{\mu(N_t)}>0$ for all $t\in T$; 
we call this a \emph{weighted tree}. 
The weights are given as inputs with arbitrary precision. 

%The input has the form of a Turing program with an oracle that can decide whether $t\in T$ and approximate $w_t$ with any given precision $\epsilon \in \QQ\cap (0,1)$. 
% on input $(t,\epsilon)$. 
%\todo[inline]{NEED TO CHECK WHAT HAPPENS TO REALS $w_t$ AS INPUTS. ARGUE HOW THE ALGORITHM WORKS WITH APPROXIMATIONS} 

%Recall that a tree is \emph{pruned} if it does not have end nodes. 
%Let $\mathsf{T}$ denote the set of subtrees of ${}^{<\omega}2$ and $\mathsf{WT}\subseteq \mathsf{T}$ the set of weighted trees. 

\begin{theorem} \label{construction of density points} 
%There is an algorithm that computes from any weighted tree $T$ and any $q\in\QQ\cap(0,1)$ a subtree $S$ of $T$ with the following properties. 
%with the following properties. 
There is a partial computable function that takes as input any pair $(T,q)$, where $T$ is a weighted tree and $q\in\QQ\cap(0,1)$, and produces a perfect tree $S\subseteq T$ with 
%the properties 
\begin{enumerate-(a)} 
\item \label{construction of density points 1} 
%$[S]$ consists of Lebesgue density points of $[T]$ 
$[S]\subseteq D_\mu([T])$ 
and 
\item \label{construction of density points 2} 
$\mu([S]) \geq q\mu([T])$. 
%$\frac{\mu([S])}{\mu([T])}\geq 1-q$. 
\end{enumerate-(a)} 
%There is a computable function $f\colon \mathsf{WT}\times \NN\rightarrow \mathsf{T}$ such that for all $T\in \mathsf{WT}$ and $n\in\omega$ 
%There is an algorithm that takes as input weighted subtrees $T$ of ${}^{<\omega}2$ with $\mu([T])>0$. For any input $T$, the output is a perfect subtree $S$ of $T$ such that $[S]$ consists of Lebesgue density points of $[T]$ and $\mu([S])>0$. 
\end{theorem}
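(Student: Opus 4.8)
The plan is to build the perfect tree $S$ by a level-by-level recursion, using the weights $w_t$ to decide when and how to split. Recall that $x \in D_\mu([T])$ means: there is $m$ such that for all $n \geq m$ and all Borel $B$ with $\mu(B) > \tfrac12$, the shift $\sigma_{x\upharpoonright n}(B) \cap [T] \notin \I_\mu$. By Lemma~\ref{lem:separating Lebesgue density}, a sufficient condition for $x \in D_\mu([T])$ is that $d^\mu_{[T]}(x) = 1$, i.e.\ $w_{x\upharpoonright n} \to 1$. So it suffices to construct a perfect $S \subseteq T$ with $\mu([S]) \geq q\mu([T])$ such that every branch $x \in [S]$ has $w_{x\upharpoonright n} \to 1$; in fact it is enough that the branches of $[S]$ all have $\mu$-density $1$ in $[S]$ itself combined with $[S] \subseteq_\mu [T]$, but the cleanest route is to force $w_{x\upharpoonright n}$ itself to converge to $1$ along $S$.

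\textbf{The construction.} First I would pick a summable sequence of ``error budgets'': fix rationals $\varepsilon_k > 0$ with $\prod_k (1-\varepsilon_k) \geq q$ (equivalently $\sum_k \varepsilon_k$ small enough), which is a computable choice from $q$. I build $S$ as an increasing union of finite trees $S_k$, maintaining at stage $k$ a finite antichain $L_k$ of ``active nodes'' of $S_k$ (its current leaves) together with the invariant that for each $t \in L_k$ we have $w_t \geq 1 - \varepsilon_k$ (after a suitable initial truncation this can be arranged to hold already at $k=0$ for the single node $\stem_T$, since $w_{\stem_T} > 0$ forces some extension with weight close to $w_{\stem_T}$; more carefully, one replaces $T$ by $T_{t_0}$ for a node $t_0$ with $w_{t_0}$ maximal enough—use that $w_{t^\smallfrown 0} + w_{t^\smallfrown 1} \geq 2 w_t$ when both children lie in $T$, or $w_{t^\smallfrown i} \geq w_t$ when only one does, so weights can only stay the same or improve as one descends, and they are bounded by $1$, hence $\sup_{t \in T} w_t$ is approached along every tail). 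Given $t \in L_k$, I search (computably, since the weights are given with arbitrary precision and all relevant inequalities are strict) for two incomparable extensions $u_0, u_1 \in T_t$ with $w_{u_0}, w_{u_1} \geq 1 - \varepsilon_{k+1}$; such extensions exist because, as noted, along any branch of $[T_t]$ the weights converge to a value $\geq w_t \geq 1 - \varepsilon_k$ (in fact to $1$ at Lebesgue-density points of $[T]$, which are $\mu$-abundant in $[T_t]$ by Lebesgue's density theorem applied inside $[T_t]$), and $[T_t]$ has positive measure so it is uncountable, whence there are at least two such branches and hence two incomparable good nodes. I put $u_0, u_1$ (and the path from $t$ to each) into $S_{k+1}$ and let $L_{k+1} = \{u_i : i < 2, t \in L_k\}$. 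Set $S = \bigcup_k S_k$.

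\textbf{Verification.} The tree $S$ is perfect by construction (every node of $L_k$ splits by stage $k+1$). Every branch $x \in [S]$ passes through nodes $t^x_k \in L_k$ with $w_{t^x_k} \geq 1 - \varepsilon_k \to 1$, and since weights are non-decreasing along branches of $T$, this gives $w_{x\upharpoonright n} \to 1$, i.e.\ $d^\mu_{[T]}(x) = 1$; by Lemma~\ref{lem:separating Lebesgue density}(1) with $\epsilon = \tfrac12$, $x$ is an $\I_\mu$-shift density point of $[T]$, so $[S] \subseteq D_\mu([T]) = D_\I([T])$, giving \ref{construction of density points 1}. For \ref{construction of density points 2}: the measure estimate comes from tracking $\mu([S_k])$ relative to $\mu([T])$; at each splitting step from $t$ to $\{u_0, u_1\}$ inside $[T_t]$ one loses at most a factor $(1 - \varepsilon_{k+1})$ of the conditional measure—here one uses that $[T_{u_0}]$ and $[T_{u_1}]$ are disjoint subsets of $[T_t]$ with $\mu([T_{u_i}])/\mu(N_{u_i}) = w_{u_i} \geq 1-\varepsilon_{k+1}$, which one arranges (by a more careful choice of $u_0, u_1$: split at a node $s \supseteq t$ in $T$ that is a genuine splitting node of $T$ with $w_s \geq 1 - \varepsilon_{k+1}$, then take $u_i = s^\smallfrown i$; the existence of such $s$ follows since $[T_t]$ is not concentrated on a single branch as $\mu([T_t]) > 0$, so $T_t$ has splitting nodes arbitrarily high, and weights at them tend to $1$) to yield $\mu([S_{k+1}]) \geq (1-\varepsilon_{k+1})\,\mu([S_k])$ after normalizing, hence $\mu([S]) = \lim_k \mu([S_k]) \geq \prod_k (1-\varepsilon_k)\,\mu([T]) \geq q\,\mu([T])$.

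\textbf{Main obstacle.} The delicate point is the measure bookkeeping: one must choose the splitting nodes so that the measure is not lost faster than the budget allows, which forces splitting \emph{at} high-weight splitting nodes of $T$ rather than at arbitrary high-weight nodes, and requires checking that such splitting nodes with weight $\geq 1-\varepsilon$ always exist above any node of positive weight (this is where Lebesgue's density theorem for $\mu$ on ${}^\omega 2$ is genuinely used, to guarantee an abundance of branches along which the weight tends to $1$). The computability claim then needs the observation that all the conditions to be searched for are open (strict inequalities on weights, which are presented with arbitrary precision) and are guaranteed to be met, so an unbounded search terminates; assembling these into a single partial computable function taking $(T,q)$ to the sequence of levels of $S$ is then routine.
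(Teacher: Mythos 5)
There is a genuine gap, and it undermines both halves of your verification. The monotonicity claim ``weights can only stay the same or improve as one descends'' is false: for $t\in T$ with both children in $T$ one has the exact identity
\begin{equation*}
w_t=\tfrac12\bigl(w_{t^\smallfrown 0}+w_{t^\smallfrown 1}\bigr),
\end{equation*}
so the weight at $t$ is the \emph{average} of the children's weights, and a chosen child can have strictly smaller weight (e.g.\ ${}^\omega 2\setminus N_{00}$ gives $w_\emptyset=3/4$ but $w_{\langle 0\rangle}=1/2$). Consequently, arranging $w_{t^x_k}\geq 1-\varepsilon_k$ only at the sparse levels $L_k$ does \emph{not} give $w_{x\upharpoonright n}\to 1$: the weights can dip arbitrarily between the checkpoints, and then $x$ need not be a density point of $[T]$. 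The paper handles exactly this with the notion of $(t,a)$-\emph{good} nodes --- those $v\sqsupseteq t$ with $w_u\geq a$ for \emph{every} intermediate $u$ --- and the non-trivial Lemma~\ref{large fraction of good nodes}, which shows that a computable fraction of the nodes at each sufficiently high level are good in this path-wise sense; that lemma is the real content of the construction and has no analogue in your argument.

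The measure estimate breaks for a related reason. You split each active node $t$ into just two children $u_0,u_1$ sitting under a single high-weight splitting node $s\sqsupseteq t$, and claim $\mu([T_s])\geq (1-\varepsilon_{k+1})\mu([T_t])$. But $\mu([T_t])$ may be spread across many incomparable splitting nodes above $t$; selecting one of them can discard half or more of the conditional measure in a single step (already for the full binary tree, $\mu([T_{s}])=2^{-(|s|-|t|)}\mu([T_t])$). And trying to avoid descending is blocked by the averaging identity: from $w_t\geq 1-\varepsilon_k$ one only gets $w_{t^\smallfrown i}\geq 1-2\varepsilon_k$, which is the wrong direction if $\varepsilon_k$ must shrink. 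The paper sidesteps this by \emph{not} binary-splitting: it maintains large \emph{sets} $S_i\subseteq\Lev_{n_i}(T)$ of nodes and shows (Lemmas~\ref{large fraction with large weight} and~\ref{large fraction of good nodes}) that a computable fraction $a_i$ of the nodes at the next level are both high-weight and good, yielding $\mu([T^{(i+1)}])\geq a_i^2\,\mu([T^{(i)}])$ with a genuine multiplicative bound. Finally, note that your argument for the \emph{existence} of the needed extensions invokes Lebesgue's density theorem, whereas the paper explicitly points out that Theorem~\ref{construction of density points} is meant to \emph{reprove} the density theorem --- so that appeal is circular and must be replaced by the elementary combinatorial lemmas above.
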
 

%\todo[inline]{The following is not yet quite right: one needs to assume a strong version of the stem property: that it holds for all $x\in [T]$. I changed this below because the current weaker definition of the stem condition is sufficient below, and holds for Random forcing as well. So we need to explain this here a bit more. } 

%It is easy to see that 
Since 
$\mu([T_s])$ is right-c.e.\footnote{See \cite[Definition 1.8.14]{MR2548883}.} in the oracle $T$ for all $s\in T$, 
%Since the join of a sequence of right-c.e. reals is again right-c.e., 
one has the following immediate consequence. 

\begin{corollary} 
For any tree $T$ with $\mu([T])>0$ and $q\in \QQ\cap (0,1)$, there is a $\Delta^0_2(T)$-definable perfect tree $S$ 
%that is $\Delta^0_2$-definable in $T$ 
%that is computable from a sequence of right-c.e. in $T$ reals 
such that \ref{construction of density points 1} and \ref{construction of density points 2} of Theorem \ref{construction of density points} hold. 
Moreover, there is a $\Delta^0_2(T)$-coded $F_\sigma$ set $A\subseteq D_\mu([T])$ with $\mu(A)=\mu([T])$. 
% in particular $S$ is $\Delta^0_2$ in $T$. 
\end{corollary}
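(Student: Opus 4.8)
The plan is to read the corollary off from Theorem~\ref{construction of density points} by relativising its construction to the Turing jump $0'$. First we may assume that $T$ is a weighted tree, i.e.\ that $\mu([T_s])>0$ for all $s\in T$: the construction underlying Theorem~\ref{construction of density points} never selects a node of weight $0$, so it applies verbatim to any tree $T$ with $\mu([T])>0$; alternatively one replaces $T$ by its random subtree $T^\ast=\{s\in T\mid\mu([T_s])>0\}$, which is again a tree (positivity of $\mu([T_s])$ passes to initial segments of $s$), has $\mu([T^\ast])=\mu([T])$ because $[T]\setminus[T^\ast]$ is contained in the countable union of null sets $\bigcup\{[T_s]\mid s\in T,\ \mu([T_s])=0\}$, and satisfies $D_\mu([T^\ast])\subseteq D_\mu([T])$ since $[T^\ast]\subseteq[T]$ have equal measure, so a perfect tree and an $F_\sigma$ set witnessing the corollary over $T^\ast$ also witness it over $T$. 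Now, for $s\in T$ the real $\mu([T_s])$ is the infimum of the $T$-computable nonincreasing sequence of rationals $c_{n,s}=2^{-n}\cdot\lvert\{t\in T\mid \lvert t\rvert=n,\ s\sqsubseteq t\}\rvert$ for $n\geq\lvert s\rvert$, hence $\mu([T_s])$ is right-c.e.\ in $T$, uniformly in $s$; consequently the weighted tree $(T,\langle w_s\mid s\in T\rangle)$ with $w_s=\mu([T_s])/\mu(N_s)$, presented with arbitrary precision, is uniformly computable from $T\oplus 0'$.

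Next, run the partial computable function $\Phi$ provided by Theorem~\ref{construction of density points} on input $\big((T,\langle w_s\rangle),q\big)$ with oracle $T\oplus 0'$. By the theorem $\Phi$ produces a perfect tree $S\subseteq T$ with $[S]\subseteq D_\mu([T])$ and $\mu([S])\geq q\mu([T])$; since $\Phi$ is computable and the oracle is $\Delta^0_2(T)$, the tree $S$ is $\Delta^0_2(T)$-definable, which is the first assertion. For the second assertion, carry this out uniformly for the rationals $q_j=1-2^{-j-1}$, $j\in\omega$, obtaining perfect trees $S_j\subseteq T$ that are uniformly $\Delta^0_2(T)$ — so that $\langle S_j\mid j\in\omega\rangle$ is $\Delta^0_2(T)$ — with $[S_j]\subseteq D_\mu([T])$ and $\mu([S_j])\geq(1-2^{-j-1})\mu([T])$. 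Put $A=\bigcup_{j\in\omega}[S_j]$. Then $A\subseteq D_\mu([T])$, the sequence $\langle S_j\mid j\in\omega\rangle$ is a $\Delta^0_2(T)$ code for the $F_\sigma$ set $A$ in the sense used in Section~\ref{section: no selector}, and $\mu([T])\geq\mu(A)\geq\sup_j\mu([S_j])\geq\sup_j(1-2^{-j-1})\mu([T])=\mu([T])$, whence $\mu(A)=\mu([T])$.

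The genuine work is entirely inside Theorem~\ref{construction of density points}; for the corollary the only points to check are that that construction relativises (it does, being given by a single oracle machine) and that it is content with the weights presented as approximable reals rather than exactly computable ones (it is, since it only queries them to finite precision). I expect the fussiest point to be the bookkeeping around the jump in the reduction of the previous paragraph — making sure the passage to a random subtree of full measure, and hence the recovery of the weights, is carried out effectively enough that the final complexity bound stays at $\Delta^0_2(T)$ (this uses that $\mu([T_s])$ is right-c.e.\ in $T$ and that along any branch positivity of $\mu([T_{x\restriction n}])$ either holds at all levels or fails cofinally). If $T$ is assumed random to begin with, this point disappears and the corollary is an immediate relativisation of Theorem~\ref{construction of density points}.
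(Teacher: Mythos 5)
Your proposal follows exactly the route the paper intends: the paper's own proof of this corollary consists of the single observation that $\mu([T_s])$ is right-c.e.\ in $T$, followed by ``one has the following immediate consequence.'' You fill in the implicit details (handling trees that are not random, and uniformising over a sequence of rationals $q_j\to 1$ to get the $F_\sigma$ set), both of which are correct and needed to make the one-liner rigorous.

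There is one small but genuine technical slip. You assert that the weighted tree $(T,\langle w_s\rangle)$ is ``uniformly computable from $T\oplus 0'$,'' and later run the machine $\Phi$ with oracle $T\oplus 0'$. But a real that is right-c.e.\ relative to $T$ is, in general, only computable from $T'$, not from $T\oplus 0'$. Concretely, to approximate $\mu([T_s]) = \inf_n c_{n,s}$ within $2^{-k}$ one must find an $n$ for which the $\Pi^0_1(T)$ statement ``$\forall m\ c_{m,s} > c_{n,s}-2^{-k}$'' holds, and deciding a $\Pi^0_1(T)$ predicate requires $T'$; one always has $T\oplus 0'\leq_T T'$, but the reverse reduction can fail (e.g.\ when $T\equiv_T 0''$). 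So the oracle should be $T'$ throughout. Since ``computable from $T'$'' is exactly what ``$\Delta^0_2(T)$'' means, this change does not affect either conclusion of the corollary; it is just the intermediate claim that is too strong as stated. Incidentally, your alternative of passing to the random subtree $T^\ast$ would also face a complexity issue ($\{s\in T: \mu([T_s])>0\}$ is $\Sigma^0_2(T)$, not obviously computable from $T'$), so your first observation---that the construction of Theorem~\ref{construction of density points} never selects a node of weight $0$ and hence applies verbatim---is the right one to lean on.
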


Note that for all strongly linked collections of trees $\PP$ (see Definition \ref{definition strongly linked}) listed in Section \ref{section list of tree forcings} and all $T\in \PP$, $S=T$ already satisfies the conditions in 
%Theorem \ref{construction of density points}, 
Theorem \ref{construction of density points}. 
For these collections, any $T\in \PP$ has the property that for all $x\in [T]$, there are infinitely many $n\in\omega$ such that there is some $S \leq T$ with $x \in [S]$ and $\stem(S) = x{\upharpoonright}n$. 
Hence all elements of $[T]$ are density points of $[T]$ by the proof
of Lemma \ref{every branch in T in a density point} below.

Theorem \ref{construction of density points} will follow from the next lemmas. 
%We can assume that $T$ is pruned by removing nodes $t\in T$ with $w_t=0$. 
To state them, we fix the following notation: Let $C=[T]$, 
$L_{t,i}=\Lev_{|t|+i}(T_t)$ be the level of $T_t$ at height $|t|+i$ 
%$l_{t,i}=|L_{t,i}|$ 
and write 
$$w_{t,i}=\frac{|L_{t,i}|}{2^i}=\mu(N_t)^{-1} \frac{|L_{t,i}|}{2^{|t|+i}}$$ 
for all $t\in {}^{<\omega}2$ and $i\in\omega$. 
This is the \emph{relative size} of levels of $T$ above $t$. 
The next result shows that these values converge to the relative measure at $t$. 
%and $w_t=\frac{\mu(C\cap N_t)}{\mu(N_t)}$ the \emph{weight} 

\begin{lemma} \label{measure as limit of relative level sizes} 
$\lim_{i\rightarrow\infty}w_{t,i}=w_t$ for all $t\in {}^{<\omega}2$. 
\end{lemma}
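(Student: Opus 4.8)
The plan is to recognize $w_{t,i}$ as a relative measure: for each $i$ it equals $\mu(U_{t,i})/\mu(N_t)$, where $U_{t,i} := \bigcup_{s\in L_{t,i}} N_s$ is the clopen "$i$-th level neighbourhood" of $T_t$. One then shows that the $U_{t,i}$ form a decreasing sequence of Borel sets whose intersection is $[T_t] = [T]\cap N_t$, and concludes by continuity of $\mu$ from above.

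First I would fix $t\in 2^{<\omega}$ and set $U_{t,i}=\bigcup\{N_s \mid s\in L_{t,i}\}$. The nodes of $L_{t,i}=\Lev_{|t|+i}(T_t)$ all have length $|t|+i$ and are pairwise $\sqsubseteq$-incomparable, so the sets $N_s$ ($s\in L_{t,i}$) are pairwise disjoint with $\mu(N_s)=2^{-(|t|+i)}$; hence $\mu(U_{t,i})=|L_{t,i}|\,2^{-(|t|+i)}$. Dividing by $\mu(N_t)=2^{-|t|}$ gives $\mu(U_{t,i})/\mu(N_t)=|L_{t,i}|\,2^{-i}=w_{t,i}$.

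Next I would verify that $\langle U_{t,i}\mid i\in\omega\rangle$ is decreasing with $\bigcap_{i}U_{t,i}=[T_t]=[T]\cap N_t$. For nestedness: any $s\in L_{t,i+1}$ lies in $T$ and extends $t$, so its initial segment $s{\upharpoonright}(|t|+i)$ lies in $L_{t,i}$ and $N_s\subseteq N_{s{\upharpoonright}(|t|+i)}\subseteq U_{t,i}$, whence $U_{t,i+1}\subseteq U_{t,i}$. For the intersection: if $x\in\bigcap_i U_{t,i}$ then $x{\upharpoonright}(|t|+i)\in L_{t,i}\subseteq T$ for every $i$, and $x{\upharpoonright}|t|\in L_{t,0}$, a node of $T_t$ of length $|t|$, hence equal to $t$; since $T$ is closed under initial segments this gives $x\in[T]$ with $t\sqsubseteq x$, i.e.\ $x\in[T]\cap N_t=[T_t]$. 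The reverse inclusion is immediate, since each $x\in[T_t]$ satisfies $x{\upharpoonright}(|t|+i)\in L_{t,i}$ for all $i$. As $\mu$ is a finite (probability) measure and each $U_{t,i}$ is clopen, continuity from above yields $\mu([T_t])=\lim_{i\to\infty}\mu(U_{t,i})$, so
\[
w_t=\frac{\mu([T]\cap N_t)}{\mu(N_t)}=\frac{\mu([T_t])}{\mu(N_t)}=\lim_{i\to\infty}\frac{\mu(U_{t,i})}{\mu(N_t)}=\lim_{i\to\infty}w_{t,i}.
\]

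There is no real obstacle here: the argument is just continuity of Lebesgue measure from above, and the only point requiring a moment's care is confirming that the level neighbourhoods $U_{t,i}$ decrease to $[T_t]$, which follows directly from $T$ being a tree (closure under initial segments).
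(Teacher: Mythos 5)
Your proof is correct. The key identification $w_{t,i}=\mu(U_{t,i})/\mu(N_t)$ with $U_{t,i}=\bigcup_{s\in L_{t,i}}N_s$ a decreasing sequence of clopen sets, the verification that $\bigcap_i U_{t,i}=[T]\cap N_t$, and the appeal to continuity of $\mu$ from above are all sound. The paper proves the same limit but by a slightly different route: it first observes $w_t\leq w_{t,i}$ (since $[T]\cap N_t\subseteq U_{t,i}$), then for the reverse inequality invokes outer regularity of $\mu$ to find an open $U\supseteq [T]\cap N_t$ with $\mu(U)<\mu([T]\cap N_t)+\epsilon\mu(N_t)$, uses compactness of $[T]\cap N_t$ to take $U$ to be a finite union of basic open sets of a common length $|t|+i$, and concludes $w_{t,i}\leq\mu(U)/\mu(N_t)<w_t+\epsilon$. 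Both arguments rest on the same measure-theoretic facts (regularity/continuity of a finite Borel measure), but your version is arguably more streamlined because it directly exploits the nested structure of the level neighbourhoods $U_{t,i}$ instead of re-deriving it via an $\epsilon$-and-compactness argument; the paper's version, on the other hand, makes the compactness of $[T]\cap N_t$ explicit, which is the geometric reason the approximation by a single level is possible. One small point worth noting either way: $w_{t,i}$ is in fact non-increasing in $i$ (since $U_{t,i}$ decreases), which makes ``$\lim_{i\to\infty}w_{t,i}$'' well defined without further comment; your formulation via continuity from above delivers this automatically, whereas the paper writes $\lim$ before establishing monotonicity.
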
 
\begin{proof} 
We have $w_t\leq\lim_{i\rightarrow\infty}w_{t,i}$, since $C\cap N_t\subseteq \bigcup_{u\in L_{t,i}} N_u$ and hence $w_t \leq w_{t,i}$ for all $i\in\omega$. 
To prove that $\lim_{i\rightarrow\infty}w_{t,i}\leq w_t$, suppose that $\epsilon>0$ is given. Let $U$ be an open set with $C\cap N_t\subseteq U$ and $\mu(U)<\mu(C\cap N_t)+\epsilon \cdot \mu(N_t)$. By compactness of $C$, we can assume that $U$ is a finite union of basic open sets. We can thus write $U=\bigcup_{j\leq n} N_{s_j}$ for some $\vec{s}=\langle s_j\mid j\leq n\rangle$ that consists of pairwise incompatible sequences $s_j$ of the same length $|t|+i$. 
%Then $\mu(U)=\mu(\bigcup_{i\leq n} N_{s_i})$. 
Since $C\cap N_t\subseteq U$, we have 
$$w_{t,i}=\mu(N_t)^{-1} \frac{|L_{t,i}|}{2^{|t|+i}}\leq \frac{\mu(\bigcup_{j\leq n} N_{s_j})}{\mu(N_t)} = \frac{\mu(U)}{\mu(N_t)}.$$ 
Hence 
$w_{t,i}-w_t \leq \frac{\mu(U)-\mu(C\cap N_t)}{\mu(N_t)}<\epsilon$ by the previous inequality and the definition of $w_t$. 
%as required 
\end{proof} 

For any  $t\in {}^{<\omega}2$ and $i\in\omega$, let 
$$r_{t,i}=\inf\{c\in (0,1) \mid \frac{|\{u\in L_{t,i}\mid w_u \geq c\}|}{|L_{t,i}|}\geq c\}$$ 
denote the ratio of nodes on level $|t|+i$ above $t$ with large weight. 

%The results in the next lemmas can be written as limit or lim infs 
\begin{lemma} \label{large fraction with large weight} 
$\liminf_{i\rightarrow\infty} r_{t,i}=1$ for all $t\in {}^{<\omega}2$. 
%Suppose that $t\in {}^{<\omega}2$ and $0<c<1$. Then for all sufficiently large $i\in\omega$, the fraction of those $u\in L_{t,i}$ with $w_u \geq c$ is at least $c$. 
%the following conditions hold 
%\begin{enumerate-(a)} 
%\item 
%$a_{t,i}\leq b+\epsilon$. 
%\item 
%\end{enumerate-(a)} 
\end{lemma}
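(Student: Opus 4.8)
The plan is to prove $\liminf_{i\to\infty} r_{t,i} = 1$ by relating the quantity $r_{t,i}$ back to the relative level sizes $w_{t,i}$, which we already know converge to $w_t > 0$ by Lemma~\ref{measure as limit of relative level sizes}, and then to use a combinatorial counting argument at each level. Fix $t \in 2^{<\omega}$; write $L_i = L_{t,i}$ and $N_i = |L_i|$. First I would unwind the definition: for $c \in (0,1)$, the condition $\frac{|\{u \in L_i \mid w_u \ge c\}|}{N_i} \ge c$ fails precisely when fewer than $c N_i$ nodes on level $i$ above $t$ are ``heavy'' (weight $\ge c$). So $r_{t,i} < 1 - \epsilon$ would mean there is some $c < 1-\epsilon$ for which the heavy-node condition holds, but more usefully, $r_{t,i} \le c$ would force the heavy-node condition to fail for all $c' > r_{t,i}$, i.e. most nodes are light. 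The key is to show that the total relative mass $w_{t,i}$ cannot stay close to $w_t$ if a substantial fraction of level-$i$ nodes carry small weight.

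The main estimate is a mass-splitting inequality. For any threshold $c \in (0,1)$, partition $L_i$ into the heavy set $H_c = \{u \in L_i \mid w_u \ge c\}$ and the light set $L_i \setminus H_c$. Each light node $u$ satisfies $w_u < c$, and since $\mu(C \cap N_u) = w_u \mu(N_u)$ while $\mu(N_u) = 2^{-(|t|+i)}$, the light nodes together contribute less than $c \cdot |L_i \setminus H_c| \cdot 2^{-(|t|+i)} \le c N_i 2^{-(|t|+i)}$ to $\mu(C \cap N_t)$. Since every $x \in C \cap N_t$ passes through some node of $L_i$, we get
\[
w_t \mu(N_t) = \mu(C \cap N_t) \le |H_c|\, 2^{-(|t|+i)} + c N_i\, 2^{-(|t|+i)}.
\]
Dividing by $\mu(N_t) = 2^{-|t|}$ and using $w_{t,i} = N_i 2^{-i}$, this rearranges to $|H_c| 2^{-(|t|+i)}/\mu(N_t) \ge w_t - c\, w_{t,i}$, i.e. $\frac{|H_c|}{N_i} \ge \frac{w_t - c\, w_{t,i}}{w_{t,i}}$. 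Now I would choose $c$ cleverly: fix $\epsilon > 0$; for $i$ large enough that $w_{t,i} \le w_t/(1-\epsilon)$ (possible since $w_{t,i} \to w_t$), set $c = 1 - \epsilon$. Then the bound gives $\frac{|H_c|}{N_i} \ge \frac{w_t - (1-\epsilon) w_{t,i}}{w_{t,i}} = \frac{w_t}{w_{t,i}} - (1-\epsilon) \ge (1-\epsilon) - (1-\epsilon)$... this needs $w_t / w_{t,i} \ge 2(1-\epsilon)$ roughly, so I would instead take $c$ slightly below $1$ depending on how close $w_{t,i}$ is to $w_t$: concretely pick $c_i = w_t / w_{t,i} - \delta_i$ where $\delta_i \to 0$, so that $\frac{|H_{c_i}|}{N_i} \ge \delta_i$...

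Let me restructure: the clean way is to show $r_{t,i} \ge c$ whenever $\frac{|H_c|}{N_i} \ge c$, and the inequality above gives $\frac{|H_c|}{N_i} \ge \frac{w_t}{w_{t,i}} - c$. So $\frac{|H_c|}{N_i} \ge c$ holds as soon as $\frac{w_t}{w_{t,i}} - c \ge c$, i.e. $c \le \frac{w_t}{2 w_{t,i}}$. Hence $r_{t,i} \ge \min\left(1, \frac{w_t}{2 w_{t,i}}\right)$ — wait, that gives a limit of $1/2$, not $1$, so the naive bound is too weak and the heavy-node argument must be iterated or the counting sharpened (one should also use that the heavy nodes themselves carry weight $\le 1$, tightening the light-mass bound, or iterate the threshold argument). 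The main obstacle is exactly getting the constant up to $1$: a single application of the splitting inequality only yields $\liminf r_{t,i} \ge 1/2$. I expect the fix is an iterative/self-improving argument — feed the bound $r_{t,i} \ge \rho$ back in, noting that the heavy nodes contribute mass $\ge \rho \cdot \rho$ while refining the light estimate, bootstrapping $\rho \to 1$ — or alternatively a direct argument using that for a.e. branch $x \in C$ the densities $w_{x\restriction n} \to 1$ (Lebesgue density / Lemma~\ref{measure as limit of relative level sizes} applied along branches), combined with a dominated-convergence / Fatou argument over the uniform measure on $L_i$. I would carry out the branch-wise version: the function $u \mapsto w_u$ on $L_i$, pushed to a function on $C \cap N_t$ via $x \mapsto w_{x \restriction (|t|+i)}$, converges pointwise a.e.\ to $1$ as $i \to \infty$ by the density theorem, and $\int_{C\cap N_t} w_{x\restriction(|t|+i)}\, d\mu$ relates to $\sum_{u \in L_i} w_u^2 2^{-(|t|+i)}$; pointwise convergence to $1$ plus boundedness forces the fraction of $u$ with $w_u \ge c$ to tend to $1$ for every $c<1$, which is exactly $\liminf_i r_{t,i} = 1$. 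This branch-wise route sidesteps the bootstrapping and is the approach I would write up.
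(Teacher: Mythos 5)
Your proposal does not close: the single place where it loses is the crude bound $|L_i\setminus H_c|\le N_i$. Keeping the exact count $|L_i\setminus H_c|=(1-\alpha)N_i$ (with $\alpha=|H_c|/N_i$) immediately repairs the constant, and this is exactly what the paper does. Since $w_t=2^{-i}\sum_{u\in L_{t,i}}w_u$ and $w_u\le 1$ on $H_c$, $w_u<c$ off $H_c$, one gets
\[
w_t\;\le\; w_{t,i}\,\alpha + w_{t,i}(1-\alpha)c,
\]
hence $\alpha\ge \dfrac{w_t/w_{t,i}-c}{1-c}$. By Lemma~\ref{measure as limit of relative level sizes} the right-hand side tends to $1$, so $\alpha\ge c$ for all large $i$, and $\liminf_i r_{t,i}\ge c$ for every $c<1$. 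No iteration, bootstrapping, or branch-wise analysis is needed; you correctly diagnosed that your version stalls at $1/2$, but misattributed the cause.

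The two fallbacks you sketch are worse than the fix. Invoking Lebesgue's density theorem is circular here: this lemma is a step in Section~\ref{section explicit construction of density points}'s self-contained, finitized proof of the density theorem on Cantor space (see the remark after Theorem~\ref{construction of density points}), so the theorem may not be assumed. And the concrete integral you propose, $\int_{C\cap N_t}w_{x\restriction(|t|+i)}\,d\mu = \sum_{u}w_u^2\,2^{-(|t|+i)}$, controls $\sum_u w_u(1-w_u)$, a quantity that is blind to nodes of very small weight (each such $u$ contributes $\approx w_u\approx 0$), so it cannot bound the \emph{fraction} of light nodes in $L_i$. The quantity that does is the uniform average $\frac{1}{|L_i|}\sum_u w_u=w_t/w_{t,i}$, which is precisely the paper's inequality in disguise; you mention the uniform measure on $L_i$ in passing but then compute the $\mu$-weighted integral, so the sketch does not deliver the claim.
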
 
\begin{proof} 
Let 
%\todo{notation: $a$ instead of $b$? } 
$b=w_t$ and assume that $c\in(0,1)$ is given. 
Since $b> b c+b(1-c)c$, there is some $\epsilon>0$ with $b> (b+\epsilon) c+(b+\epsilon)(1-c)c.$
%$\frac{1-(b+\epsilon)}{b+\epsilon}> \frac{1-b}{b} c$. 
%We now assume this inequality for $\epsilon$. 
%Given any such $\epsilon$, we can assume that 
By Lemma \ref{measure as limit of relative level sizes}, we can take $i\in\omega$ to be sufficiently large such that $w_{t,i}\leq b+\epsilon$. Moreover, let $\alpha$ denote the fraction of nodes $u\in L_{t,i}$ with weight $w_u\geq c$. Then 
%Then the contribution to $w_t$ of nodes $u$ with weight $w_u\geq c$ is at most $a_{t,i} \alpha$. The weight of all remaining nodes is at most $a_{t,i} (1-\alpha)c$. 
$$b= w_t = 2^{-i}\sum_{u\in L_{t,i}} w_u \leq  w_{t,i} \alpha+ w_{t,i} (1-\alpha)c.$$ 
We claim that $\alpha\geq c$. Otherwise $\alpha<c$ and 
$b\leq w_{t,i} \alpha+w_{t,i} (1-\alpha)c\leq w_{t,i} c+w_{t,i} (1-c)c$.  
Since $w_{t,i}\leq b+\epsilon$, we obtain  
$b\leq (b+\epsilon) c+(b+\epsilon) (1-c)c$, contradicting the definition of $\epsilon$. 
%Hence  
%$$b\leq a_{t,i} b+a_{t,i} (1-b)c$$ 
%$$b(1- a_{t,i}) \leq a_{t,i} (1-b)c$$ 
%$$ \frac{1- a_{t,i}}{a_{t,i}}\leq \frac{1-b}{b} c.$$ 
%Since $a_{t,i}\leq b+\epsilon$, we have 
%$$ \frac{1- (b+\epsilon)}{b+\epsilon}\leq \frac{1-b}{b} c.$$ 
%But this contradicts the choice of $\epsilon$. 
\end{proof} 

We need the following notion to ensure that weights converge to $1$ along branches of the tree constructed below. 
We say that $v\sqsupseteq t$ is \emph{$(t,a)$-good} 
%\todo{better $>$ for algorithm?} 
if $w_u\geq a$ for
all $u$ with $t\sqsubseteq u\sqsubseteq v$; otherwise it is called \emph{$(t,a)$-bad}. 
%For any  $t\in {}^{<\omega}2$ and $i\in\omega$, 
Let 
$$s_{t,a,i}=\frac{|\{u\in L_{t,i}\mid \text{$u$ is $(t,a)$-good}\}|}{|L_{t,i}|}$$ 
denote the fraction of $(t,a)$-good nodes on level $|t|+i$ above $t$. 

We fix a computable function $f\colon \QQ\cap (0,1)\rightarrow \QQ\cap (0,1)$ such that $\frac{1-b}{b(b-a)}< \frac{1-a}{a}$ 
holds for all 
%sufficiently large $b\in\QQ\cap(0,1)$, we can choose $f$ so that this inequality holds for all 
$a,b\in \QQ\cap(0,1)$ with $b>f(a)$. 

\begin{lemma} \label{large fraction of good nodes} 
%There is a computable function $f\colon \QQ\cap (0,1)\rightarrow \QQ\cap (0,1)$ with the following property. 
If $a\in\QQ\cap (0,1)$ and $w_t= b> f(a)$, then $\liminf_{i\rightarrow\infty} s_{t,a,i}\geq a$. 
%Then for all sufficiently large $i\in\omega$, the fraction of $(t,a)$-good $u\in L_{t,i}$ is at least $a$. 
%for any $\epsilon>0$, the following conditions hold 
%\begin{enumerate-(a)} 
%\item \label{condition 1} 
%$a_{t,i}\leq b+\epsilon$. 
%\item \label{condition 2} 
%\end{enumerate-(a)} 
\end{lemma}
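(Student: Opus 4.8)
The plan is to reduce the statement to a single inequality between measures and to prove that inequality by a short antichain (flow) argument. Write $b=w_t$, and note $b>a$ (this is implicit in the hypothesis $b>f(a)$: the inequality defining $f$ is informative only for $b>a$, so $f(a)\ge a$); in particular $t$ is $(t,a)$-good. Let $G$ be the set of all $x\in[T_t]$ such that $w_{x{\upharpoonright}k}\ge a$ for every $k\ge|t|$. First I would show that $s_{t,a,i}$ in fact \emph{converges}, with $\lim_{i\to\infty}s_{t,a,i}=\mu(G)/\mu([T_t])$. Writing $G_{t,i}=\{u\in L_{t,i}\mid u\text{ is }(t,a)\text{-good}\}$, the clopen sets $\bigcup_{u\in G_{t,i}}N_u$ form a \emph{decreasing} sequence (a node is good only if all its initial segments down to $t$ are good) whose intersection is exactly $G$; hence $|G_{t,i}|\cdot2^{-i}=\mu\bigl(\bigcup_{u\in G_{t,i}}N_u\bigr)/\mu(N_t)\to\mu(G)/\mu(N_t)$ by continuity of $\mu$ from above, while $|L_{t,i}|\cdot2^{-i}=w_{t,i}\to b=\mu([T_t])/\mu(N_t)>0$ by Lemma~\ref{measure as limit of relative level sizes}. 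Dividing gives the claim, so it remains to prove $\mu(G)\ge a\,\mu([T_t])$, equivalently $\mu([T_t]\setminus G)\le(1-a)\mu([T_t])$.

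Next I would introduce the antichain $B$ of $\sqsubseteq$-minimal $(t,a)$-bad nodes: those $u\in T_t$ with $w_u<a$ but $w_{u'}\ge a$ for every proper initial segment $u'$ of $u$ with $t\sqsubseteq u'$ (so $t\notin B$). Every branch of $[T_t]\setminus G$ has a $\sqsubseteq$-least bad initial segment, so $[T_t]\setminus G=\bigsqcup_{u\in B}[T_u]$, and since $w_u<a$ for $u\in B$,
\[
\mu([T_t]\setminus G)=\sum_{u\in B}w_u\,\mu(N_u)<a\sum_{u\in B}\mu(N_u)=a\cdot\mu\Bigl(\textstyle\bigcup_{u\in B}N_u\Bigr).
\]
Set $q=\mu\bigl(\bigcup_{u\in B}N_u\bigr)/\mu(N_t)\in[0,1]$. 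The key observation is that $G$ is disjoint from $\bigcup_{u\in B}N_u$ (a branch through a minimal bad node passes through a light node) while $G\subseteq N_t$; hence $\mu(G)\le(1-q)\mu(N_t)$. On the other hand $\mu(G)=\mu([T_t])-\mu([T_t]\setminus G)>(b-aq)\mu(N_t)$ by the displayed estimate. Comparing the two bounds on $\mu(G)$ yields $b-aq<1-q$, i.e.\ $q<\tfrac{1-b}{1-a}$.

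Finally I would combine the two estimates: $\mu([T_t]\setminus G)<aq\,\mu(N_t)<\tfrac{a(1-b)}{1-a}\,\mu(N_t)$. By the defining property of $f$ we have $\tfrac{1-b}{b(b-a)}<\tfrac{1-a}{a}$, hence $a(1-b)<(1-a)\,b\,(b-a)$, and since $b<1$ we have $b-a<1-a$, so $a(1-b)<(1-a)^2b$; dividing by $1-a$ gives $\tfrac{a(1-b)}{1-a}<(1-a)b$. Therefore $\mu([T_t]\setminus G)<(1-a)b\,\mu(N_t)=(1-a)\mu([T_t])$, which is exactly the inequality needed; in fact this yields $\liminf_{i\to\infty}s_{t,a,i}=\mu(G)/\mu([T_t])>a$. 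The one genuine obstacle I anticipate is controlling the infinitely many minimal bad nodes uniformly: a naive attempt to take the limit level-by-level in the counts $|L_{t,i}|$ would require uniform convergence of the quantities $w_{u,j}$ over $u\in B$, which is not available, and the point of phrasing everything in terms of $\mu$ and the single open set $\bigcup_{u\in B}N_u$ is precisely to replace this by countable additivity of $\mu$, after which only elementary algebra remains.
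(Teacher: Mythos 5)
Your argument is correct, and it takes a genuinely different route from the paper's. The paper proves the estimate at a finite level: it fixes an auxiliary threshold $c\in(0,1)$, invokes Lemma~\ref{large fraction with large weight} to ensure that for large $i$ a fraction $\geq b$ of the nodes in $L_{t,i}$ have weight at least $c$, and then bounds $\mu(N_t\setminus C)$ from below by summing $\mu(N_u\setminus C)$ over the $\sqsubseteq$-minimal bad initial segments of the bad nodes at level $|t|+i$, deriving a contradiction if the bad fraction $\alpha$ exceeds $1-a$. You instead pass to the limit first: you identify $\lim_{i\to\infty}s_{t,a,i}=\mu(G)/\mu([T_t])$ (a slight strengthening of the stated $\liminf$), using that $\bigcup_{u\in G_{t,i}}N_u$ decreases to $G$ together with continuity of $\mu$ from above and the paper's Lemma~\ref{measure as limit of relative level sizes}, and then reduce the lemma to the single inequality $\mu(G)\geq a\,\mu([T_t])$. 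You establish that by decomposing $[T_t]\setminus G=\bigsqcup_{u\in B}[T_u]$ over the antichain $B$ of minimal bad nodes, obtaining $\mu([T_t]\setminus G)<aq\,\mu(N_t)$ on one side, and using the disjointness of $G$ and $\bigcup_{u\in B}N_u$ inside $N_t$ to get the complementary constraint $q<\tfrac{1-b}{1-a}$; combining these with the defining inequality for $f$ and the elementary $b-a<1-a$ closes the argument. This dispenses entirely with Lemma~\ref{large fraction with large weight} and with the auxiliary parameter $c$, which is a genuine simplification: your observation that one should argue with the open set $\bigcup_{u\in B}N_u$ and countable additivity, rather than with level-by-level counts, is exactly what removes the need for uniform control over the finite levels. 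One small remark: your inference that $b>a$ (because the inequality defining $f$ fails for $b$ just above $a$, forcing $f(a)\geq a$) is correct and worth stating explicitly, since positivity of $b-a$ is used silently in several steps of both proofs.
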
 
\begin{proof} 
%Let $\delta>0$. 
%can assume that $i$ is sufficiently large such that $a_{t,i}\leq b+\epsilon$. 
%$\frac{1-a_{t,i}}{a_{t,i}}<\frac{(1-a)^2}{a}$ for all sufficiently large $i\in\omega$. We now assume this inequality for $i$. 
%Assume that $a$ is fixed. 
%Since $\frac{1-b}{b(b-a)}< \frac{1-a}{a}$ holds for all sufficiently large $b\in\QQ\cap(0,1)$, we can choose $f$ so that this inequality holds for all $b>f(a)$ in $\QQ\cap(0,1)$. 
Since $b>f(a)$, there is some $c\in(0,1)$ with 
$1-b < \frac{1-a}{a} b(b-a) c$. 
%$\frac{1-b}{b(b-a)} < \frac{1-a}{a} d$. 
Let $i$ be sufficiently large such that the fraction of $v\in L_{t,i}$ with $w_v\geq c$ is at least $b$ by Lemma \ref{large fraction with large weight}. 
%Let $\epsilon\leq a$. \todo{move to below?} Then 
%$$\frac{1-b}{b-\epsilon}< \frac{(1-a)(b-a)}{a}. $$ 
%Since $b=w_t=\lim_{i\rightarrow\infty}a_{t,i}$ by Lemma \ref{measure as limit of relative level sizes}, we can take $i\in\omega$ sufficiently large such that \ref{condition 1} holds. 
Then the fraction of nodes $v\in L_{t,i}$ with $w_v<c$ is at most $1-b$ and their number at most $2^i w_{t,i}(1-b)$. 

Let $A$ the set of $(t,a)$-bad nodes in $L_{t,i}$, $U=\bigcup_{v\in A} N_v$ and $\alpha=\frac{|A|}{|L_{t,i}|}$. 
We aim to show that $\alpha\leq 1-a$. 
The number of $(t,a)$-bad nodes in $L_{t,i}$ is $2^i w_{t,i}\alpha$. 
Since all of these except at most $2^i w_{t,i}(1-b)$ have weight at least $c$, we have  
$$2^{|t|+i}\mu(C\cap U)= \sum_{v\in A}w_v \geq (2^i w_{t,i}\alpha- 2^i w_{t,i}(1-b)) c.$$ 

\begin{claim} 
$1-b \geq \frac{1-a}{a} w_{t,i}(\alpha-(1-b))c$. 
\end{claim} 
\begin{proof} 
For each $v\in A$, 
% is $(t,a)$-bad, there is 
take some $u_v$ with
$t\sqsubseteq u_v\sqsubseteq v$ and $w_{u_v}<a$ by the definition of $(t,a)$-bad. 
In particular $\frac{1-w_{u_v}}{w_{u_v}} > \frac{1-a}{a}$.

Let $B=\{ u_v\mid v\in A\}$ and $B^\star$ the set of $\sqsubseteq$-minimal
elements 
%(in the sense of $\sqsubseteq$) 
of $B$. 
For each $v\in A$ and $u=u_v$ 
%$u_v\in A^\star$, 
%$\frac{1-w_{u_v}}{w_{u_v}} > \frac{1-a}{a}$ and hence 
$$\mu(N_u\setminus C)=2^{-|u|}(1-w_u)>2^{-|u|} w_u \frac{1-a}{a}=\frac{1-a}{a} \mu(C\cap N_u). $$ 
Since the sets $N_u$ for $u\in B^\star$ are pairwise disjoint, the previous inequality implies 
$$\mu(U\setminus C) = \sum_{u\in B^\star} \mu(N_u\setminus C)  > \frac{1-a}{a} \sum_{u\in B^\star} \mu(C\cap N_u) = \frac{1-a}{a}\mu(C\cap U).$$ 
By this inequality and the one before the claim, we have $\mu(N_t\setminus C) \geq \mu(U\setminus C)>\frac{1-a}{a}\mu(C\cap U)\geq 2^{-|t|} \frac{1-a}{a} (w_{t,i}\alpha-w_{t,i}(1-b)) c$. 
%\begin{equation*} 
%\begin{split}
%\mu(N_t\setminus C) & \geq \sum_{u\in A^\star} \mu(N_u\setminus C) \\
% & >\sum_{u\in A^\star} \mu(C\cap N_u) \frac{1-a}{a} \\ 
% & =  \mu(C\cap U)\frac{1-a}{a} \\ 
% & \geq 2^{-|t|}(w_{t,i}(\alpha-(1-b))) d \frac{1-a}{a}. 
%\end{split}
%\end{equation*}
Since $\frac{\mu(N_t \setminus C)}{2^{-|t|}}=\frac{\mu(N_t \setminus C)}{\mu(N_t)}=1-w_t=1-b$ and $\mu(N_t)=2^{-|t|}$, the claim follows. 
\end{proof} 

%Let $\alpha$ be the fraction of $(t,a)$-bad $u\in L_{t,i}$. 
It is sufficient to show that $\alpha\leq 1-a$. Otherwise by the previous claim and since $b=w_t\leq w_{t,i}$ 
% and hence $a_{t,i}\alpha> \lceil a_{t,i}(1-a)\rceil$. 
\begin{equation*} 
\begin{split}
1-b & \geq \frac{1-a}{a} w_{t,i}(\alpha-(1-b)) c \\ 
& > \frac{1-a}{a} w_{t,i}((1-a)-(1-b)) c \\ 
& = \frac{1-a}{a} w_{t,i}(b-a) c \\ 
& \geq \frac{1-a}{a} b(b-a) c. 
\end{split} 
\end{equation*} 
%$$1-b \geq a_{t,i}(\alpha-(1-b)) \frac{1-a}{a}d> a_{t,i}((1-a)-(1-b)) \frac{1-a}{a}d= a_{t,i}(b-a) \frac{1-a}{a}d \geq b(b-a) \frac{1-a}{a} d.$$  
But this contradicts the choice of $c$. 
\end{proof} 

%\begin{claim*} 
%Assume that $t\in T$ and $a$ with $0<a<1$ with $\mu_t(C)\geq a$. 
%Then there is some $i\geq 0$ such that at least a fraction of $a$ nodes $v$ in $\Lev_{|t|+i}(T_t)$, we have $\mu_v(C)\geq a$. 
%
%Then the set of these cones has size at least $a^2 \mu_t(C)$. 
%
%Hence for some such $v$, $\mu_u(C)\geq a^2$ for all $u$ with $t\subseteq u\subseteq v$. 
%\end{claim*} 
%\begin{proof} 
%Let $b=(1-a)a+a<1$. By the previous claim, we can find some $i\geq 0$ with $b\nu\leq \mu_t(C)$ for $\nu=\frac{|\Lev_{|t|+i}(T_t)|}{2^i}$. 
%Suppose that the claim fails for $i$. Then for at least a fraction of $1-a$ nodes $v$ in $\Lev_{|t|+i}(T_t)$, $\mu_v(C)<a$. Then $\mu_t(C)<((1-a)a+ a)\nu=b\nu$. But this contradicts the choice of $i$. 
%%Size of set of these cones: $a^2$ times measure of all cones 

\begin{proof}[Proof of Theorem \ref{construction of density points}] 
Let $\vec{a}=\langle a_i\mid i\in\omega\rangle$ be a computable sequence in $\QQ\cap (0,1)$ with $\prod_{i\in\omega}a_i^2>q$. 
Using Lemmas \ref{large fraction with large weight} and \ref{large fraction of good nodes}, we will inductively construct a strictly increasing sequence $\vec{n}=\langle n_i\mid i\in\omega\rangle$ and 
%a perfect subtree $S$ of $T$ (level by level) 
sets $S_i\subseteq \Lev_{n_i}(T)$ with $n_0=0$ and $S_0=\{\emptyset\}$ by induction on $i\in\omega$. 
The sets $S_i$ are compatible levels of a tree in the sense that each $t\in S_i$ has an extension $u\in S_{i+1}$ and conversely, each $u\in S_{i+1}$ extends some $t\in S_i$. 
We further let $T^{(i)}=\{t\in T\mid \exists u\in S_i\ (t\sqsubseteq u\vee u\sqsubseteq t)\}$ denote the subtree of $T$ induced by $S_i$. 

We will maintain during the induction that 
%$r\in S_i$, 
%\begin{enumerate-(a)} 
%\item \label{inductive condition weight} 
%$w_r>a_i$. \todo{CHECK: is this all?}
%\item \label{inductive condition good} 
(a) $u$ is $(t,a_i)$-good for all $t\in S_{i+1}$ and $u\in S_{i+2}$ 
and 
(b) $\frac{\mu([T^{(i+1)}])}{\mu([T^{(i)}])}\geq a_i^2 $. 
%\item \label{inductive condition measure} 
%\end{enumerate-(a)} 
%$$ |\Lev_{n_{i+2}}(S)|>\prod_{j\leq i} a_j |\Lev_{n_{i+2}}(T)|$$ 
%for all $i\in\omega$. 

\medskip 
\noindent 
{\bf Description.} 
We now describe the construction. 
% of the required sequences. 
We simultaneously construct auxiliary numbers $a'_i, a''_i, b_i\in\QQ$ and $j_i\in\omega$ with $a_i<a'_i<a''_i< b_i<1$ and $b_i>f(a''_i)$ for all $i\geq 1$. 
%and \todo{better: condition on weights?} maintain that \todo{$w_t>f(a'_i)$} $\liminf_{j\rightarrow\infty}s_{t,a'_i,j}\geq a'_i$ \todo{NEED $<$} for all $t\in S_i$ and $i\geq 1$. 
%$w_t>a_i$ for all $t\in S_{i+1}$. 
It is not hard to see that all steps are effective. 

Let $n_0=0$ and $S_0=\Lev_{n_0}(T)=\Lev_0(T)$. 
%For each $i\geq1$, we will define $a'_i$, $a''_i$, $b_i$, \todo{better $k_i$?} $j_i$, $n_i$ and $S_i$. 
%$a'_0$, $b_0$ and $j_0$ are arbitrary. 
%Next we define these values for $i=1$. 
%Fix some $\delta>0$ with $1-q<\delta^2 \prod_{i\in\omega}a_i^2<1$. 

For $i=1$, 
we first choose some $a'_1\in \QQ$ with $a_1<a'_1<1$ and $b_1\in\QQ\cap (0,1)$ with $b_1>f(a'_1)$. 
%We can find $b_1>a_1$ \todo{mention computable bound} sufficiently large \todo{$w_t>f(a'_1)$, $b_1>f(a'_1)$?} such that \todo{$<$? effective?} $\liminf_{j\rightarrow\infty}s_{t,j}\geq a'_1$ for all $t\in T$ with $w_t\geq b_1$ by Lemma \ref{large fraction of good nodes} applied to $a'_1$. 
By Lemma \ref{large fraction with large weight} applied to $t=\emptyset$, there is some $j_1$ with 
%$r_{\emptyset,i}>\max\{b_0,\delta\}$ 
$r_{\emptyset,j_1}>b_1$. 
%for all $j\geq j_1$. 
Let $n_1=j_1$ and $S_1$ a subset of $\Lev_{n_1}(T)$ with $w_u>b_1$ for all $u\in S_1$ and $\frac{|S_1|}{|\Lev_{n_1}(T)|}>b_1$. 
%the set of those $u\in \Lev_{n_1}(T)$ with $w_u\geq b_1$. 
%$t\sqsubseteq u$ for some $t\in\Lev_{n_0}(T)$ and 
%Since each $t\in \Lev_{n_0}(S)$ has an extension in $\Lev_{n_1}(S)$, we can define $\Lev_i(S)$ for $n_0<i<n_1$ such that $\Lev_{\leq n_1}(S)=\bigcup_{i\leq n_1} \Lev_i(S)$ equals the downwards closure of $\Lev_{n_1}(S)$ in $T$. 
%We can further take any $\delta\in (0,1)$ and assume that 
We can further take $b_1$ to be sufficiently large such that $\frac{\mu([T^{(1)}])}{\mu([T^{(0)}])}> a_0^2$ by $r_{\emptyset,j_1}>b_1$ and the definition of $r_{\emptyset,j_1}$. 
%Since $r_{\emptyset,j_0}>\delta$, $\frac{\mu([T^{(1)}])}{\mu([T^{(0)}])}\geq \frac{\delta^2}{\delta^2+(1-\delta)} \geq \delta^2$. 

Fix $i\geq1$ and assume that step $i$ is completed. 
%In step $i+1$ for $i\geq 1$, 
%Assume that $a'_k$, $b_k$, $j_k$ are already defined for all $k< i$ and $n_k$, $S_k$ for all $k\leq i$. 
%We next define $a'_1$, $b_1$, $j_1$, $n_2$ and $S_2$. %$\Lev_{n_2}(S)$. 
First take some $a'_{i+1}, a''_{i+1}\in\QQ$ with $a_{i+1}<a'_{i+1}<a''_{i+1}<1$ and $1-a'_{i+1}<a'_i-a_i$. 
Let $b_{i+1}\in\QQ\cap(0,1)$ with $b_{i+1}> f(a''_{i+1}), a''_{i+1}, a_i$. 
%By Lemma \ref{large fraction of good nodes} applied to $a'_{i+1}$, choose $b_{i+1}> a'_{i+1}$ \todo{mention computable bound} sufficiently large such that $\liminf_{j\rightarrow\infty}s_{t,j}\geq a'_{i+1}$ for all $t\in T$ with $w_t\geq b_{i+1}$. 
By Lemma \ref{large fraction with large weight}, 
%applied to any $t\in S_i$, 
there is some $j_{i+1}$ 
such that $r_{t,j_{i+1}}> b_{i+1}$ for all $t\in S_i$. 
Since $b_i>f(a''_i)$ by the inductive hypothesis and $a'_i<a''_i$, we can take $j_{i+1}$ to satisfy $s_{t,a''_i,j_{i+1}}> a'_i$ for all $t\in S_i$ by Lemma \ref{large fraction of good nodes}. 
%\todo{effective?} Since \todo{$s$ needs to carry the index $a'_i$} $\liminf_{j\rightarrow\infty}s_{t,a'_i,j}\geq a'_i$ for all $t\in S_i$ by the inductive hypothesis, we can choose $j_{i+1}$ sufficiently large \todo{effective} such that \todo{add index a}$s_{t,i}\geq a'_i$ for all $i\geq j_{i+1}$. 
Let $n_{i+1}=n_i+j_{i+1}$. 
By definition of $j_{i+1}$, there is a subset $S_{i+1}$ of $\Lev_{n_{i+1}}(T)$ such that for all $u\in S_{i+1}$, we have $w_u> b_{i+1}$, there is a (unique) $t\in S_i$ with $t\sqsubseteq u$, $u$ is $(t,a'_i)$-good and 
$\frac{|L_{t,j_{i+1}}\setminus S_{i+1}|}{|L_{t,j_{i+1}}|}< (1-a'_i)+(1-b_{i+1})$. 

\medskip 
\noindent 
{\bf Verification.}
%This completes the description of the algorithm. It remains to show the next claim: 
%and let $S_{i+1}$ consist of those $u\in \Lev_{n_{i+1}}(T)$ such that $w_u\geq b_{i+1}$, and there is some $t\in S_i$ with $t\sqsubseteq u$ \todo{WHY? CHECK }and $s_{t,i_{i+1}}\geq a_{i+1}$. 
We show that the algorithm computes the required tree. 
Clearly condition (a) is maintained in the construction. 
The next claim shows (b). 
\begin{claim} 
$\frac{\mu([T^{(i+1)}])}{\mu([T^{(i)}])} \geq a_i^2$ for all $i\in\omega$. 
\end{claim} 
\begin{proof} 
%For $i=0$ this is automatic. 
%Let $i\geq1$ and 
This is clear for $i=0$. 
Let $i\geq1$ and fix any $t\in S_i$. 
Since $a'_{i+1}<b_{i+1}$ and by the definition of $S_{i+1}$, we have 
$\frac{|L_{t,j_{i+1}}\setminus S_{i+1}|}{|L_{t,j_{i+1}}|}\leq (1-a'_i)+(1-b_{i+1}) \leq (1-a'_i)+(1-a'_{i+1}) \leq 1-a_i$. 
%for each $t\in \Lev_{n_1}(S)$ 
%\todo{the first inequality holds by the definition of $S_{i+1}$} by the definition of $a'_{i+1}$. 
Hence 
$\frac{|L_{t,j_{i+1}}\cap S_{i+1}|}{|L_{t,j_{i+1}}|}\geq a_i$. 

Since each $u\in  S_{i+1}$ has weight at least $b_{i+1}$, we have 
%c:=\frac{\mu([T^{(1)}]\cap N_t))}{\mu([T^{(0)}]\cap N_t)}
$c:=\frac{\mu([T^{(i+1)}]\cap N_t)}{\mu([T^{(i)}]\cap N_t)}\geq c':= a_i b_{i+1}$. 
By the definition of $T^{(i+1)}$ from $S_{i+1}$, $d:=\frac{\mu(([T^{(i)}]\cap N_t)\setminus [T^{(i+1)}])}{\mu([T^{(i)}]\cap N_t)}\leq d':= 1- a_i$. 
Moreover $c+d=1$. 
%Let further $c:=\frac{\mu([T^{(i+1)}]\cap N_t)}{\mu([T^{(i)}]\cap N_t)}$ and $d:=\frac{\mu(([T^{(i)}]\cap N_t)\setminus [T^{(i+1)}])}{\mu([T^{(i)}]\cap N_t)}$. 
%$$d:=\frac{\mu(([T^{(0)}]\cap N_t)\setminus [T^{(1)}]))}{\mu([T^{(0)}]\cap N_t)}
%$d \leq d':= (1- a_i)$. 
Since $c\geq c'$, $\frac{c+d'}{c}=1+\frac{d'}{c}\leq 1+\frac{d'}{c'}=\frac{c'+d'}{c'}$. 
Since $d\leq d'$ and by the last inequality $\frac{c}{c+d}\geq \frac{c}{c+d'}\geq \frac{c'}{c'+d'}$. 
Therefore  
$c =\frac{c}{c+d}\geq \frac{c'}{c'+d'}= \frac{a_i b_{i+1}}{a_i b_{i+1}+ (1-a_i)} \geq a_i b_{i+1} \geq a_i^2$. 
Since this inequality holds for all $t\in S_i$, we have 
$\frac{\mu([T^{(i+1)}])}{\mu([T^{(i)}])} \geq a_i^2$. 
\end{proof} 

%We first argue that these properties allow us to obtain the required tree. 
%imply that the algorithm succeeds. 
%the statement of the theorem. 
To see that conditions (a) and (b) are sufficient, let $S$ be the unique perfect subtree of ${}^{<\omega} 2$ with $\Lev_{n_i}(S)=S_i$ for all $i\in\omega$. 
This tree can be computed level by level via the algorithm above. 
%\ref{inductive condition measure}. 
We have $\lim_{i\rightarrow \infty} a_i=1$ by the definition of $\vec{a}$. 
%\ref{inductive condition good} 
Thus (a) implies that all elements of $[S]$ are density points of $[T]$. 
Moreover, $\mu([S])=\inf_{i\in\omega} \mu([T^{(i)}])\geq \prod_{i\in\omega} a_i^2 \mu([T])<q \mu([T])$ by (b) as required. 
\end{proof} 

The previous result provides a finitized proof of Lebesgue's density theorem for Lebesgue measure on Cantor space, since any Borel set can be approximated in measure by closed subsets. 
%It is a finitized version of Lebesgue's density theorem. 
To see this, note that trivially $D(A)\cap D({}^\omega\baseset\setminus A)=\emptyset$ for any subset $A$ of ${}^\omega\baseset$. 
Thus it is sufficient to show that for any Borel set $A$ and any $\epsilon>0$, there is a closed subset $C$ of $A$ with $\mu(A\setminus C)<\epsilon$ consisting of density points of $A$; the density property for $A$ follows by applying this property to both $A$ and its complement. 
To see that this property holds, take a closed subset $B$ of $A$ with $\mu(A\setminus B)<\frac{\epsilon}{2}$. By Theorem \ref{construction of density points}, there is a closed subset $C$ of $B$ with $\mu(B\setminus C)<\frac{\epsilon}{2}$ that consists of density points of $B$ and therefore also of $A$. 
Since $\mu(A\setminus C)<\epsilon$, $C$ is as required. 

Note that the algorithm also produces lower bounds for weights along branches of $S$.

%%%%%%%%%%%%%%%%%%
%%%%%%%%%%%%%%%%%%
\section{Open problems} \label{section open problems} 

We end with some open problems. 
Our main goal is to prove the equivalence of the properties discussed in Section \ref{subsection - conclusion} for many other ideals. 

%The main one asks about equivalence of the properties discussed in the introduction. 

%The main question is whether the ccc and the density property, or the existence of selectors, define the same dividing line. 

%We call an ideal $I$ of Borel sets \emph{homogeneous} if any two $I$-positive Borel sets are Borel isomorphic via a bijection that preserves membership in $I$ in both directions. For all tree forcings $\PP$ studied here, the associated ideal $I$ satisfies this condition, since it follows from the statement that for all $S,T\in\PP$, there is some $R\subseteq S$ in $\PP$ and a Borel isomorphism $f\colon [R]\rightarrow [T]$ that preserves membership in $I$ in both directions. 

%The main question is whether the equivalence in [cite main theorems] holds for all homogeneous $\sigma$-ideals. [check if \emph{homogeneous ideal} is defined] A positive answer would show that the exception of non-ccc ideals in Theorem [cite main theorem above] is necessary. 

%\begin{question} 
%Is the density property equivalent to the ccc for all homogeneous $\sigma$-ideals, assuming $\PD$? 
%\end{question} 

%One direction of [main theorem] follows from the fact that there are no simply definable selectors. This suggests the following question. 

\begin{question} 
Are the shift density property, the existence of a simply definable selector and the ccc equivalent for all simply definable $\sigma$-ideals? 
\end{question} 

%For instance, t
This is of interest for the ideals studied in \cite{MR2391923}, for instance 
%For more examples of ideals, see \cite{MR2391923}. 
%The previous question is interesting 
for the $K_\sigma$-ideal. 
For the latter, we suggest to generalize the proof idea of Theorem \ref{no selector for ideal of countable sets} to answer the next question. 
%from the ideal of countable sets 
%to other non-ccc ideals such as the $K_\sigma$-ideal. 

\begin{question} 
Is there a Baire measurable selector with Borel values for the $K_\sigma$-ideal? 
\end{question} 

A promising related problem is whether there is a relation between the shift density property and the condition that the collection of Borel sets modulo $\I$ carries a natural Polish metric. 
%In particular, there is a large number of interesting ideals which we haven't touched here (see for instance [Zapletal]) and many forcings defining ideals (see [some paper on creature forcings]). 
%In one direction, 
%it is tempting to try to modify 
%one might 

%About the notion studied in \cite{Poreda85}, the counterexamples in Section \ref{section convergence density} suggest the following question. 

%\begin{question} 
%Are the null and meager ideals the only ideals $I$ for which the $I$-convergence density property holds? 
%\end{question} 

%We further ask about the optimal complexity of the set of density points. The set of Lebesgue density points of a Borel set is again Borel (see [Andretta...] for a complexity analysis). Since it is projective in our setting, this suggest to find simpler definitions in specific cases. 

%The set of density points of a Borel set is again Borel for the standard notion and we refer the reader to [Andretta et al] for a precise analysis. Since our notion is only projective, we ask what is the simplest possible complexity of sets of density points, in particular for the tree ideals studied here. 

\medskip 

Our proof of the density property for strongly linked tree ideals is based on the fact from Section \ref{section P-measurable sets} that $\PP$-measurable sets form a $\sigma$-ideal if $\PP$ has the $\omega_1$-covering property. 
%results in Section \ref{section P-measurable sets} about $\PP$-measurable sets for collections $\PP$ with the $\omega_1$-covering property. 
%There we showed that the $\PP$-measurable sets form a $\sigma$-ideal. 
%Lemma \ref{P-measurable sets form a sigma-algebra} showed that the $\PP$-measurable sets form a $\sigma$-ideal if $\PP$ has the $\omega_1$-covering property. 
%It is of interest 
%to know 
%whether this can be proved without assuming the $\omega_1$-covering property. 
Can the latter assumption be omitted? 
In this case, the next question would also have a positive answer. 
%The next question has a positive answer if this can be proved without assuming the $\omega_1$-covering property. 
%If this assumption can be omitted, this would in particular imply a positive answer to the next question. 
%We ask if this assumption can be omitted. 

\begin{question} 
Are all Borel sets $\PP$-measurable for all tree forcings $\PP$? 
\end{question} 

Note that any counterexample $\PP$ collapses $\omega_1$ if we assume $\CH$; in this case $\PP$ preserves $\omega_1$ if and only if it has the $\omega_1$-covering property by a similar argument as in the proof of Lemma \ref{preservation of omega1}.

%\begin{lemma} 
%Assuming $\CH$, for forcings of size $\omega_1$ (or even $\omega_2$-cc), the $\omega_1$-covering property equivalent to preserving $\omega_1$. 
%\end{lemma} 
%\begin{proof} 
%Note that is easy to show, similar to above, that $\PP$ is $\omega_1$-preserving if and only the antichain condition (b) above holds for all antichains of size $\omega_1$ in the Boolean completion of $\PP$. 
%Assuming the $\omega_2$-cc for $\PP$, the Boolean completion also has the $\omega_2$-cc and hence the condition holds for all antichains. 
%Hence $\omega_1$-preserving implies $\omega_1$-covering for $\PP$. The converse is always true. 
%\end{proof} 

%Can the previous be formulated without $\CH$? Is $\omega_1$-covering equivalent to ``does not collapse any $\kappa\leq {}^{\omega}2$"? 

%\begin{question} 
%Is there an ($\omega_1$-collapsing) tree forcing $\PP$ such that not every Borel set is $\PP$-measurable? 
%\end{question} 

%\begin{question} \todo{WE SOLVED THIS! } 
%Are there simply definable notions of density points with Borel values for the ideals associated to Hechler forcing, eventually different forcing, Laver forcing with a Borel filter and Mathias forcing with a shift invariant Borel filter? 
%\end{question} 

\medskip 

In Section \ref{section: no selector}, we proved from $\mathsf{PD}$ that there is no selector $\Borel({}^\omega \baseset)\to\Proj({}^\omega \baseset)$ for $=_\I$ that is induced by a universally Baire measurable function on the codes (Theorem \ref{intro - no projective selector}) 
To this end, we introduced the principle $\mathrm{IA}^\CC_{\mathrm{proj}}$ of generic absoluteness. 
% introduced in Section \ref{section: no selector}. 
This follows from $\PD$ by Lemma \ref{internal projective Cohen absoluteness}. Our results leave its precise consistency strength open. 
%Our results leave the consistency strength of this statement open. 

%\todo[color=orange]{Took out the question}

%An earlier version of this article also included the following question: 

\begin{question} 
What is the consistency strength of $\mathrm{IA}^\CC_{\mathrm{proj}}$? 
\end{question} 
%This is now answered; 
Recent unpublished work of the first and second-listed authors answers this by showing that the consistency strength of $\mathrm{IA}^\CC_{\mathrm{proj}}$ is that of $\ZFC$ (see~\cite{mueller-schlicht-preprint}).

\bibliographystyle{alpha}
%\begin{singlespace}
\nocite{*}
  \bibliography{lebesguedensity}
%\end{singlespace}

\end{document}